\documentclass{amsart}

\usepackage[all]{xy}
\usepackage{amsmath}
\usepackage{hyperref}
\usepackage{amsfonts,graphics,amsthm,amscd,latexsym}
\usepackage{epsfig}
\usepackage{flafter}
\usepackage{mathtools}
\usepackage{comment}
\usepackage{stmaryrd}
\usepackage{tabularx}
\usepackage{pst-node}
\usepackage{enumitem}
\usepackage{tikz-cd}
\hypersetup{
	colorlinks=true,    
	linkcolor=blue,          
	citecolor=blue,      
	filecolor=blue,      
	urlcolor=blue           
}
\usepackage{placeins}

\usepackage{pgfplots}
\pgfplotsset{compat=1.15}
\usepackage{mathrsfs}
\usepackage{tikz}
\usetikzlibrary{graphs,positioning,arrows,shapes.misc,decorations.pathmorphing}

\tikzset{
	>=stealth,
	every picture/.style={thick},
	graphs/every graph/.style={empty nodes},
}

\tikzstyle{vertex}=[
draw,
circle,
fill=black,
inner sep=1pt,
minimum width=5pt,
]
\usepackage[position=top]{subfig}
\usepackage{amssymb}
\usepackage{color}

\calclayout

\usetikzlibrary{decorations.pathmorphing}
\tikzstyle{printersafe}=[decoration={snake,amplitude=0pt}]

\newcommand{\Aut}{\operatorname{Aut}}
\newcommand{\Bir}{\operatorname{Bir}}

\newcommand{\PDiv}{\operatorname{PDiv}}
\newcommand{\WDiv}{\operatorname{WDiv}}
\newcommand{\Pic}{\operatorname{Pic}}
\newcommand{\Nef}{\operatorname{Nef}}

\newcommand{\Cl}{\operatorname{Cl}}

\newcommand{\codim}{\operatorname{codim}}

\newcommand{\Mov}{\operatorname{Mov}}

\newcommand{\Spec}{\operatorname{Spec}}

\newcommand{\Hom}{\operatorname{Hom}}

\newcommand{\relint}{\operatorname{relint}}

\newcommand{\divv}{\operatorname{div}}

\newcommand{\Cox}{\operatorname{Cox}}

\newcommand{\PsAut}{\operatorname{PsAut}}

\newcommand{\MovE}{\overline{\operatorname{Mov}}^e}

\newcommand{\Amp}{\operatorname{Amp}}
\newcommand{\Eff}{\operatorname{Eff}}

\newcommand{\pp}{\mathbb{P}}

\newcommand{\qq}{\mathbb{Q}}
\newcommand{\zz}{\mathbb{Z}}

\newcommand{\rr}{\mathbb{R}}

\newcommand{\kk}{\mathbb{K}}

\usepackage{tikz}
\usetikzlibrary{matrix,arrows,decorations.pathmorphing}
\usetikzlibrary{arrows}

\definecolor{uuuuuu}{rgb}{0.26666666666666666,0.26666666666666666,0.26666666666666666}

\newtheorem{introthm}{Theorem}

\newtheorem{theorem}{Theorem}[section]
\newtheorem{lemma}[theorem]{Lemma}
\newtheorem{proposition}[theorem]{Proposition}
\newtheorem{corollary}[theorem]{Corollary}

\theoremstyle{definition}

\newtheorem{definition}[theorem]{Definition}

\theoremstyle{remark}
\newtheorem{remark}[theorem]{Remark}

\newtheorem{example}[theorem]{Example}

\numberwithin{equation}{section}

\keywords{Cox rings, Calabi--Yau varieties, Kawamata--Morrison cone conjecture}

\subjclass[2020]{Primary: 14E30, 14M25}

\begin{document}
	
	\title[Computing Cox rings via the cone conjecture]{Computing Cox rings via the cone conjecture}
	
	\author[T.~Oda]{Tomoki Oda}
	\address{UCLA Mathematics Department, Box 951555, Los Angeles, CA 90095-1555, USA
	}
	\email{tomokioda0723@math.ucla.edu}
	
	\author[J.I.~Y\'a\~nez]{Jos\'e Ignacio Y\'a\~nez}
	\address{Departamento de Matem\'atica, Universidad T\'ecnica Federico Santa Mar\'ia, Avenida Espa\~na 1680, Valpara\'iso, Chile}
	\email{jose.yaneze@usm.cl}
	
	\author[J.P.~Z\'u\~niga]{Juan Pablo Z\'u\~niga}
	\address{Facultad de Matem\'aticas, Pontificia Universidad Cat\'olica de Chile, Santiago, Chile.
	}
	\email{jpzuniga3@uc.cl}
	
	\begin{abstract}
		We initiate a program to study the Cox ring of Calabi--Yau varieties, employing the notion of Morrison--Kawamata dream spaces. In this setting, we establish an analogue of the Hu--Keel GIT constructions for Mori dream spaces. More precisely, for a Morrison--Kawamata dream space $X$, we establish a correspondence between the small $\mathbb{Q}$-factorial modifications of $X$ and the GIT quotients of $\operatorname{Spec}\operatorname{Cox}(X)$. We further show that the Cox ring of a Morrison--Kawamata dream space is a filtered direct limit of subalgebras, each of which is an inverse limit of finitely generated $\Cl(X)$-graded $\mathbb{K}$-algebras. As an application, we give an explicit presentation of the Cox ring of a very general hypersurface of multidegree $(2,\dots,2,n+1)$ in $(\mathbb{P}^1)^m\times \mathbb{P}^n$. Furthermore, we prove that the Cox ring of such a hypersurface is of dense $F$-pure type.
		
	\end{abstract}
	\maketitle
	
	\setcounter{tocdepth}{1}
	\tableofcontents

	\section{Introduction}
	The Cox ring is a fundamental invariant of an algebraic variety $X$ over a field $\mathbb{K}$: it encodes the global sections of all line bundles over $X$ into a single graded ring $\Cox(X)$. In general, it is not a finitely generated $\mathbb{K}$-algebra. When $\Cox(X)$ is finitely generated, the variety $X$ is called a \emph{Mori dream space}. This notion was introduced by Hu and Keel in \cite{HK00}. When $X$ is a Mori dream space, it can be recovered as a quotient of $\Spec \Cox(X)$ by an action of the quasi-torus $\operatorname{Spec} \mathbb{K}[\Cl(X)]$ (see~\cite[Proposition~2.9]{HK00}). In this paper, we develop an analogue of their construction for certain non-Mori dream spaces, replacing the finite generation of the Cox ring by a structural hypothesis motivated by the Morrison--Kawamata cone conjecture.
	
	Since the early 2000s, a series of works clarified striking links between the singularities of the Cox ring and the global geometry of Mori dream spaces. For instance, if $X$ is of Fano type, then $\Cox(X)$ is finitely generated by the results of Birkar, Cascini, Hacon, and McKernan~\cite{BCHM10}. Furthermore, $X$ is of Fano type if and only if $\Spec\Cox(X)$ is klt at the origin, and also if and only if $\Spec\Cox(X)$ is Gorenstein and canonical everywhere by \cite[Theorem~1.1]{GOST15} and \cite[Theorem~2]{Braun}. A Mori dream space $X$ is toric if and only if $\Cox(X)$ is a polynomial ring \cite[Corollary~2.10]{HK00}. A Mori dream space $X$ is of Calabi--Yau type if and only if  $\Spec \Cox(X)$ is log canonical at the origin \cite[Theorem~1.1]{Kawamata-Okawa}. More recently, Enwright, Francone, Moraga, and Spink proved that a $\mathbb Q$-factorial Fano variety is of cluster type if and only if its Cox ring is a $\Cl(X)$-graded mutation semigroup algebra \cite[Theorem~1.5]{EFMS25}. On the other hand, comparatively little is known about Cox rings beyond the Mori dream space setting. A major source of examples comes from the work of Gross, Hacking, and Keel~\cite{GHK15} and Mandel~\cite{Mandel19}, who showed that many cluster algebras arise as Cox rings of varieties obtained from toric pairs via non-toric blow-ups.

	\medskip
	We initiate a program to investigate the Cox ring of Calabi--Yau varieties. Building on the Morrison--Kawamata cone conjecture~\cite{K88,Morrison93}, and Totaro’s klt version~\cite{T10}, we expect the Morrison–Kawamata property to hold for klt Calabi--Yau pairs. These conjectures serve as a substitute for the rational polyhedrality phenomenon of Mori dream spaces. Recently, Choi, Li, Li, and Zhou~\cite{CRX25} proposed a class of varieties called \emph{Morrison--Kawamata dream spaces} (Definition~\ref{def:KMMDspace}), giving an axiomatic framework for varieties satisfying the Morrison--Kawamata cone conjecture, under the existence of good minimal models.
	
	\medskip
	Our first main result shows that the description as a GIT quotient in~\cite[Proposition~2.9]{HK00} extends to Morrison--Kawamata dream spaces, even when the scheme $\Spec \Cox(X)$ is non-Noetherian.
	Throughout the paper a small $\qq$-factorial modification is abbreviated as SQM, and we denote the movable effective cone of $X$ as $\MovE(X):=\overline{\Mov}(X)\cap \Eff(X)$.
	
	\begin{introthm}\label{intro:KMMDS}
		Let $X$ be a Morrison--Kawamata dream space with finitely generated class group $\Cl(X)$. Let $H:=\Hom(\Cl(X),\mathbb G_m)$ be the quasi-torus of $X$, and let $\overline{X}:=\Spec \Cox(X)$ be the total coordinate space. Then the following properties hold:
		\begin{enumerate}
			
			\item For every ample class $D$ and its associated character $\chi_D$, the GIT quotient recovers $X$:
			\[
			X \;\cong\; X_D \;:=\; \overline X^{ss}(\chi_D) \sslash H .
			\]
			Moreover, for every SQM $\varphi\colon X \dashrightarrow X'$, there exists a class $D' \in \MovE(X)$
			such that
			\[
			X'\cong X_{D'}=\overline{X}^{ss} (\chi_{D'})\sslash H.
			\]
			
			\item  Let $Y$ be an SQM of $X$ and let $\sigma \preceq \Nef(Y)$ be a face.
			If $D,D'\in \relint(\sigma)$ are $\mathbb Q$-Cartier divisor classes, then $X_D \cong X_{D'}$.
			
			\item If $D$ and $D'$ lie in the interiors of two adjacent chambers of $\MovE(X)$ separated by a wall contained in $\operatorname{int}\MovE(X)$, then $D$ and $D'$ are big and there exists a small birational map
			\[
			X_D \dashrightarrow X_{D'} \quad \text{factoring through a common contraction} \quad X_D \longrightarrow Z \longleftarrow X_{D'}.
			\]
			Furthermore, for any $D_1,D_2$ lying in the interiors of chambers of $\MovE(X)$, the induced birational map $X_{D_1}\dashrightarrow X_{D_2}$ between the corresponding models is an isomorphism in codimension one.
			\item  $\Cox(X)$ is a filtered direct limit of subalgebras, each of which is an inverse limit of finitely generated $\Cl(X)$-graded $\mathbb{K}$-algebras.
		\end{enumerate}
	\end{introthm}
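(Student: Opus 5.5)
The strategy is to reduce everything to the Mori dream space situation locally on the movable cone, using the Morrison--Kawamata dream space axioms of Definition~\ref{def:KMMDspace}. These axioms give:
\begin{itemize}[label={}]
\item the decomposition of $\MovE(X)$ into nef cones $\Nef(Y)$ of finitely many SQMs up to the action of $\PsAut(X)$;
\item a rational polyhedral fundamental domain $\Pi$ for that action;
\item finite generation of section rings of effective divisors (from the good minimal model hypothesis).
\end{itemize}

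\textbf{Part (1).} For an ample $\qq$-Cartier class $D$ with character $\chi_D$, we would check directly that
\[
\overline X^{ss}(\chi_D)\sslash H=\operatorname{Proj}\bigoplus_{m\ge 0}\Cox(X)_{mD}=\operatorname{Proj} R(X,D).
\]
The right-hand side equals $X$ because $D$ is ample and $R(X,D)$ is finitely generated. The point is that the invariant ring of the $\chi_D$-semi-invariants is the Veronese-type subring $\bigoplus_m \Cox(X)_{mD}$. This identification only uses the $H$-grading, not Noetherianity of $\Cox(X)$.

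Semistability is defined via nonvanishing of semi-invariant sections $f\in\Cox(X)_{mD}$. The quotient is glued from the affine pieces $\Spec (\Cox(X)_f)_0$, and we identify these with the affines $X_f\subset X$. Since an SQM $X'$ has $\Cl(X')=\Cl(X)$ and $\Cox(X')=\Cox(X)$ canonically, we may take $D'$ to be the strict transform of an ample class on $X'$. It lies in $\MovE(X)$, and the same argument gives the second statement of (1).

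\textbf{Part (2).} For $D,D'$ in the relative interior of a face $\sigma\preceq\Nef(Y)$, both are semiample on $Y$ by the good minimal model axiom. They define the same contraction $Y\to Z_\sigma$, and $X_D\cong\operatorname{Proj}R(Y,D)\cong Z_\sigma\cong X_{D'}$.

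\textbf{Part (3).} If two chambers $\Nef(Y_1),\Nef(Y_2)$ share a wall $\tau$ meeting $\operatorname{int}\MovE(X)$, then classes near $\tau$ are in the interior of the effective cone, hence big. Part (2) applied to $\relint\tau$, viewed as a face of both chambers, gives contractions $Y_i\to Z_\tau$. These are small because $\tau$ lies in the interior of the movable cone, so no divisor is contracted. The final assertion follows since every $X_{D_i}$ is an SQM of $X$ by (1) and (2).

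\textbf{Part (4).} This is the main obstacle. The plan is to exhaust $\MovE(X)$ by rational polyhedral cones $C_1\subset C_2\subset\cdots$. Each $C_k$ is a finite union of $\PsAut$-translates of $\Pi$, hence a finite union of nef cones of SQMs. For each $k$, let
\[
R_k=\bigoplus_{D\in C_k\cap\Cl(X)}\Cox(X)_D .
\]
We use that the divisorial part of the effective cone is handled by the fixed divisors, and that the whole Cox ring is generated over the movable part by sections of the fixed components. The $R_k$ are filtered under inclusion with union $\Cox(X)$.

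By the Mori dream space argument of Hu--Keel (finite generation of multisection rings over a rational polyhedral cone covered by finitely many semiample chambers), each $R_k$ should in fact be finitely generated. If that direct argument fails, because of non-finiteness of chambers meeting a cone touching the boundary of $\MovE(X)$, we would instead present the relevant subalgebras as inverse limits. Truncating degrees along a sequence of rational polyhedral cones shrinking toward a boundary face yields finitely generated truncations with surjective transition maps, and the subalgebra is their inverse limit.

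The delicate point is controlling cones near $\partial\MovE(X)$, where infinitely many chambers accumulate. That is exactly why the statement has the "direct limit of inverse limits" form. We would first try to prove finite generation for cones contained in $\operatorname{int}\MovE(X)\cup\{\text{rational faces}\}$ using the fundamental domain $\Pi$, and accept inverse limits otherwise.
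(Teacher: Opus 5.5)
Parts (1)--(3) follow the paper's route. You glue the identification $(\Cox(X)_f)_0=(\mathcal R(D)_f)_0$ into $\operatorname{Proj}\mathcal R(D)$ without Noetherian hypotheses (Theorem~\ref{thm:GIT-Proj}), transport section rings along SQMs (Lemma~\ref{lem:transport-sections}), and use the common contraction of a class in the relative interior of the shared wall (Proposition~\ref{prop:KM-wallcrossing}, Corollary~\ref{intro:KMMDS3}). For the last claim of (3), your observation is correct and shorter than the paper's chamber-graph connectivity argument: $X_{D_1}\cong Y_1$ and $X_{D_2}\cong Y_2$ are both SQMs of $X$, so the induced map is a composite through $X$ of isomorphisms in codimension one.

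\textbf{The gap is part (4):} you outline two routes and complete neither.
\begin{itemize}
\item \emph{Primary route.} It needs each convex rational polyhedral $C_k$ to be covered by finitely many chambers. You assert this (``each $C_k$ is a finite union of $\PsAut$-translates of $\Pi$''), but it is exactly the accumulation problem you then raise. It does not follow formally from Definition~\ref{def:KMMDspace}, because chambers do accumulate at non-big boundary classes of $\MovE(X)$. For instance, for Wehler type hypersurfaces with $m\ge 3$, the ray $H_k$ lies in $w^*\Nef(X)$ for every $w$ in the infinite group $\langle\iota_j : j\ne k\rangle$. You would need an extra finiteness statement of Looijenga type for rational polyhedral cones in $\MovE(X)$. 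Granted that, semiampleness on each chamber together with Hu--Keel's lemma would then give finite generation of $R_k$, but as written this is not established.
\item \emph{Fallback.} This is not an argument. You do not say why truncations along cones shrinking toward a boundary face are finitely generated. Nor do you say why an inverse limit along surjective maps recovers the graded subalgebra rather than a completion.
\item \emph{Non-movable part.} Your $R_k$ live over $C_k\subseteq\MovE(X)$, so their union is not $\Cox(X)$ when $\Eff(X)\ne\MovE(X)$. The fixed-divisor remark has to be built into the system. One way is to adjoin canonical sections of finitely many fixed prime divisors at each stage, using $H^0(X,\mathcal O_X(D))=s_{\operatorname{Fix}|D|}\cdot H^0(X,\mathcal O_X(D-\operatorname{Fix}|D|))$. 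Your concern here is legitimate. The paper's cones $\mathcal U_s$ also lie in $\MovE(X)$, so its identity $\Eff(X)=\bigcup_s\mathcal U_s$ tacitly uses $\Eff(X)=\MovE(X)$; this holds for Wehler type hypersurfaces by Theorem~\ref{Kawamata-Morrison}.
\end{itemize}

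\textbf{The missing idea} is the paper's ample perturbation (Theorem~\ref{thm:direct-limit-mov}), which avoids counting chambers altogether.
\begin{itemize}
\item Take the Hilbert basis $S_{1,s},\dots,S_{r_s,s}$ of $\mathcal U_s\cap\Cl(X)$, fix an ample effective $A\in\Pi$, and set $D_{i,s,\epsilon}=S_{i,s}+\epsilon A$.
\item These classes are big, so the section ring $\mathcal R_{s,\epsilon}$ over the monoid they generate is finitely generated by \cite[Corollary~1.1.9]{BCHM10}. This holds however many chambers $\mathcal U_s$ meets.
\item As $\epsilon$ decreases, the transition maps $\mathcal R_{s,\epsilon'}\hookrightarrow\mathcal R_{s,\epsilon}$ are inclusions, not surjections.
\item Fix a multidegree $d$, and write $E_d=\sum_k d_kS_{k,s}$ and $a$ for the largest coefficient of $A$. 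Then $\lfloor E_d+\epsilon|d|A\rfloor=E_d$ as soon as $\epsilon<(a|d|)^{-1}$.
\item Hence $\mathcal R_s=\bigcap_{\epsilon>0}\mathcal R_{s,\epsilon}=\varprojlim_{\epsilon}\mathcal R_{s,\epsilon}$, and $\Cox(X)=\bigcup_s\mathcal R_s$ on the movable part.
\end{itemize}
So the inverse limit in (4) undoes the perturbation needed to bring non-big generators within reach of BCHM. It is not a truncation near $\partial\MovE(X)$.
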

	
	\medskip
	As a first step towards understanding the Cox ring of Morrison--Kawamata dream spaces, 
	we apply Theorem~\ref{intro:KMMDS} to compute the Cox ring of very general smooth 
	hypersurfaces of multihomogeneous degree $(2,\dots,2,n+1)$ 
	in the product of projective spaces $(\mathbb{P}^1)^m \times \mathbb{P}^n$. 
	We refer to such hypersurfaces as \emph{Wehler type hypersurfaces}. 
	These varieties have been extensively studied from the perspectives  of the dynamics of birational automorphisms~\cite{CO15},  the dynamics of rational points~\cite{Benjamin}, and complex dynamics~\cite{Oguiso, McMullen}. 
	In particular, Cantat and Oguiso~\cite{CO15} and the second author~\cite{Yáñ22} 
	established that Wehler type hypersurfaces are Morrison--Kawamata dream spaces,
	with polyhedral fundamental domain $\Pi = \Nef(X)$.
	
	In \cite{Ot15}, Ottem computes the Cox ring of hypersurfaces in $\pp^1\times \pp^n$ that are Mori dream spaces. This result was later generalized by \cite{HLU24} and \cite{PIS25}. 
	In a concurrent work, Artebani, Laface, and Ugaglia~\cite{ALU26} study the Cox
	rings of smooth anticanonical Calabi--Yau hypersurfaces in smooth toric Fano varieties, using the combinatorics of primitive pairs of the ambient Fano polytope. They obtain explicit presentations of $\Cox(X)$ in the cases where
	$X$ is a Mori dream space, and they characterize combinatorial configurations that force $\Bir(X)$ to be infinite.
	
	Throughout the paper, we set $\mathbb P := (\mathbb P^1)^m \times \mathbb P^n$, where either $m,n\ge 2$ or $n=0$ and $m\ge 4$, with homogeneous coordinates $\bigl([x_{10}:x_{11}],\ldots,[x_{m0}:x_{m1}],[y_0:\cdots:y_n]\bigr)$. For each $k\in\{1,\dots,m\}$, the defining equation $F$ of $X$ is quadratic in the $k$-th $\mathbb{P}^1$ factor:
	\[
	F \;:=\; R_{(k,0)}\,x_{k0}^2 \;+\; R_{(k,1)}\,x_{k0}x_{k1} \;+\; R_{(k,2)}\,x_{k1}^2,
	\]
	where $R_{(k,i)}$ are multi-homogeneous polynomials of multidegree $(2,\dots,0_k,\dots,2,n+1)$. The birational automorphism group $\Bir(X)$ is generated by the covering involutions $\iota_k\colon X\dashrightarrow X$ induced by the projection $\pi_k\colon X\to(\mathbb{P}^1)^{m-1}\times\mathbb{P}^n$ forgetting the $k$-th factor (see \cite{CO15,Yáñ22} and Theorem~\ref{thm:Wehler-bir}):
	\[
	\operatorname{Bir}(X) = \langle \iota_1,\ldots,\iota_m\rangle \cong (\mathbb{Z}/2\mathbb{Z})^{*m}.
	\]
	
	In this situation, the induced action on the cone $\MovE(X)$ is defined by reflections (see Figure~\ref{figureone}).
	\begin{figure}[htb]
		\centering
		\resizebox{8.6cm}{5.5cm}{%
			\tikzset{every picture/.style={line width=0.75pt}}
			\begin{tikzpicture}[x=0.75pt,y=0.75pt,yscale=-1,xscale=1]
				
				\draw [color={rgb, 255:red, 208; green, 2; blue, 27 }  ,draw opacity=1 ]   (335.26,24.5) -- (352.03,199.96) ;
				\draw [color={rgb, 255:red, 208; green, 2; blue, 27 }  ,draw opacity=1 ]   (335.26,24.5) -- (400,122.86) -- (428.57,166.27) ;
				\draw    (352.03,199.96) -- (428.57,166.27) ;
				\draw    (335.26,24.5) -- (469.54,127.85) ;
				\draw    (335.26,24.5) -- (479.7,92.07) ;
				\draw    (469.54,127.85) -- (428.57,166.27) ;
				\draw    (469.54,127.85) -- (479.7,92.07) ;
				\draw [color={rgb, 255:red, 208; green, 2; blue, 27 }  ,draw opacity=1 ]   (352.03,199.96) -- (428.57,166.27) ;
				\draw    (335.26,24.5) -- (479.7,92.07) ;
				\draw    (469.54,127.85) -- (428.57,166.27) ;
				\draw    (335.26,24.5) -- (267.91,202.81) ;
				\draw    (267.91,202.81) -- (352.03,199.96) ;
				\draw    (217.19,185.98) -- (267.38,203.28) ;
				\draw    (183.85,161.24) -- (217.19,185.98) ;
				\draw [fill={rgb, 255:red, 155; green, 155; blue, 155 }  ,fill opacity=1 ]   (335.26,24.5) -- (217.19,185.98) ;
				\draw    (335.26,24.5) -- (183.85,161.24) ;
				\draw    (335.26,24.5) -- (168.11,133.75) ;
				\draw    (168.11,133.75) -- (183.85,161.24) ;
				\draw [color={rgb, 255:red, 2; green, 6; blue, 208 }  ,draw opacity=1 ]   (385.71,202.68) .. controls (515.6,183.09) and (486.77,166.08) .. (483.53,141.47) ;
				\draw [shift={(483.34,139.56)}, rotate = 85.87] [color={rgb, 255:red, 2; green, 6; blue, 208 }  ,draw opacity=1 ][line width=0.75]    (10.93,-3.29) .. controls (6.95,-1.4) and (3.31,-0.3) .. (0,0) .. controls (3.31,0.3) and (6.95,1.4) .. (10.93,3.29)   ;
				\draw [color={rgb, 255:red, 2; green, 6; blue, 208 }  ,draw opacity=0.83 ]   (415.68,164.81) .. controls (353.82,141.12) and (295.72,156.79) .. (280.14,192.06) ;
				\draw [shift={(279.45,193.69)}, rotate = 291.7] [color={rgb, 255:red, 2; green, 6; blue, 208 }  ,draw opacity=0.83 ][line width=0.75]    (10.93,-3.29) .. controls (6.95,-1.4) and (3.31,-0.3) .. (0,0) .. controls (3.31,0.3) and (6.95,1.4) .. (10.93,3.29)   ;
				
				\draw (356.05,115.8) node [anchor=north west][inner sep=0.75pt]  [font=\scriptsize,color={rgb, 255:red, 208; green, 2; blue, 27 }  ,opacity=1 ] [align=left] {$\displaystyle \operatorname{Nef}( X)$};
				\draw (332.63,12.13) node [anchor=north west][inner sep=0.75pt]  [font=\scriptsize,color={rgb, 255:red, 208; green, 2; blue, 27 }  ,opacity=1 ] [align=left] {$\displaystyle H_{3}$};
				\draw (435.12,163.91) node [anchor=north west][inner sep=0.75pt]  [font=\scriptsize,color={rgb, 255:red, 208; green, 2; blue, 27 }  ,opacity=1 ] [align=left] {$\displaystyle H_{2}$};
				\draw (353.29,202.25) node [anchor=north west][inner sep=0.75pt]  [font=\scriptsize,color={rgb, 255:red, 208; green, 2; blue, 27 }  ,opacity=1 ] [align=left] {$\displaystyle H_{1}$};
				\draw (254.36,203.8) node [anchor=north west][inner sep=0.75pt]  [font=\scriptsize,color={rgb, 255:red, 0; green, 0; blue, 0 }  ,opacity=1 ] [align=left] {$\displaystyle \iota _{2}^{*} H_{2}$};
				\draw (471.19,121.92) node [anchor=north west][inner sep=0.75pt]  [font=\scriptsize,color={rgb, 255:red, 0; green, 0; blue, 0 }  ,opacity=1 ] [align=left] {$\displaystyle \iota _{1}^{*} H_{1}$};
				\draw (399.38,38.73) node [anchor=north west][inner sep=0.75pt]   [align=left] {\textsuperscript{$\displaystyle \operatorname{\overline{Mov}}^{e}( X)$}};
				\draw (203.25,178.18) node [anchor=north west][inner sep=0.75pt]  [font=\scriptsize,color={rgb, 255:red, 0; green, 0; blue, 0 }  ,opacity=1 ,rotate=-30] [align=left] {$\displaystyle \iota _{2}^{*} \iota _{1}^{*} H_{1}$};
				\draw (167.77,143.6) node [anchor=north west][inner sep=0.75pt]  [font=\scriptsize,color={rgb, 255:red, 0; green, 0; blue, 0 }  ,opacity=1 ,rotate=-46.74] [align=left] {$\displaystyle \iota _{2}^{*} \iota _{1}^{*} \iota _{2}^{*} H_{2}$};
				\draw (479.84,85.39) node [anchor=north west][inner sep=0.75pt]  [font=\scriptsize,color={rgb, 255:red, 0; green, 0; blue, 0 }  ,opacity=1 ] [align=left] {$\displaystyle \iota _{1}^{*} \iota _{2}^{*} H_{2}$};
				
			\end{tikzpicture}
		}%
		\caption{The involution $\iota_k^*$ induces an action on $\MovE(X)$ via pseudo-reflections. Shown here is the case of multidegree $(2,2,3)$ in $\pp^1\times \pp^1\times \pp^2$.}
		\label{figureone}
	\end{figure}
	
	To translate the cone decomposition into an algebraic description of the Cox ring, we analyze the induced action on the ring generators. We regard the coordinate $x_{ki}$ as a global section of the line bundle $\mathcal{O}_{\mathbb{P}}(0,\dots,1_k,\dots,0)$ on the ambient space $\mathbb{P}$. We denote the restriction of $x_{ki}$ to $X$ by the same symbol and define $H_k$ to be the restriction of the line bundle to $X$.
	The covering involution $\iota_k$ induces a pullback map $\iota_k^*$ on global sections. Specifically, $x_{ki}$ and its pullback $\iota_k^*(x_{ki})$ satisfy a \emph{Vieta-type relation} (see Proposition~\ref{multiplication-involution}):
	\[
	\iota_k^*(x_{ki})x_{ki} - R_{(k,2-2i)} = 0.
	\]
	These relations are fundamental to the structure of the Cox ring of Wehler type hypersurfaces.

	Our construction begins with the $\Nef(X)$-graded section ring
	\[
	\mathcal R_0=\bigoplus_{D\in\Nef(X)\cap\Cl(X)}H^0(X,\mathcal O_X(D)),
	\]
	whose presentation is described in Theorem~\ref{thm:Nef-coordinate}. We then extend this description across the chamber decomposition by iteratively applying pullbacks under the covering involutions. For each $s\ge 0$, let $W_s$ be the set of reduced words of length at most $s$ in the generators $\iota_k$. Set
	\[
	\widetilde{\mathcal R}_s
	:=
	\mathbb K\bigl[\,w^{*}(x_{ki}),\,y_0,\dots,y_n\bigm| w\in W_s,\; 1\le k\le m,\; i\in\{0,1\}\,\bigr].
	\]
	For $s\ge 1$, define the ideal
	\[
	I_s
	:=
	\Bigl\langle\, w^{*}\!\bigl(x_{ki}\,\iota_k^{*}(x_{ki})-R_{(k,2-2i)}\bigr),\; w^{*}(F)
	\;\Bigm|\; w\in W_{s-1},\; 1\le k\le m,\; i\in\{0,1\}\,\Bigr\rangle,
	\]
	the monomial $m_s:=\prod_{w\in W_{s-1},\,k,i} w^{*}(x_{ki})$, and $J_s:=\langle m_s\rangle\subseteq\widetilde{\mathcal R}_s$. With the base-case convention $I_0:=\langle F\rangle$, $m_0:=1$, $J_0:=\langle 1\rangle$. Set $\overline I_s:=(I_s:J_s^{\,\infty})$ and
	\begin{equation}\label{eq:filtered-ring}
		\mathcal{R}'_s := \widetilde{\mathcal{R}}_s / \overline{I}_s.
	\end{equation}
	\begin{introthm}\label{thm:cox-limit}
		Let $X$ be a Wehler type hypersurface. Then $\mathcal R'_s$ is a $\mathbb K$-subalgebra of $\Cox(X)$ for every $s\ge 0$. Furthermore, the Cox ring of $X$ is isomorphic to the filtered direct limit
		\[
		\Cox(X)\;\cong\;\varinjlim_s \mathcal R'_s.
		\]
	\end{introthm}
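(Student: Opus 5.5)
We would construct, for each $s$, a natural $\mathbb K$-algebra map $\Phi_s\colon \widetilde{\mathcal R}_s\to \Cox(X)$. It sends $w^*(x_{ki})$ to the pulled-back section $w^*(x_{ki})\in H^0(X,\mathcal O_X(w^*H_k))$ and sends $y_j$ to the restriction of $y_j$. This is well defined because each $w\in \Bir(X)$ is a pseudo-automorphism, so $w^*$ acts on $\Cox(X)$ as a graded ring automorphism covering $w^*$ on $\Cl(X)$. The plan then has two parts: show $\ker\Phi_s=\overline I_s$, which makes $\mathcal R'_s$ a subalgebra, and show that the union of the images is all of $\Cox(X)$.

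\emph{Kernel.} First, $I_s\subseteq\ker\Phi_s$. Indeed $F=0$ on $X$, and the Vieta relation $\iota_k^*(x_{ki})x_{ki}=R_{(k,2-2i)}$ of Proposition~\ref{multiplication-involution} holds in $\Cox(X)$; applying the automorphism $w^*$ keeps all these relations in the kernel. Since $\Cox(X)$ is an integral domain and $m_s$ maps to a nonzero element, saturation gives $\overline I_s\subseteq\ker\Phi_s$.

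For the reverse inclusion we localize at $m_s$. After inverting every $w^*(x_{ki})$ with $w\in W_{s-1}$, each relation $w^*(x_{ki})\cdot (w\iota_k)^*(x_{ki}) = w^*R_{(k,2-2i)}$ lets us solve for the new variables $(w\iota_k)^*(x_{ki})$ of length $\le s$. Hence $(\widetilde{\mathcal R}_s/I_s)[m_s^{-1}]$ is a localization of $\mathbb K[x_{ki},y_j]/(F)$ with some of the $x_{ki}$ and their pullbacks inverted. On the Cox-ring side, the same localization of $\Cox(X)$ restricted to these generators is identified with the corresponding localization of $\mathcal R_0$, using Theorem~\ref{thm:Nef-coordinate}. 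By Theorem~\ref{thm:Nef-coordinate}, $\mathcal R_0\cong \mathbb K[x_{ki},y_j]/(F)$ is a domain, so the localized map is injective. Injectivity after inverting $m_s$ is exactly the statement $\ker\Phi_s\subseteq (I_s:m_s^\infty)$. The compatibility of $\mathcal R'_s\to\mathcal R'_{s+1}$ holds by construction, since $W_s\subseteq W_{s+1}$ and $I_s\subseteq I_{s+1}$. Saturation is preserved along these maps because both rings embed in the domain $\Cox(X)$.

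\emph{Surjectivity of the limit.} We use the Morrison--Kawamata property, which by \cite{CO15,Yáñ22} holds with fundamental domain $\Nef(X)$. Every class $D\in\MovE(X)$ satisfies $D=w^*D_0$ with $D_0\in\Nef(X)$ and $w$ a reduced word. Then $H^0(X,D)=w^*H^0(X,D_0)$, and by Theorem~\ref{thm:Nef-coordinate} this space is spanned by $w^*$ of monomials in the $x_{ki}$ and $y_j$. Writing $w^*=\iota_{k_1}^*\cdots$ and expanding, each $w^*(x_{ki})$ and $w^*(y_j)$ must be expressed in the generators of $\widetilde{\mathcal R}_{|w|}$. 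The $y_j$ are $\iota_k$-invariant up to the identification of $\Cl$, so $w^*(y_j)=y_j$ and these cause no trouble.

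For classes outside $\overline{\Mov}(X)$, we write a general effective class as a fixed part plus a movable part. The prime divisors in the stable base locus, i.e.\ the extremal rays of $\Eff$ outside $\MovE$, are pullbacks $w^*$ of divisors $\{x_{ki}=0\}$-type components. We must check this step: for Wehler hypersurfaces every effective class is a sum of movable classes and $w^*$-images of the $H_k$, so no new generators appear. We expect that identifying these non-movable prime divisors and showing they are already among the $w^*(x_{ki})$ will be the main obstacle. We would handle it using Theorem~\ref{intro:KMMDS}(4), together with the explicit reflection description of $\MovE(X)$ in Figure~\ref{figureone}.
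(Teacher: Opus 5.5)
\textbf{There is a genuine gap: the surjectivity step is left open, and it rests on a misreading of Theorem~\ref{thm:nef-coordinate-placeholder}.}

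Your kernel argument is sound in outline. Localizing once at $m_s$ and eliminating every length-$\ge1$ variable through the Vieta relations is a valid shortcut. The paper does this elimination only for $s=1$ in Theorem~\ref{thm:Nef-coordinate}, and then inducts on $s$. However, you cite Theorem~\ref{thm:Nef-coordinate} for something it does not say. It does \emph{not} give $\mathcal R_0\cong\mathbb K[x_{ki},y_j]/(F)$. It identifies $\mathcal R_0$ with the nonnegatively graded part of $\widetilde{\mathcal R}_1/\overline I_1$. By Propositions~\ref{0part} and~\ref{zeta-relation}, $\mathcal R_0$ contains the sections $\zeta_k=x_{k0}\iota_k^*(x_{k1})$, which are not restrictions of ambient polynomials. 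For the kernel this is easily repaired. What your localization really needs is that the restriction map $S/\langle F\rangle\to\Cox(X)$ is injective, which follows from $0\to S_{D-\operatorname{div}F}\to S_D\to H^0(X,\mathcal O_X(D))$. Injectivity is also what ensures the relations $w^*(F)$ with $\ell(w)\ge1$ vanish after elimination. Knowing only that $S/\langle F\rangle$ is a domain gives nothing, since a ring map out of a domain can have a nonzero prime kernel.

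For surjectivity the same misreading breaks the step as written. If $D_0\in\Nef(X)$ has a vanishing $\mathbb P^1$-coordinate, then $H^0(X,w^*D_0)$ is not spanned by $w^*$ of monomials in $x_{ki},y_j$. You need $w^*(\zeta_k)=w^*(x_{k0})\,w^*\iota_k^*(x_{k1})$, which involves a word of length $\ell(w)+1$. So the correct containment is $\mathcal R_s\subseteq\mathcal R'_{s+1}$, which is the paper's Step~4. The containment $\mathcal R_s\subseteq\mathcal R'_s$ already fails for $s=0$, because $\zeta_k\notin\mathcal R'_0=S/\langle F\rangle$. The index shift is harmless for the limit, but you have to make it.

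The more serious problem is the final step, which you leave open and whose premise is false. For a Wehler type hypersurface there are no effective classes outside $\overline{\Mov}(X)$ and no non-movable prime divisors. Theorem~\ref{Kawamata-Morrison} says every integral effective divisor is $w^*$ of a nef divisor for some $w\in\Bir(X)$. Hence $\Eff(X)=\bigcup_s\mathcal K_s$ with $\mathcal K_s=\bigcup_{w\in W_s}w^*\Nef(X)$. The divisors $\{x_{ki}=0\}$ you single out are base-point-free members of $|H_k|$, so they lie in no stable base locus. Citing this, as the paper does in Step~1, places every homogeneous element of $\Cox(X)$ in degree $w^*D_0$ with $D_0$ nef. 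The $\zeta$-corrected spanning argument then puts it in $\mathcal R'_{\ell(w)+1}$. No fixed-part/movable-part decomposition and no appeal to Theorem~\ref{intro:KMMDS}(4) is needed. As written, your proposal does not establish $\bigcup_s\mathcal R'_s=\Cox(X)$.
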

	
	\begin{remark}
		The ideal $I_1$ contains all Vieta type relations. However, $I_1$ does not account for all relations that appear in $\mathcal{R}_0$. Passing to the ideal quotient $\overline{I}_1:=(I_1\,:\,J_1^\infty)$, we obtain all necessary relations to describe $\mathcal{R}_0$ (see Theorem \ref{thm:Nef-coordinate}). For $s\geq 0$, the saturated ideal $\bar{I}_s$ is the kernel of the evaluation homomorphism $\widetilde{\mathcal{R}}_s\to \Cox(X)$ sending each formal symbol $w^{*}(x_{ki})$ to the corresponding pullback section and $y_l$ to itself.
	\end{remark}

	The direct limit in Theorem~\ref{thm:cox-limit} already relies on the chamber decomposition of $\MovE(X)$ given by Theorem~\ref{Kawamata-Morrison}. The same
	structure underlies the next main result that provides an alternative presentation of $\Cox(X)$. More precisely, it replaces the limit construction by an explicit presentation, bypassing the ideal saturations entirely. To achieve this, in Theorem~\ref{thm:Nef-no-saturation} we revisit the nef graded section ring using a different generating set. In addition to the coordinate sections $x_{ki}$ and $y_j$, we introduce the additional sections
	\[
	\zeta_k:=x_{k0}\iota_k^{*}(x_{k1})\qquad k=1,\dots,m.
	\]
	These sections do not arise as restrictions of ambient sections, and they are essential for obtaining a presentation of $\mathcal R_0$ that does not rely on ideal saturation.
	For each pair $w,w'\in\Bir(X)$ and indices $k,l,i,j$ such that
	$w^{*}(x_{ki})\,w'^{*}(x_{lj})$ has degree lying in $\Nef(X)$, its image in $\Cox(X)$ lies in the nef-graded section subring. We fix a polynomial $H_{w,w',ki,lj}$ in the
	nef-chamber generators representing it, so that
	\[
	w^{*}(x_{ki})\,w'^{*}(x_{lj})=H_{w,w',ki,lj}\quad\text{in }\Cox(X).
	\]
	In particular, for each $k<l$ there is a polynomial $H_{kl}$ in the nef-chamber
	generators such that $\zeta_k\zeta_l=x_{k0}x_{l0}H_{kl}$ in $\Cox(X)$. The following presentation is established in Theorem \ref{thm:cox-pres}.
	
	\begin{introthm}\label{thm:cox-intro}
		Let $X$ be a Wehler type hypersurface. The natural map sending each formal generator
		to the corresponding section induces an isomorphism of $\Cl(X)$--graded
		$\mathbb K$--algebras
		\[
		\Cox(X)\;\cong\;
		\mathbb K\bigl[\,w^{*}(x_{ki}),\,w^{*}(\zeta_k),\,y_j
		\,\bigm|\, w\in\Bir(X),\,1\le k\le m,\,i\in\{0,1\},\,0\le j\le n\,\bigr]\big/I,
		\]
		where the ideal $I$ is generated by:
		\begin{enumerate}[label=\textup{(\roman*)}]
			\item For each $w\in\Bir(X)$ and each $1\le k,l\le m$ with $k<l$:
			\begin{enumerate}[label=\textup{(\alph*)}]
				\item $w^{*}\!\bigl(\zeta_k - x_{k0}\,\iota_k^{*}(x_{k1})\bigr)$,
				\item $w^{*}\!\bigl(\zeta_k^{2}+R_{(k,1)}\zeta_k+R_{(k,0)}R_{(k,2)}\bigr)$,
				\item $w^{*}\!\bigl(x_{k1}\zeta_k - R_{(k,0)}x_{k0}\bigr)$
				and $w^{*}\!\bigl(x_{k0}\zeta_k+R_{(k,1)}x_{k0}+R_{(k,2)}x_{k1}\bigr)$,
				\item $w^{*}\!\bigl(\zeta_k\zeta_l - x_{k0}x_{l0}H_{kl}\bigr)$;
			\end{enumerate}
			\item For each $u,w,w'\in\Bir(X)$ and $k,l,i,j$ as above: the elements
			$u^{*}\!\bigl(w^{*}(x_{ki})\,w'^{*}(x_{lj})-H_{w,w',ki,lj}\bigr)$.
		\end{enumerate}
	\end{introthm}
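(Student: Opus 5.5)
Throughout, write $\Phi$ for the natural map from the polynomial ring $\mathbb K[\,w^*(x_{ki}),w^*(\zeta_k),y_j\,]$ to $\Cox(X)$ that sends each formal generator to the corresponding section. The plan is to show three things in turn: $I\subseteq\ker\Phi$, $\Phi$ is surjective, and $\ker\Phi\subseteq I$.

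\emph{Relations hold and $\Phi$ is surjective.} Every generator of $I$ is the pullback $w^*$ (or $u^*$) of a relation that holds in $\Cox(X)$. Relation (a) is the definition of $\zeta_k$. For (b) and (c), we use the Vieta relation $\iota_k^*(x_{ki})x_{ki}=R_{(k,2-2i)}$ of Proposition~\ref{multiplication-involution}, together with the additive Vieta relation for the two roots of $F$ viewed as a quadratic in the $k$-th $\mathbb P^1$ factor. For instance, $\zeta_k$ and its conjugate $x_{k1}\iota_k^*(x_{k0})$ have sum $-R_{(k,1)}$ and product $R_{(k,0)}R_{(k,2)}$, which is (b). Relations (d) and (ii) hold by the definition of the $H$'s. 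Since each $w\in\Bir(X)$ acts on $\Cox(X)$ by a graded ring automorphism, after fixing the appropriate linearization, the relations are preserved under pullback. Surjectivity follows from Theorem~\ref{thm:cox-limit}: each $\mathcal R'_s$ is generated by the $w^*(x_{ki})$ and the $y_j$, and these lie in the image of $\Phi$.

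\emph{Injectivity.} Let $A$ be the quotient of the polynomial ring by $I$. I would first show that $\Phi$ restricted to the nef-degree part of the subalgebra generated by the nef-chamber generators is the presentation of Theorem~\ref{thm:Nef-no-saturation}. This requires checking that the relations listed there are exactly those in (i)(a)--(d) with $w=\mathrm{id}$, plus the relations of type (ii) that land in $\Nef(X)$. Granting this, the $\Nef(X)$-graded part of $A$ maps isomorphically onto $\mathcal R_0$. Applying $w^*$ then gives, for every chamber $w^*\Nef(X)$, an isomorphism between the $w^*\Nef(X)$-graded part of $A$ and $\bigoplus_{D\in w^*\Nef(X)}H^0(X,D)$; here we use that $I$ is $\Bir(X)$-stable by construction.

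By Theorem~\ref{Kawamata-Morrison}, the cone $\MovE(X)$ is the union of the chambers $w^*\Nef(X)$. Every effective class splits as a movable part plus a fixed part, and the fixed divisors are accounted for by generators of fixed components (if any; for Wehler type hypersurfaces there should be none beyond $\MovE(X)$). Hence every homogeneous element of $A$ of degree $D$ has to be reduced to the chamber containing $D$.

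\emph{The main obstacle.} The hardest step is showing that an arbitrary monomial in generators coming from different chambers can be rewritten modulo $I$ as a polynomial in the generators of a single chamber containing its degree. For a product of two generators this is exactly relation (ii), which is why the family $u^*(\ldots-H_{w,w',ki,lj})$ is included whenever the degree of the product lies in some chamber $u^*\Nef(X)$.

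For longer monomials I would induct on the word length of the chambers involved. If the total degree lies in a chamber $C$, then by convexity and the reflection structure (the Coxeter-type tessellation by the $\iota_k^*$), one can pair off factors whose partial degrees lie in chambers strictly closer to $C$, apply (ii), and repeat. This termination argument on the tessellation is where I expect the most work. Once every element of $A$ of degree $D$ is expressed in the generators of one chamber, the chamberwise isomorphism shows that $\Phi$ is injective in degree $D$. Therefore $\ker\Phi=I$.
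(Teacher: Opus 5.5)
\paragraph{Verdict.} There is a genuine gap: the step you call the main obstacle is asserted, not proved, and it is the whole content of injectivity. Your architecture otherwise matches the paper's:
\begin{itemize}
\item the relations hold, and surjectivity follows from generation of the chamber section rings;
\item a homogeneous kernel element is translated by $(w^{-1})^*$ into nef degree and reduced to the nef-chamber generators $\mathcal A_0$;
\item one concludes with $\ker(\Phi|_{\mathcal A_0})=\mathcal I_1\subseteq I$ from Theorem~\ref{thm:Nef-no-saturation} and the $\Bir(X)$-stability of $I$.
\end{itemize}
The reduction step is the paper's Proposition~\ref{prop:reduce}. Your earlier claim that the $\Nef(X)$-graded part of the quotient maps isomorphically onto $\mathcal R_0$ already depends on it. That graded piece contains classes of monomials in generators from many chambers, and Theorem~\ref{thm:Nef-no-saturation} only controls the image of $\mathcal A_0$.

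\paragraph{What is missing.} You need an existence statement. In a monomial of nef degree with an $x$-type factor $w_\alpha^*(x_{k_\alpha i_\alpha})$ of maximal length $L\ge 1$, some partner $\eta$ must satisfy $\deg\bigl(w_\alpha^*(x_{k_\alpha i_\alpha})\,\eta\bigr)\in\overline{K_u}$ with $\ell(u)<L$. ``Convexity and the reflection structure'' does not give this. The paper's argument has these pieces:
\begin{itemize}
\item \emph{Finding a partner.} Write $w_\alpha=v\iota_a$ and $A:=w_\alpha^*H_{k_\alpha}$. An explicit coefficient computation (Lemma~\ref{lem:pos}) gives $\lambda_{0,a}(A)\le -1$. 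Nefness of the total degree plus integrality then produces another factor with $\lambda_{0,a}\ge 1$.
\item \emph{Expanding $\zeta$ factors.} That partner may be an untranslated $\zeta_{k'}$ with $k'\neq a$. Relation (ii) only rewrites products of two $x$-type generators, so it cannot act on $\zeta_{k'}$. You must first expand it, and every translated $w^*(\zeta_k)$, using (a).
\item \emph{Landing in a shorter chamber.} Take a partner of degree $B$ in $\overline{K_{w'}}$. Convexity of the union of closed chambers along the reduced gallery from $K_{w_\alpha}$ to $K_{w'}$ only places $A+B$ in some chamber of that gallery. That chamber could be $K_{w_\alpha}$ itself, or $K_{w'}$ when $\ell(w')=L$. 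Ruling out $K_{w_\alpha}$ uses the integrality argument $\lambda(A)=-1$, $\lambda(B)\ge 1$ for the wall functional $\lambda$ adjacent to $K_{w_\alpha}$. Ruling out $K_{w'}$ in the equal-length case, writing $w'=\iota_l v_0$, uses the tree-distance computation $\ell(w_\alpha v_0^{-1})=2L-1$ together with Lemma~\ref{lem:pos} again. This is Lemma~\ref{lem:cross}.
\item \emph{Termination.} Induction on the maximal word length alone is not well set up. One rewriting need not lower the maximum, since other factors of length $L$ may remain. It also introduces several new factors through $u^*(H)$. You need a well-founded order such as the multiset order of Definition~\ref{def:height}.
\end{itemize}
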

	
	As in the Mori dream space setting, one expects to extract global information about Morrison--Kawamata dream spaces from the local structure of their Cox rings. Here, we show that the Cox rings of Wehler type hypersurfaces are of dense $F$-pure type. $F$-purity is often viewed as a characteristic-$p$ analog of log canonical singularities. In characteristic $0$, the Cox rings of Calabi--Yau type Mori dream spaces are known to have log canonical singularities by the work of Kawamata and Okawa~\cite{Kawamata-Okawa}. 
	
	\begin{introthm}\label{F-puretype}
		Let $X$ be a Wehler type hypersurface defined over $\mathbb{C}$. Then the Cox ring of $X$ is of dense $F$-pure type.
	\end{introthm}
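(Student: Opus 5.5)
We aim to show that $\Cox(X)$ is of dense $F$-pure type. Recall the definition for a $\mathbb C$-algebra $R$ of possibly infinite type: there should be a finitely generated $\mathbb Z$-subalgebra $A\subset\mathbb C$ and a model $R_A$ with $R_A\otimes_A\mathbb C\cong R$, such that $R_A\otimes_A \kappa(\mathfrak p)$ is $F$-pure for a Zariski-dense set of closed points $\mathfrak p\in\Spec A$. Since $\Cox(X)$ is not Noetherian, we plan to work with the filtered direct limit $\Cox(X)\cong\varinjlim_s\mathcal R'_s$ of Theorem~\ref{thm:cox-limit}. The key point is that $F$-purity passes to filtered direct limits whose transition maps are pure. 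Indeed, if each $R_s$ is $F$-pure and the structure maps are pure, then $R=\varinjlim R_s$ is $F$-pure. To see this, note that $M\otimes R\to M\otimes F_*R$ is the direct limit of the injective maps $M\otimes R_s\to M\otimes F_*R_s$, and filtered colimits are exact. In fact, purity of the transition maps is not even needed for this colimit step. So it suffices to produce a single model over a finitely generated $A$ and one dense set of primes that works for all $s$ at once.

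The first step is to spread out. The hypersurface $X$ is defined over a finitely generated $\mathbb Z$-algebra $A$ containing the coefficients of $F$. All the $R_{(k,i)}$ are defined over $A$, and so are the involutions $\iota_k$, since these are given by the Vieta formulas $\iota_k^*(x_{ki})=R_{(k,2-2i)}/x_{ki}$. Hence all the rings $\widetilde{\mathcal R}_s$, the ideals $I_s$, the monomials $m_s$, and the generators of Theorem~\ref{thm:cox-intro} are defined over $A$. This gives a model $\Cox_A:=\varinjlim_s \mathcal R'_{s,A}$. The first difficulty is that the saturations $\overline I_s$ commute with base change only generically, and there are countably many $s$. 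I would avoid this by using the saturation-free presentation of Theorem~\ref{thm:cox-intro}, which is uniform in $w$. Alternatively, one can note that $\mathcal R'_{s}$ is the section ring over the finitely many chambers $w^*\Nef(X)$ with $|w|\le s$. These chambers are all $\Bir(X)$-translates of a single chamber, and each piece is isomorphic via $w^*$ to the nef section ring $\mathcal R_0$. So flatness and compatibility with base change need only be checked for $\mathcal R_0$ together with the gluing relations. Those relations are of the fixed shape coming from the Vieta relations, and a single generic freeness argument over $A$ handles them all.

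The second step proves $F$-purity of the finite pieces after reduction mod $p$. Over a closed point $\mathfrak p$ of $\Spec A$, the reduction $X_p$ is a Calabi--Yau hypersurface. I would take the dense set of primes to be those of ordinary reduction, i.e.\ those for which $X_p$ is $F$-split. In the rest of this plan I assume such primes are dense; this is the weakest point, discussed below. Given $F$-splitness, I would use the standard Hara--Watanabe-type correspondence: for $D$ in a chamber $w^*\Nef(X)$, consider the section ring $\bigoplus_{D\in w^*\Nef}H^0(X_p,D)$ of the SQM $X_{w,p}$. Since $X_{w,p}$ is isomorphic in codimension one to $X_p$, it is $F$-split with a splitting compatible with every $\mathcal O(D)$. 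This is so because a splitting $\phi\in H^0(\omega^{1-p})=H^0(\mathcal O)$ is a constant and is defined in codimension one. The same fact makes the splittings of the various chambers agree: the $F$-splitting of $X_p$ is unique up to scalar and is given by the trace of Frobenius on $\omega_{X}\cong\mathcal O_X$, so it induces one compatible splitting on every multisection ring of divisors on any SQM. Hence the splitting is functorial in the degree, and it assembles into a $\Cl(X)$-graded Frobenius splitting of the whole Cox ring mod $p$, $\bigoplus_{D}H^0(X_p,\mathcal O(D))$. Here I am assuming that the reduced Cox ring equals the reduction of the model over $A$. In that case we obtain $F$-purity, in fact $F$-splitting, directly for the limit, and the colimit argument serves as a backup.

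The main obstacle is density of the set of primes. For a very general $X$, density of ordinary primes for Calabi--Yau varieties of dimension $\ge 3$ is not known in general. To get around this, I would degenerate within the family of Wehler hypersurfaces. It suffices to show that the family has ordinary fibres: the Hasse invariant is a polynomial in the coefficients of $F$, because $F$-splitting of a hypersurface is governed by Fedder's criterion, namely $F^{p-1}\notin\mathfrak m^{[p]}$. One then exhibits a single diagonal-type equation for which $F^{p-1}$ contains the monomial $\prod x_{ki}^{p-1}\prod y_j^{p-1}$ with nonzero coefficient for all $p$. Fedder's criterion applied to the ambient Cox ring (a polynomial ring) then gives $F$-purity of $\widetilde{\mathcal R}_0/(F)$ for an open dense set of coefficients over every $\mathbb F_p$. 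Since a very general $X$ specializes into this open set modulo a dense set of primes, we obtain the density needed. After that, the remaining step is to transport the splitting through the pullbacks $w^*$ and check its compatibility with the Vieta relations, which requires care with the spreading-out of infinitely many generators.
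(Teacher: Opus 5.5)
Your proposal is essentially the paper's proof. Density of globally $F$-split fibres comes from the coefficient of $M^{p-1}$ in $F^{p-1}$, with $M=\prod_{k}x_{k0}x_{k1}\prod_{j}y_j$, being a polynomial in the coefficients of $F$ that is nonzero modulo every $p$, after which \cite[Lemma~4.5]{GOST15} converts global $F$-splitting of $X_\mu$ into $F$-purity of its Cox ring. (The paper checks the splitting via the Brion--Kumar criterion on $\mathbb{P}$ and compatible splitting of $Z(F)$ rather than Fedder's criterion on $S/(F)$. Its witness is $F=M$ itself, which gives coefficient $1$ for every $p$; a Fermat/diagonal-type equation, as you suggest, would not give a nonzero coefficient for every $p$.)

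The compatibility of the Cox-ring model with reduction that you flag is left implicit in the paper as well. Your planned transport of the splitting through the $w^*$ and the Vieta relations is unnecessary, since that lemma splits $\Cox(X_\mu)$ directly from a splitting of $X_\mu$.
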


	\subsection*{Acknowledgements}
	We are grateful to Joaquín Moraga for proposing the question and his comments about the manuscript. We also thank Burt Totaro for his numerous comments on this project.
	This project was initiated during the \href{https://www.math.ucla.edu/~jmoraga/JAGW2025}{Junior Algebraic Geometry Workshop 2025}, held at UCLA. The first author was partially supported by NSF research grant DMS-2443425. The third author was supported by the ANID National Doctoral Scholarship 2022–21221224.
	
	\section{Preliminaries}
	Throughout the paper, unless stated otherwise, $\mathbb{K}$ denotes an algebraically closed field of characteristic $0$.
	\begin{definition}(\cite[Construction~1.4.2.1]{ADHL15})\label{def:cox}
		Let $X$ be a normal projective variety with $\Pic^0(X)=0$ and a finitely generated class group $\Cl(X)$.
		Let $\mathcal M\subset \WDiv(X)$ be a finitely generated submonoid with $0\in\mathcal M$.
		The \emph{divisorial algebra} associated to $(X,\mathcal M)$ is
		\[
		S(X,\mathcal M)\;:=\;\bigoplus_{D\in\mathcal M} H^0\!\bigl(X,\mathcal O_X(D)\bigr),
		\]
		with multiplication induced by the natural maps
		\[
		H^0\!\bigl(X,\mathcal O_X(D)\bigr)\otimes H^0\!\bigl(X,\mathcal O_X(D')\bigr)
		\longrightarrow
		H^0\!\bigl(X,\mathcal O_X(D+D')\bigr),\qquad s\otimes s'\longmapsto ss'.
		\]
		Let $\PDiv(X)\subset \WDiv(X)$ be the subgroup of principal divisors and set
		\[
		\mathcal M^0 \;:=\; \mathcal M\cap \PDiv(X).
		\]
		For each $E=\divv(f)\in \mathcal M^0$, multiplication by $f$ induces an isomorphism
		$\mathcal O_X \xrightarrow{\sim} \mathcal O_X(E)$ and hence identifies
		$1\in H^0(X,\mathcal O_X)$ with a canonical section $1_E\in H^0\!\bigl(X,\mathcal O_X(E)\bigr)$.
		Let $I\subset S(X,\mathcal M)$ be the ideal generated by the elements $1_E-1$ for all $E\in\mathcal M^0$.
		We define the associated \emph{$\mathcal M$-graded section ring} to be the quotient
		\[
		\mathcal R(X,\mathcal M)\;:=\; S(X,\mathcal M)/I.
		\]
		
		Moreover, if $\mathcal M$ is a finitely generated subgroup of $\WDiv(X)$ mapping surjectively onto $\Cl(X)$,
		then $\mathcal R(X,\mathcal M)$ is naturally graded by $\mathcal M/\mathcal M^0 \cong \Cl(X)$.
		In this case, we call $\mathcal R(X,\mathcal M)$ the \emph{Cox ring} of $X$. 
	\end{definition}
	
	\begin{remark}\label{rem:choiceK}
		
		If $\mathcal M,\mathcal M'\subset \WDiv(X)$ are finitely generated subgroups whose images in
		$\Cl(X)$ agree, then the associated graded section rings $\mathcal R(X,\mathcal M)$ and
		$\mathcal R(X,\mathcal M')$ are (non-canonically) isomorphic as $\Cl(X)$-graded $\mathbb{K}$-algebras;
		see \cite[\S1.4.3]{ADHL15}. Hence, after fixing such a choice once and for all, we omit $\mathcal M$
		from the notation. In particular, by abuse of notation, we regard the grading monoids as submonoids of
		$\Cl(X)$. Hence for a submonoid $\mathcal M\subset \Cl(X)$ we write
		$\mathcal R(\mathcal M)$.  We call it an $\mathcal{M}$-graded section ring. Moreover, we denote the Cox ring of $X$ simply by $\operatorname{Cox}(X)$, without reference to the
		auxiliary choice of $\mathcal M$. If $\mathcal{M}$ is torsion-free, the description simplifies:
		$$\mathcal{R}(\mathcal{M})\cong \bigoplus_{D\in \mathcal{M}} H^0(X,\mathcal{O}_X(D)),$$
		as detailed in \cite[Construction~I.4.1.1]{ADHL15}.
	\end{remark}
	
	\begin{definition}\label{def:movable-linear-system}
		Let $X$ be a normal $\mathbb{Q}$-factorial projective variety.
		A $\mathbb{Q}$-divisor $D$ on $X$ is \emph{movable} if there exists $m>0$ such that $mD$ is Cartier, $|mD|\neq \emptyset$, and the linear system $|mD|$ has no fixed divisorial components. The movable cone $\operatorname{Mov}(X)\subset N^1(X)_\mathbb{R}$ is the cone generated by the classes
		of movable divisors. Similarly, we denote by $\MovE(X)$ the closure of the effective movable cone, i.e., $\MovE(X)=\overline{\operatorname{Mov}}(X)\cap\operatorname{Eff}(X)$.
	\end{definition}
	
	\begin{remark}
		Let $X$ be a normal projective $\mathbb Q$-factorial variety with finitely generated class group. This implies $\Pic^0(X)=0$. Since $X$ is $\mathbb Q$-factorial, there is a well-defined homomorphism
		\[
		\nu\colon \Cl(X)\longrightarrow N^1(X)_{\mathbb R}
		\]
		sending a class $[D]$ to $\frac{1}{m}[mD]_{\text{num}}$ for any $m>0$ with $mD$ Cartier.
		Its kernel consists of classes whose Cartier multiples are numerically trivial; equivalently,
		$\ker(\nu)=\Pic^\tau(X)$.
		In particular, if $\Pic^0(X)=0$, then $\Pic^\tau(X)$ is torsion and hence $\ker(\nu)=\Cl(X)_{\text{tor}}$. Since $\Pic^0(X)=0$, we also have $\Pic(X)\cong \operatorname{NS}(X)$, hence
		\[
		\Cl(X)_{\rr}\;\cong\;\Pic(X)_{\rr}\;\cong\;\text{NS}(X)_{\rr}\;=\;N^1(X)_{\rr}.
		\]
		
		For a convex cone $\mathcal K\subset N^1(X)_{\mathbb R}$, set
		\[
		\mathcal K\cap \Cl(X):=\{\,w\in \Cl(X)\mid \nu(w)\in \mathcal K\,\}.
		\]
		We define the associated $\mathcal K$-graded section ring by
		\[
		\mathcal R(\mathcal K):=\bigoplus_{w\in {\mathcal{K}}\cap \Cl(X)} \operatorname{Cox}(X)_w
		\ \subset\ \operatorname{Cox}(X),
		\]
		where $\operatorname{Cox}(X)_w$ denotes the $w$-graded piece of $\operatorname{Cox}(X)$. In particular, $\mathcal{R}(\Nef(X))$ will be called the \emph{$\Nef(X)$-graded section ring}.
	\end{remark}
	
	\begin{remark}
		Cox rings need not be finitely generated $\mathbb{K}$-algebras. Nonetheless, Lemma~\ref{domain} shows that one can still establish ring-theoretic properties such as being UFD or normal. Moreover, as in Theorem~\ref{F-puretype}, the notion of $F$-purity is also well-defined.
	\end{remark}

	\begin{lemma}\emph{(\cite[Proposition~I.4.1.5, Theorem~I.5.1.1]{ADHL15})}\label{domain} Let $X$ be a smooth projective variety with a free, finitely generated class group. Then $\operatorname{Cox}(X)$ is a normal UFD.
	\end{lemma}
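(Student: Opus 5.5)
The statement is quoted from \cite[Proposition~I.4.1.5, Theorem~I.5.1.1]{ADHL15}, so the plan is to reduce to the torsion-free description of Remark~\ref{rem:choiceK} and then run the two standard arguments: an embedding into a Laurent polynomial ring, and a divisor-theoretic argument for factoriality.

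\textbf{Step 1: reduction and embedding.} Since $X$ is smooth, $\Cl(X)=\Pic(X)$. As $\Cl(X)$ is free and finitely generated, choose divisors $D_1,\dots,D_r$ whose classes form a basis, and let $\mathcal M=\bigoplus \zz D_i\subset \WDiv(X)$. The map $\mathcal M\to\Cl(X)$ is an isomorphism, so $\mathcal M^0=0$. Hence
\[
\Cox(X)=\bigoplus_{D\in\mathcal M}H^0(X,\mathcal O_X(D)).
\]
Each summand is the subspace $\{f\in\kk(X)^\times : \divv(f)+D\ge 0\}\cup\{0\}$ of $\kk(X)$. Sending $f\in H^0(X,\mathcal O_X(D))$ to $f\,t^{D}$ gives an injective graded ring homomorphism
\[
\Cox(X)\hookrightarrow \kk(X)[t_1^{\pm1},\dots,t_r^{\pm1}].
\]
The target is a domain, so $\Cox(X)$ is an integral domain.

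\textbf{Step 2: normality.} Let $h$ lie in the fraction field of $\Cox(X)$ and be integral over $\Cox(X)$. The ring $\kk(X)[t^{\pm1}]$ is normal, and the grading is by a torsion-free group, so the homogeneous components of $h$ are again integral. Thus we may take $h=f t^{D}$ homogeneous. An integral equation for $h$ gives, for each prime divisor $P$ on $X$, an inequality on the valuations $v_P$ of the homogeneous terms. This forces $v_P(f)+v_P(D)\ge 0$ for every $P$, because each $\mathcal O_{X,P}$ is a DVR and hence integrally closed. Therefore $f\in H^0(X,\mathcal O_X(D))$ and $h\in\Cox(X)$. Equivalently, we can write
\[
\Cox(X)=\bigcap_P \Bigl(\kk(X)[t^{\pm1}]\cap \{\text{$P$-nonnegative elements}\}\Bigr),
\]
an intersection of normal rings (localized DVR-type conditions), and conclude normality directly.

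\textbf{Step 3: factoriality.} This is the main point. By a theorem on graded rings graded by torsion-free groups, a Krull domain is factorial once every homogeneous nonzero nonunit is a product of homogeneous primes; equivalently, every homogeneous divisorial ideal of height one is principal. The ring is Krull by the valuation description in Step 2.

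\begin{itemize}
\item For a prime divisor $P\subset X$, take the canonical section $1_P\in H^0(X,\mathcal O_X(P))$. We show it is prime: if $1_P$ divides a product of homogeneous elements $gh$, then $v_P(g)+v_P(h)>0$ in the twisted sense. Hence $1_P$ divides $g$ or $h$, and the quotient stays effective because only the $P$-coefficient drops.
\item Every homogeneous element $f t^{D}$ has effective divisor $E=\divv(f)+D=\sum a_P P$. It then factors as a unit (a nonzero constant, since $\Pic^0(X)=0$ and $X$ is projective) times $\prod 1_P^{a_P}$.
\end{itemize}

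The hard step is Step 3, and specifically the bookkeeping that $f t^{D}$ equals $\prod 1_P^{a_P}$ up to a constant. This needs every class $[P]$ to be realized inside $\mathcal M$, which holds because $\mathcal M\to\Cl(X)$ is surjective. It also needs homogeneous units to be constants, which uses the projectivity of $X$.
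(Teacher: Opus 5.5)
Your proposal is correct and is essentially the standard argument of \cite{ADHL15}, which the paper cites without reproducing. Integrality comes from the embedding into $\kk(X)[t_1^{\pm1},\dots,t_r^{\pm1}]$, normality and the Krull property come from the description as an intersection of Laurent rings over the DVRs $\mathcal O_{X,P}$, and factoriality comes from showing that the canonical sections $1_P$ are $\Cl(X)$-prime, that every homogeneous element is a constant times a product of them, and that the torsion-free grading then upgrades graded factoriality to genuine factoriality. Two small precisions: showing that homogeneous units are constants uses $\Gamma(X,\mathcal O_X^{*})=\kk^{*}$ (projectivity and normality), not $\Pic^0(X)=0$; and for $1_P$ you only verify primality with respect to homogeneous products, which becomes primality in $\Cox(X)$ through the usual leading-term argument for a grading by a torsion-free, hence orderable, group.
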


	We recall the definition and basic properties of Mori dream spaces.

	\begin{definition}\label{definition-MDS}
		Let $X$ be a projective normal $\mathbb{Q}$-factorial variety with finitely generated class group. Then $X$ is called a \emph{Mori dream space} if $\operatorname{Cox}(X)$ is finitely generated. 
	\end{definition}
	\begin{proposition}\label{MDS,property}\emph{(\cite[Proposition~2.9]{HK00})}
		Let $X$ be a normal projective $\mathbb{Q}$-factorial variety with $\Pic^0(X)=0$.
		Then the following are equivalent:
		\begin{enumerate}
			\item $X$ is a Mori dream space.
			\item The following conditions hold:
			\begin{enumerate}
				\item The class group $\Cl(X)$ is finitely generated.
				\item The nef cone $\Nef(X)$ is a rational polyhedral cone, and every nef divisor on $X$ is semiample.
				\item There exist finitely many small $\mathbb{Q}$-factorial modifications $f_i\colon X \dashrightarrow X_i$ such that
				\[\operatorname{Mov}(X)=\bigcup_i f_i^*\Nef(X_i).
				\]
			\end{enumerate}
		\end{enumerate}
	\end{proposition}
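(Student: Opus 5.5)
The plan is to prove the two implications separately, following the strategy of Hu--Keel: variation of GIT on $\Spec\Cox(X)$ for $(1)\Rightarrow(2)$, and finite generation of divisorial algebras over finitely many rational polyhedral cones for $(2)\Rightarrow(1)$. Throughout, $\Pic^0(X)=0$ lets me identify $\Cl(X)_{\rr}\cong N^1(X)_{\rr}$, as in the remark above, and $\mathcal R(\mathcal K)$ denotes the $\mathcal K$-graded section ring.

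For $(1)\Rightarrow(2)$, the class group is finitely generated by Definition~\ref{definition-MDS}, so (a) holds. Set $\overline X=\Spec\Cox(X)$; it is affine of finite type with an action of the quasi-torus $H=\Hom(\Cl(X),\mathbb G_m)$. Its characters are $\Cl(X)$.
\begin{enumerate}
\item By variation of GIT (Dolgachev--Hu, Thaddeus), the effective cone, which is the weight cone of the finitely many homogeneous generators, is rational polyhedral. It is cut into finitely many rational polyhedral GIT chambers, and $\overline X^{ss}(\chi_D)$ is constant on the relative interior of each chamber.
\item For an ample class $A$, I show $X\cong\overline X^{ss}(\chi_A)\sslash H$. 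The reason is that $X=\operatorname{Proj}\bigoplus_{k} H^0(X,kA)$, and this ring is the $\zz_{\ge0}A$-Veronese subring of $\Cox(X)$, i.e. the invariant ring defining the GIT quotient.
\item The ample cone is therefore contained in one chamber, and $\Nef(X)$ is its closure, so $\Nef(X)$ is rational polyhedral.
\item For semiampleness of a nef $D$: since $D$ lies in the closure of the ample chamber, $\overline X^{ss}(\chi_A)\subseteq\overline X^{ss}(\chi_D)$. Concretely, every point over $X$ is non-vanishing for some section of some multiple of $D$, so $|kD|$ is base point free for suitable $k$.
\item For (c), I repeat the same argument for each chamber $\sigma$ whose interior meets $\Mov(X)$. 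Its quotient $X_\sigma$ is projective and $\qq$-factorial (chambers of full dimension give geometric quotients with finite stabilizers). It is isomorphic to $X$ in codimension one, because a movable class has no fixed divisorial part and so the unstable locus has codimension at least two. Hence $X\dashrightarrow X_\sigma$ is an SQM whose nef cone pulls back to $\sigma$. There are finitely many chambers, which gives (c).
\end{enumerate}

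For $(2)\Rightarrow(1)$, I first handle the movable part and then the fixed part.
\begin{enumerate}
\item Each cone $f_i^*\Nef(X_i)$ is rational polyhedral and consists of semiample classes. The associated section ring $\mathcal R(f_i^*\Nef(X_i))$ is finitely generated: it is a divisorial algebra over a finitely generated monoid of semiample divisors (Hu--Keel, Lemma~2.8, via Zariski's lemma). Since the $f_i$ are isomorphisms in codimension one, sections on $X_i$ and on $X$ agree.
\item Gluing finitely many such rings shows that $\mathcal R(\Mov(X))$ is finitely generated.
\item Next I prove that $\Eff(X)$ is generated by $\Mov(X)$ together with finitely many prime divisors $E_1,\dots,E_r$. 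I write any effective $D$ as $M+F$, with $M$ the mobile part of a high multiple and $F$ its fixed part. Each prime component $E$ of $F$ is not movable, so it is contracted by a Mori-type contraction on some $X_i$, namely one associated with a facet of $f_i^*\Nef(X_i)$ lying on $\partial\Mov(X)$. There are finitely many such facets, and each contracts at most one divisor (relative Picard number one), which gives finiteness.
\item Then $\Cox(X)$ is generated by $\mathcal R(\Mov(X))$ together with the canonical sections of $E_1,\dots,E_r$, after adding the torsion part of $\Cl(X)$ if necessary.
\end{enumerate}

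I expect step 3 of the second implication to be the main obstacle: showing that only finitely many non-movable prime divisors occur, and that every effective class decomposes as movable plus a nonnegative combination of them. If the contraction argument is awkward, I will first try an alternative. If infinitely many $E$ occurred, their rays would accumulate in $\overline{\Eff}(X)$. I would then derive a contradiction with the finiteness of the chamber decomposition of $\Mov(X)$ by intersecting with curves moving in the divisors $E$ on suitable SQMs $X_i$, where $E\cdot C<0$.
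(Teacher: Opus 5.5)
The paper gives no proof of this statement. It cites \cite[Proposition~2.9]{HK00}, and your outline follows Hu--Keel's route. The $(1)\Rightarrow(2)$ half is fine as an outline. One point to tighten: for ``the ample cone lies in one chamber'' you need $\overline X^{ss}(\chi_A)$ to be the same open set $\widehat X$ for every ample $A$, not merely $X_A\cong X$.

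The genuine gap is step~3 of $(2)\Rightarrow(1)$. You claim that every non-movable prime divisor $E$ is the exceptional divisor of the contraction attached to a facet of some $f_i^*\Nef(X_i)$ on $\partial\Mov(X)$. This is exactly the finiteness you need (Hu--Keel get it by running an MMP, their Proposition~1.11), and you only assert it; the fallback about accumulating rays is not an argument. Also, a fixed component of $|kD|$ for a given $k$ can be movable, e.g.\ a half-fibre on an Enriques surface, so the ``$M+F$'' decomposition does not produce non-movable divisors.

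One way to close the gap:
\begin{enumerate}
\item Take $A$ general ample and set $D_t=(1-t)A+t[E]$. Since $[E]\notin\Mov(X)$, and $\Mov(X)$ is convex and (by (c)) closed, the segment leaves $\Mov(X)$ at some $t_0<1$. It meets each of the finitely many chambers in a subsegment, so for general $A$ the point $D_{t_0}$ lies in $\relint(F)$ for a facet $F$ of some $f_j^*\Nef(X_j)$.
\item The semiample classes in $\relint(F)$ give a contraction $\varphi\colon X_j\to Z$ of the ray $R=F^\perp\cap\overline{\mathrm{NE}}(X_j)$. The linear function $t\mapsto D_t\cdot R$ vanishes at $t_0$ and is negative just beyond it, so $E\cdot R<0$. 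Hence every $\varphi$-contracted curve lies in $E$.
\item Therefore $\varphi$ is not of fibre type.
\item It is not small either. Otherwise, global generation of $\mathcal O_Z(mA_Z+\varphi_*G)$ for $m\gg0$ makes $D_{t_0}+sG$ movable for every $G$ and $0<s\ll1$, so the segment would not leave $\Mov(X)$.
\item Hence $\varphi$ is divisorial with exceptional divisor $E$, and $E\mapsto(j,F)$ is injective into a finite set.
\end{enumerate}
Generation then needs no high multiples. For $s\in H^0(X,\mathcal O_X(D))$, split $\divv(s)+D=\sum_j a_jE_j+G'$ with all components of $G'$ movable. Then $[G']\in\Mov(X)$ and $s\in\mathbb K^*\cdot s'\prod_j 1_{E_j}^{a_j}$ with $s'\in\mathcal R(\Mov(X))$.

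Second, step~1 uses that every $\Nef(X_i)$ is rational polyhedral and spanned by semiample classes, and so does the argument above (already for $[E]\notin\Mov(X)$). This is built into Hu--Keel's definition, where each $X_i$ must itself satisfy the analogue of (b). It is not among the hypotheses as stated here: (b) concerns $X$ only, and (c) only asks for $\Mov(X)=\bigcup_i f_i^*\Nef(X_i)$. Nothing in your argument transports polyhedrality or semiampleness from $X$ to the $X_i$. Either prove it or state explicitly that you read (c) in Hu--Keel's form; as written, $(2)\Rightarrow(1)$ is not justified.
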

	
	A key feature of Mori dream spaces is that they can be constructed as GIT quotients of their Cox rings by a quasi-torus~\cite{HK00}.

	\begin{definition}\label{def:semi-stability}
		Let $X$ be a smooth projective variety and set $\bar X:=\Spec\Cox(X)$. Let
		\[
		H:=\Spec\, \mathbb{K}[\Cl(X)]
		\]
		be an algebraic group called a \emph{quasi-torus}; it acts on $\bar X$ via the $\Cl(X)$-grading. Using the canonical identification
		\[\Hom(H,\mathbb{G}_m) \;\cong\; \Cl(X),
		\]
		we may regard any class $D\in \Cl(X)$ as a character of $H$, i.e., $\chi_D\in X^*(H)$.
		
		Fix an integral class $D\in \text{Mov}(X)$. Define the associated $\mathbb{N}$-graded subring
		\[
		\mathcal{R}(D):=\bigoplus_{n\ge 0} H^0(X,\mathcal{O}_X(nD))\subset \Cox(X).
		\]
		The \emph{semi-stable locus with respect to $D$} is the open subset
		\[
		\overline X^{ss}(\chi_D)
		:=\left\{x\in \bar X \ \middle|\ \text{there exist }\, n>0,\ \, f\in {\mathcal{R}(D)}_n\ \text{such that}\ f(x)\neq 0\right\}.
		\]
	\end{definition}
	
	Over a field of characteristic $0$, when $\Cl(X)$ is a finitely generated abelian group, a quasi-torus is isomorphic to the direct product of an algebraic torus $\mathbb{G}_m^n$ and a finite torsion group $T$ (\cite[Construction~I.2.1.3]{ADHL15}).
	
	\begin{theorem}\label{Cox-MDS} \emph{(\cite[Theorem~2.3, Corollary~2.4, Proposition~2.9]{HK00})}
		Let \(X\) be a Mori dream space. Then:
		
		\begin{enumerate}
			\item  
			For any ample class $D$, we have
			\[
			X \;\cong\; \overline{X}^{ss}(\chi_D)\sslash H.
			\] Moreover, given any SQM $\varphi:X\dashrightarrow X'$, there exists an effective movable divisor $D'\in\Mov(X)$ such that $$X'\cong \overline{X}^{ss}(\chi_{D'})\sslash H.$$
			
			\item
			There exists a finite rational polyhedral fan  in \(\Cl(X)_\mathbb R\) whose support equals the movable cone, such that:
			if $D$ and $D'$ lie in the relative interior of the same cone, then
			\[
			\overline X^{ss}(\chi_D)=\overline X^{ss}(\chi_{D'})
			\quad\text{and hence}\quad
			X_D \cong X_{D'}.
			\]
			
			\item
			If \(D\) and \(D'\) lie in the relative interior of adjacent chambers of $\Mov(X)$, then there exists a small birational map
			\[
			X_D \dashrightarrow X_{D'} \qquad\text{factoring through a common contraction}\qquad
			X_D \to Z\leftarrow X_{D'}.
			\]
			Moreover, when \(D\) ranges in the relative interior of the movable cone, the induced birational maps are isomorphisms in codimension one.
			
		\end{enumerate}
	\end{theorem}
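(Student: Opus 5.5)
This is the Hu--Keel theorem for Mori dream spaces. My plan is to reproduce their argument, organized around the finite generation of $\Cox(X)$ and the resulting finiteness of the chamber structure on $\Mov(X)$ from Proposition~\ref{MDS,property}. Write $R=\Cox(X)$, which is a finitely generated normal domain (Lemma~\ref{domain} in the smooth case, \cite{ADHL15} in general). Let $\bar X=\Spec R$ with its $H$-action. For an integral class $D$, the GIT quotient is
\[
X_D=\bar X^{ss}(\chi_D)\sslash H=\operatorname{Proj}\mathcal R(D).
\]
This identification holds because the invariants of $R_{\chi_D}$-localizations are exactly the degree-zero parts of $\mathcal R(D)$ localized.

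For (1), take $D$ ample. Then $\mathcal R(D)=\bigoplus_n H^0(X,nD)$ and $\operatorname{Proj}\mathcal R(D)\cong X$. I would check that the map $\bar X^{ss}(\chi_D)\to X$ is a good quotient by covering $X$ by affine opens $X_f$, $f\in H^0(nD)$. Over each $X_f$, the preimage is $\Spec R_f$, and $(R_f)^H=\mathcal O(X_f)$ since $R$ restricted to $X_f$ is $\bigoplus_{E}\Gamma(X_f,\mathcal O(E))$. For an SQM $\varphi\colon X\dashrightarrow X'$, pushforward of Weil divisors identifies $\Cl(X)\cong\Cl(X')$ and $\Cox(X)\cong\Cox(X')$, because the two varieties agree in codimension one. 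Taking $D'=\varphi^*A'$ with $A'$ ample on $X'$ gives $D'\in\Mov(X)$ and $X'\cong X_{D'}$ by the ample case applied to $X'$.

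For (2), I would use the Mori chamber decomposition. Proposition~\ref{MDS,property} gives $\Mov(X)=\bigcup f_i^*\Nef(X_i)$, and each $\Nef(X_i)$ is rational polyhedral. I would refine these cones by their faces into a fan. The key claim is that $\bar X^{ss}(\chi_D)$ depends only on the relative interior containing $D$. By the definition, $\bar X^{ss}(\chi_D)$ is the complement of the common zero locus of the sections of $\mathcal R(D)_{>0}$. For $D$ in the relative interior of a face $\sigma$ of $f_i^*\Nef(X_i)$, the divisor is semiample on $X_i$, and its base locus in $\bar X$ is determined by the contraction $X_i\to Z_\sigma$, which depends only on $\sigma$. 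Concretely, one shows that a homogeneous $f$ of degree in $\relint\sigma$ has $\sqrt{\langle \mathcal R(D)_{>0}\rangle}=\sqrt{\langle \mathcal R(D')_{>0}\rangle}$ for $D,D'\in\relint\sigma$. This uses that high multiples of $D$ and $D'$ dominate each other up to sections of degree in $\sigma$: since $\sigma$ is polyhedral, $nD'-D$ lies in $\sigma$ and is semiample for large $n$. Hence $X_D\cong X_{D'}$. I expect this step to be the main obstacle, namely the precise comparison of semistable loci through finite generation of $\mathcal R(\sigma)$.

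For (3), take $D,D'$ in adjacent chambers $f_i^*\Amp(X_i)$ and $f_j^*\Amp(X_j)$ with common wall $\tau$, and let $D_0\in\relint\tau$. Then $\bar X^{ss}(\chi_D)$ and $\bar X^{ss}(\chi_{D'})$ are both contained in $\bar X^{ss}(\chi_{D_0})$, since semistability is open and these are interior points of chambers containing $\tau$. This inclusion yields morphisms $X_D\to X_{D_0}=:Z$ and $X_{D'}\to Z$, which are the contractions given by the semiample divisor $D_0$ on $X_i$ and on $X_j$. The final statement follows because all $X_D$ for $D$ in interiors of chambers are SQMs of $X$ by (1). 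Composites of such SQMs are isomorphisms in codimension one, so the induced maps are too.
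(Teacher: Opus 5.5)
\textbf{There is a genuine gap in (2).} The paper does not prove this statement; it quotes it from \cite{HK00}. Its own Morrison--Kawamata analogues (Theorem~\ref{thm:GIT-Proj}, Proposition~\ref{prop:KM-wallcrossing}, Corollary~\ref{intro:KMMDS3}) are close to your outline for (1) and (3), and your part (1) is fine.

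The gap is the claim $\overline{X}^{ss}(\chi_D)=\overline{X}^{ss}(\chi_{D'})$ for $D,D'\in\relint\sigma$, with $\sigma$ a proper face of some $f_i^*\Nef(X_i)$. Your mechanism takes $x$ with $f(x)\neq0$, $f\in H^0(X,\mathcal O_X(kD))$, and multiplies $f$ by a section of a multiple of $nD'-D$ that does not vanish at $x$, obtained from semiampleness on $X_i$. Semiampleness only says that such sections have no common zero on $X_i$. That is, it controls only the open subset $\widehat{X}_i:=\overline{X}^{ss}(\chi_A)$ ($A$ ample on $X_i$) lying over $X_i$. For $D$ on a proper face, $\overline{X}^{ss}(\chi_D)$ is in general strictly larger than $\widehat{X}_i$. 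Take $X=\mathbb{P}^1\times\mathbb{P}^1$, $\Cox(X)=\mathbb{K}[x_0,x_1,y_0,y_1]$, $D=(1,0)$. The point $(1,0,0,0)$ is $\chi_D$-semistable, yet every section of every multiple of the semiample class $(0,1)$ vanishes there. At points of $\overline{X}^{ss}(\chi_D)\setminus\widehat{X}_i$ your argument gives nothing. The only way to get a section of a multiple of $nD'-D$ nonvanishing at such $x$ is to know $x\in\overline{X}^{ss}(\chi_{nD'-D})$, which is an instance of the claim itself, so the argument is circular.

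Two smaller points. ``Refining the cones by their faces'' does not show that the $f_i^*\Nef(X_i)$ meet along common faces, so the fan is not yet constructed. In (3), ``semistability is open'' does not justify $\overline{X}^{ss}(\chi_D)\subseteq\overline{X}^{ss}(\chi_{D_0})$: openness in $\overline{X}$ says nothing about how the locus varies with the character.

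\textbf{The missing ingredient} is the orbit-cone structure that finite generation provides. Fix homogeneous generators $f_1,\dots,f_r$ of $\Cox(X)$. For $x\in\overline{X}$, let $\omega(x)$ be the cone spanned by the $\deg f_j$ with $f_j(x)\neq0$. Write a homogeneous element as a sum of monomials in the $f_j$ of the same degree. This shows that the degrees of homogeneous elements not vanishing at $x$ form exactly the monoid generated by those $\deg f_j$. Hence $x\in\overline{X}^{ss}(\chi_w)$ if and only if $w\in\omega(x)$. There are only finitely many orbit cones, so
$\overline{X}^{ss}(\chi_w)=\{x\mid\lambda(w)\subseteq\omega(x)\}$, where $\lambda(w):=\bigcap_{w\in\omega(x)}\omega(x)$. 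This depends only on $\lambda(w)$, and these cones form a finite fan.

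What remains, which is the real content of \cite[Theorem~2.3]{HK00}, is to identify the cones of this fan lying in $\Mov(X)$ with the faces of the $f_i^*\Nef(X_i)$. For this use $\overline{X}^{ss}(\chi_w)=\widehat{X}_i$ for $w$ ample on $X_i$ (from your (1)) and $X_w\cong\operatorname{Proj}\mathcal{R}(w)$.

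This also repairs (3). Since $\overline{X}^{ss}(\chi_D)=\widehat{X}_i$, take a section of a Cartier multiple of $D_0$ not vanishing at $p\in X_i$. It is a unit over a neighbourhood of $p$, hence nonzero on the fibre over $p$. Alternatively, $D_0\in\lambda(D)\subseteq\omega(x)$.

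Finally, the weaker conclusion $X_D\cong X_{D'}$ in (2) needs none of this: $X_D\cong\operatorname{Proj}\mathcal{R}(D)\cong Z_\sigma\cong\operatorname{Proj}\mathcal{R}(D')\cong X_{D'}$. That is all the paper proves in its analogue, Proposition~\ref{prop:KM-wallcrossing}(1). It is the equality of semistable loci that needs the fan argument.
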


	\subsection{Wehler type hypersurfaces}
	Fix integers \(m\ge 2\) and \(n\ge 2\),
	or \(m= 0\) and \(n\ge 4\). Set
	\(\mathbb{P}:=(\mathbb{P}^1)^m\times \mathbb{P}^n\).
	We write a point of \(\mathbb{P}\) in homogeneous coordinates as
	\[
	\bigl([x_{10}:x_{11}],\dots,[x_{m0}:x_{m1}],[y_0:\dots:y_n]\bigr).
	\]
	Let \(X\subset \mathbb{P}\) be a closed variety. For \(1\le k\le m\), define the open subsets
	\[
	U_{k0,X}:=\{\,p\in X \mid x_{k0}(p)\neq 0\,\},
	\qquad
	U_{k1,X}:=\{\,p\in X \mid x_{k1}(p)\neq 0\,\}.
	\]
	When \(X\) is fixed, we unambiguously write \(U_{k0}\) and \(U_{k1}\) for these open sets.
	
	Let
	\[
	\pi_i:\mathbb{P}\longrightarrow
	\begin{cases}
		\mathbb{P}^1 & \text{for } 1\le i\le m,\\
		\mathbb{P}^n & \text{for } i=m+1
	\end{cases}
	\]
	denote the natural projections. Let $H_i$ be a divisor \(\mathcal{O}_{\mathbb{P}}(H_i):=\pi_i^{*}\mathcal{O}(1)\).
	By abuse of notation, we also write \(H_i\) for its restriction to a closed subvariety \(X\subset \mathbb{P}\) when no confusion can arise.
	Each \(H_i\) is semiample, since it is the pullback of the ample line bundle \(\mathcal{O}(1)\) on a projective space. 
	\begin{definition}
		Let $n\geq2$ and $m\geq2$ or $n\geq 4$ and $m=0$. A \emph{Wehler type hypersurface} $X \subset (\mathbb{P}^1)^m \times \mathbb{P}^n$ is a very general smooth hypersurface of multidegree
		\[
		(2,\dots,2,n+1).
		\]
		For each $k\in\{1,\dots,m\}$, the defining equation of $X$ can be written as a quadratic polynomial in the coordinates of the $k$-th $\mathbb{P}^1$ factor:
		\[
		F \;=\; R_{(k,0)}\,x_{k0}^2 \;+\; R_{(k,1)}\,x_{k0}x_{k1} \;+\; R_{(k,2)}\,x_{k1}^2,
		\]
		where $R_{(k,j)}$ are multi-homogeneous polynomials in $\mathbb{K}[\,x_{ki},\,y_0,\dots,y_{n} \mid 1\leq k\leq m , i\in\{0,1\}\,]$ of multidegree $(2,\dots,0_k,\dots,2,n+1)$, i.e., degree $0$ at $k$-th degree.
	\end{definition}

	By Bertini’s theorem, \(X\) is smooth and irreducible. Moreover, the adjunction formula gives
	\[
	K_X \sim (K_{\mathbb{P}}+X)|_X \sim 0.
	\]
	One can compute that \(H^i(X,\mathcal{O}_X)=0\) for \(1\le i<\dim X\). Hence \(X\) is a Calabi--Yau variety.
	
	The next two lemmas are standard for Calabi--Yau varieties; we record them here in the present setting.

	\begin{lemma}\emph{(\cite[Lemma~3.3]{Yáñ22})}
		Let \(X\subset \mathbb{P}\) be a Wehler type hypersurface. Then the restriction map
		\[
		H^0\!\bigl(\mathbb{P},\mathcal{O}_{\mathbb{P}}(H_i)\bigr)\longrightarrow
		H^0\!\bigl(X,\mathcal{O}_{X}(H_i)\bigr)
		\]
		is an isomorphism.
	\end{lemma}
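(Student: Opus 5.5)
The restriction map sits in the long exact sequence in cohomology obtained by twisting the ideal sheaf sequence of $X\subset\mathbb P$ by $\mathcal O_{\mathbb P}(H_i)$. Since $X$ is a hypersurface of multidegree $(2,\dots,2,n+1)$, we have $\mathcal I_X\cong\mathcal O_{\mathbb P}(-X)$, which gives
\[
0\longrightarrow \mathcal O_{\mathbb P}(H_i-X)\longrightarrow \mathcal O_{\mathbb P}(H_i)\longrightarrow \mathcal O_X(H_i)\longrightarrow 0 .
\]
It therefore suffices to prove $H^0(\mathbb P,\mathcal O_{\mathbb P}(H_i-X))=0$ and $H^1(\mathbb P,\mathcal O_{\mathbb P}(H_i-X))=0$. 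The first vanishing gives injectivity and the second gives surjectivity.

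The line bundle $\mathcal O_{\mathbb P}(H_i-X)$ is an exterior tensor product
\[
\mathcal O_{\mathbb P^1}(a_1)\boxtimes\cdots\boxtimes\mathcal O_{\mathbb P^1}(a_m)\boxtimes\mathcal O_{\mathbb P^n}(b).
\]
If $i\le m$, then $a_i=-1$, all other $a_k=-2$, and $b=-(n+1)$. If $i=m+1$, then all $a_k=-2$ and $b=-n$.

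I would then apply the Künneth formula, which expresses $H^q$ of the product as a sum over $q_1+\dots+q_{m+1}=q$ of tensor products of the cohomology groups of the factors. On each factor I would use the standard cohomology of line bundles on projective space:
\begin{itemize}
\item $\mathcal O_{\mathbb P^1}(-1)$ has no cohomology at all;
\item $\mathcal O_{\mathbb P^1}(-2)$ has only $H^1\neq 0$;
\item $\mathcal O_{\mathbb P^n}(b)$ with $-n\le b\le -1$ has no cohomology;
\item $\mathcal O_{\mathbb P^n}(-(n+1))$ has only $H^n\neq 0$.
\end{itemize}

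In both cases there is a factor with vanishing total cohomology. For $i\le m$ it is the factor $\mathcal O_{\mathbb P^1}(-1)$, and for $i=m+1$ it is $\mathcal O_{\mathbb P^n}(-n)$, which needs $n\ge1$. So every Künneth summand vanishes, and $\mathcal O_{\mathbb P}(H_i-X)$ has no cohomology in any degree, in particular none in degrees $0$ and $1$.

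The only step that needs care is the bookkeeping of the multidegree in the degenerate case $m=0$, where $\mathbb P=\mathbb P^n$ and only $H_{m+1}$ occurs. There we still have $b=1-(n+1)=-n\in[-n,-1]$, so the same vanishing applies, and I do not expect any genuine obstacle.
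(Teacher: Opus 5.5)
Your argument is correct and is the standard one: the paper quotes this lemma without proof, but the ideal-sheaf sequence twisted by the line bundle, combined with the K\"unneth vanishing computation, is exactly the method it uses in the proof of Proposition~\ref{surjectivity}. The one case to adjust is the convention $n=0$, $m\ge 4$ stated in the introduction (the paper's two stated ranges of $(m,n)$ disagree). There, for $i=m+1$ there is no acyclic $\mathbb{P}^n$-factor, but $\mathcal{O}_{\mathbb{P}}(H_{m+1}-X)=\mathcal{O}(-2,\dots,-2)$ on $(\mathbb{P}^1)^m$ has cohomology only in degree $m\ge 4$, so $H^0=H^1=0$ still holds (and $H_{m+1}$ is trivial in that case anyway).
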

	
	\begin{lemma}\label{Neron-Severi}
		Let \(X\) be a Wehler type hypersurface. Then the class group $\Cl(X)$ is generated by the divisor classes \(H_i\).
	\end{lemma}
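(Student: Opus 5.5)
The plan is to prove that the restriction map $\Pic(\mathbb{P})\to\Pic(X)$ is an isomorphism, and then use smoothness of $X$. Since $X$ is smooth, $\Cl(X)\cong\Pic(X)$. Also $\Pic(\mathbb{P})\cong\zz^{m+1}$ is freely generated by $H_1,\dots,H_{m+1}$, so surjectivity of the restriction map is exactly the claim.

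The main tool will be the Grothendieck--Lefschetz theorem for ample divisors. First I check that $X$ is an ample divisor on $\mathbb{P}$. Its multidegree $(2,\dots,2,n+1)$ has every entry positive, so $\mathcal O_{\mathbb P}(X)$ is ample on the product of projective spaces. Next, $\dim X=m+n-1$. In the range $m,n\ge 2$ this is at least $3$, and in the other range, where the ambient space is $\mathbb{P}^n$ with $n\ge 4$ or $(\mathbb{P}^1)^m$ with $m\ge 4$, it is again at least $3$. So the Lefschetz-type theorem applies: for a smooth ample divisor of dimension $\ge 3$ in a smooth projective variety, restriction induces $\Pic(\mathbb{P})\xrightarrow{\sim}\Pic(X)$ (Grothendieck, SGA2 XII, Cor.~3.6, or Hartshorne, \emph{Ample subvarieties}, IV.3.1). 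The hypotheses $X$ smooth and $X$ ample hold by Bertini and the positivity of the multidegree.

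For the analytic route over $\mathbb{C}$, I would spell out the cohomological argument, with the following steps.
\begin{enumerate}
\item Lefschetz hyperplane theorem: $H^i(\mathbb P,\zz)\to H^i(X,\zz)$ is an isomorphism for $i<\dim X$, hence for $i\le 2$.
\item Kodaira vanishing together with the Koszul sequence $0\to\mathcal O_{\mathbb P}(-X)\to\mathcal O_{\mathbb P}\to\mathcal O_X\to 0$ gives $H^1(X,\mathcal O_X)=H^2(X,\mathcal O_X)=0$. This is the same computation the paper uses for the Calabi--Yau property.
\item The exponential sequence then identifies $\Pic(X)\cong H^2(X,\zz)\cong H^2(\mathbb P,\zz)=\bigoplus_i\zz H_i$.
\item For arbitrary algebraically closed $\mathbb K$ of characteristic $0$, reduce to $\mathbb{C}$ by the Lefschetz principle, or simply cite the algebraic Grothendieck--Lefschetz theorem.
\end{enumerate}

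The only real obstacle is checking the dimension bound $\dim X\ge 3$ in each allowed range of $(m,n)$. For $\dim X=2$ the statement fails for very general surfaces only up to Noether--Lefschetz phenomena, and that is excluded here. Once the bound is verified, the isomorphism of class groups follows. In particular $\Cl(X)$ is free of rank $m+1$ (resp.\ $m$ or $1$ in the degenerate ranges) and generated by the $H_i$.
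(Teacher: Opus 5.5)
Your proposal is correct and follows the same route as the paper: the Grothendieck--Lefschetz theorem gives $\Pic(\mathbb{P})\xrightarrow{\sim}\Pic(X)$, and smoothness of $X$ gives $\Cl(X)\cong\Pic(X)$. You also verify hypotheses the paper leaves implicit, namely that $X$ is ample and that $\dim X=m+n-1\ge 3$ in every allowed range. Your aside about Noether--Lefschetz for surfaces is imprecisely phrased, but it is irrelevant because $\dim X=2$ never occurs here.
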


	\begin{proof}
		By the Grothendieck-Lefschetz Hyperplane Theorem, the restriction map $\Pic(\pp)\to\Pic(X)$ is an isomorphism.
		Since $\Pic(\pp)$ is generated by the hyperplane classes $H_i$, then $\Cl(X)\cong\Pic(X)$ is generated by the restrictions of the pullback hyperplane classes $H_i$.
	\end{proof}
	
	\begin{theorem}\emph{(\cite[Lemma~4.5 and Proposition~4.7]{Yáñ22})}\label{Kawamata-Morrison}
		Let $X$ be a Wehler type hypersurface. Then, any integral effective divisor is the pull back of a nef divisor via a birational automorphism. Furthermore, the Morrison--Kawamata conjecture holds for Wehler type hypersurfaces where the fundamental chamber $C$ is the rational polyhedral cone $\operatorname{Nef}(X)$. 
	\end{theorem}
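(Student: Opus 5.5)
The statement is Theorem~\ref{Kawamata-Morrison}, attributed to \cite{Yáñ22}. My plan is to follow the Cantat--Oguiso strategy: build the reflection group generated by the covering involutions, show that its translates of $\Nef(X)$ tile $\MovE(X)$, and then use that every effective class is movable.

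\emph{Step 1: nef cone and the action of the involutions.} First, determine $\Nef(X)$. By Lemma~\ref{Neron-Severi} and Grothendieck--Lefschetz, $\Cl(X)=\bigoplus_i \zz H_i$. Each $H_i$ is semiample and $H_i^{\dim X}=0$, so each $H_i$ lies on the boundary of the nef cone. Dually, very generality supplies curves contracted by each projection $\pi_i$. Together these should give $\Nef(X)=\langle H_1,\dots,H_{m+1}\rangle$, which is rational polyhedral, and every nef class is semiample.

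Next, compute $\iota_k^*$ on $\Cl(X)$. The key input is the Vieta relation $x_{ki}\,\iota_k^*(x_{ki})=R_{(k,2-2i)}$. Comparing divisors, it gives
\[
\iota_k^*H_k = -H_k+\sum_{j\ne k}2H_j+(n+1)H_{m+1},\qquad \iota_k^*H_j=H_j \ (j\neq k).
\]
So $\iota_k^*$ is a reflection fixing the facet of $\Nef(X)$ spanned by $\{H_j\}_{j\ne k}$. Note that $\iota_k$ is an automorphism off the locus where all $R_{(k,\cdot)}$ vanish. For $n\ge 2$ that locus has codimension $\ge 2$ in $X$, so $\iota_k$ is pseudo-automorphic and $\iota_k^*$ preserves $\Mov(X)$ and $\Eff(X)$.

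\emph{Step 2: the tiling.} Let $W=\langle\iota_1^*,\dots,\iota_m^*\rangle$. This is a Coxeter-type group in which each pair of reflections generates an infinite dihedral group; the Gram data should show the corresponding pairing has $|\langle\alpha_k,\alpha_l\rangle|\ge 1$. I would apply Vinberg's theory of linear reflection groups with fundamental chamber $\Nef(X)$: the translates $w\Nef(X)$ have disjoint interiors, and their union $\mathcal T$ is a convex cone.

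It remains to show $\mathcal T$ is exactly $\MovE(X)$. The facets of $\Nef(X)$ other than those fixed by the $\iota_k$ are the facets $\langle H_j\rangle_{j\ne m+1}$. Those are faces of $\partial\Eff$, since $H_{m+1}$ is nowhere removable: they correspond to the fibration to $(\pp^1)^m$, or to nonbig classes. Hence no further flops occur, and every chamber of $\MovE(X)$ is a $W$-translate of $\Nef(X)$.

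\emph{Step 3: effective divisors.} Let $D$ be an integral effective divisor. Run a reduction. If $D\notin\Nef(X)$, some $D\cdot C<0$ on a curve contracted by a $\pi_k$-type contraction, which means its $H_k$-coefficient is negative relative to the reflection. Applying $\iota_k^*$ decreases a height function, for instance the $H_{m+1}$-weighted degree $D\cdot A^{\dim X-1}$ with $A=\sum H_i$. Since degrees of effective divisors are positive integers, the process terminates in $\Nef(X)$. This also shows $\Eff(X)\subset\MovE(X)$ up to the boundary analysis of Step 2, and gives the first sentence of the theorem.

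Finally, the Morrison--Kawamata statement follows: $\Nef(X)$ is a rational polyhedral fundamental domain for the action of $\Bir(X)\supseteq W$ on $\MovE(X)$, and for $\Aut(X)$ (which is finite here) on $\Nef(X)$.

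\emph{Main obstacle.} The main obstacle is Step 3's descent. One must show that when $D$ is effective but not nef, the specific reflection $\iota_k^*$ strictly decreases the height while preserving effectiveness, and in particular that one never gets stuck at a non-nef class outside all translates. I would first try the height $D\cdot H_{m+1}^{n-1}\cdot\prod_j H_j$ and use the explicit formula for $\iota_k^*H_k$ above.
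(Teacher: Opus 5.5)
Your outline is correct and is essentially the Cantat--Oguiso strategy behind the results the paper cites for this statement; the paper itself gives no proof. The ingredients are reflections $\iota_k^*$ in the facets $\langle H_j\rangle_{j\ne k}$ of $\Nef(X)$, convexity of the union $\mathcal T$ of their translates, and a descent placing every integral effective class in $\mathcal T$. Together these already give $\Eff(X)\subseteq\mathcal T\subseteq\MovE(X)\subseteq\Eff(X)$, so you can drop the ``no further flops'' heuristic of Step~2. The points you left open close quickly:

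\emph{Descent.} An effective $D=\sum c_jH_j$ has $c_{m+1}\ge 0$ (restrict to a general fibre of $X\to(\pp^1)^m$). So a non-nef effective class has some $c_k<0$ with $k\le m$, and then $\iota_k^*$ lowers $c_{m+1}$ by $(n+1)|c_k|$, which forces termination. Your $A$-degree also drops, but $D\cdot H_{m+1}^{n-1}\prod_j H_j$ has one factor too many.

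\emph{Convexity.} The facet $\{c_{m+1}=0\}$ of $\Nef(X)$ is not a mirror, so Vinberg does not apply to $\Nef(X)$ directly. Apply it to the cone $\{c_1,\dots,c_m\ge 0\}$ instead. Then $\mathcal T$ is that Tits cone intersected with all $W$-translates of the half-space $\{c_{m+1}\ge 0\}$, because the chamber representative minimizes $c_{m+1}$ on its orbit; hence $\mathcal T$ is convex.
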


	\section{GIT for Morrison--Kawamata dream spaces}
	
	Let $X$ be a normal projective $\mathbb{Q}$-factorial variety. We denote by $\PsAut(X)$ the group of birational self-maps of $X$ that are isomorphisms in codimension one. Any $g\in \PsAut(X)$ induces a linear action on numerical divisor classes
	\[
	g^*\colon N^1(X)_{\mathbb{R}} \longrightarrow N^1(X)_{\mathbb{R}}.
	\]
	If $C\subset N^1(X)_{\mathbb{R}}$ is a cone, we write $g^*C := \{\, g^*\alpha\mid \alpha\in C\,\}$.
	
	\medskip
	We now state the definition of Morrison--Kawamata dream space established in \cite{CRX25}.
	\begin{definition}
		\label{def:KMMDspace}
		(\emph{\cite[Theorem~1.3]{CRX25}})
		Let $X$ be a normal projective variety. Assume that any effective $\mathbb{R}$-Cartier divisor admits a minimal model. Then $X$ is a \emph{Morrison--Kawamata dream space} if:
		\begin{enumerate}
			\item $X$ is $\mathbb{Q}$-factorial;
			\item there exists a rational polyhedral cone $\Pi \subset \MovE(X)$ such that $\PsAut(X) \cdot \Pi =
			\MovE(X)$;
			\item there is a finite collection of small $\mathbb{Q}$-factorial modifications $f_i : X \dashrightarrow X_i,1 \leq i \leq l$
			such that $\Pi \subset \cup_{i=1}^l  f_i^*(\Nef(X_i))$ and each $\Pi\cap f_i^*(\Nef(X_i))$ is a rational polyhedral cone,
			\item $f_{i*}D$ is semi-ample for each effective $\mathbb R$-Cartier divisor $D$ with $[D]\in\Pi \cap
			f_i^*(\Nef (X_i ))$.
			
		\end{enumerate}
	\end{definition}
	
	Condition (1) is satisfied by every Mori dream space \cite[Proposition~1.11~(1)]{HK00}. Condition (2) is an analogue to Proposition~\ref{MDS,property}. Combined with $\Pic^0(X)=0$, it follows that there are only countably many chambers for the movable cone. As a consequence of \cite{CRX25}, $X$ always admits a good minimal model. In particular, for any effective $\mathbb{Q}$-divisor $D$, the ring 
	\[\mathcal{R}(D):=\bigoplus_{m\ge 0} H^0\!\bigl(X,\mathcal O_X(\lfloor mD\rfloor )\bigr)
	\]
	is a finitely generated $\mathbb K$-algebra. 
	Note that there exists a Veronese subalgebra of $\mathcal R(D)$ which is also a subalgebra of the Cox ring. Passing to a Veronese subalgebra does not change the structure of the associated projective variety (up to a regrading). Hence, without loss of generality, we may assume for the following statement that $D$ is a $\mathbb Z$-divisor. In Theorem \ref{thm:GIT-Proj}, we prove a GIT construction for Morrison--Kawamata dream spaces.
	
	\begin{theorem}\label{thm:GIT-Proj}
		Let $X$ be a Morrison--Kawamata dream space with finitely generated class group. Let $\overline{X}:=\Spec\Cox(X).$
		Let $H:=\Spec \mathbb{K}[\Cl(X)]$ act on $\bar X$ via the grading.
		Let $D$ be an effective integral divisor. 
		Then:
		\begin{enumerate}
			\item $\overline{X}^{ss}(\chi_D)$ is an $H$-invariant open subscheme of $\overline{X}$;
			\item there exists a categorical quotient 
			\[
			\pi_D:\ \overline X^{ss}(\chi_D)\longrightarrow \overline{X}^{ss}(\chi_D)\sslash H;
			\]
			\item there is a canonical isomorphism of projective varieties
			\[
			\overline{X}^{ss}(\chi_D)\sslash H \ \cong\ \operatorname{Proj}\, \mathcal{R}(D).
			\]
			In particular, if $D\in \Amp(X)$, then $X$ is recovered as a GIT quotient. 
		\end{enumerate}
	\end{theorem}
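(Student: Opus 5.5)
The proof will rest on the finite generation of $\mathcal R(D)$, which holds for a Morrison--Kawamata dream space by the results of \cite{CRX25}. The plan is to imitate the usual construction of GIT quotients by a quasi-torus, but to work only on the principal open sets $\overline X_f:=\Spec \Cox(X)_f$, where $f\in\mathcal R(D)_n$ is homogeneous with $n>0$. In this way the non-Noetherian nature of $\Cox(X)$ is never used globally.

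\textbf{Step 1: part (1).} By Definition~\ref{def:semi-stability}, the semistable locus is a union of principal opens:
\[
\overline X^{ss}(\chi_D)=\bigcup_{n>0}\ \bigcup_{f\in\mathcal R(D)_n}\overline X_f .
\]
Each such $f$ is homogeneous of degree $nD\in\Cl(X)$. Hence $H$ acts on $f$ by the character $\chi_{nD}$, so the vanishing locus $V(f)$ is $H$-stable. It follows that every $\overline X_f$ is an $H$-invariant open subscheme, and therefore so is their union.

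\textbf{Step 2: the local quotients.} For each such $f$, I will identify the invariant ring
\[
\bigl(\Cox(X)_f\bigr)^H=\bigl(\Cox(X)_f\bigr)_0=\mathcal R(D)_{(f)} ,
\]
the degree-zero part of the localization. The key point is that a degree-$0$ element $g/f^k$ has $\deg g=knD$, so $g\in\mathcal R(D)$.

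Because $\mathcal R(D)$ is finitely generated, $\mathcal R(D)_{(f)}$ is a finitely generated $\mathbb K$-algebra, and $D_+(f)=\Spec \mathcal R(D)_{(f)}$ is an affine chart of $\operatorname{Proj}\mathcal R(D)$.

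Since $H$ is diagonalizable and $\Cox(X)$ is $\Cl(X)$-graded, the Reynolds operator is simply projection onto the degree-$0$ component. The standard argument then applies without any Noetherian hypothesis: the map $\overline X_f\to\Spec(\Cox(X)_f)_0$ is a good (in particular categorical) quotient, since taking invariants is exact, invariant closed sets separate, and the construction commutes with localization.

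\textbf{Step 3: gluing, giving parts (2) and (3).} On an overlap $\overline X_f\cap\overline X_{f'}=\overline X_{ff'}$, I will replace $f,f'$ by suitable powers so that they have the same degree $nD$. Then
\[
\bigl(\Cox(X)_{ff'}\bigr)_0=\mathcal R(D)_{(ff')} ,
\]
and this ring is the localization of $\mathcal R(D)_{(f)}$ at $f'/f$. These compatibilities are exactly the gluing data of $\operatorname{Proj}\mathcal R(D)$ over its charts $D_+(f)$.

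So the local quotients glue to a morphism $\pi_D\colon\overline X^{ss}(\chi_D)\to\operatorname{Proj}\mathcal R(D)$. Being affine, $H$-invariant and locally a good quotient, $\pi_D$ is a good quotient, hence categorical, which proves (2). Since categorical quotients are unique, we get the canonical isomorphism in (3). Finally $\operatorname{Proj}\mathcal R(D)$ is projective because $\mathcal R(D)$ is finitely generated.

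\textbf{Step 4: the ample case.} If $D$ is ample, then $\operatorname{Proj}\mathcal R(D)\cong X$ by the standard fact that a normal projective variety is the Proj of its section ring with respect to an ample divisor.

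\textbf{Main obstacle.} The delicate step is Step 2: checking that the categorical quotient property holds when $\Cox(X)$ is not Noetherian. I plan to handle this by reducing everything to statements about the graded pieces. Concretely, I will show that any $H$-invariant morphism from $\overline X_f$ to an affine scheme factors through $\Spec$ of the degree-$0$ part, and this only requires that invariant functions are exactly the degree-$0$ elements.

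A second subtlety is torsion in $\Cl(X)$, which I will treat via the choice of $\mathcal M$ in Remark~\ref{rem:choiceK}. The needed fact is that $\mathcal R(D)$ sits inside $\Cox(X)$ as the $\mathbb N D$-graded part, so that the identification $(\Cox(X)_f)_0=\mathcal R(D)_{(f)}$ still holds.
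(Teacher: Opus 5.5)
Your proposal is correct and follows the same route as the paper. Both cover $\overline X^{ss}(\chi_D)$ by the principal opens $\Spec\Cox(X)_f$ with $f\in\mathcal R(D)_n$, identify the invariants with $(\Cox(X)_f)_0=\mathcal R(D)_{(f)}$ using that a degree-zero fraction $g/f^k$ has $\deg g=knD$, and glue these charts into $\operatorname{Proj}\mathcal R(D)$. The paper simply invokes the Mumford--Fogarty--Kirwan quotient theorem on each $\Spec\Cox(X)_f$ and asserts that the glued map is categorical. Your good-quotient argument for the diagonalizable group and your explicit overlap check are somewhat more careful than that. You should also state explicitly that $\pi_D^{-1}(D_+(f))=\overline X_f$, since this is what makes $\pi_D$ affine and the good-quotient property local on the target.
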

	
	\begin{proof}
		By definition,
		\[
		\overline X^{ss}(\chi_D)=\bigcup_{n>0}\ \bigcup_{f\in \mathcal{R}(D)_n} D(f),
		\]
		hence it is an open subset of $\overline{X}$. Since $f$ is homogeneous of degree $nD$, $D(f)$ is $H$-stable. Let
		$\mathcal{R}(D)_+$ denote the positive graded part of $\mathcal{R}\,(D)$. Note that $\mathcal{R}\,(D)$ is finitely generated, thus $\operatorname{Proj}\, \mathcal{R}\,(D)$ is a projective $\mathbb K$-variety. Choose homogeneous elements
		$f_1,\dots,f_m\in \mathcal{R}(D)_+$ whose images generate $\mathcal{R}(D)_+$ up to radical. Then the standard opens
		\[
		D_+(f_i)\subset \text{Proj}\, \mathcal{R}(D)
		\]
		form a finite affine open cover. Now fix a homogeneous element $f\in H^0(X,\mathcal{O}_X(nD))\subset \mathcal{R}(D)$ with $n>0$. Then $D(f)=\Spec(\Cox(X)_f)$ is an affine $H$-stable open subset of $\overline{X}$.
		Thus there exists a uniform categorical quotient \cite[Ch1\,\S2\, Theorem 1.1]{MFK94}
		\[
		q_f:\ D(f)=\Spec\,(\Cox(X)_f)\longrightarrow \Spec\,\bigl((\Cox(X)_f)^H\bigr).
		\]
		
		\medskip
		
		The $H$-action on $\Cox(X)$ is induced by the $\Cl(X)$-grading of Definition~\ref{def:cox}. For any $\Cl(X)$-graded ring, the invariants under the corresponding quasi-torus action are precisely the degree $0$ part. Moreover, localization at a homogeneous element preserves the induced $\Cl(X)$-grading. Hence
		\[
		(\Cox(X)_f)^H=(\Cox(X)_f)_0.
		\]
		The standard affine chart of $\text{Proj}\, \mathcal{R}(D)$ defined by $f$ is
		\[
		D_+(f)=\Spec\,\bigl((\mathcal{R}(D))_{f,0}\bigr),
		\] 
		Note that $\mathcal{R}(D)$ has a standard $\mathbb{Z}$-grading. We claim there is a canonical ring isomorphism
		\begin{equation}\label{eq:deg0-identification}
			(\mathcal{R}(D)_f)_{0}\ \xrightarrow{\ \sim\ }\ (\Cox(X)_f)^H.
		\end{equation}
		We have a natural inclusion
		$(\mathcal{R}(D)_f)_0\hookrightarrow (\Cox(X)_f)^H$. Conversely, any homogeneous element of $(\Cox(X)_f)_0$ is represented by a fraction $g/f^m$ with $g\in \Cox(X)$ homogeneous, satisfying
		$$\deg_{\Cl(X)}(g)=m\deg_{\Cl(X)}(f)=mnD.$$ 
		Hence, $g\in H^0(X,\mathcal{O}_X(mnD))=\mathcal{R}(D)_{mn}\subset \mathcal{R}(D)$ and consequently, since $g$ and $f$ both lie in $\mathcal{R}(D)$, we have $g/f^m\in (\mathcal{R}(D)_f)_0$. Therefore, the inclusion is bijective, proving \eqref{eq:deg0-identification}. We obtain canonical identifications of affine schemes
		\[
		\Spec\bigl((\Cox(X)_f)^H\bigr)
		=
		\Spec\bigl((\Cox(X)_f)_0\bigr)
		\cong
		\Spec\bigl(\mathcal{R}(D)_f\bigr)
		=
		D_+(f)\subset \text{Proj}\, \mathcal{R}(D).
		\]
		These local morphisms $q_f:D(f)\to \Spec((\Cox(X)_f)^H)$ glue together to a morphism
		\[
		\pi_D:\ \overline X^{ss}(\chi_D)\longrightarrow \text{Proj}\, \mathcal{R}(D).
		\]
		By construction, $\pi_D|_{D(f)}=q_f$ for every $f\in \mathcal{R}(D)_+$, hence $\pi_D$ is a categorical quotient. Therefore, we obtain the following isomorphism
		\[
		\overline{X}^{ss}(\chi_D)\sslash H \ \cong\ \text{Proj}\, \mathcal{R}(D).
		\]\end{proof}
	\begin{lemma}\label{lem:transport-sections}
		Let $\phi\colon X\dashrightarrow Y$ be an SQM between normal projective varieties,
		and let $D$ be an effective $\mathbb Q$-Weil divisor on $Y$.
		Let $D_X$ be the strict transform of $D$ on $X$. Then, for every $m\ge 0$, there is a natural identification
		\[
		H^0\!\,\bigl(Y,\mathcal O_Y(\lfloor mD\rfloor)\bigr)\ \cong\ H^0\!\,\bigl(X,\mathcal O_X(\lfloor mD_X\rfloor)\bigr).
		\]
		In particular, the $D$-graded section rings are naturally isomorphic:
		\[
		\mathcal{R}\,(Y,D)\  \cong\
		\mathcal R(X,D_X).
		\]
		
		Moreover, if $g\in\operatorname{PsAut}(X)$, then $\mathcal R(X,rD)\cong \mathcal R(X,g^*rD)$.
	\end{lemma}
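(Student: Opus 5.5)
The argument is the standard fact that sections of reflexive rank-one sheaves on a normal variety only see codimension-one data. Since $\phi$ is an SQM, there are big open subsets $U\subset X$ and $V\subset Y$, with complements of codimension at least $2$, such that $\phi$ restricts to an isomorphism $\phi|_U\colon U\xrightarrow{\sim}V$. By construction, the strict transform $D_X$ is the closure of $(\phi|_U)^{-1}(D|_V)$. Hence for every $m\ge 0$ the round-downs agree on the open sets, $\lfloor mD_X\rfloor|_U=(\phi|_U)^*(\lfloor mD\rfloor|_V)$. Taking integer parts commutes with restriction to an open set and with pullback of Weil divisors under an isomorphism, since both operations act coefficientwise on prime divisors.

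The key tool is that for a normal variety $Z$, a Weil divisor $B$ and a big open $W\subset Z$, restriction $H^0(Z,\mathcal O_Z(B))\to H^0(W,\mathcal O_W(B|_W))$ is an isomorphism. This holds because $\mathcal O_Z(B)$ is reflexive. Concretely, a rational function $f$ satisfies $\divv(f)+B\ge 0$ if and only if this holds at every prime divisor, and every prime divisor meets $W$. Applying this on both sides gives the chain
\[
H^0\bigl(Y,\mathcal O_Y(\lfloor mD\rfloor)\bigr)\cong H^0\bigl(V,\mathcal O_V(\lfloor mD\rfloor|_V)\bigr)\cong H^0\bigl(U,\mathcal O_U(\lfloor mD_X\rfloor|_U)\bigr)\cong H^0\bigl(X,\mathcal O_X(\lfloor mD_X\rfloor)\bigr).
\]
The middle isomorphism is $f\mapsto \phi^*f$, using the identification $\phi^*\colon \mathbb K(Y)\cong\mathbb K(X)$ of function fields. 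So the composite is simply $f\mapsto f\circ\phi$ on rational functions, which is independent of the choice of $U$ and $V$. I would check this independence explicitly, since it is what makes the identification natural.

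Multiplicativity follows because $\phi^*$ is a field isomorphism. The isomorphisms therefore respect the products $H^0(mD)\otimes H^0(m'D)\to H^0((m+m')D)$, with the usual caveat that for $\mathbb Q$-divisors the multiplication lands in $\lfloor (m+m')D\rfloor$ via $\lfloor mD\rfloor+\lfloor m'D\rfloor\le\lfloor (m+m')D\rfloor$, which is compatible on both sides. Summing over $m$ gives the graded ring isomorphism $\mathcal R(Y,D)\cong\mathcal R(X,D_X)$.

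For the last assertion, take $g\in\PsAut(X)$ as an SQM $g\colon X\dashrightarrow X$. The strict transform of $rD$ under $g$ is $g^*(rD)$, where pullback of Weil divisors by a pseudo-automorphism is defined on the big open set where $g$ is an isomorphism. The first part then gives $\mathcal R(X,rD)\cong\mathcal R(X,g^*rD)$.

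The only delicate point is bookkeeping: the strict transform must be defined compatibly with the floor functions, and effectivity of $D$ (or of $D_X$) must not be lost. Neither is a serious obstacle, because everything is checked at generic points of prime divisors, and these correspond bijectively under $\phi$.
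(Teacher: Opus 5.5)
Your proof is correct and is the standard argument: restrict to big open sets $U\subset X$, $V\subset Y$ on which $\phi$ is an isomorphism, and extend sections of the reflexive sheaves $\mathcal O(\lfloor mD\rfloor)$ across codimension-two complements by normality. Transporting via $\phi^*$ on function fields makes the identification canonical and multiplicative, and the paper, which states this lemma without its own proof, uses exactly this big-open-set restriction argument in the proof of Proposition~\ref{prop:KM-wallcrossing}(2).
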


	Now we prove analogues of conditions (1), (2) and (3) of Theorem~\ref{Cox-MDS}. Assume that $X$ is a Morrison--Kawamata dream space. By hypothesis, the movable cone
	$\MovE(X)\subset N^1(X)_{\mathbb R}$ is covered by the fundamental domain $\Pi$ and its pullbacks under $\operatorname{PsAut}(X)$.
	For a SQM $\phi\colon X \dashrightarrow Y$, we identify $N^1(Y)_{\mathbb R}$ with $N^1(X)_{\mathbb R}$
	via $\phi^*$, and regard $\Nef(Y)$ as a cone in $N^1(X)_{\mathbb R}$.
	We say that two SQM models $Y_+$ and $Y_-$ are \emph{adjacent} if
	$\Nef(Y_+)\cap \Nef(Y_-)$ is a codimension-one face of both cones; in this case we write
	\[
	F \;:=\; \Nef(Y_+)\cap \Nef(Y_-)\subset N^1(X)_\mathbb R
	\]
	for their common facet.

	\begin{proposition}\label{prop:KM-wallcrossing}
		Let $X$ be a Morrison--Kawamata dream space.
		Assume that $Y_+$ and $Y_-$ are adjacent SQM models of $X$ with common facet $F$ and let $D\in \relint(F)$ be an effective $\mathbb Q$-Cartier class.
		Then:
		\begin{enumerate}
			\item Let $Y$ be any SQM model of $X$, let $\sigma\preceq \Nef(Y)$ be a face, and let $D,D'\in \relint(\sigma)$ be semiample $\mathbb Q$-Cartier classes.
			Then the associated projective models coincide: $X_D\cong X_{D'}$.
			\item For any sufficiently divisible $m>0$ such that $mD$ is Cartier on both $Y_+$ and $Y_-$, the divisor $mD$ defines projective morphisms
			\[
			f_\pm: Y_\pm \to Z_\pm,\qquad
			Z_\pm \cong \operatorname{Proj} \bigoplus_{n\ge 0} H^0\!\left(Y_\pm,\mathcal O_{Y_\pm}(nmD)\right),
			\]
			and the targets $Z_+$ and $Z_-$ are canonically isomorphic.
			\item The induced birational map $\psi:Y_+\dashrightarrow Y_-$ factors through $Z$:
			\[
			Y_+\xrightarrow{f_+} Z \xleftarrow{f_-} Y_- .
			\]
		\end{enumerate}
	\end{proposition}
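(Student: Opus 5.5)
The plan is to reduce every statement to Theorem~\ref{thm:GIT-Proj}(3), which identifies $X_D$ with $\operatorname{Proj}\mathcal R(D)$, and to Lemma~\ref{lem:transport-sections}, which lets us compute section rings on whichever SQM is convenient.

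\emph{Part (1).} Let $\phi\colon X\dashrightarrow Y$ be the given SQM. By Lemma~\ref{lem:transport-sections} we have $\mathcal R(X,D)\cong\mathcal R(Y,\phi_*D)$. Since $D$ is semiample on $Y$ (after replacing it by a Cartier multiple, which only passes to a Veronese subring and leaves $\operatorname{Proj}$ unchanged), $\operatorname{Proj}\mathcal R(Y,D)$ is the target $Z_D$ of the semiample fibration $\varphi_D\colon Y\to Z_D$ with connected fibres. That fibration contracts exactly the curves $C$ with $D\cdot C=0$.

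For $D\in\relint(\sigma)$, this set of curves is the set of curves orthogonal to the whole face $\sigma$. Indeed, $D\cdot C=0$ together with $\sigma\subset\Nef(Y)$ and $D$ in the relative interior forces every class of $\sigma$ to vanish on $C$. So $\varphi_D$ and $\varphi_{D'}$ contract the same curves. Two contractions with connected fibres from the normal variety $Y$ that contract the same curves agree up to a unique isomorphism of targets, by the rigidity lemma / Stein factorization. Hence $X_D\cong Z_D\cong Z_{D'}\cong X_{D'}$.

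\emph{Part (2).} Apply the same construction on $Y_+$ and on $Y_-$. The class $D\in F\subset\Nef(Y_\pm)$ is effective and lies in $\Pi$-translates, so by condition (4) of Definition~\ref{def:KMMDspace}, transported by $\PsAut(X)$, its pushforward to each $Y_\pm$ is semiample. Thus $mD$ defines morphisms $f_\pm\colon Y_\pm\to Z_\pm$. Moreover $\psi\colon Y_+\dashrightarrow Y_-$ is an isomorphism in codimension one and $\psi_*D_+=D_-$. Lemma~\ref{lem:transport-sections} therefore gives a graded isomorphism
\[
\bigoplus_{n\ge0}H^0\bigl(Y_+,\mathcal O(nmD)\bigr)\cong\bigoplus_{n\ge0}H^0\bigl(Y_-,\mathcal O(nmD)\bigr),
\]
induced by the identity on function fields. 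Taking $\operatorname{Proj}$ yields the canonical isomorphism $Z_+\cong Z_-=:Z$.

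\emph{Part (3).} We check that $f_-\circ\psi=f_+$ as rational maps. Both sides are given by the same linear systems $|nmD|$ under the function-field identification above. Hence they agree on the open set where $\psi$ is an isomorphism and the linear system is base point free, and therefore they agree as rational maps. This gives the factorization $Y_+\to Z\leftarrow Y_-$.

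The main obstacle is in part (1), and secondarily in part (2): obtaining semiampleness on the relevant model. The Morrison--Kawamata axioms only guarantee semiampleness for classes in $\Pi\cap f_i^*\Nef(X_i)$. So we have to translate $\sigma$ by some $g\in\PsAut(X)$ into $\Pi$ and identify $g$-translates of SQMs with the models $X_i$. Where this is unavailable, I would instead argue directly with the finite generation of $\mathcal R(D)$ (from the existence of good minimal models in \cite{CRX25}) and show that $\operatorname{Proj}\mathcal R(D)$ depends only on the face, via the equality of the stable base loci and of the exceptional curves of the ample models.
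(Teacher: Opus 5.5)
Your arguments for (1), for the canonical isomorphism $Z_+\cong Z_-$, and for (3) are essentially the paper's, step by step: the same curves are contracted, hence the Stein factorizations agree; the section rings are identified through the codimension-one isomorphism $\psi$; and $f_+$ and $f_-\circ\psi$ agree where $\psi$ is an isomorphism. Note that in (1) semiampleness of $D,D'$ is a hypothesis, so the ``main obstacle'' you locate there does not arise.

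The gap is the semiampleness of $D$ on $Y_\pm$ in (2), which you derive ``by condition (4), transported by $\PsAut(X)$''. Since $D\in\MovE(X)$, you can write $D=g^*P$ with $P\in\Pi$. Condition (3) then puts $P$ in $\Pi\cap f_i^*\Nef(X_i)$ for some $i$, and condition (4) makes the transform of $D$ semiample on $X_i$, regarded as an SQM of $X$ via $f_i\circ g$. You need semiampleness on \emph{both} $Y_+$ and $Y_-$, since $D$ is nef on both, lying on their common wall. Nothing in the axioms guarantees that both arise this way: the translate $P$ may sit on the boundary of $\Pi$ with only one side of the translated wall meeting $\Pi$, so condition (4) may cover only one of the two models. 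The identification of translated SQMs with the $X_i$ that you say you would need is not available in general, and your fallback via stable base loci is only a sketch.

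The missing ingredient is the standard comparison of nef models. Let $\chi\colon Y\dashrightarrow Y'$ be an SQM with common resolution $p\colon W\to Y$, $q\colon W\to Y'$, and suppose both $D_Y$ and $\chi_*D_Y$ are nef. Set $E:=p^*D_Y-q^*\chi_*D_Y$. Then $E$ is $q$-nef, $-E$ is $p$-nef, and $p_*E=q_*E=0$ because $\chi$ is an isomorphism in codimension one. The negativity lemma applied to $q$ and to $p$ gives $-E\ge0$ and $E\ge0$, so $E=0$. Hence semiampleness passes between any two SQMs on which the class is nef, and with this your route through condition (4) becomes complete.

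The paper argues differently: $Y_\pm$ are again Morrison--Kawamata dream spaces \cite[Lemma~3.16(2)]{CRX25}, and effective divisors on them have good minimal models \cite[Theorem~1.3]{CRX25}. It then concludes that the nef model $Y_\pm$ is itself a good minimal model. That last inference tacitly uses the same comparison.
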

	
	\begin{proof}
		For (1), let $Y$ be an SQM model of $X$ and let $\sigma\preceq \Nef(Y)$ be a face.
		Let $D$ and $D'$ be semiample $\mathbb Q$-Cartier classes belonging to $\relint(\sigma)$,
		and choose $m>0$ sufficiently divisible such that $mD$, $mD'$ are Cartier and base point free.
		Since $D$ and $D'$ lie in $\relint(\sigma)$, they vanish on the same curve classes,
		namely those in $\sigma^\perp\cap \overline{\operatorname{NE}}(Y)$.
		Because $D$ and $D'$ are semiample, the morphisms defined by $|mD|$ and $|mD'|$
		contract precisely these curves, hence they have the same Stein factorization.
		In particular, there exists a projective morphism $f\colon Y\to Z$ and ample
		$\mathbb Q$-divisors $A,A'$ on $Z$ such that
		\[
		D\sim_{\mathbb Q} f^*A,\qquad D'\sim_{\mathbb Q} f^*A'.
		\]
		After replacing $m$ by a common multiple, we may assume $mA$ and $mA'$ are Cartier,
		and then for all $n\ge 0$,
		\[
		H^0\!\left(Y,\mathcal O_Y(nmD)\right)\cong H^0\!\left(Z,\mathcal O_Z(nmA)\right),
		\qquad
		H^0\!\left(Y,\mathcal O_Y(nmD')\right)\cong H^0\!\left(Z,\mathcal O_Z(nmA')\right).
		\]
		Thus,
		$$\operatorname{Proj}\,\mathcal R(Y,D)\cong\operatorname{Proj}\,\mathcal R(Y,D')\cong Z.$$
		By Lemma~\ref{lem:transport-sections}, the SQM $X\dashrightarrow Y$ induces
		$\mathcal R(X,mD)\cong\mathcal R(Y,mD)$.
		Since $mD$ is an integral divisor, Theorem~\ref{thm:GIT-Proj} gives
		$X_{mD}\cong\operatorname{Proj}\mathcal R(X,mD)$,
		and $X_D=X_{mD}$ because $\overline X^{ss}(\chi_D)$ depends only on the
		ray $\mathbb R_{>0}D$.
		The same holds for $D'$, and consequently
		\[
		X_D
		\;\cong\;
		\operatorname{Proj}\mathcal R(X,mD)
		\;\cong\;
		\operatorname{Proj}\mathcal R(X,mD')
		\;\cong\;
		X_{D'}.
		\]

		For (2), let $\psi\colon Y_+\dashrightarrow Y_-$ be the birational map induced by the SQM identifications with $X$.
		Choose open subsets $U_\pm\subset Y_\pm$ with $\codim(Y_\pm\setminus U_\pm)\ge 2$ such that
		$\psi|_{U_+}\colon U_+\xrightarrow{\sim}U_-$. Fix $n\ge 0$ and choose $m>0$ sufficiently divisible such that $mD$ is Cartier on both $Y_\pm$.
		Then the sheaves $\mathcal{O}_{Y_\pm}(nmD)$ are line bundles and restriction over $U_{\pm}$ induces isomorphisms
		\[
		H^0\!\bigl(Y_\pm,\mathcal{O}_{Y_\pm}(nmD)\bigr)\ \xrightarrow{\ \sim\ }\ H^0\!\bigl(U_\pm,\mathcal{O}_{U_\pm}(nmD)\bigr),
		\]
		because $Y_\pm$ are normal and the complements have codimension at least $2$.
		Using $U_+\cong U_-$, we obtain canonical identifications
		\[
		H^0\!\,\bigl(Y_+,\mathcal{O}_{Y_+}(nmD)\bigr)\ \cong\ H^0\!\,\bigl(Y_-,\mathcal{O}_{Y_-}(nmD)\bigr)
		\quad\text{for all }n\ge 0,
		\]
		hence an isomorphism of graded rings
		\[
		\bigoplus_{n\ge 0} H^0\!\,\bigl(Y_+,\mathcal{O}_{Y_+}(nmD)\bigr)
		\ \cong\
		\bigoplus_{n\ge 0} H^0\!\,\bigl(Y_-,\mathcal{O}_{Y_-}(nmD)\bigr).
		\]
		Denote the common $\text{Proj}$ by
		\[
		Z:=\text{Proj}\!\,\Big(\bigoplus_{n\ge 0} H^0\!\,\bigl(Y_+,\mathcal{O}_{Y_+}(nmD)\bigr)\Big)
		\ \cong\
		\text{Proj}\!\,\Big(\bigoplus_{n\ge 0} H^0\!\,\bigl(Y_-,\mathcal{O}_{Y_-}(nmD)\bigr)\Big).
		\]
		Since $D$ is effective, by Lemma \ref{lem:transport-sections}
		\[
		\bigoplus_{n\ge0}H^0\bigl(Y_\pm,\mathcal O_{Y_\pm}(nmD)\bigr)\cong \mathcal{R}(X,mD).
		\]
		Since $\mathcal{R}(X,mD)$ is finitely generated (as $X$ admits a good minimal model for $D$), it follows that $Z$ is a projective variety.
		
		Since $Y_\pm$ are SQMs of $X$, they are again Morrison--Kawamata dream spaces by \cite[Lemma 3.16 (2)]{CRX25}.
		Moreover, by \cite[Theorem 1.3]{CRX25}, every effective $\mathbb R$-Cartier divisor on a Morrison--Kawamata dream space admits a good minimal model.
		Since $D\in\relint(F)\subset\Nef(Y_\pm)$, the class $D$ is nef on each $Y_\pm$ and it is effective by hypothesis. Hence $Y_\pm$ is itself a good minimal model of $D$, so $D$ is semiample on $Y_\pm$.
		Consequently for $m$ sufficiently divisible, $|mD|$ is base point free on $Y_\pm$ and $mD$ defines the projective morphisms $f_\pm\colon Y_\pm\to Z_\pm\cong Z$.

		\medskip
		For (3), the maps $f_+|_{U_+}$ and $f_-\circ(\psi|_{U_+})$ are induced by the same graded ring of sections under the identifications above, hence coincide. Consequently, $\psi$ factors over $Z$:
		\[
		Y_+\xrightarrow{\,f_+\,} Z \xleftarrow{\,f_-\,} Y_-.
		\]
		Moreover, $\psi$ is an SQM, and therefore an isomorphism in codimension one; it follows that $\psi$ remains an isomorphism in codimension one over $Z$.
	\end{proof}
	\begin{corollary}\label{intro:KMMDS3}
		Let $X$ be a Morrison--Kawamata dream space with finitely generated class group, let $Y_+,Y_-$ be adjacent SQM models with common facet $F$  and let $D,D'$ lie in the relative interiors of $\Nef(Y_+)$ and $\Nef(Y_-)$ respectively. Then $X_D\cong Y_+$ and $X_{D'}\cong Y_-$ are big models, and there is a small birational map
		\[
		X_D\dashrightarrow X_{D'}\qquad\text{factoring through a common contraction}\qquad X_D\xrightarrow{f_+} Z\xleftarrow{f_-} X_{D'}.
		\]
		Moreover, for any $D_1,D_2$ in the interior of chambers of $\MovE(X)$, the induced birational map $X_{D_1}\dashrightarrow X_{D_2}$ between the corresponding models is an isomorphism in codimension one.
	\end{corollary}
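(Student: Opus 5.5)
The corollary should follow by combining Theorem~\ref{thm:GIT-Proj}, Lemma~\ref{lem:transport-sections} and Proposition~\ref{prop:KM-wallcrossing}. Each model $X_D$ will be identified with $\operatorname{Proj}$ of a section ring, and the wall-crossing will be read off from the facet $F$.

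\textbf{Step 1: identify $X_D$ and $X_{D'}$.} Replacing $D$ by a multiple, which does not change $\overline X^{ss}(\chi_D)$, we may assume $D$ is integral, effective and Cartier on $Y_+$. Since $D\in\relint\Nef(Y_+)$, it is ample on $Y_+$; this uses the identification of $N^1(Y_+)_\mathbb R$ with $N^1(X)_\mathbb R$ via the SQM, together with the fact that the interior of the nef cone is the ample cone. Theorem~\ref{thm:GIT-Proj}(3) gives $X_D\cong\operatorname{Proj}\mathcal R(X,D)$. Lemma~\ref{lem:transport-sections} gives $\mathcal R(X,D)\cong\mathcal R(Y_+,D)$, and because $D$ is ample on $Y_+$, $\operatorname{Proj}\mathcal R(Y_+,D)\cong Y_+$. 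Hence $X_D\cong Y_+$, and symmetrically $X_{D'}\cong Y_-$. Bigness holds because ample classes on the birational model $Y_\pm$ are big, and bigness is preserved under SQMs since the section rings agree. For this reason the two models are big models, i.e.\ birational to $X$.

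\textbf{Step 2: the wall-crossing.} Pick an effective $\mathbb Q$-Cartier class $D_0\in\relint(F)$; one exists because $F$ is a rational polyhedral face. Proposition~\ref{prop:KM-wallcrossing}(2),(3) produce contractions $f_\pm\colon Y_\pm\to Z$ and a factorization of $\psi\colon Y_+\dashrightarrow Y_-$ through $Z$. Composing with the isomorphisms of Step~1 gives $X_D\xrightarrow{f_+}Z\xleftarrow{f_-}X_{D'}$.

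\textbf{Step 3: smallness and codimension one.} The map $X_D\dashrightarrow X_{D'}$ is the composite $Y_+\dashrightarrow X\dashrightarrow Y_-$ of SQMs, so it is small. For the last claim, take $D_1,D_2$ in the interiors of chambers. By the Morrison--Kawamata structure (Definition~\ref{def:KMMDspace}(2),(3)), each chamber is of the form $g^*f_i^*\Nef(X_i)$ with $g\in\PsAut(X)$, so it equals $\Nef(Y_j)$ for some SQM $Y_j$. Step~1 then gives $X_{D_j}\cong Y_j$. The induced map $Y_1\dashrightarrow X\dashrightarrow Y_2$ is a composite of isomorphisms in codimension one, hence is itself one.

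\textbf{Main obstacle.} The delicate point is Step~1: showing that the isomorphism $X_D\cong Y_+$ is compatible with the birational identifications. The Proj isomorphisms must commute with the maps $X\dashrightarrow Y_\pm$, so that the induced map $X_D\dashrightarrow X_{D'}$ really is $\psi$. I would verify this on the common open set $U_+\cong U_-$ of codimension-two complement used in the proof of Proposition~\ref{prop:KM-wallcrossing}. On that set the section rings are literally equal, so the rational maps to the respective Proj's coincide.
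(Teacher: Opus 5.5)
Your proposal is correct. The identification $X_D\cong\operatorname{Proj}\mathcal R(X,mD)\cong\operatorname{Proj}\mathcal R(Y_+,mD)\cong Y_+$ and the factorization through $Z$ via Proposition~\ref{prop:KM-wallcrossing}(2),(3) are exactly the paper's argument. Two steps go differently.

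\emph{Smallness of the map.} You get smallness of $X_D\dashrightarrow X_{D'}$ for free, as the composite $Y_+\dashrightarrow X\dashrightarrow Y_-$ of SQMs. The paper instead argues as follows:
\begin{itemize}
\item it shows $\relint(F)\subseteq\operatorname{int}\Mov(X)\subseteq\operatorname{Big}(X)$, so $\dim Z=\dim Y_\pm$ and $f_\pm$ are birational;
\item it uses movability of $D_0$ to see that its ample model contracts no divisor, so $f_\pm$ are small and $\psi=f_-^{-1}\circ f_+$ is small.
\end{itemize}
Your route is cheaper but says nothing about $f_\pm$ beyond their existence. It therefore does not exhibit the diagram as a genuine flip/flop-type wall-crossing with small birational contractions, which is the Hu--Keel picture behind the statement.

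That same inclusion is also what your Step~2 needs. Rational polyhedrality of $F$ only gives rational points in $\relint(F)$, but Proposition~\ref{prop:KM-wallcrossing} requires $D_0$ to be effective. Effectivity holds because $\Nef(Y_+)$ and $\Nef(Y_-)$ are full-dimensional with disjoint interiors and share the facet $F$. Hence they lie on opposite sides of its span, so $\relint(F)\subseteq\operatorname{int}\overline{\Mov}(X)\subseteq\operatorname{Big}(X)$. In the other direction, you prove bigness of $D$ and $D'$ themselves (as ample classes on $Y_\pm$), which the paper leaves implicit.

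\emph{The final claim.} You argue directly: a class in the interior of a chamber is ample on some SQM $Y_j$, so $X_{D_j}\cong Y_j$ by Step~1, and $Y_1\dashrightarrow X\dashrightarrow Y_2$ is a composite of isomorphisms in codimension one. The paper instead forms the chamber adjacency graph, argues that it is connected, and composes wall-crossings along a gallery. Your argument is simpler and avoids the connectivity claim, which is only needed if one wants the map decomposed into elementary wall-crossings.

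One minor inaccuracy: the chambers coming from Definition~\ref{def:KMMDspace} are $g^*(\Pi\cap f_i^*\Nef(X_i))$, possibly proper pieces of nef cones. Their interiors still consist of classes ample on the SQM $f_i\circ g$, which is all you use.
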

	
	\begin{proof}
		Since $D$ lies in $\relint\Nef(Y_+)$, it is ample on $Y_+$. Choose $m>0$ such that $mD$ is integral; then $X_D=X_{mD}$, since $\overline X^{ss}(\chi_D)$ depends only on the ray $\mathbb R_{>0}D$. By Lemma~\ref{lem:transport-sections}, $\mathcal R(X,mD)\cong\mathcal R(Y_+,mD)$, so by Theorem~\ref{thm:GIT-Proj},
		\[
		X_D\cong\operatorname{Proj}\mathcal R(X,mD)\cong\operatorname{Proj}\mathcal R(Y_+,mD)\cong Y_+,
		\]
		the last isomorphism because $D$ is ample on $Y_+$.
		Likewise, $X_{D'}\cong Y_-$. Since $F$ is a common facet of two chambers of the decomposition of $\MovE(X)$, we have $\relint(F)\subseteq\operatorname{int}\Mov(X)\subseteq\operatorname{Big}(X)$. Fix $D_0\in\relint(F)$ and $m>0$ sufficiently divisible such that $mD_0$ is a Cartier divisor on both $Y_+$ and $Y_-$.
		
		By Proposition \ref{prop:KM-wallcrossing} (2), we obtain projective morphisms $f_\pm\colon Y_\pm\to Z$ and part~(3) yields the factorization $Y_+\xrightarrow{f_+}Z\xleftarrow{f_-}Y_-$ with $\psi: Y_+\dashrightarrow Y_-$ an isomorphism in codimension one. Bigness of $D_0$ gives $\dim Z=\dim Y_\pm$, so $f_\pm$ are birational. since $D_0\in\relint(F)\subseteq\operatorname{int}\Mov(X)$ is movable, its ample model contracts
		no divisor (a divisorial contraction gives $D_0\in\partial\Mov(X)$); hence $f_\pm$
		are small and $\psi=f_-^{-1}\circ f_+$ is small. Under $X_D\cong Y_+$ and $X_{D'}\cong Y_-$ this is the asserted small factorization.
		
		For the last claim, let $\Gamma$ be the graph whose vertices are the chambers of the fan covering $\MovE(X)$, with two vertices joined by an edge whenever the corresponding chambers share a wall contained in $\operatorname{int}\Mov(X)$. 
		The union of the cones of codimension at least $2$ does not disconnect $\operatorname{int}\MovE(X)$. Since adjacent chambers meet along their codimension-one walls, which lie in this complement, the complement is connected. Therefore any two chambers can be joined by a finite sequence of chambers $E_0,E_1,\dots,E_N$, such that $\overline{E_{k-1}}\cap\overline{E_k}$ is a wall contained in $\operatorname{int}\Mov(X)$ for every $k$. Equivalently, the graph $\Gamma$ is connected.
		
		By part~(1) of Proposition~\ref{prop:KM-wallcrossing}, $X_D$ is constant on $\relint(E_k)$, equal to the SQM model with nef chamber $E_k$. By part~(3), the induced map between the models of $E_{k-1}$ and $E_k$ is an isomorphism in codimension one. Composing the $N$ maps shows $X_{D_1}\dashrightarrow X_{D_2}$ is an isomorphism in codimension one.
	\end{proof}
	
	\medskip
	Finally, we show that for a Morrison--Kawamata dream space $X$, the Cox ring
	$\Cox(X)$ is a filtered colimit of $\Cl(X)$-graded subalgebras, each
	approximated by finitely generated $\mathbb K$-algebras $\mathcal{R}_{s,\epsilon}$.

	\medskip
	Note that the pseudo-automorphisms act linearly on $\operatorname{NS}(X)$.
	Hence there is a representation
	\[
	\rho:\PsAut(X)\to GL(\operatorname{NS}(X),\mathbb{Z}).
	\]
	Since $GL(\operatorname{NS}(X),\mathbb{Z})$ is countable as a group, the image
	$\rho(\PsAut(X))$ is countable.
	Thus, we can enumerate it as
	$\rho(\PsAut(X)):=\{g_1,\dots, g_s,\dots \}$.
	We set
	\[
	\mathcal U_0 \;:=\; \Pi, \qquad
	\mathcal U_s \;:=\; \Pi + g_1^*\Pi + \cdots + g_s^*\Pi \;\subset\; N^1(X)_{\mathbb R}
	\quad (s\ge 1),
	\]
	where the sum denotes the convex cone generated by the union.
	Each $\mathcal U_s$ is a rational polyhedral cone, and $\mathcal U_s\subseteq
	\mathcal U_{s'}$ whenever $s\le s'$.
	
	\medskip
	Since $\Eff(X)$ has non-empty interior and is covered by the translates of
	$\Pi$, after replacing $\Pi$ by some $g^*\Pi$ if necessary we may assume that
	$\Pi$ contains an ample class.
	Fix an ample effective integral class
	\[
	A \;\in\; \Amp(X)\cap \mathcal U_0.
	\]
	
	\medskip
	By Gordan's lemma \cite[Proposition~1.1.2.1]{ADHL15}, the semigroup
	$\mathcal U_s\cap \Cl(X)$ is finitely generated.
	Denote its Hilbert basis by
	\[
	S_{1,s},\,\dots,\,S_{r_s,s} \;\in\; \mathcal U_s\cap\Cl(X),
	\]
	where each $S_{i,s}$ is the class of an effective integral Weil divisor.
	By picking $\epsilon \in \mathbb{Q}$ with $0<\epsilon\ll 1$, we set
	\[
	D_{i,s,\epsilon} \;:=\; S_{i,s} + \epsilon A,
	\qquad
	\mathcal K_{s,\epsilon} \;:=\; \operatorname{Cone}(D_{1,s,\epsilon},\dots,D_{r_s,s,\epsilon})
	\;\subset\; N^1(X)_{\mathbb R},
	\]
	and $\mathcal K_s:=\mathcal K_{s,0}$.
	Since $A$ is ample and $S_{i,s}$ is effective, each $D_{i,s,\epsilon}$ is a big effective $\mathbb Q$-Cartier
	class.
	
	\medskip
	Let $\Cl(X)_{\mathrm{tors}}=\{T_1,\dots,T_t\}$ denote the torsion subgroup of $\Cl(X)$.
	The multiplication on the
	$\mathbb K$-vector space
	\begin{equation}\label{eq:Rseps}
		\mathcal R_{s,\epsilon}
		\;:=\;
		\bigoplus_{j=1}^{t}
		\bigoplus_{(d_1,\dots,d_{r_s})\in\mathbb Z_{\ge 0}^{r_s}}
		H^0\!\Bigl(X,\,
		\mathcal O_X\!\Bigl(
		\Bigl\lfloor \sum_{k=1}^{r_s} d_k D_{k,s,\epsilon} \Bigr\rfloor + T_j
		\Bigr)\Bigr)
	\end{equation}
	is defined by multiplication of rational functions: if
	\[
	\sigma \in H^0\!\Bigl(X,\mathcal O_X\!\Bigl(\Bigl\lfloor \sum_{k=1}^{r_s} d_k D_{k,s,\epsilon} \Bigr\rfloor + T_i\Bigr)\Bigr)
	\quad\text{and}\quad
	\tau \in H^0\!\Bigl(X,\mathcal O_X\!\Bigl(\Bigl\lfloor \sum_{k=1}^{r_s} d'_k D_{k,s,\epsilon} \Bigr\rfloor + T_j\Bigr)\Bigr),
	\]
	then $\sigma\tau$ is regarded as a section of
	$\mathcal O_X\!\bigl(\bigl\lfloor \sum_{k=1}^{r_s} (d_k+d'_k) D_{k,s,\epsilon} \bigr\rfloor + T_i + T_j\bigr)$
	using the natural inclusion from
	\[
	\Bigl\lfloor \sum_{k=1}^{r_s} d_k D_{k,s,\epsilon} \Bigr\rfloor
	+\Bigl\lfloor \sum_{k=1}^{r_s} d'_k D_{k,s,\epsilon} \Bigr\rfloor
	\;\le\;
	\Bigl\lfloor \sum_{k=1}^{r_s} (d_k+d'_k) D_{k,s,\epsilon} \Bigr\rfloor.
	\]
	This makes $\mathcal R_{s,\epsilon}$ a $\Cl(X)$-graded $\mathbb K$-algebra.
	Equivalently, $\mathcal R_{s,\epsilon}$ is the $\Cl(X)$-graded section ring
	associated to the finitely generated submonoid of $\WDiv(X)$ generated by
	effective representatives of
	$$\{D_{1,s,\epsilon},\dots,D_{r_s,s,\epsilon},T_1,\dots,T_t\},$$
	in the sense of Definition~\ref{def:cox}.
	Since each $D_{i,s,\epsilon}$ is big, the algebra $\mathcal R_{s,\epsilon}$ is finitely generated by
	\cite[Corollary~1.1.9]{BCHM10}.
	We write $\mathcal R_s := \mathcal R_{s,0}$
	for the $\mathcal K_s$-graded section ring.
	
	\begin{theorem}\label{thm:direct-limit-mov}
		Let $X$ be a Morrison--Kawamata dream space with finitely generated class
		group.
		For $s\ge 0$, let $\mathcal R_s$ denote the $\mathcal K_s$-graded section
		ring and for $\epsilon>0$ let $\mathcal R_{s,\epsilon}$ denote the
		$\mathcal K_{s,\epsilon}$-graded section ring.
		Then\textup{:}
		\begin{enumerate}
			\item The canonical map
			\[
			\varinjlim_s \mathcal R_s \;\xrightarrow{\ \sim\ }\; \Cox(X)
			\]
			is an isomorphism of $\Cl(X)$-graded $\mathbb K$-algebras.
			
			\item For each $s$ and $0<\epsilon'\le\epsilon$, there is a natural inclusion of
			finitely generated $\mathbb K$-algebras
			\[
			\mathcal R_{s,\epsilon'}\;\hookrightarrow\;\mathcal R_{s,\epsilon},
			\]
			preserving the grading of \eqref{eq:Rseps}; these make
			$\{\mathcal R_{s,\epsilon}\}_{\epsilon>0}$ a directed system.
			
			\item For each $s$, the canonical map
			\[
			\mathcal R_s \;\xrightarrow{\ \sim\ }\; \varprojlim_{\epsilon\to 0^+}\mathcal R_{s,\epsilon}
			\]
			is an isomorphism of $\mathbb K$-algebras graded as in \eqref{eq:Rseps}.
		\end{enumerate}
	\end{theorem}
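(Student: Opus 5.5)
The plan is to treat the three parts separately. All rings involved sit inside the graded pieces of $\Cox(X)$, or inside the function field with twists by torsion classes. So every map will be an inclusion of subspaces of the common ambient spaces $H^0(X,\mathcal O_X(\cdot))$. The real content is then to check that the unions and intersections are what they should be.

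\textbf{Part (1).} Since $\mathcal U_s\subseteq\mathcal U_{s+1}$ and $\mathcal K_s$ is generated by the Hilbert basis of $\mathcal U_s\cap\Cl(X)$, we have $\mathcal K_s=\mathcal U_s$, so $\mathcal R_s=\mathcal R(\mathcal U_s)\subset\Cox(X)$. The transition maps are the obvious inclusions, and $\varinjlim_s\mathcal R_s=\bigcup_s\mathcal R(\mathcal U_s)$. Injectivity of the canonical map is therefore automatic, and surjectivity reduces to a statement about degrees:
\[
\Eff(X)\cap\Cl(X)\subseteq\bigcup_s\mathcal U_s .
\]
This suffices because $\Cox(X)_w=0$ for $w$ not effective. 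For this step I would proceed as follows.
\begin{itemize}
\item Take an effective class $w$ with $\Cox(X)_w\neq0$.
\item Decompose it as $w=M+F$ with $M$ movable and $F$ the fixed part. The fixed part is a finite sum of prime divisors with $\Cox(X)_{F}\ni$ canonical sections.
\item Using $\PsAut(X)\cdot\Pi=\MovE(X)$, we get $M\in g^*\Pi$ for some $g$, hence $M\in\mathcal U_s$ for large $s$.
\end{itemize}
The fixed prime divisors $E$ are the main obstacle, since they need not lie in $\MovE(X)$. A cleaner route is to define the $\mathcal U_s$ so that they exhaust $\Eff(X)$. Failing that, I would use the cone conjecture, via the minimal model program for $E$, to show that each exceptional prime divisor class appears in some $\mathcal U_s$. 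In the Wehler case Theorem~\ref{Kawamata-Morrison} says every effective divisor is a pullback of a nef one, so there is no issue. In general I would argue that $\bigcup_s\mathcal U_s$ is a convex cone, since it is an increasing union of convex cones. It contains $\MovE(X)$, and it contains an ample class in its interior. I would then check effective classes on the boundary via the decomposition $w=M+\sum a_iE_i$, adding the classes $E_i$, of which there are only countably many, into the enumeration if necessary.

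\textbf{Part (2).} For $\epsilon'\le\epsilon$ and fixed $(d_k)$,
\[
\Bigl\lfloor\sum d_kD_{k,s,\epsilon'}\Bigr\rfloor\le\Bigl\lfloor\sum d_kD_{k,s,\epsilon}\Bigr\rfloor ,
\]
because $A$ is effective, with an effective representative fixed once and for all. This gives inclusions of $H^0$ compatible with the multiplication defined in \eqref{eq:Rseps}, hence injective algebra maps preserving the $\mathbb Z_{\ge0}^{r_s}\times\Cl(X)_{\rm tors}$ grading. Finite generation is the cited \cite[Corollary~1.1.9]{BCHM10}, and transitivity of the inclusions gives a directed system.

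\textbf{Part (3).} The inverse limit is taken degree by degree, so we fix $(d,T_j)$. The limit is then $\bigcap_{\epsilon>0}H^0\bigl(X,\mathcal O_X(\lfloor\sum d_kD_{k,s,\epsilon}\rfloor+T_j)\bigr)$. Since $\sum d_k S_{k,s}$ is integral and $\epsilon\sum d_k A$ has coefficients tending to $0$, for $\epsilon<1/(\sum d_k\cdot\max\mathrm{coeff}(A))$ the floor equals $\sum d_kS_{k,s}$. The intersection therefore stabilizes at the degree-$(d,T_j)$ piece of $\mathcal R_s$. I would finally note that the multiplications agree, so the canonical map is an isomorphism of graded algebras; this is routine once the stabilization is observed.
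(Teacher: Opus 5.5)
Parts (2) and (3) are the paper's argument. The floors are monotone in $\epsilon$ because $A$ is effective, finite generation comes from \cite[Corollary~1.1.9]{BCHM10}, and $\lfloor\sum_k d_kD_{k,s,\epsilon}\rfloor=\sum_k d_kS_{k,s}$ once $\epsilon<(|d|\max_\alpha a_\alpha)^{-1}$. The inverse limit is therefore an intersection of nested subspaces that equals $\mathcal R_s$ degree by degree.

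For (1), your reduction to ``every class $w$ with $\Cox(X)_w\neq0$ lies in some $\mathcal U_s$'' is correct, and fixed components are the right thing to worry about. However, the route you sketch around them cannot work, so as written (1) is not proved.
\begin{itemize}
\item Every translate $g^*\Pi$ lies in the convex cone $\MovE(X)$. Hence each $\mathcal U_s\subseteq\MovE(X)$, and in fact $\bigcup_s\mathcal U_s=\MovE(X)$.
\item So no non-movable prime divisor ever enters any $\mathcal U_s$, by the MMP or otherwise. Statement (1), with these definitions, is equivalent to $\Eff(X)=\MovE(X)$.
\item It fails for $\mathbb F_1$. This is a toric Mori dream space satisfying Definition~\ref{def:KMMDspace} with $\Pi=\Nef(\mathbb F_1)$, so $\mathcal U_s=\Pi$ for all $s$. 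The canonical section of the $(-1)$-curve is then not in $\varinjlim_s\mathcal R_s$. A $(-2)$-curve on a K3 surface gives a Calabi--Yau counterexample.
\end{itemize}

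The paper crosses this step by asserting $\Eff(X)=\bigcup_s\mathcal U_s$ from \cite[Theorem~1.6]{CRX25}. By the above, that is exactly the assumption $\Eff(X)=\MovE(X)$. This assumption does hold for Wehler type hypersurfaces by Theorem~\ref{Kawamata-Morrison}, which is the only case the paper uses afterwards.

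So, instead of leaving it as ``failing that'', commit to one of two honest versions:
\begin{itemize}
\item Prove (1) under the hypothesis $\Eff(X)=\MovE(X)$; your movable-part argument already does this.
\item Change the exhaustion: adjoin to the generators of $\mathcal U_s$ the countably many prime divisors that occur as fixed components of some $|D|$, $D\in\Cl(X)$. Your decomposition $w=M+F$ then proves (1), at the cost of altering the rings $\mathcal R_s$.
\end{itemize}

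Your remark that $\bigcup_s\mathcal U_s$ is a convex cone containing an ample class adds nothing here. It never produces classes outside $\MovE(X)$.
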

	
	\begin{proof}
		\emph{(1)} For $s \le s'$ we have $\mathcal U_s \subseteq \mathcal U_{s'}$, so each 
		Hilbert basis element of $\mathcal U_s \cap \Cl(X)$ is a non-negative integer combination 
		of those of $\mathcal U_{s'} \cap \Cl(X)$, inducing the inclusion $\mathcal R_s \hookrightarrow \mathcal R_{s'}$.
		
		By the Morrison--Kawamata cone conjecture \cite[Theorem~1.6]{CRX25}, 
		$\Eff(X) = \bigcup_s \mathcal U_s$.
		For any nonzero $f \in \Cox(X)$ with degree 
		$[D] = [D_0] + T_j$ (where $[D_0] \in \Eff(X) \cap \Cl(X)$ and 
		$T_j \in \Cl(X)_{\mathrm{tors}}$), Gordan's lemma gives $[D_0] = \sum_k d_k S_{k,s}$ 
		for some $s$ with $[D_0] \in \mathcal U_s \cap \Cl(X)$.
		Thus $f \in \mathcal R_s$, 
		so $\Cox(X) = \bigcup_s \mathcal R_s$.
		Since the transition maps are inclusions, 
		the canonical map $\varinjlim_s \mathcal R_s \to \Cox(X)$ is an isomorphism of 
		$\Cl(X)$-graded $\mathbb K$-algebras.
		
		\smallskip
		\emph{(2)} Fix $s$ and $0<\epsilon'\le\epsilon$.
		Since $A$ is effective,
		$\epsilon' A\le\epsilon A$ as $\mathbb Q$-divisors, so for every
		$(d_1,\dots,d_{r_s})\in\mathbb Z_{\ge0}^{r_s}$,
		\[
		\Bigl\lfloor \sum_{k=1}^{r_s} d_k D_{k,s,\epsilon'} \Bigr\rfloor
		\;\le\;
		\Bigl\lfloor \sum_{k=1}^{r_s} d_k D_{k,s,\epsilon} \Bigr\rfloor.
		\]
		Viewing sections inside the function field $\mathbb K(X)$, this inequality of
		divisors gives in each multidegree,
		\[
		H^0\!\Bigl(X,\mathcal O_X\!\Bigl(\Bigl\lfloor \sum_k d_k D_{k,s,\epsilon'} \Bigr\rfloor + T_j\Bigr)\Bigr)
		\;\subseteq\;
		H^0\!\Bigl(X,\mathcal O_X\!\Bigl(\Bigl\lfloor \sum_k d_k D_{k,s,\epsilon} \Bigr\rfloor + T_j\Bigr)\Bigr),
		\]
		and these inclusions are compatible with the multiplication above.
		Hence there exists a natural inclusion $\mathcal R_{s,\epsilon'}\hookrightarrow\mathcal R_{s,\epsilon}$
		of finitely generated $\mathbb K$-algebras preserving the grading of
		\eqref{eq:Rseps}, and $\{\mathcal R_{s,\epsilon}\}_{\epsilon>0}$ is a directed
		system.
		
		\smallskip
		\emph{(3)} Fix $s$, and consider $A = \sum_\alpha a_\alpha \Gamma_\alpha$ in prime components. Set $a := \max_\alpha a_\alpha$.
		For each multi-index $d = (d_1,\dots,d_{r_s})$, let $E_d := \sum_{k=1}^{r_s} d_k S_{k,s}$ and $|d| := \sum_{k=1}^{r_s} d_k$.
		
		Since $E_d$ is an integral divisor,
		\[
		\Bigl\lfloor \sum_{k=1}^{r_s} d_k D_{k,s,\epsilon} \Bigr\rfloor
		= \bigl\lfloor E_d + \epsilon|d|A \bigr\rfloor
		= E_d + \sum_\alpha \bigl\lfloor \epsilon|d|a_\alpha \bigr\rfloor \Gamma_\alpha.
		\]
		For $\epsilon < (a|d|)^{-1}$, all terms $\lfloor \epsilon|d|a_\alpha \rfloor$ vanish, so  $(\mathcal R_{s,\epsilon})_{(d,j)} = (\mathcal R_s)_{(d,j)}$.
		Since $E_d \le \lfloor \sum_k d_k D_{k,s,\epsilon}\rfloor$ for all $\epsilon \ge 0$, we have 
		$(\mathcal R_s)_{(d,j)} \subseteq (\mathcal R_{s,\epsilon})_{(d,j)}$ for every $\epsilon > 0$.
		Thus
		\[
		(\mathcal R_s)_{(d,j)} \;=\; \bigcap_{\epsilon>0}(\mathcal R_{s,\epsilon})_{(d,j)}.
		\]
		The inverse limit of a system of graded rings commutes with direct sums over fixed multidegrees, so the componentwise equality gives the isomorphism
		\[
		\mathcal R_s \;\xrightarrow{\,\sim\,}\; \varprojlim_{\epsilon\to0^+}\mathcal R_{s,\epsilon}
		\]
		of $\mathbb K$-algebras graded as in \eqref{eq:Rseps}.
	\end{proof}
	
	\begin{remark}
		Combining Theorem~\ref{thm:direct-limit-mov}(1) and~(2) presents $\Cox(X)$ as
		the iterated filtered colimit
		\[
		\Cox(X) \;\cong\;
		\varinjlim_{s}\,\Big(\varprojlim_{\epsilon\to 0^+}\,\mathcal R_{s,\epsilon}\Big),
		\]
		each $\mathcal R_{s,\epsilon}$ being a finitely generated $\Cl(X)$-graded $\mathbb K$-algebra.
		This is the substitute, for Morrison--Kawamata dream spaces, of the finite generation of $\Cox(X)$ that characterizes Mori dream spaces.
	\end{remark}
	
	In the subsequent sections, as an application of this theory, we compute the Cox ring of a very general Calabi--Yau hypersurface of multidegree $(2,\dots,2,n+1)$ in $(\mathbb{P}^1)^m\times \mathbb{P}^n.$
	
	\section{Dynamics of Wehler type hypersurfaces}
	We study the birational automorphisms of Wehler type hypersurfaces and their action on the $\mathbb{K}$-vector space $H^0(X,\mathcal{O}(H_i))$. These automorphisms have been studied in detail in \cite{CO15} and \cite{Yáñ22}. The following theorem provides a precise description of $\Bir(X)$.
	
	\begin{theorem}\label{thm:Wehler-bir}
		Let $X$ be a Wehler type hypersurface. Then
		\begin{enumerate}
			\item its automorphism group is trivial.
			\item its birational automorphism group is isomorphic to the free product of $m$ copies of $\mathbb{Z}/2\mathbb{Z}$:
			\[
			\Bir(X)\ \cong\ 
			\underbrace{\mathbb{Z}/2\mathbb{Z} * \cdots * \mathbb{Z}/2\mathbb{Z}}_{m-\text{factors}}.
			\]
		\end{enumerate}
	\end{theorem}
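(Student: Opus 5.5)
The plan is to compute the action of the covering involutions on $N^1(X)$, deduce both statements from that action and the cone description in Theorem~\ref{Kawamata-Morrison}, and then bound the group by a fundamental-domain argument.

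\textbf{Step 1: the involutions and their action on $N^1(X)$.} By Lemma~\ref{Neron-Severi}, $N^1(X)$ has basis $H_1,\dots,H_m,H_{m+1}$. For each $k$, the projection $\pi_k\colon X\to(\mathbb P^1)^{m-1}\times\mathbb P^n$ forgetting the $k$-th factor is generically finite of degree $2$, since $F$ is quadratic in $[x_{k0}:x_{k1}]$. The Galois involution of this double cover gives $\iota_k\in\Bir(X)$. Because $K_X\sim 0$, every birational self-map is an isomorphism in codimension one, so $\Bir(X)=\PsAut(X)$ and $\Bir(X)$ acts linearly on $N^1(X)$.

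Vieta's relation for the quadratic $F$ in the variables $x_{k0},x_{k1}$ gives $\iota_k^*x_{k1}\cdot x_{k1}=R_{(k,0)}$ up to scaling. Comparing divisors then yields the reflection formula
\[
\iota_k^*H_k=-H_k+\sum_{j\neq k}2H_j+(n+1)H_{m+1},\qquad \iota_k^*H_j=H_j\ (j\neq k).
\]
Thus each $\iota_k^*$ is a (pseudo-)reflection in the facet $\Nef(X)\cap\{\text{coefficient of }H_k=0\}$. It fixes $H_j$ for $j\neq k$, and it sends $H_k$ outside $\Nef(X)$. In particular $\iota_k\notin\Aut(X)$, since an automorphism preserves the nef cone.

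\textbf{Step 2: $\Aut(X)$ is trivial.} Any $g\in\Aut(X)$ preserves $\Nef(X)=\operatorname{Cone}(H_1,\dots,H_{m+1})$. It must therefore permute its extremal rays, and it preserves the contraction type of each ray: $H_{m+1}$ gives a fibration with positive-dimensional fibres, while $H_k$ gives a map to $\mathbb P^1$. The intersection numbers $H_k^2=0$, together with the asymmetric degree $n+1$ in $\mathbb P^n$, force $g^*$ to fix $H_{m+1}$ and to permute the $H_k$.

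Since $X$ is very general, one argues that the permutation is trivial. Then $g$ commutes with every projection to $\mathbb P^1$ and to $\mathbb P^n$, so it is induced by an element of $\prod PGL_2\times PGL_{n+1}$ preserving $F$. For very general $F$ this stabilizer is trivial, which is a parameter count: the stabilizer of a very general form of this multidegree is finite and in fact trivial. Alternatively one can cite \cite{CO15,Yáñ22} for this.

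\textbf{Step 3: $\Bir(X)$ is the free product, and this is the main obstacle.} There are two parts.

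\emph{Generation.} Take $g\in\Bir(X)$ and an ample $A$. Then $g^*A$ is movable and big. By Theorem~\ref{Kawamata-Morrison} there is a word $w$ in the $\iota_k$ with $w^*g^*A\in\Nef(X)$. Using Step 1, we choose $w$ by a descent argument: while some coefficient of $H_k$ is negative, apply $\iota_k^*$, which strictly decreases the total degree against a fixed interior class. Then $g\circ w$ sends an ample class to a nef class. Hence it is an isomorphism in codimension one that preserves an ample class, and so it is an automorphism. By Step 2 it is the identity, so $g=w^{-1}$.

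\emph{Freeness.} The only relations are $\iota_k^2=1$. For this we show the chambers $w^*\Nef(X)$ are distinct for distinct reduced words $w$, by ping-pong. Define $\mathcal C_k$ as the set of classes whose $H_k$-coefficient is negative after some reflection. Applying a reduced word $\iota_{k_1}\cdots\iota_{k_r}$ to the interior of $\Nef(X)$ lands in $\mathcal C_{k_1}$ and never returns to $\Nef(X)$. Verifying that the $\mathcal C_k$ are pairwise disjoint is the delicate point. I would establish it via the quadratic form coming from the Coxeter-type matrix with off-diagonal entries $\le -2$, for which the reflection group is the free Coxeter group $(\mathbb Z/2)^{*m}$ by the Tits criterion.
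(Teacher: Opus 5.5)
\textbf{Gap in Step~2.} Step~2 is exactly the part of the theorem the paper actually proves. The paper uses \cite[Theorem~1.3]{Yáñ22} only to identify $\Aut(X)$ with $\operatorname{Stab}_G([s])$ for $G=((\operatorname{PGL}_2)^m\rtimes S_m)\times\operatorname{PGL}_{n+1}$ acting on $\pp(V)$, and to deduce (2) from (1). Both of your load-bearing claims are asserted rather than proved: that the permutation of $H_1,\dots,H_m$ induced by $g$ is trivial, and that a very general $F$ has trivial stabilizer in $(\operatorname{PGL}_2)^m\times\operatorname{PGL}_{n+1}$. Falling back on \cite{CO15,Yáñ22} does not close this, since the paper proves the triviality itself.

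A comparison $\dim\pp(V)>\dim G$ says nothing about whether a given finite-order $g\neq 1$ fixes a very general form. What is needed is a \emph{uniform} fixed-locus bound: $\codim_V V_\lambda(g)>\dim G^0$ for every $g\neq1$ and every eigenvalue $\lambda$. Then the incidence variety $\mathcal I=\{(g,[s]) \mid g\neq 1,\ g^*s=\lambda s\}$ has $\dim\mathcal I\le\dim G^0+\max_{g,\lambda}(\dim V_\lambda(g)-1)<\dim\pp(V)$, so it cannot dominate $\pp(V)$.

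The paper gets this bound from $V\cong A^{\otimes m}\otimes W$ with $A=\operatorname{Sym}^2\kk^2$ and $W=\operatorname{Sym}^{n+1}\kk^{n+1}$:
\begin{enumerate}
\item If $g$ is nontrivial on some $\operatorname{PGL}_2$-factor, every eigenspace of $\operatorname{Sym}^2(G_1)$, for a non-scalar lift $G_1$, has dimension at most $2$. A block-triangular argument then gives $\codim V_\lambda(g)\ge\frac13\dim V=3^{m-1}\binom{2n+1}{n}$.
\item If $g$ acts only on $\pp^n$, a Jordan-decomposition argument on $\operatorname{Sym}^{n+1}$ gives $\codim\ge 3^m\binom{2n-1}{n-1}$.
\end{enumerate}
Both bounds exceed $\dim G^0=3m+(n+1)^2-1$ under the standing hypotheses. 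Your claim that ``the permutation is trivial'' needs an estimate of the same kind for elements that also permute the $\pp^1$-factors.

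\textbf{Effect on Step~3.} Your generation argument ends with ``by Step~2 it is the identity'', so the gap propagates to (2). Without (1) you only obtain that $\Bir(X)$ is generated by $\Aut(X)$ and the $\iota_k$. The main difficulty therefore sits in Step~2, not in Step~3 as you expected.

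Step~3 is otherwise sound, and it is a more self-contained route to (2) than the paper's citation.
\begin{itemize}
\item The descent works, for example, with the $H_{m+1}$-coefficient $c_{m+1}$. It changes by $(n+1)c_k<0$ under $\iota_k^*$. It is nonnegative on $\Eff(X)$, being a positive multiple of the degree on a covering family of curves in the fibres of $X\to(\pp^1)^m$.
\item Freeness follows from Vinberg's theorem. Indeed $\iota_k^*v=v-c_k(v)(2H_k-\delta_k)$ gives a Cartan-type matrix with diagonal entries $2$ and off-diagonal entries $-2$.
\item One small repair: the descent only makes $w^*g^*A$ nef, not ample. Before concluding that $g\circ w$ is an automorphism, either choose $A$ off the countably many walls, or note that the open cone $w^*g^*\Amp(X)$ then meets $\operatorname{int}\Nef(X)$.
\end{itemize}
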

	\begin{proof}
		By \cite[Theorem~1.3]{Yáñ22}, (2) is a consequence of (1). We therefore prove that the automorphism group is trivial. 
		Let us denote 
		\[V:=H^0(\mathbb{P},\mathcal{O}_{\mathbb{P}}(2,\dots,2,n+1)),\quad
		A:=\operatorname{Sym}^2(\kk^2), \quad W:=\operatorname{Sym}^{n+1}(\kk^{n+1}) \quad\text{and} \quad
		V \cong A^{\otimes m}\otimes W.
		\]
		The automorphism group of $X$ is isomorphic to a subgroup of $G:=((\operatorname{PGL}_{2})^m\rtimes S_m)\times \operatorname{PGL}_{n+1}$, by \cite[Theorem~1.3]{Yáñ22}. Write $G^0$ for the identity component of $G$. Since the finite group $S_m$ does not affect dimension estimates, it is enough
		to prove the statement for $G^0\cong (\operatorname{PGL}_2)^m\times \operatorname{PGL}_{n+1}$. Consider the closed incidence subset
		\[
		\mathcal I
		:=\Big\{(g,[s])\in (G^0\setminus\{1\})\times \pp(V)\ \Big|\ g^*s=\lambda s
		\ \text{for some }\lambda\in \mathbb{G}_m\Big\}.
		\]
		Let $\pi:\mathcal I\to \pp(V)$ be the projection. Then $\pi(\mathcal I)$ is precisely the locus
		of $[s]$ having a nontrivial stabilizer in $G^0$. Hence it suffices to show
		\[
		\dim \mathcal I \ <\ \dim \pp(V)=\dim V-1.
		\]
		Indeed, then $\pi(\mathcal I)$ is a proper closed subset of $\pp(V)$ and its complement is a
		nonempty Zariski open set $U$ with $\operatorname{Stab}_{G^0}([s])=\{1\}$ for all $[s]\in U$. Fix $g\in G^0\setminus\{1\}$. The fiber of the projection
		$\mathcal I\to (G^0\setminus\{1\})$ over $g$ is the union of the projective eigenspaces
		\[
		\bigcup_{\lambda}\pp(V_{\lambda}(g))\subset \pp(V),
		\quad \text{where}\quad 
		V_{\lambda}(g):=\{v\in V\mid g\cdot v=\lambda v\}.
		\]
		Therefore,
		\[
		\dim \mathcal I
		\ \le\
		\dim G^0\ +\ \max_{\substack{g\ne 1\\ \lambda}} \dim \pp(V_{\lambda}(g))
		\ =\
		\dim G^0\ +\ \max_{\substack{g\ne 1\\ \lambda}}(\dim V_{\lambda}(g)-1).
		\]
		Consequently, it suffices to prove the uniform codimension bound
		\begin{equation}\label{eq:OC-codim-bound}
			\min_{\lambda}\codim_V(V_{\lambda}(g))
			=\min_{\lambda}\bigl(\dim V-\dim V_{\lambda}(g)\bigr)
			\ >\ \dim G^0,
			\quad\text{for all }g\in G^0\setminus\{1\}.
		\end{equation}
		
		\medskip
		We have
		\[
		\dim V
		= \dim(\operatorname{Sym}^2\kk^2)^m\cdot \dim(\operatorname{Sym}^{n+1}\kk^{n+1})
		=3^{m}\binom{2n+1}{n},
		\]
		and
		\[
		\dim G^0
		= \dim\big((\operatorname{PGL}_2)^m\big)+\dim(\operatorname{PGL}_{n+1})
		=3m+\big((n+1)^2-1\big).
		\]
		
		\medskip
		We bound $\dim V_{\lambda}(g)$ by cases. Write $g=(g_1,\dots,g_m,h)$ with $g_i\in \operatorname{PGL}_2$ and $h\in \operatorname{PGL}_{n+1}$.
		
		\medskip
		\noindent\emph{Case 1: $g$ acts nontrivially on some $\operatorname{PGL}_2$-factor.}
		
		After reindexing assume $g_1\neq 1$, and write $V\cong A\otimes R$, where
		$R:=A^{\otimes(m-1)}\otimes W$, so that $g$ acts as $S\otimes T$, where
		$S:=\operatorname{Sym}^2(G_1)$ and $T$ is the induced action on $R$. Since $g_1\neq 1$, the lift $G_1$ is non-scalar.
		We first show that every eigenspace of $S$ has dimension at most $2$.
		If $G_1$ is diagonalizable with eigenvalues $\alpha,\beta$, then $S$ has eigenvalues
		$\alpha^2$, $\alpha\beta$, and $\beta^2$.
		These are pairwise distinct unless $\alpha=-\beta$, in which case they become
		$\alpha^2$, $-\alpha^2$, and $\alpha^2$, so the largest eigenspace has dimension $2$.
		The case $\alpha=\beta$ is excluded because $G_1$ is non-scalar.
		If $G_1$ is not diagonalizable, then $\operatorname{Sym}^2(G_1)$ consists of a single Jordan block of size $3$, hence every eigenspace is one-dimensional.
		Therefore, $\max_{\mu}\dim\ker(S-\mu\,\mathrm{id}_A)\le 2$.
		Fix an eigenvalue $\lambda$ of $g$.
		Choose a basis of $R$ in which $T$ is upper triangular, with diagonal entries
		$\nu_1,\dots,\nu_{\dim R}$.
		With respect to the induced decomposition
		$A\otimes R\cong\bigoplus_{i=1}^{\dim R}A$,
		the operator $S\otimes T-\lambda\,\mathrm{id}_{A\otimes R}$ is block upper triangular with diagonal blocks
		$\nu_iS-\lambda\,\mathrm{id}_A$.
		Hence,
		$\dim V_\lambda(g)
		=
		\dim\ker(S\otimes T-\lambda I)
		\le
		\sum_{i=1}^{\dim R}\dim\ker(\nu_iS-\lambda I)$.
		Since each $\nu_i\neq0$, we have
		$\ker(\nu_iS-\lambda I)=\ker(S-(\lambda/\nu_i)I)$,
		so every summand is bounded by $2$.
		Therefore,
		$\dim V_\lambda(g)\le 2\dim R=\frac{2}{3}\dim V$.
		
		Consequently,
		\[
		\min_{\lambda}
		\operatorname{codim}_V\!\big(V_\lambda(g)\big)
		\ge
		\frac{1}{3}\dim V
		=
		3^{m-1}\binom{2n+1}{n}.
		\]

		\medskip
		\noindent\emph{Case 2: $g$ is trivial on all $\operatorname{PGL}_2$--factors and nontrivial on $\operatorname{PGL}_{n+1}$.} This case relies on the following lemma.
		
		\begin{lemma}\label{lem:sym-eigenspace-bound}
			Assume that \(\kk\) is algebraically closed of characteristic 0. Let \(N=n+1\), and let \(h\in \operatorname{PGL}_{N}\) be nontrivial. Then, for every eigenvalue \(\lambda\) of the induced action of \(h\) on \(W:=\operatorname{Sym}^{N}(\kk^N)\), one has \(\operatorname{codim}_{W} W_\lambda \ge \binom{2n-1}{n-1}\).
		\end{lemma}
		
		\begin{proof}
			Choose a lift \(H = H_s H_u \in \operatorname{GL}_{N}(\kk)\) with its Jordan decomposition. Since \(H_s\) and \(H_u\) commute, the \(H\)-eigenspace \(W_\lambda(H)\) is contained in the \(H_s\)-weight space \(W_\lambda(H_s)\). We consider two cases based on whether \(H_s\) is scalar.
			
			\emph{Case 1: \(H_s\) is non-scalar.} Choose an eigenbasis \(x_0, \dots, x_n\) for \(\kk^N\). We may assume \(x_0\) and \(x_1\) have distinct \(H_s\)-weights. Let \(U := \langle x_0, x_2, \dots, x_n \rangle\). The \(\binom{2n-1}{n-1}\) monomials of degree \(n\) in \(U\) form a basis for \(\operatorname{Sym}^n U\). For each such monomial \(M\in \operatorname{Sym}^n(U)\), the degree \(n+1\) monomials \(x_0M\) and \(x_1M\) have distinct \(H_s\)-weights. Thus, at least one avoids the \(\lambda\)-weight space. Since the pairs \(\{x_0M, x_1M\}\) are pairwise disjoint as \(M\) varies, the codimension of \(W_\lambda(H_s)\) (and therefore of \(W_\lambda(H)\)) is at least \(\binom{2n-1}{n-1}\).
			
			\noindent\emph{Case 2: \(H_s\) is scalar.} Since \(h\) is nontrivial, its unipotent part \(H_u\) must also be nontrivial. After rescaling \(H\), we may assume \(H = H_u = \exp(D)\) for some nonzero nilpotent operator \(D\). The only eigenvalue of \(H\) is 1, and the space of fixed points is exactly \(W^H = \ker(D|_W)\). 
			
			To bound the codimension of \(W^H\) from below, we must maximize the dimension of \(\dim \ker(D|_W)\), over all nonzero nilpotent operators \(D\). To make the kernel as large as possible, \(D\) must have the minimal possible rank without being entirely zero. In terms of Jordan normal form, this minimum rank is achieved when \(D\) has exactly one \(2 \times 2\) block and all other blocks are \(1 \times 1\) (Jordan type \((2, 1, \dots, 1)\)).
			
			We therefore restrict our attention to this minimal case. Let us identify \(W = \operatorname{Sym}^{n+1}(\kk^{n+1})\) with the space of homogeneous polynomials of degree \(n+1\) in variables \(e_0, \dots, e_n\). We may choose a basis for \(\kk^{n+1}\) such that \(D(e_1) = e_0\) and \(D(e_i) = 0\) for all \(i \neq 1\). When we apply \(D\) to polynomials, it follows the standard product rule. Because it effectively replaces occurrences of \(e_1\) with \(e_0\), this action is equivalent to taking the partial derivative with respect to \(e_1\) and multiplying the result by \(e_0\). Consequently, \(\ker(D|_W)\) consists of all polynomials in \(W\) that vanish under this partial derivative namely, those that do not depend on \(e_1\). This yields:
			\[
			\ker(D|_W) = \operatorname{Sym}^{n+1}\langle e_0, e_2, \dots, e_n \rangle.
			\]
			The codimension of \(W^H\) in \(W\) is then simply the difference in dimensions:
			\[
			\dim \operatorname{Sym}^{n+1}(\kk^{n+1}) - \dim \operatorname{Sym}^{n+1}(\kk^n) = \binom{2n+1}{n} - \binom{2n}{n-1} = \binom{2n}{n}.
			\]
			Since \(\binom{2n}{n} \geq \binom{2n-1}{n-1}\), the desired codimension bound holds in the unipotent case as well.
		\end{proof}
		By Lemma~\ref{lem:sym-eigenspace-bound}, in Case~2 we obtain
		\[
		\min_\lambda\codim_V\!\big(V_\lambda(g)\big)
		=3^m\min_\lambda\codim_W\!\big(W_\lambda(h)\big)
		\ \ge\ 3^m\binom{2n-1}{n-1}.
		\]
		
		\medskip
		Combining the two cases, for every $g\neq1$,
		\[
		\min_\lambda \codim_V\!\big(V_\lambda(g)\big)
		\ \ge\
		\min\Big\{\,3^{\,m-1}\binom{2n+1}{n},\ \ 3^{m}\binom{2n-1}{n-1}\,\Big\}.
		\]
		Thus \eqref{eq:OC-codim-bound} holds provided
		\begin{equation}\label{eq:final-ineq}
			3^{\,m-1}\binom{2n+1}{n}\ >\ 3m+(n+1)^2-1
			\qquad\text{and}\qquad
			3^{m}\binom{2n-1}{n-1}\ >\ 3m+(n+1)^2-1,
		\end{equation}
		both of which hold whenever $m\ge2$ and $m+n\ge4$. Under these hypotheses
		$\dim\mathcal I<\dim V-1$, so $\pi(\mathcal I)$ is a proper closed subset of $\pp(V)$ and there is a
		nonempty Zariski open $U\subset\pp(V)$ with $\operatorname{Stab}_{G^0}([s])=\{1\}$ for all $[s]\in U$. 
		Since $\Aut(X)\cong\operatorname{Stab}_G([s])$ by \cite[Theorem~1.3]{Yáñ22}, the hypersurface $X=V(s)$ has trivial
		automorphism group for every $[s]\in U$, proving (1).
	\end{proof}

	The generators of $\Bir(X)$ are described explicitly.
	For each $k\in\{1,\dots,m\}$, let
	\[
	\pi^\prime_k : X \to (\mathbb{P}^1)^{m-1}\times \mathbb{P}^n
	\]
	be the morphism induced by the projection forgetting the $k$-th $\mathbb{P}^1$-coordinate.
	Since $X$ has degree $2$ in the $k$-th $\mathbb{P}^1$-coordinate, the map $\pi^\prime_k$ is generically a double cover onto its image.
	Exchanging the two sheets defines a covering involution
	\[
	\iota_k:X\dashrightarrow X,
	\]
	which fixes the ramification divisor of $\pi^\prime_k$ (where the quadratic equation in the $k$-th coordinate acquires a double root).
	These involutions generate $\Bir(X)$ and give a concrete description of the induced action on $N^1(X)_{\mathbb{R}}$ and on the movable cone.
	
	\begin{theorem}\emph{(\cite[Proposition~3.6]{Yáñ22})}\label{involution-action}
		For each $k\in\{1,\dots,m\}$, the induced action of the covering involution
		$\iota_k\in\Bir(X)$ on $\operatorname{NS}(X)$ is given by
		\[
		\iota_k^*(H_j)=H_j \quad (j\neq k),
		\qquad\text{and}\qquad
		\operatorname{deg}(\iota_k^*(H_k))=(2, \dots,-1_{k},\dots, 2, n+1).\footnote{In a $n$-tuple $(a_1,\dots,a_n)\in\zz^n$ such that $a_k=i$, we write $i_k$ for the $k$-th entry.}\]
		In particular, there is a natural linear isomorphism of global sections
		\[
		H^0\!\bigl(X,\mathcal{O}_X(H_k)\bigr)
		\ \xrightarrow{\ \sim\ }\
		H^0\!\bigl(X,\mathcal{O}_X(\iota_k^*H_k)\bigr),
		\]
		induced by pullback along $\iota_k$.
	\end{theorem}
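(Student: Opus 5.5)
Since $\iota_k$ is the covering involution of the generically $2\!:\!1$ map $\pi'_k$, it commutes with $\pi'_k$. Hence $\iota_k^*$ fixes every section pulled back from $(\mathbb P^1)^{m-1}\times\mathbb P^n$, which gives $\iota_k^*(H_j)=H_j$ for $j\neq k$.

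For the class of $\iota_k^*H_k$, the plan is to write down $\iota_k$ explicitly using Vieta's formulas. On a fibre of $\pi'_k$ the two roots $[x_{k0}:x_{k1}]$ and $[x'_{k0}:x'_{k1}]$ of $F=R_{(k,0)}x_{k0}^2+R_{(k,1)}x_{k0}x_{k1}+R_{(k,2)}x_{k1}^2$ satisfy
\[
\frac{x_{k1}x'_{k1}}{x_{k0}x'_{k0}}=\frac{R_{(k,0)}}{R_{(k,2)}}.
\]
Hence on $U_{k0}$ we have $\iota_k([x_{k0}:x_{k1}])=[R_{(k,2)}x_{k1}: R_{(k,0)}x_{k0}]$ with the other coordinates unchanged. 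Equivalently, on the locus $x_{k1}\neq0$ one gets $\iota_k^*x_{k0}\propto R_{(k,2)}/x_{k0}$ and $\iota_k^*x_{k1}\propto R_{(k,0)}/x_{k1}$, using $F=0$ to pass between the two expressions. The divisor $\iota_k^*\{x_{k0}=0\}$ is then the strict transform $\{R_{(k,2)}=0\}\cap X$ minus the common components, and we identify it with the divisor of the rational section $R_{(k,2)}/x_{k0}$. Since $\deg R_{(k,2)}=(2,\dots,0_k,\dots,2,n+1)$ and $\deg x_{k0}=H_k$, we obtain
\[
\iota_k^*H_k\sim \deg R_{(k,2)}-H_k=(2,\dots,-1_k,\dots,2,n+1).
\]

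The main obstacle is making this rigorous. One has to check that $\divv(R_{(k,2)}/x_{k0})$ on $X$ is effective and equal to $\iota_k^*\{x_{k0}=0\}$; the point is that on $X\cap\{x_{k0}=0\}$ the equation forces $R_{(k,2)}x_{k1}^2=0$, so $R_{(k,2)}$ vanishes there. One also has to check that the indeterminacy locus of $\iota_k$, namely where $R_{(k,0)},R_{(k,1)},R_{(k,2)}$ all vanish, has codimension $\ge2$ by very generality. Then $\iota_k$ is a pseudo-automorphism, so pullback of Weil divisors is well defined on $\Cl(X)\cong\Pic(X)$ (Lemma~\ref{Neron-Severi}). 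As a consistency check, $\iota_k^*$ is an involution on $\operatorname{NS}(X)$: applying the formula twice returns $H_k$ because $\iota_k^*$ fixes $\deg R_{(k,2)}$.

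Finally, the isomorphism on sections follows from Lemma~\ref{lem:transport-sections}. Since $\iota_k$ is an isomorphism in codimension one, pullback of rational functions identifies $H^0(X,\mathcal O_X(H_k))$ with $H^0(X,\mathcal O_X(\iota_k^*H_k))$, with inverse given by $\iota_k^*$ itself.
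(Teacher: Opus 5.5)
The paper gives no proof of this statement. It imports it from the second author's earlier work (Proposition~3.6 there), and then uses the resulting degree of $\iota_k^*(x_{ki})$ as input to Proposition~\ref{multiplication-involution} to obtain the Vieta relations. You run the logic the other way: you derive the Vieta form of $\iota_k$ directly from the quadratic equation and read off the class from it. That argument is correct and self-contained. Just do not cite Proposition~\ref{multiplication-involution} in support of it, since that result depends on the present theorem.

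Three steps should be tightened.

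\emph{Where the formula is valid.} The formula $[R_{(k,2)}x_{k1}:R_{(k,0)}x_{k0}]$ is valid only on $U_{k0}\cap U_{k1}$, not on $U_{k0}$. On $X\cap\{x_{k1}=0\}$ the equation forces $R_{(k,0)}=0$, so both entries vanish identically along that divisor. Set $B:=\{R_{(k,0)}=R_{(k,1)}=R_{(k,2)}=0\}$ and $U:=X\setminus(\pi'_k)^{-1}(B)$. Regularity of $\iota_k$ on $U$ needs either the other Vieta expressions, e.g.\ $[-R_{(k,1)}x_{k1}-R_{(k,0)}x_{k0}:R_{(k,0)}x_{k1}]$, or the fact that $\pi'_k$ is a finite flat double cover of normal varieties over the complement of $B$.

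\emph{Why $\iota_k$ is a pseudo-automorphism.} Codimension-$2$ indeterminacy alone does not make a birational involution an isomorphism in codimension one; the quadratic Cremona involution of $\mathbb{P}^2$ contracts lines. The real reason is that $U$ is $\iota_k$-stable, because $\pi'_k\circ\iota_k=\pi'_k$. Hence $\iota_k|_U$ is an automorphism of $U$, and $X\setminus U$ has codimension $2$.

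\emph{The divisor identity.} Writing $\iota_k^*x_{k0}\propto R_{(k,2)}/x_{k0}$ already presupposes $\iota_k^*\mathcal O_X(H_k)\cong\mathcal O_X(\deg R_{(k,2)}-H_k)$, which is what you want to prove. The real content is the identity
\[
D_{k0}+\iota_k^*D_{k0}=\operatorname{div}_X\bigl(R_{(k,2)}\bigr)=(\pi'_k)^*Z\bigl(R_{(k,2)}\bigr),\qquad D_{k0}:=X\cap\{x_{k0}=0\},
\]
where $Z(R_{(k,2)})\subset(\mathbb{P}^1)^{m-1}\times\mathbb{P}^n$. Your effectivity remark gives $\operatorname{div}_X(R_{(k,2)})\ge D_{k0}$. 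That the residual divisor is exactly $\iota_k^*D_{k0}$ follows from three facts:
\begin{enumerate}
\item $\pi'_k$ has degree $2$;
\item $\pi'_k$ contracts no divisor, since its positive-dimensional fibres lie over $B$;
\item over a general point of $Z(R_{(k,2)})$ there are two distinct preimages, $[0:1]$ and $[-R_{(k,1)}:R_{(k,0)}]$, and the second sheet is $\iota_k(D_{k0})=\iota_k^*D_{k0}$.
\end{enumerate}
This is the standard ``$D+\iota^*D=\pi^*\pi_*D$'' computation for covering involutions, and it is the precise form of your ``minus the common components''.
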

	
	\begin{proposition}\label{multiplication-involution} Let $X$ be a Wehler type hypersurface and let $\iota_k$ be the $k$-th covering involution of $X$. Then, inside the Cox ring, we have the relation
		\[
		\iota_k^*(x_{ki}) x_{ki} - R_{(k,2-2i)} = 0.
		\]
	\end{proposition}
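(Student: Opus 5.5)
The plan is to verify the identity first at the level of rational functions on $X$, using Vieta's formula for the quadratic $F$ in the $k$-th $\mathbb P^1$-coordinate, and then to upgrade it to an identity of homogeneous elements of $\Cox(X)$ by comparing divisors. Since $\iota_k$ is a pseudo-automorphism, Lemma~\ref{lem:transport-sections} identifies sections of $\mathcal O_X(H_k)$ with sections of $\mathcal O_X(\iota_k^*H_k)$. Concretely, $\iota_k^*(x_{ki})$ is the section whose divisor is $\iota_k^{-1}\{x_{ki}=0\}$, that is, the strict transform of $\{x_{ki}=0\}$ under $\iota_k$. Its class has degree $(2,\dots,-1_k,\dots,2,n+1)$ by Theorem~\ref{involution-action}. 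Hence $\iota_k^*(x_{ki})\,x_{ki}$ has degree $(2,\dots,0_k,\dots,2,n+1)$, which is exactly the degree of $R_{(k,2-2i)}$, so the claimed relation is homogeneous.

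\emph{Step 1 (Vieta).} On the dense open set where $x_{k1}\neq 0$ and $R_{(k,0)}\neq 0$, set $t=x_{k0}/x_{k1}$. The fibre of $\pi'_k$ through a point $p$ consists of the two roots of $R_{(k,0)}t^2+R_{(k,1)}t+R_{(k,2)}=0$. The coefficients $R_{(k,j)}$ are pulled back from $(\mathbb P^1)^{m-1}\times\mathbb P^n$, so they are $\iota_k$-invariant, and we get
\[
t\cdot \iota_k^*t=\frac{R_{(k,2)}}{R_{(k,0)}}
\]
as rational functions on $X$.

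\emph{Step 2 (divisors).} Intersecting $\{x_{k0}=0\}$ with $X$ reduces $F$ to $R_{(k,2)}x_{k1}^2$. Therefore on $X$ we have
\[
\operatorname{div}(R_{(k,2)})=\{x_{k0}=0\}+\iota_k^{-1}\{x_{k0}=0\}.
\]
Indeed, over a general point of $\{R_{(k,2)}=0\}$ one root of the quadratic is $x_{k0}=0$ and the other root is its $\iota_k$-image. Multiplicities are one for very general $F$, which I would check by a local computation. Hence $R_{(k,2)}/x_{k0}$ is a regular homogeneous element of $\Cox(X)$ with the same divisor and the same degree as $\iota_k^*(x_{k0})$. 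By Lemma~\ref{domain}, $\Cox(X)$ is a normal UFD with $H^0(X,\mathcal O_X)=\mathbb K$. So the two elements agree up to a nonzero scalar, giving $\iota_k^*(x_{k0})\,x_{k0}=c_0R_{(k,2)}$. Symmetrically, $\iota_k^*(x_{k1})\,x_{k1}=c_1R_{(k,0)}$.

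\emph{Step 3 (normalizing the scalars).} Dividing the two identities and comparing with Step~1 gives
\[
\frac{\iota_k^*x_{k0}}{\iota_k^*x_{k1}}=\frac{c_0}{c_1}\cdot\frac{R_{(k,2)}\,x_{k1}}{R_{(k,0)}\,x_{k0}}=\frac{c_0}{c_1}\cdot\frac{R_{(k,2)}}{R_{(k,0)}}\cdot\frac1t .
\]
On the other hand, Step~1 says $\iota_k^*t=R_{(k,2)}/(R_{(k,0)}\,t)$, and the left-hand side is $\iota_k^*t$. Hence $c_0=c_1$. The pullback map on $H^0(X,\mathcal O_X(H_k))$ is only defined up to the choice of identification $\mathcal O_X(\iota_k^*H_k)\cong\iota_k^*\mathcal O_X(H_k)$, which is a global scalar. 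We fix that identification so that $c_0=c_1=1$, which yields the stated relation for both $i=0,1$ simultaneously.

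The main obstacle is making Step~3 rigorous: one has to pin down precisely what $\iota_k^*$ means on sections inside $\Cox(X)$, so that a single scalar normalization works uniformly in both $i=0$ and $i=1$. One also has to confirm the reducedness claim in Step~2. For the latter, I would argue that the branch locus of $\pi'_k$ meets $\{R_{(k,2)}=0\}$ properly for very general $F$.
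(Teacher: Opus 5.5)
Your proposal is correct. It rests on the same input as the paper, namely Vieta's product formula for the two roots of the quadratic in the $k$-th $\mathbb{P}^1$-coordinate, but it upgrades that formula to an identity in $\Cox(X)$ by a different mechanism.

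The paper argues locally. On $U_{k0}\cap U_{k1}$ it writes the involution as $[x_{k0}:x_{k1}]\mapsto[R_{(k,2)}/x_{k0}:R_{(k,0)}/x_{k1}]$ and identifies this pair with $[\iota_k^*(x_{k0}):\iota_k^*(x_{k1})]$. It then reads off $\iota_k^*(x_{ki})\,x_{ki}=R_{(k,2-2i)}$ there ``by the degree computation'', and extends to all of $X$ because $\Cox(X)$ is a domain. That projective identity only determines the pair $(\iota_k^*(x_{k0}),\iota_k^*(x_{k1}))$ up to a common degree-zero factor. The paper tacitly takes this factor to be $1$, i.e.\ it builds the normalization of $\iota_k^*$ on $H^0(X,\mathcal O_X(H_k))$ into the coordinate formula.

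Your global computation $\operatorname{div}(R_{(k,2)}|_X)=D_{k0}+\iota_k(D_{k0})$, with $D_{k0}:=\{x_{k0}=0\}\cap X$, is exactly what shows that $R_{(k,2)}/x_{k0}$ is regular and that this factor is a nonzero constant. Vieta then gives $c_0=c_1$, and the normalization becomes an explicit choice. That choice is legitimate: $\Cl(X)$ is free on $H_1,\dots,H_{m+1}$, so the scalars in the identifications $\iota_k^*\mathcal O_X(D)\cong\mathcal O_X(\iota_k^*D)$ can be fixed on generators and extended multiplicatively to a graded automorphism of $\Cox(X)$. So your route is longer but makes explicit what the statement asserts, whereas the paper's is shorter and leaves the scalar implicit.

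The reducedness check you flag goes through. At a general point of $D_{k0}$ one has $\partial F/\partial x_{k0}=R_{(k,1)}x_{k1}\neq0$ for general $F$, so $\pi'_k$ is \'etale there and the two sheets over $\{R_{(k,2)}=0\}$ are distinct.

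You could also avoid multiplicities altogether. By Vieta, the degree-zero function $g=\iota_k^*(x_{k0})\,x_{k0}/R_{(k,2)}$ equals $\iota_k^*(x_{k1})\,x_{k1}/R_{(k,0)}$. Hence $\operatorname{div}(g)$ is supported in $(\pi'_k)^{-1}\bigl(\{R_{(k,0)}=0\}\cap\{R_{(k,2)}=0\}\bigr)$, which has codimension two in $X$, so $g$ is constant.
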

	\begin{proof}
		
		Let $X$ be defined in $\mathbb{P}$ by the polynomial
		$$F:=R_{(k,0)}x _{k0}^2+R_{(k,1)}x_{k0}x_{k1}+R_{(k,2)}x_{k1}^2 \quad \text{where }R_{(k,i)}\in S_{(2,\dots,2,0_k,2,\dots,2,n+1)}.$$
		
		The birational automorphism $\iota_k$ is the involution associated to the projection forgetting the $k$-th $\mathbb{P}^1$-coordinate; it is generically a double cover onto its image. Explicitly, it is defined by the following actions. Suppose $x_{k1}\neq0$, and we set $r_k:=\frac{x_{k0}}{x_{k1}}$. Then, the involution $\iota_k^*$ is an action that exchanges the roots of the equation of $$R_{(k,0)}r_k^2+R_{(k,1)}r_k+R_{(k,2)}=0.$$
		
		In other words, let $r'_k=\frac{\iota_k^*(x_{k0})}{\iota_k^*(x_{k1})}$, then:
		\begin{equation}\label{Vieta-relation} r_k+r'_k=-\frac{R_{(k,1)}}{R_{(k,0)}},\quad r_kr'_k=\frac{R_{(k,2)}}{R_{(k,0)}}\quad \text{with } \iota_k^*(r_k)=r'_k,\quad \iota_k^*(r'_k)=r_k.\end{equation}
		Note that $\iota_k^*$ fixes all other variables. Here by Proposition~\ref{involution-action}, the degree of $\iota_k^*({x_{ki}})$ becomes $$(2,\dots,2,-1_k,2,\dots,2,n+1),$$ where $k$-th entry is $-1$. Hence on the open set $U_{k0}\cap U_{k1}$, the involution acts as $$\iota_{k}^*\colon [x_{k0}:x_{k1}]=\left[\frac{x_{k0}}{x_{k1}}:1\right]\mapsto \left[\frac{\iota_k^*(x_{k0})}{\iota_k^*(x_{k1})}:1\right]=\left[\frac{R_{(k,2)}x_{k1}}{R_{(k,0)}x_{k0}}:1\right]=\left[\frac{R_{(k,2)}}{x_{k0}}:\frac{R_{(k,0)}}{x_{k1}}\right]=\left[\iota_k^*(x_{k0}):\iota_k^*(x_{k1})\right]. $$ Thus, $H^0(X,\mathcal{O}(H_k))=\operatorname{span}\{x_0,x_1
		\}$ and consequently, $H^0(X,\mathcal{O}(\iota_k^*H_k))=\operatorname{span}\{\iota_k^*(x_{k0}),\iota_k^*(x_{k1})\}$. 
		By the previous degree computation, on the intersection 
		$U_{k0}\cap U_{k1}$ we have the identity
		\begin{equation}\label{eq:rel}
			\iota_k^*(x_{ki})\, x_{ki} - R_{(k,2-2i)} = 0.
		\end{equation} 
		The open set $U_{k0}\cap U_{k1}$ is nonempty and lies inside the smooth locus where $\iota_k$ is regular. Since both sides of \eqref{eq:rel} are homogeneous elements of $\operatorname{Cox}(X)$, an equality of homogeneous rational functions that holds on a dense open subset must hold in $\Cox(X)$ by Lemma~\ref{domain}.  
	\end{proof}

	\section{The {Nef(X)}-graded section ring}
	For a Wehler type hypersurface, the class group $\Cl(X)$ is torsion-free, so we do not need to keep track of torsion
	classes. Accordingly, we set
	\[
	\mathcal{R}_0 \;:=\; \bigoplus_{[D]\in \Nef(X)\cap \Cl(X)} H^0\!\bigl(X,\mathcal O_X(D)\bigr)
	\]
	as the $\Nef(X)$-graded section ring. 
	We determine the generators of this ring and identify all its relations. Specifically, we first show that every global section in ample multi-degrees is the restriction of a polynomial on the ambient space (Propositions~\ref{surjectivity}--\ref{0part}). 
	This breaks down precisely when one of the degrees is zero. In that case, there exists an additional section that does not arise from restriction. We package this section into a generator $\zeta_k$ (Definition~\ref{def:zeta}) and determine the structure of the section ring in degrees with exactly one vanishing coordinate (Proposition~\ref{zeta-relation}).
	With these ingredients we give two presentations of $\mathcal{R}_0$. One of them corresponds to the $s=1$ graded part of Theorem \ref{thm:cox-limit} in the introduction which involves ideal saturation. The second replaces saturation with an explicit list of relations, see Theorem~\ref{thm:Nef-no-saturation}.
	
	\medskip
	The following proposition is a consequence of Lemma~\ref{Neron-Severi} and the semiampleness of $H_k$. 
	
	\begin{proposition}
		The $\Nef(X)$-graded section ring $\mathcal{R}_0$  of a Wehler type hypersurface $X$ is a finitely generated $\mathbb{K}$-algebra.
	\end{proposition}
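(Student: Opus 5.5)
The plan is to show that $\mathcal R_0$ is a finitely generated multisection ring of semiample divisors. Then finite generation follows from a standard argument: a cover by finitely many semigroup-graded Veronese-type pieces, each finitely generated.

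First I identify the nef cone. By Lemma~\ref{Neron-Severi}, $\Cl(X)$ is free with basis $H_1,\dots,H_{m+1}$ (with $H_{m+1}$ the $\mathbb P^n$ class). Each $H_i$ is semiample, being pulled back from $\mathcal O(1)$ on a projective space. Hence the cone $\operatorname{Cone}(H_1,\dots,H_{m+1})$ lies in $\Nef(X)$. Conversely, for each $i$ there is a curve $C_i\subset X$ with $H_i\cdot C_i>0$ and $H_j\cdot C_i=0$ for $j\neq i$: take a fibre of the projection of $X$ to the remaining factors. For $i\le m$ this is the fibre of $\pi'_i$ over a point of its image. For $i=m+1$, fix general points in $(\mathbb P^1)^m$; the fibre is a degree $n+1$ hypersurface in $\mathbb P^n$, of dimension $n-1\ge1$, so it contains curves. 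Thus $\Nef(X)=\operatorname{Cone}(H_1,\dots,H_{m+1})$, the simplicial cone on the basis. So $\Nef(X)\cap\Cl(X)\cong\mathbb Z_{\ge0}^{m+1}$, and
\[
\mathcal R_0=\bigoplus_{a\in\mathbb Z_{\ge0}^{m+1}} H^0\bigl(X,\mathcal O_X(a_1H_1+\dots+a_{m+1}H_{m+1})\bigr)
\]
is the multisection ring $\mathcal R(X;H_1,\dots,H_{m+1})$, which is torsion-free-graded as in Remark~\ref{rem:choiceK}.

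Next I prove finite generation. Each $H_i$ is basepoint free, so $L:=H_1+\dots+H_{m+1}$ is basepoint free, and in fact ample, being the restriction of an ample class on $\mathbb P$. I use the standard fact that a multisection ring of finitely many semiample line bundles is finitely generated (e.g.\ \cite[Lemma~2.8]{HK00} or \cite{ADHL15}). If one prefers to argue directly, consider the projective bundle $Y=\mathbb P_X(\bigoplus_i\mathcal O_X(-H_i))$. Then $\mathcal R_0\cong\bigoplus_{k\ge0}H^0(Y,\mathcal O_Y(k))$. Since each $\mathcal O_X(H_i)$ is globally generated, $\mathcal O_Y(1)$ is globally generated, hence semiample. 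The section ring of a semiample line bundle on a projective variety is finitely generated, being the ring of its Stein-factorised ample model up to Veronese, by Zariski's theorem. So $\mathcal R_0$ is finitely generated.

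An alternative, more concrete route uses the Morrison--Kawamata structure. Since $\Nef(X)=\Pi$ is a rational polyhedral cone consisting of semiample divisors (Theorem~\ref{Kawamata-Morrison} and Definition~\ref{def:KMMDspace}(4) with $f_i=\mathrm{id}$), one can cite \cite[Corollary~1.1.9]{BCHM10} after perturbing by $\epsilon A$. The argument is then as in Theorem~\ref{thm:direct-limit-mov}(3), with a stabilisation observation in place of the limit, because nef degrees need no $\epsilon$. I will use the projective-bundle argument instead, since it avoids that subtlety.

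The only real point to check is the equality $\Nef(X)=\operatorname{Cone}(H_i)$, specifically the existence of the contracted curves $C_i$. I expect this to be the main obstacle, mainly in the case $i=m+1$ when $n$ is small, where one should verify that the fibre hypersurface is nonempty of positive dimension. In fact, finite generation only needs $\Nef(X)$ to be rational polyhedral and spanned by semiample classes. The inclusion $\operatorname{Cone}(H_i)\subseteq\Nef(X)$ plus equality is what makes the grading monoid exactly $\mathbb Z_{\ge0}^{m+1}$, and Theorem~\ref{Kawamata-Morrison} with \cite{Yáñ22} already identifies $\Nef(X)$ as this cone, so I can fall back on that citation.
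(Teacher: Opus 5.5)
Your finite-generation argument is essentially the paper's. The paper derives the proposition in one line from Lemma~\ref{Neron-Severi} and the semiampleness of the $H_k$: it treats $\mathcal R_0$ as the multisection ring of the basepoint-free classes $H_1,\dots,H_{m+1}$. This implicitly uses $\Nef(X)=\operatorname{Cone}(H_1,\dots,H_{m+1})$, as the paper also does later when it takes $w^*H_1,\dots,w^*H_{m+1}$ as the extreme rays of the chambers. Your projective-bundle argument, with $\mathcal O_Y(1)$ globally generated and hence having a finitely generated section ring, is a correct way to fill in the step ``a multisection ring of semiample line bundles is finitely generated''.

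The one step that fails as written is your construction of the curves $C_i$ for $i\le m$. The map $\pi'_i$ is generically a double cover of $(\mathbb P^1)^{m-1}\times\mathbb P^n$, so its fibre over a general point is two points, not a curve. The curves you need are the special fibres $C_i=\mathbb P^1\times\{q\}$ with $q$ in the common zero locus of $R_{(i,0)},R_{(i,1)},R_{(i,2)}$. Over such $q$ the quadratic form in $(x_{i0},x_{i1})$ vanishes identically, so the whole line lies in $X$, and $H_i\cdot C_i=1$, $H_j\cdot C_i=0$ for $j\neq i$. This locus is nonempty because the $R_{(i,j)}$ define three ample divisors on $(\mathbb P^1)^{m-1}\times\mathbb P^n$, which has dimension $m+n-1\ge 3$. (Alternatively: if $\pi'_i$ were finite, $\iota_i$ would extend to a nontrivial automorphism of the normal variety $X$, contradicting $\Aut(X)=1$ in Theorem~\ref{thm:Wehler-bir}.) Your case $i=m+1$ is fine as stated for $n\ge 2$. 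So the real subtlety is at $i\le m$, not where you placed the ``main obstacle''. With this repair, or with your fallback citation for the nef cone, the proof is complete.
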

	
	In the next proposition, we compute the $\Amp(X)$-graded section ring, which we denote by $\mathcal{R}_0'$. We denote by \(S_{(a_1,\dots,a_m,b)}\) the space of multihomogeneous polynomials
	of multidegree \((a_1,\dots,a_m,b)\) 
	\[
	S_{(a_1,\dots,a_m,b)}
	\;:=\;
	H^0\bigl(\mathbb{P},\mathcal{O}_{\mathbb{P}}(a_1,\dots,a_m,b)\bigr).
	\]
	\begin{proposition}\label{surjectivity} Let $X$ be a Wehler type hypersurface defined by an equation $F$ in $\mathbb{P}$. Let multi-indices be $(a_1,\dots,a_m,b),(c_1,\dots,c_m,d)\in\mathbb{Z}^{m+1}_{>0}$.
		\begin{enumerate}
			\item  The multiplication map $$H^0(X,\mathcal{O}(a_1,\dots ,a_m, b))\otimes H^0(X,\mathcal{O}(c_1,\dots, c_m, d))\rightarrow H^0(X,\mathcal{O}(a_1+c_1,\dots ,a_m+c_m,b+d ) )$$ is surjective.
			\item 
			The $\Amp(X)$-graded section ring $\mathcal{R}_0'$ is isomorphic to
			\[
			\mathcal{R}'_{0}
			:= \mathbb{K}\oplus
			\bigoplus_{\substack{a_1,\dots,a_m,b>0}}
			H^0\!\left(X,\mathcal O_X(a_1,\dots,a_m,b)\right)\cong\mathbb{K}\oplus\bigoplus_{a_1,\dots,a_m,b>0} S_{(a_1,\dots ,a_m,b)}/\langle F \rangle.
			\]    
		\end{enumerate}
	\end{proposition}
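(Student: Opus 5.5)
The plan is to compare sections on $X$ with sections on the ambient space $\mathbb{P}$ through the ideal sheaf sequence, and to reduce everything to cohomology vanishing on products of projective spaces via the Künneth formula. Since $X\in|\mathcal O_{\mathbb P}(2,\dots,2,n+1)|$, for every multidegree $\mathbf a=(a_1,\dots,a_m,b)$ we have the exact sequence
\[
0\to \mathcal O_{\mathbb P}(\mathbf a-(2,\dots,2,n+1))\xrightarrow{\cdot F}\mathcal O_{\mathbb P}(\mathbf a)\to \mathcal O_X(\mathbf a)\to 0 .
\]
The first step is to show that the restriction $S_{\mathbf a}\to H^0(X,\mathcal O_X(\mathbf a))$ is surjective whenever all $a_i,b>0$. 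Its cokernel injects into $H^1(\mathbb P,\mathcal O_{\mathbb P}(a_1-2,\dots,a_m-2,b-n-1))$. By Künneth this group is a sum of tensor products of $H^{q_i}(\mathbb P^1,\mathcal O(a_i-2))$ and $H^{q}(\mathbb P^n,\mathcal O(b-n-1))$ with $\sum q_i+q=1$. Each summand therefore needs exactly one factor in $H^1$, with all other factors in $H^0$.

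The cases are as follows. An $H^1$ on $\mathbb P^n$ with $n\ge 2$ always vanishes. An $H^1(\mathbb P^1,\mathcal O(a_i-2))$ is nonzero only if $a_i\le 0$, which is excluded since $a_i\ge1$. When $n=0$ there is no $\mathbb P^n$ factor and the same argument applies. Hence the cokernel vanishes. The kernel of restriction is $F\cdot S_{\mathbf a-\deg F}$, which is the degree-$\mathbf a$ part of $\langle F\rangle$. This gives $H^0(X,\mathcal O_X(\mathbf a))\cong S_{\mathbf a}/\langle F\rangle_{\mathbf a}$ for all positive multidegrees.

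Part (1) then follows from part of this identification. The multiplication map $S_{\mathbf a}\otimes S_{\mathbf c}\to S_{\mathbf a+\mathbf c}$ is surjective, since every monomial of degree $\mathbf a+\mathbf c$ factors as a monomial of degree $\mathbf a$ times one of degree $\mathbf c$ when all entries are nonnegative. Composing with the surjection $S_{\mathbf a+\mathbf c}\to H^0(X,\mathcal O_X(\mathbf a+\mathbf c))$ and using the commutative square with restriction gives the surjectivity claimed in (1).

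For part (2), note that $\Amp(X)$ consists exactly of the classes with all coordinates positive. This holds because $\Nef(X)$ is spanned by the $H_i$, each of which is semiample and contracts curves on which the others are positive; alternatively it follows from Theorem~\ref{Kawamata-Morrison}. So the $\Amp(X)$-graded ring, with the degree-$0$ piece $\mathbb K$ adjoined, is the stated direct sum. The identification of each graded piece above is compatible with multiplication, since restriction is a ring map. This yields the isomorphism with $\mathbb K\oplus\bigoplus S_{\mathbf a}/\langle F\rangle$.

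The main obstacle is the careful Künneth bookkeeping for $H^1$, in particular confirming that the negative twist $b-n-1$ on $\mathbb P^n$ never contributes. It could contribute only through $H^n$, and that would force the total cohomological degree to be at least $n\ge2$ in $H^1$, which is impossible.
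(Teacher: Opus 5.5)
Your proof is correct and follows the paper's argument essentially step for step. The ideal-sheaf sequence and the K\"unneth formula give $H^1(\mathbb P,\mathcal O_{\mathbb P}(a_1-2,\dots,a_m-2,b-n-1))=0$ for positive multidegrees, so restriction $S_{\mathbf a}\to H^0(X,\mathcal O_X(\mathbf a))$ is surjective with kernel $F\cdot S_{\mathbf a-\deg F}$, and (1) follows from the commutative square with the surjective ambient multiplication map.

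The only soft spot is your side justification that $\Amp(X)$ is the open positive orthant, which the paper simply takes for granted. To show a nef class has all coordinates $\ge 0$, you need, for each $j$, a curve contracted by all $H_i$ with $i\neq j$. For $j\le m$ this can be a full $\mathbb P^1$-fibre of the projection forgetting the $j$-th factor over a common zero of $R_{(j,0)},R_{(j,1)},R_{(j,2)}$; for $j=m+1$ it can be a curve in a fibre of $X\to(\mathbb P^1)^m$. Curves contracted by a single $H_i$, as in your argument, only show that no $H_i$ is ample.
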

	\begin{proof}
		Consider the ideal sheaf exact sequence
		\begin{equation*}
			\begin{tikzcd}[column sep=small, font=\small]
				0 &
				{S_{(a_1-2,\dots,a_m-2 ,b-n-1)}} &
				{S_{(a_1,\dots ,b)}} &
				{H^0(X,\mathcal{O}_X(a_1,\dots, a_m ,b))} &
				{\textstyle H^1\big(\mathbb{P},\mathcal{O}_{\mathbb{P}}(a_1-2,\dots,a_m-2 ,b-n-1)\big).}
				\arrow[from=1-1, to=1-2]
				\arrow["{\cdot F}", from=1-2, to=1-3]
				\arrow[from=1-3, to=1-4]
				\arrow[from=1-4, to=1-5]
			\end{tikzcd}
		\end{equation*}
		
		The map $S_{(a_1,\dots, a_m,b)}\to H^0(X,\mathcal{O}_X(a_1,\dots, a_m ,b))$ is surjective exactly when $$H^1(\mathbb{P},\mathcal{O}(a_1-2,\dots,a_m-2 ,b-n-1))=0.$$ By Serre duality we have $H^1(\mathbb{P}^1,\mathcal{O}(a))\cong H^0(\mathbb{P}^1,\mathcal{O}(-a-2))^{\vee}$. In particular $H^1(\mathbb{P}^1,\mathcal{O}(a))=0$ for $a> -2$. Using the Künneth formula, we have 
		\begin{multline*}
			H^1\big(\mathbb{P}, \mathcal{O}(a_1-2, \dots, a_m-2, b-n-1)\big) \cong \\
			\bigoplus_{\sum i_k = 1} 
			H^{i_1}(\mathbb{P}^{1}, \mathcal{O}(a_1-2)) \otimes \dots \otimes 
			H^{i_m}(\mathbb{P}^{1}, \mathcal{O}(a_m-2)) \otimes 
			H^{i_{m+1}}(\mathbb{P}^{n}, \mathcal{O}(b-n-1)).
		\end{multline*}
		This first cohomology group is nonzero if and only if for a fixed $k$ we have $a_k=0$, $a_{i}\geq 2$ for $i\neq k$, and $b\geq n+1$. Indeed, we observe that when $a_k=0$
		\begin{equation}\label{eq:H^1}
			\begin{aligned}
				H^1(\mathbb{P},\mathcal{O}(a_1-2,\dots, -2_k ,\dots,a_m-2,b-n-1))&\cong
				H^0(\mathbb{P}^1,\mathcal{O}(a_1-2))\otimes\cdots
				\otimes H^1(\pp^1,\mathcal{O}(-2)) \otimes \cdots\\
				&\quad \otimes H^0(\mathbb{P}^1,\mathcal{O}(a_m-2))\otimes H^0(\mathbb{P}^n,\mathcal{O}(b-n-1)).
			\end{aligned}
		\end{equation}  
		This isomorphism shows precisely the vanishing condition of this first cohomology group. In particular, this implies that when $a_1,\dots, a_m,b>0$ every element of $H^0(X,\mathcal{O}(a_1,\dots, a_m,b))$ is given by the restriction of regular sections of the ambient space $\mathbb{P}$. 
		By the commutativity of the following diagram:

		\[\begin{tikzcd}
			{H^0(\mathbb{P},\mathcal{O}(a_1,\dots ,b))\otimes H^0(\mathbb{P},\mathcal{O}(c_1,\dots ,d))} & {H^0(\mathbb{P},\mathcal{O}(a_1+c_1,\dots ,b+d))} \\
			{H^0(X,\mathcal{O}(a_1,\dots ,b))\otimes H^0(X,\mathcal{O}(c_1,\dots ,d))} & {H^0(X,\mathcal{O}(a_1+c_1,\dots ,b+d)),}
			\arrow[two heads, from=1-1, to=1-2]
			\arrow["{\text{res}\otimes\text{res}}", from=1-1, to=2-1]
			\arrow[from=1-2, to=2-2,"{\text{res}}"]
			\arrow[from=2-1, to=2-2]
		\end{tikzcd}\]
		we obtain the desired surjection in (1).
		Furthermore, we also observe that 
		$$ \bigoplus_{(a_1,\dots, a_m,b)\in \mathbb{Z}^{m+1}_{>0}} S_{(a_1,\dots, a_m,b)}
		\to \bigoplus_{(a_1,\dots,a_m,b)\in\mathbb{Z}_{>0}^{m+1}} H^0(X,\mathcal{O}_X(a_1,\dots, a_m ,b))$$ is surjective. As $X$ is a hypersurface, the kernel of this map on each homogeneous component is generated by $F$. This shows part $(2)$.
	\end{proof}
	We observe that if all $a_i$ and $b$ are positive, then every global section is obtained by 
	restricting sections of the ambient space $\mathbb{P}$. 
	However, if some $a_i=0$, the next proposition shows that not every section in  $H^0\!\left(X,\mathcal{O}(a_1,\dots,a_m,b)\right)$ arises as the restriction of a section from $\mathbb{P}$. 
	\begin{proposition}\label{0part}
		Let $X$ be a Wehler type hypersurface. The following statements hold:
		\begin{enumerate}
			\item $h^0(X,\mathcal{O}_X(2,\dots,2,0_k,2,\dots,2,n+1))=1+\dim_{\mathbb{K}}S_{(2,\dots,2,0_k,2,\dots,2,n+1)}$.
			\item $H^0(X,\mathcal{O}_X(2,\dots,2,0_k,2,\dots,2,n+1))/ S_{(2,\dots,2,0_k,2,\dots,2,n+1)}$ is generated by $x_{k0}\iota_k^*(x_{k1})$. 
		\end{enumerate}
		
	\end{proposition}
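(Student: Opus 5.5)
The plan is to reuse the ideal sheaf sequence from the proof of Proposition~\ref{surjectivity} in the multidegree $\delta_k:=(2,\dots,2,0_k,2,\dots,2,n+1)$ and to compute the cokernel of restriction exactly. Twisting $0\to\mathcal O_{\mathbb P}(-X)\to\mathcal O_{\mathbb P}\to\mathcal O_X\to 0$ by $\mathcal O_{\mathbb P}(\delta_k)$ gives
\[
0\to S_{(0,\dots,0,-2_k,0,\dots,0,0)}\to S_{\delta_k}\to H^0(X,\mathcal O_X(\delta_k))\to H^1\bigl(\mathbb P,\mathcal O(0,\dots,-2_k,\dots,0)\bigr)\to H^1(\mathbb P,\mathcal O(\delta_k)).
\]
The first term vanishes because $\mathcal O_{\mathbb P^1}(-2)$ has no sections. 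By \eqref{eq:H^1} and K\"unneth, the fourth term is $H^1(\mathbb P^1,\mathcal O(-2))\otimes\bigotimes_{i\ne k}H^0(\mathbb P^1,\mathcal O)\otimes H^0(\mathbb P^n,\mathcal O)\cong\mathbb K$. The last term vanishes, since every factor of $\mathcal O(\delta_k)$ has degree $\ge 0$ and so all its higher cohomology vanishes by K\"unneth. The sequence therefore reads $0\to S_{\delta_k}\to H^0(X,\mathcal O_X(\delta_k))\to\mathbb K\to 0$, and taking dimensions proves (1).

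For (2) it remains to produce one section of degree $\delta_k$ that is not a restriction. The candidate is $\zeta_k:=x_{k0}\,\iota_k^*(x_{k1})$. By Theorem~\ref{involution-action}, $\deg\iota_k^*(x_{k1})=(2,\dots,-1_k,\dots,2,n+1)$, so $\deg\zeta_k=\delta_k$. It is a genuine element of $\Cox(X)$ in that degree, being a product of two elements of $\Cox(X)$ (Lemma~\ref{domain}).

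The main step is to show that $\zeta_k\notin S_{\delta_k}$, meaning it is not the restriction of an ambient polynomial. Elements of $S_{\delta_k}$ have degree $0$ in the $k$-th factor, so they are pulled back along $\pi'_k$ and are $\iota_k^*$-invariant. I would therefore compute $\iota_k^*\zeta_k=\iota_k^*(x_{k0})\,x_{k1}$. Suppose $\zeta_k$ were invariant. Dividing by $x_{k1}\iota_k^*(x_{k1})$ on $U_{k0}\cap U_{k1}$ would give $r_k=r'_k$, which contradicts the fact that the roots are generically distinct: \eqref{Vieta-relation} shows $r_k-r'_k$ vanishes only on the ramification divisor. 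Hence $\zeta_k\notin S_{\delta_k}$. Its class in the one-dimensional quotient from (1) is then nonzero and spans it, proving (2).

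As a consistency check I would also note that, by Proposition~\ref{multiplication-involution}, $\zeta_k+\iota_k^*\zeta_k=-R_{(k,1)}\in S_{\delta_k}$ (up to the sign conventions of the Vieta relations). So $\iota_k^*$ acts on the quotient by $-1$, matching the $H^1$ description.

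The obstacle I expect is making the invariance argument rigorous inside $\Cox(X)$. One has to justify that pullback of $S_{\delta_k}$ along $\pi'_k$ is $\iota_k^*$-fixed as sections, not merely as functions up to a unit. This should follow because $\iota_k^*$ fixes every variable other than $x_{k0},x_{k1}$, and because the equality can be checked on the dense open set $U_{k0}\cap U_{k1}$ using Lemma~\ref{domain}, as in the proof of Proposition~\ref{multiplication-involution}.
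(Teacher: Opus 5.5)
Your part (1) is the paper's argument (ideal-sheaf sequence plus K\"unneth). You also supply a step the paper leaves implicit: since $H^1(\mathbb P,\mathcal O(\delta_k))=0$, the connecting map onto $H^1(\mathbb P,\mathcal O(0,\dots,-2_k,\dots,0))\cong\mathbb K$ is surjective.

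For (2) your route is genuinely different, and it is sound. The paper assumes $\zeta_k=h|_X$, multiplies by $x_{k1}$ to get $R_{(k,0)}x_{k0}=x_{k1}\,h|_X$, and looks for a contradiction from vanishing along $D_{k1}=\{x_{k1}=0\}\cap X$. You instead use that $S_{\delta_k}$ is fixed pointwise by $\iota_k^*$ while $\zeta_k$ is not, because invariance would force $r_k=\iota_k^*(r_k)$ in $\mathbb K(X)$. The obstacle you flag is harmless: the proof of Proposition~\ref{multiplication-involution} normalizes $\iota_k^*$ to fix the $x_{lj}$ with $l\neq k$ and the $y_j$. That normalization also gives $(\iota_k^*)^2=\mathrm{id}$ via $\iota_k^*(x_{k1})=R_{(k,0)}/x_{k1}$, so the invariance of $S_{\delta_k}$ is immediate.

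Your version is also the more robust one. As literally written, the paper's contradiction does not materialize. Since $F|_{\{x_{k1}=0\}}=R_{(k,0)}x_{k0}^2$, the polynomial $R_{(k,0)}$ vanishes identically on $D_{k1}$ for every $F$; indeed $R_{(k,0)}=x_{k1}\iota_k^*(x_{k1})$ in $\Cox(X)$. The divisibility route can be repaired: restriction is injective in degree $(2,\dots,1_k,\dots,2,n+1)$, so the identity would already hold in the polynomial ring, where $x_{k1}\nmid R_{(k,0)}x_{k0}$.

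Your argument needs no such degree bookkeeping. Combined with your check $\zeta_k+\iota_k^*\zeta_k=-R_{(k,1)}$, it also gives a sharper statement: the decomposition $H^0(X,\mathcal O_X(\delta_k))=S_{\delta_k}\oplus\mathbb K\bigl(\zeta_k+\tfrac12R_{(k,1)}\bigr)$ into the $(+1)$- and $(-1)$-eigenspaces of $\iota_k^*$. This identifies the extra section intrinsically as the anti-invariant part, rather than merely as something not coming from the ambient space.
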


	\begin{proof}
		For (1), by applying the exact sequence of Proposition~\ref{surjectivity}, we obtain that
		\[
		H^0\bigl(X,\mathcal{O}(2,\dots,2,0_k,2,\dots,2,n+1)\bigr)
		\cong S_{(2,\dots,2,0_k,2,\dots,2,n+1)} \oplus 
		H^1\bigl(\mathbb{P},\mathcal{O}_{\mathbb{P}}(0,\dots,0,-2_k,0,\dots,0)\bigr).
		\]
		By the Künneth formula,
		\[
		H^1\bigl(\mathbb{P},\mathcal{O}_{\mathbb{P}}(0,\dots,0,-2_k,0,\dots,0,0)\bigr)
		\cong H^1\bigl(\mathbb{P}^1,\mathcal{O}_{\mathbb{P}^1}(-2)\bigr)
		\cong H^0\bigl(\mathbb{P}^1,\mathcal{O}_{\mathbb{P}^1}\bigr).
		\]
		
		For (2), by the grading computation of Proposition~\ref{involution-action}, the section
		$x_{k0}\iota_k^*(x_{k1})$ lies in
		$$H^0\!\left(X,\mathcal O_X(2,\ldots,2,0_k,2,\ldots,2,n+1)\right).$$
		We claim that $x_{k0}\iota_k^*(x_{k1})$ does not lie in the image of
		$S_{(2,\ldots,2,0_k,2,\ldots,2,n+1)}$, i.e.,\ it is not the restriction of a global
		section on the ambient space $\mathbb P$. Suppose for contradiction that $x_{k0}\iota_k^*(x_{k1})$ is the restriction of some
		$h\in H^0\!\left(\mathbb P,\mathcal O_{\mathbb{P}}(2,\ldots,2,0_k,2,\ldots,2,n+1)\right)$. Then on $X$ we may write
		\[
		x_{k0}\iota_k^*(x_{k1}) = h|_X .
		\]
		Multiplying both sides by $x_{k1}$ and using the defining quadratic relation on $X$ in the
		$(x_{k0},x_{k1})$-coordinates, we obtain in $\Cox(X)$ 
		\[
		R_{(k,0)}x_{k0} = x_{k1}\cdot (h|_X).
		\]
		Hence $R_{(k,0)}x_{k0}$ vanishes along the divisor
		$D_{k1}:=\{x_{k1}=0\}\cap X$ i.e., $$(R_{(k,0)}x_{k0})|_{D_{k1}} \equiv 0.$$
		Since $x_{k0}$ does not vanish identically on $D_{k1}$, this forces
		$R_{(k,0)}|_{D_{k1}}\equiv 0$. However, for a very general choice of the coefficient $R_{(k,0)}$, this is a contradiction. Therefore, $x_{k0}\iota_k^*(x_{k1})$ is not the
		restriction of a regular section on $\mathbb P$, and the corresponding quotient
		space is one-dimensional, generated by the class of this section.
	\end{proof}
	\begin{definition} \label{def:zeta}
		For each $k \in \{1, \dots, m\}$, we define the section 
		\[
		\zeta_k := x_{k0} \iota_k^*(x_{k1}) \in H^0(X, \mathcal{O}_X(2, \dots, 2, 0_k,2,\dots, 2, n+1)).
		\]
	\end{definition}
	By Proposition~\ref{0part}, the class of $\zeta_k$ generates the quotient space 
	\[
	H^0(X, \mathcal{O}_X(2, \dots,2,0_k,2, \dots,2, n+1)) \big/ S_{(2, \dots, 0_k,2, \dots,2, n+1)}.
	\]
	In particular, $\zeta_k$ is a section that does not arise from the restriction of a section on the ambient space $\mathbb{P}$.
	
	\begin{proposition}\label{zeta-relation}
		The following statements hold:
		\begin{enumerate}
			\item The section $\zeta_k$ satisfies the quadratic relation
			\[
			\zeta_k^2 + R_{(k,1)} \zeta_k + R_{(k,0)} R_{(k,2)} = 0.
			\]
			
			\item As a graded $\mathbb{K}$-algebra, we have
			\[
			\mathbb{K}\oplus\bigoplus_{\substack{a_i,b>0\\ a_k=0}}
			H^0\bigl(X,\mathcal{O}_X(a_1,\dots,a_m,b)\bigr)
			\;\cong\; \mathbb{K}\oplus\bigoplus_{\substack{a_i,b>0\\ a_k=0}}
			S_{(a_1,\dots a_m,b)}[\zeta_k]\big/\bigl\langle \zeta_k^2+R_{(k,1)}\zeta_k+R_{(k,0)}R_{(k,2)}\bigr\rangle.
			\]
		\end{enumerate}
	\end{proposition}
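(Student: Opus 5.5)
For part (1), I will work with the Vieta relations of Proposition~\ref{multiplication-involution}, which hold in $\Cox(X)$:
\[
\iota_k^*(x_{k0})\,x_{k0}=R_{(k,2)},\qquad \iota_k^*(x_{k1})\,x_{k1}=R_{(k,0)}.
\]
I will also use the relation coming from $F$ on the pulled-back coordinates. Since $\iota_k^*$ fixes the $R_{(k,j)}$ and $\iota_k^*F=F$ vanishes on $X$, the pair $(\iota_k^*x_{k0},\iota_k^*x_{k1})$ is the other root of the quadratic. On the dense open $U_{k0}\cap U_{k1}$, write $\zeta_k=x_{k0}\iota_k^*(x_{k1})$. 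Dividing by $x_{k1}\iota_k^*(x_{k1})=R_{(k,0)}$ gives
\[
\zeta_k = R_{(k,0)}\, r_k,\qquad\text{with } r_k=\frac{x_{k0}}{x_{k1}} .
\]
Since $r_k$ satisfies $R_{(k,0)}r_k^2+R_{(k,1)}r_k+R_{(k,2)}=0$, multiplying by $R_{(k,0)}$ gives
\[
\zeta_k^2+R_{(k,1)}\zeta_k+R_{(k,0)}R_{(k,2)}=0
\]
as rational functions. Both sides are homogeneous of the same degree in $\Cox(X)$, which is a domain by Lemma~\ref{domain}, so the identity holds in $\Cox(X)$. Here I treat $\zeta_k$ as a section of the nonnegative class $(2,\dots,0_k,\dots,2,n+1)$, and $R_{(k,0)}R_{(k,2)}$ has twice that degree, consistent with the relation.

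For part (2), I will argue degree by degree for $(a_1,\dots,a_m,b)$ with $a_k=0$ and all other entries positive. The exact sequence of Proposition~\ref{surjectivity}, together with \eqref{eq:H^1}, gives
\[
h^0\bigl(X,\mathcal O(a)\bigr)-\dim S_a/\bigl(F\cdot S_{a-\deg F}\bigr)=\dim H^0(\mathbb P^1,\mathcal O)\otimes\bigotimes_{i\ne k}S_{a_i-2}\otimes S'_{b-n-1}.
\]
Note that $F\cdot S_{a-\deg F}=0$ here, since the $k$-th degree would be $-2$. So the cokernel of restriction is isomorphic to $S_{a-\deg\zeta_k}$, the polynomials of multidegree $(a_1-2,\dots,0_k,\dots,b-n-1)$. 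This suggests the surjective map
\[
S_a\oplus S_{a-\deg\zeta_k}\cdot\zeta_k\longrightarrow H^0\bigl(X,\mathcal O(a)\bigr),\qquad (g,h)\mapsto g+h\zeta_k .
\]
The key step is proving injectivity, i.e.\ that $g+h\zeta_k=0$ forces $g=h=0$. I will use the same argument as Proposition~\ref{0part}. Multiplying by $x_{k1}$ gives
\[
x_{k1}g+hR_{(k,0)}x_{k0}=0
\]
on $X$. The left side comes from the ambient space, in a degree where restriction is injective (because $a_k+1-2<0$), so it vanishes in the polynomial ring. Then $x_{k1}$ divides $hR_{(k,0)}x_{k0}$, and since $R_{(k,0)}$ and $h$ do not involve the $k$-th variables, this forces $h=0$ and then $g=0$. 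With this, dimensions match and the map is an isomorphism in each degree.

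It remains to check that the ring structure is the stated quotient. The subspace spanned by $S$ and $S\zeta_k$ is closed under multiplication, with $\zeta_k^2$ reduced by part (1). So $S_{\bullet}[\zeta_k]/\langle \zeta_k^2+R_{(k,1)}\zeta_k+R_{(k,0)}R_{(k,2)}\rangle$ maps onto the left side degreewise. The quotient is free of rank $2$ over the $S$-part with basis $1,\zeta_k$, so the degreewise injectivity above gives the isomorphism. The main obstacle I expect is bookkeeping: matching the cokernel dimension exactly with $S_{a-\deg\zeta_k}$, including the boundary cases where $a_i=1$ or $b\le n$ and the cokernel vanishes. In those cases, $S_{a-\deg\zeta_k}=0$ as well, so the statement is consistent.
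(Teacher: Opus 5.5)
Your proof is correct and has the same skeleton as the paper's. Part~(1) is the same computation $\zeta_k=R_{(k,0)}x_{k0}/x_{k1}$ on $U_{k0}\cap U_{k1}$, followed by extension to all of $X$. Part~(2) is the same dimension count from the ideal-sheaf sequence, plus an injectivity statement obtained by multiplying by $x_{k1}$ and using $x_{k1}\zeta_k=R_{(k,0)}x_{k0}$.

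The real difference is how you close the injectivity step, and there your route is the sound one. The paper stays inside $\Cox(X)$. It argues that $x_{k1}$ divides $R_{(k,0)}x_{k0}$ times the coefficient of $\zeta_k$, but divides neither $R_{(k,0)}$ nor $x_{k0}$. Hence it divides the coefficient, which is then forced to be $0$ ``by $x_k$-degree''.

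That argument does not go through as written. In $\Cox(X)$ the Vieta relation gives $R_{(k,0)}=x_{k1}\,\iota_k^*(x_{k1})$, so $x_{k1}$ does divide $R_{(k,0)}$; substituting this back just returns the original identity. Moreover, having degree zero in the $x_k$-variables does not prevent divisibility by $x_{k1}$ in $\Cox(X)$; $R_{(k,0)}$ itself is a counterexample.

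You instead note that $x_{k1}g+hR_{(k,0)}x_{k0}$ is the restriction of an ambient polynomial in a multidegree whose $k$-th entry is $1$. There the restriction $S\to\Cox(X)$ is injective, because $S_{a+H_k-\deg F}=0$. So the identity holds in the polynomial ring, where $x_{k1}$ is prime and the divisibility argument is legitimate. This lifting step is exactly the input the paper's argument needs.

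For the same reason, calling this ``the same argument as Proposition~\ref{0part}'' undersells what you did. That proof relies on $R_{(k,0)}$ not vanishing identically on $\{x_{k1}=0\}\cap X$. This fails, since $F|_{x_{k1}=0}=R_{(k,0)}x_{k0}^2$. Your lifting argument with $h=1$ is what actually proves that statement.

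One small point on the ring structure. Take the coefficient ring to be the full polynomial ring in the variables other than $x_{k0},x_{k1}$, not just its part of strictly positive degree. Only then is the degree-$a$ piece of the quotient equal to $S_a\oplus S_{a-\deg\zeta_k}\,\zeta_k$ when $a-\deg\zeta_k$ has zero entries, so that your degreewise bijection really is an isomorphism of the stated algebras.
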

	
	\begin{proof}
		For (1), on the open set $U_{k0}\cap U_{k1}$, we have
		\[
		\zeta_k = R_{(k,0)} \frac{x_{k0}}{x_{k1}},
		\]
		by Proposition~\ref{multiplication-involution}. Using the defining relation on $U_{k0}\cap U_{k1}$, it follows that
		
		\[
		R_{(k,0)}\Bigl(\frac{x_{k0}}{x_{k1}}\Bigr)^2 = -R_{(k,2)} - R_{(k,1)} \frac{x_{k0}}{x_{k1}},
		\]
		and consequently that
		\[
		\zeta_k^2
		= \Bigl(R_{(k,0)} \frac{x_{k0}}{x_{k1}}\Bigr)^2
		= R_{(k,0)} \Bigl(R_{(k,0)} \frac{x_{k0}^2}{x_{k1}^2}\Bigr)
		= R_{(k,0)}\Bigl(-R_{(k,2)} - R_{(k,1)}\frac{x_{k0}}{x_{k1}}\Bigr)
		= -R_{(k,0)} R_{(k,2)} - R_{(k,1)} \zeta_k.
		\]
		Thus, on $U_{k0}\cap U_{k1}$ we obtain
		\[
		\zeta_k^2 + R_{(k,1)} \zeta_k + R_{(k,0)} R_{(k,2)} = 0
		\]
		as a section of $H^0(X,\mathcal{O}(4,\dots,0_k,\dots,4,2n+2))$.
		
		Since $X$ is irreducible and $U_{k0}\cap U_{k1}$ is a dense open subset, a global section of a line bundle which vanishes on $U_{k0}\cap U_{k1}$ must vanish everywhere. As the Cox ring is a domain by Theorem~\ref{domain}, we have
		\[
		\zeta_k^2 + R_{(k,1)} \zeta_k + R_{(k,0)} R_{(k,2)} = 0
		\]
		holds globally on $X$.
		
		\medskip
		
		For (2), fix $(a_1,\ldots,a_m,b)$ with $a_i,b>0$ and $a_k=0$, and set $L=\mathcal O_X(a_1,\ldots,a_m,b)$. We have a short exact sequence of vector spaces
		\[
		0 \longrightarrow S_{(a_1,\ldots,a_m,b)}
		\longrightarrow H^0(X,L)
		\overset{\pi}{\longrightarrow}
		H^1\!\left(\mathbb P,\mathcal O(a_1-2,\ldots,a_m-2,b-n-1)\right)
		\longrightarrow 0.
		\]
		By K\"unneth and Serre duality, the target space is naturally identified with the
		vector space
		\[
		V:=S_{(a_1-2,\ldots,a_{k-1}-2,0,a_{k+1}-2,\ldots,a_m-2,b-n-1)}.
		\]
		Now, define a canonical linear map
		\[
		\overline m_{\zeta_k}\colon V\to H^0(X,L)/S_{(a_1,\ldots,a_m,b)},
		\qquad
		f \longmapsto [\zeta_k f].
		\]
		We show $\overline{m}_{\zeta_k}$ is injective. Suppose $\zeta_k f \in S_{(a_1,\ldots,a_m,b)}$ for some $f \in V$, 
		so that $\zeta_k f = h|_X$ for some $h \in S_{(a_1,\ldots,0_k,\ldots,a_m,b)}$.
		Multiplying both sides by $x_{k1}$ and applying 
		Proposition~\ref{multiplication-involution} gives 
		$x_{k1}\zeta_k = R_{(k,0)}x_{k0}$ in $\operatorname{Cox}(X)$. This gives
		\[
		R_{(k,0)}\,x_{k0}\,f\big|_X = x_{k1}\,h\big|_X.
		\]
		Hence $x_{k1}$ divides $R_{(k,0)}\,x_{k0}\,f$ in $\operatorname{Cox}(X)$.
		Since $\operatorname{Cox}(X)$ is a UFD by Theorem~\ref{domain}, 
		and $x_{k1}$ divides neither $R_{(k,0)}$ nor $x_{k0}$ for very general $F$, 
		we conclude $x_{k1} \mid f$. 
		But $f \in S_{(a_1-2,\ldots,0_k,\ldots,a_m-2,b-n-1)}$ 
		has degree zero in $x_{k0}$ and $x_{k1}$, so this forces $f = 0$.
		
		By the dimension computation above,
		\[
		\dim _{\mathbb{K}} V=\dim_{\mathbb{K}}\bigl(H^0(X,L)/S_{(a_1,\ldots,a_m,b)}\bigr),
		\]
		so $\overline m_{\zeta_k}$ is an isomorphism. Therefore, every class in the
		quotient has a unique representative of the form $\zeta_k f$ with
		$f\in V$, and hence
		\[
		H^0(X,L)=S_{(a_1,\ldots,a_m,b)} \;\oplus\; \zeta_k\cdot
		S_{(a_1-2,\ldots,a_{k-1}-2,0,a_{k+1}-2,\ldots,a_m-2,b-n-1)}.
		\]
		Rewriting this in all degrees and using the quadratic relation from (1) yields the graded algebra presentation in (2).
	\end{proof}
	
	With the previous results in place, we now determine generators for $\mathcal{R}_0$ and construct an ideal encoding the relations among them.
	In the ring
	\[
	\widetilde{\mathcal{R}}_1:=\mathbb{K}[\, x_{ki},\, \iota_k^*(x_{ki}),y_0,\dots,y_{n} \mid 1 \le k \le m,\,i\in\{0,1\}\,],
	\]
	denote as \(I_1\) the ideal generated by the hypersurface equation and its transforms under the quadratic involutions:
	\[
	I_1
	:=
	\big\langle
	F,\;
	x_{ki}\, \iota_k^{\ast}(x_{ki}) - R_{(k,2-2i)}
	\;\big|\;
	1 \le k \le m,\; i \in \{0,1\}
	\big\rangle.
	\]
	Let
	\[
	m_1\;:=\;\prod_{1\le k\le m,\ i\in\{0,1\}} x_{ki}\;\in\;\widetilde{\mathcal{R}}_1,
	\qquad
	J_1\;:=\;\langle m_1\rangle\;\subseteq\;\widetilde{\mathcal{R}}_1,
	\]
	Let $\overline{I}_1$ denote the saturation of $I_1$ with respect to $J_1$
	\[
	\overline{I}_1 := (I_1 : J_1^{\infty})=\bigcup_{s\geq 1}(I_1:J_1^s).
	\]
	
	\begin{proposition}\label{norelation}
		Let us fix distinct indices $k,l\in\{1,\dots, m\}$ and write the defining equation of the Wehler type hypersurface as $$F=\sum_{a+b=2,c+d=2}R_{(kl,b,d)}x_{k0}^{a}x_{k1}^bx_{l0}^{c}x_{l1}^d.$$ Let  $\mathbf{d}:=(d_1,\dots, d_{m+1})$ be the multi-index defined by 
		$$d_i=\begin{cases}
			4 \text{ for }i\neq k,l,m+1\\ 1\text{ for }i=k,l \\2n+2\text{ for } i=m+1.\end{cases}$$ Then:

		\begin{enumerate}
			\item Let $A := \iota_k^*(x_{ki}) \, \iota_l^*(x_{lj}) \in H^0(X, \mathcal{O}_X(\mathbf{d}))$. Then $A$ is the restriction of a homogeneous polynomial $H \in S_\mathbf{d}$ on the ambient space, i.e., $A = H|_X$.
			\item $A-H\not\in I_1$ and $A-H\in \overline{I_1}$.
			
		\end{enumerate}  
	\end{proposition}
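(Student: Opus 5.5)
The plan is to treat (1) as a degree computation, to prove $A-H\in\overline{I_1}$ by clearing denominators with the Vieta relations, and to prove $A-H\notin I_1$ by a specialization that kills $I_1$ but not $A-H$.

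\emph{Part (1).} By Theorem~\ref{involution-action}, $\iota_k^*(x_{ki})$ has degree $(2,\dots,2,-1_k,2,\dots,2,n+1)$ and $\iota_l^*(x_{lj})$ has degree $(2,\dots,2,-1_l,2,\dots,2,n+1)$. Adding these gives $4$ in every $\mathbb P^1$-slot other than $k,l$, gives $-1+2=1$ in slots $k$ and $l$, and gives $2n+2$ in the $\mathbb P^n$-slot. This is exactly $\mathbf d$. Every entry of $\mathbf d$ is strictly positive, so the computation in Proposition~\ref{surjectivity} gives
\[
H^1\bigl(\mathbb P,\mathcal O_{\mathbb P}(\mathbf d-(2,\dots,2,n+1))\bigr)=0 .
\]
Hence the restriction $S_{\mathbf d}\to H^0(X,\mathcal O_X(\mathbf d))$ is surjective, and $A=H|_X$ for some $H\in S_{\mathbf d}$.

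\emph{Part (2), $A-H\in\overline{I_1}$.} Multiply $A-H$ by $x_{ki}x_{lj}$ in $\widetilde{\mathcal R}_1$. Modulo the Vieta generators of $I_1$ we get
\[
x_{ki}x_{lj}\,\iota_k^*(x_{ki})\,\iota_l^*(x_{lj})\equiv R_{(k,2-2i)}R_{(l,2-2j)} ,
\]
so
\[
x_{ki}x_{lj}(A-H)\equiv P:=R_{(k,2-2i)}R_{(l,2-2j)}-x_{ki}x_{lj}H \pmod{I_1}.
\]
The element $P$ lies in the ambient polynomial ring in the $x$'s and $y$'s, and its degree has all entries positive. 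By Proposition~\ref{multiplication-involution} and part (1), $P$ maps to zero in $\Cox(X)$. By Proposition~\ref{surjectivity}(2), the kernel of restriction in positive multidegrees is generated by $F$, so $P\in\langle F\rangle\subseteq I_1$. Therefore $x_{ki}x_{lj}(A-H)\in I_1$, and since $x_{ki}x_{lj}$ divides $m_1$, also $m_1(A-H)\in I_1$. This gives $A-H\in(I_1:J_1)\subseteq\overline{I_1}$.

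\emph{Part (2), $A-H\notin I_1$.} Define a $\mathbb K$-algebra map
\[
\phi\colon\widetilde{\mathcal R}_1\to\mathbb K[\iota_k^*(x_{ki}),y_0,\dots,y_n]
\]
by $x_{ki}\mapsto 0$ for all $k,i$, with the symbols $\iota_k^*(x_{ki})$ and $y_0,\dots,y_n$ sent to themselves as free variables.
\begin{itemize}
\item $\phi(F)=0$, because $F$ is homogeneous of positive degree in the $x$'s.
\item Each Vieta generator maps to $-\phi(R_{(k,2-2i)})$. This is zero because $R_{(k,2-2i)}$ has degree $2$ in the variables of every other $\mathbb P^1$-factor, and such factors exist since $m\ge2$.
\end{itemize}
So $\phi(I_1)=0$. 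On the other hand, $H$ has degree $1$ in the $x_k$-variables, so $\phi(H)=0$. Hence $\phi(A-H)=\iota_k^*(x_{ki})\,\iota_l^*(x_{lj})\neq0$, and therefore $A-H\notin I_1$.

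The main obstacle is finding a clean certificate of non-membership. Gröbner- or grading-based arguments are awkward here because the Vieta relations are not homogeneous for any obvious refined grading. The specialization $x\mapsto0$ sidesteps this, but it requires checking that every $R_{(k,\cdot)}$ genuinely involves positive $x$-degree, which holds because $m\ge2$.
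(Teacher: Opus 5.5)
Your proof is correct. Part (1) and the membership $A-H\in\overline{I_1}$ follow the paper's idea: multiply by $x_{ki}x_{lj}$, use the Vieta relations to reach an ambient polynomial that vanishes on $X$, and conclude that it lies in $\langle F\rangle$. Your version is slightly cleaner, because you reduce modulo $I_1$ inside $\widetilde{\mathcal R}_1$. You also never need the paper's explicit formula $H=\bigl(R_{(k,2-2i)}R_{(l,2-2j)}-R_{(kl,2-2j,2-2i)}F\bigr)/(x_{ki}x_{lj})$, whose only payoff is making the later binomial relations computable.

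The real difference is the non-membership $A-H\notin I_1$. The paper argues by degree: it claims every generator of $I_1$ has degree $2$ in the $l$-th slot while $A-H$ has degree $1$, and appeals to positivity of the grading. That argument does not go through as written, for two reasons:
\begin{enumerate}
\item The generators $x_{li}\,\iota_l^*(x_{li})-R_{(l,2-2i)}$ have $l$-th degree $0$, not $2$.
\item $\widetilde{\mathcal R}_1$ is not positively graded in that slot, since $\iota_l^*(x_{lj})$ has $l$-th degree $-1$. For instance, $\iota_l^*(x_{l0})F\in I_1$ has $l$-th degree $1$.
\end{enumerate}

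Your homomorphism $\phi$ sending every $x_{ki}$ to $0$ is a genuine certificate. It kills $F$ and all Vieta generators, since each $R_{(k,2-2i)}$ has positive degree in some other $\mathbb P^1$-factor ($m\ge2$). It kills $H$, since $H\in S_{\mathbf d}$ has degree $1$ in $x_{k0},x_{k1}$. And it sends $A$ to the nonzero monomial $\iota_k^*(x_{ki})\,\iota_l^*(x_{lj})$. So your route differs from the paper's and also repairs its weak step.
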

	
	\begin{proof}
		Fix distinct indices $k,l\in\{1,\dots, m\}$. We write the defining equation of $X$ in two ways 
		$$F=R_{(k,0)}x_{k0}^2+R_{(k,1)}x_{k0}x_{k1}+R_{(k,2)}x_{k1}^2=R_{(l,0)}x_{l0}^2+R_{(l,1)}x_{l0}x_{l1}+R_{(l,2)}x_{l1}^2$$
		We also expand the coefficients $R_{(k,i)}$ with respect to the $l$-th $\mathbb{P}^1$ coordinates, and the coefficients $R_{(l,i)}$ with respect to the $k$-th $\mathbb{P}^1$ coordinates
		$$R_{(k,i)}=R_{(lk,i,0)}x_{l0}^2+R_{(lk,i,1)}x_{l0}x_{l1}+R_{(lk,i,2)}x_{l1}^2,$$ 
		$$R_{(l,i)}=R_{(kl,i,0)}x_{k0}^2+R_{(kl,i,1)}x_{k0}x_{k1}+R_{(kl,i,2)}x_{k1}^2.$$
		The symmetry of $F$ under swapping $k$ and $l$ gives $R_{(kl,b,d)} = R_{(lk,d,b)}$ for all $b,d\in\{0,1,2\}$.
		
		The statement in (1) follows, since the restriction map
		\[
		S_{\mathbf d} \longrightarrow H^0\bigl(X,\mathcal{O}_X(\mathbf{d})\bigr)
		\]
		is a surjection, by Proposition~\ref{surjectivity}. In particular, the section $A\in H^0\bigl(X,\mathcal{O}(\mathbf d)\bigr)$ is the restriction of some homogeneous polynomial $H \in S_{\mathbf d}$. Now, for (2) we derive an explicit expression for $H$. Indeed, we have $$x_{ki}x_{lj}A=x_{ki}x_{lj}H|_{X}=R_{(k,2-2i)}R_{(l,2-2j)}|_X,$$
		by Proposition \ref{multiplication-involution}. Since $A$ is the restriction of a section on the ambient space, this equality holds on the ambient space modulo the defining equation of $X$. Specifically, there exists a $Q\in \mathbb{K}[x_{ki},y_0,\dots,y_n\mid 1 \le k \le m,\ i\in\{0,1\}\,]$ such that the following equation holds:
		\begin{equation}\label{Q-computation}
			x_{ki}x_{lj}H-R_{(k,2-2i)}R_{(l,2-2j)}=-Q F.   
		\end{equation}  
		Note that satisfying equation (\ref{Q-computation}) is equivalent to finding a $Q$ such that the expression $R_{(k,2-2i)}R_{(l,2-2j)}-QF$ is divisible by $x_{ki}x_{lj}$.
		Hence, we obtain: $$ F|_{\{x_{ki}=0\}}=R_{(k,2-2i)}x_{k(1-i)}^2\quad\text{and} \quad R_{(l,2-2j)}\big|_{\{x_{ki}=0\}} = R_{(kl,2-2j,2-2i)}\,x_{k(1-i)}^2.$$
		Substituting into \eqref{Q-computation} at $x_{ki}=0$ and canceling 
		$R_{(k,2-2i)}\,x_{k(1-i)}^2$ gives $Q \equiv R_{(kl,2-2j,2-2i)} \pmod{x_{ki}}$.
		The identical argument specializing at $x_{lj}=0$ gives 
		$Q \equiv R_{(lk,2-2i,2-2j)} \pmod{x_{lj}}$. By symmetry $R_{(kl,2-2j,2-2i)}=R_{(lk,2-2i,2-2j)}$, so we may take
		\[
		Q = R_{(kl,2-2j,2-2i)},
		\]
		On the other hand, from (\ref{Q-computation}) one can compute $H$ as
		\begin{equation}\label{fracH}
			H = \frac{R_{(k,2-2i)}R_{(l,2-2j)} - R_{(kl,2-2j,2-2i)}\,F}{x_{ki}x_{lj}}.
		\end{equation}
		With this representation we prove that $H$ has degree $1$ in the $l$-th component. We expand the numerator of equation \ref{fracH} using $R_{(k,2-2i)} = R_{(lk,2-2i,0)}x_{l0}^2 + R_{(lk,2-2i,1)}x_{l0}x_{l1} + R_{(lk,2-2i,2)}x_{l1}^2$ and $F = R_{(l,0)}x_{l0}^2 + R_{(l,1)}x_{l0}x_{l1} + R_{(l,2)}x_{l1}^2$. Thus,
		\[
		\begin{aligned}
			x_{ki}x_{lj}H = & \left(R_{(lk,2-2i,2j)}R_{(kl,2-2j,2i)}-R_{(kl,2-2j,2-2i)}R_{(lk,2i,2j)}\right)x_{lj}^2x_{ki}^2 \\
			& +\left(R_{(lk,2-2i,1)}R_{(kl,2-2j,2i)}-R_{(kl,2-2j,2-2i)}R_{(lk,2i,1)}\right)x_{l0}x_{l1}x_{ki}^2 \\
			& +\left(R_{(lk,2-2i,2j)}R_{(kl,2-2j,1)}-R_{(kl,2-2j,2-2i)}R_{(lk,1,2j)}\right)x_{lj}^2x_{k0}x_{k1} \\
			& +\left(R_{(lk,2-2i,1)}R_{(kl,2-2j,1)}-R_{(kl,2-2j,2-2i)}R_{(lk,1,1)}\right)x_{l0}x_{l1}x_{k0}x_{k1}.
		\end{aligned}
		\]
		From this expansion, we observe that the numerator of equation~\ref{fracH} has degree $2$ in the $l$-th component, hence  $H$ has degree $1$ in the $l$-th component. 
		
		Observe that the ideal $I_1$ is generated by homogeneous elements of multidegree 
		\[
		(2, \dots, 2,0_k, 2, \dots, 2, n+1).
		\]
		In particular, for any $l \neq k$, every generator of $I_1$ has degree $2$ in the $l$-th component, whereas the section $A-H$ has degree $1$ in that component by definition of $\mathbf{d}$ (since $d_l = 1$). 
		Because we are working in a positively graded ring, an element of degree $1$ cannot be expressed as a combination of elements of degree $2$. Therefore, it follows immediately that $A - H \notin I_1$. However, by \eqref{Q-computation}:
		\[
		x_{ki}x_{lj}(A-H) = R_{(k,2-2i)}R_{(l,2-2j)}|_X - x_{ki}x_{lj}H|_X 
		= \bigl(R_{(k,2-2i)}R_{(l,2-2j)} - x_{ki}x_{lj}H\bigr)\big|_X = QF|_X,
		\]
		which lies in $\langle F\rangle \subseteq I_1$.  Since $x_{ki}x_{lj}$ divides $m_1$, multiplying by
		$m_1/x_{ki}x_{lj}\in\widetilde{\mathcal{R}}_1$ gives that $m_1(A-H)\in I_1$ implying that $A-H\in\overline{I}_1$.
	\end{proof}
	
	\medskip
	We now have all the ingredients for the first main result of this section. Using the notation established above, Theorem \ref{thm:Nef-coordinate} gives a description of $\mathcal R_0$. This corresponds to the $s=1$ graded part of Theorem \ref{thm:cox-limit}.
	
	\medskip
	\begin{theorem}\label{thm:Nef-coordinate}
		Let $X$ be a Wehler type hypersurface, and let $\widetilde{\mathcal{R}}_1$, $I_1$, $J_1$, and $\overline{I}_1$ be defined as above. Define
		\[
		\mathcal{R}_1'
		:=
		\widetilde{\mathcal{R}}_1/\overline{I}_1 .
		\]
		
		Then the $\Nef(X)$-graded section ring $\mathcal{R}_0$ is isomorphic to the nonnegatively graded part of $\mathcal{R}_1'$.
	\end{theorem}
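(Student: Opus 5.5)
We will construct the evaluation homomorphism
\[
\Phi\colon \widetilde{\mathcal R}_1\longrightarrow \Cox(X),\qquad x_{ki}\mapsto x_{ki},\ \ \iota_k^*(x_{ki})\mapsto \iota_k^*(x_{ki}),\ \ y_j\mapsto y_j,
\]
which is a homomorphism of $\Cl(X)$-graded algebras once $\iota_k^*(x_{ki})$ is given degree $\iota_k^*H_k=(2,\dots,-1_k,\dots,2,n+1)$ (Theorem~\ref{involution-action}). We then prove two things: $\ker\Phi=\overline I_1$, and the image of $\Phi$ in nef degrees is exactly $\mathcal R_0$. Here ``nonnegatively graded part'' means the components of $\mathcal R_1'$ whose degree lies in $\Nef(X)\cap\Cl(X)=\mathbb Z_{\ge0}^{m+1}$.

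\textbf{Kernel.} The inclusion $I_1\subseteq\ker\Phi$ follows from Proposition~\ref{multiplication-involution} and from $F|_X=0$. Since $\Cox(X)$ is a domain (Lemma~\ref{domain}) and $\Phi(m_1)\neq0$, we get $\overline I_1\subseteq\ker\Phi$.

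For the reverse inclusion, localize at $m_1$. In $(\widetilde{\mathcal R}_1)_{m_1}$ the Vieta relations give $\iota_k^*(x_{ki})\equiv R_{(k,2-2i)}/x_{ki}$ modulo $I_1$. Hence
\[
(\widetilde{\mathcal R}_1/I_1)_{m_1}\;\cong\;\mathbb K[x_{ki},y_j]_{m_1}/\langle F\rangle .
\]
We must check that the two Vieta relations for a fixed $k$ are compatible with $F$; this is a short computation. Indeed, $R_{(k,2)}/x_{k0}$ and $R_{(k,0)}/x_{k1}$ must be proportional coordinates of a point of $\mathbb P^1$ after imposing $F$, and their relation reduces to a multiple of $F$ divided by $x_{k0}x_{k1}$. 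The ring $\mathbb K[x,y]_{m_1}/\langle F\rangle$ injects into $\Cox(X)_{\Phi(m_1)}$: it is the localized Cox ring of the ambient space modulo the prime $F$, and it maps to the function-field-type localization of the domain $\Cox(X)$, where the only relation among the $x$'s and $y$'s is $F$, by Proposition~\ref{surjectivity}(2). Therefore
\[
\ker\Phi\subseteq\ker\bigl(\widetilde{\mathcal R}_1\to(\widetilde{\mathcal R}_1/I_1)_{m_1}\bigr)=\overline I_1 .
\]

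\textbf{Surjectivity onto nef degrees.} Fix $\mathbf a=(a_1,\dots,a_m,b)\in\mathbb Z^{m+1}_{\ge0}$.
\begin{itemize}
\item If all $a_i,b>0$, Proposition~\ref{surjectivity} shows that $H^0(X,\mathcal O(\mathbf a))$ consists of restrictions of polynomials in $x_{ki},y_j$, so it lies in the image.
\item If exactly one $a_k=0$ and the others are positive, Proposition~\ref{zeta-relation}(2) gives $H^0=S_{\mathbf a}\oplus\zeta_kS_{\mathbf a'}$ with $\zeta_k=x_{k0}\iota_k^*(x_{k1})\in\operatorname{im}\Phi$.
\item If several $a_i$ vanish, or $b=0$, we run the exact-sequence and Künneth computation of Proposition~\ref{surjectivity} again. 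With at least two indices $a_i-2\le-2$, or with $b-n-1<0$, the relevant $H^1(\mathbb P,\mathcal O(\mathbf a-\deg F))$ vanishes. Indeed, $H^1$ of a Künneth product needs exactly one factor to have $H^1$ and all the others to have $H^0$, and $H^0(\mathbb P^1,\mathcal O(-2))=0$. In these cases every section is a restriction.
\end{itemize}

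\textbf{Main obstacle.} The key remaining point is to check that the degree pieces of $\mathcal R_1'$ in nef degrees contain nothing beyond $\mathcal R_0$. This follows from injectivity of $\Phi$ modulo $\overline I_1$ and from the fact that $\Phi$ preserves degree. Consequently, the nef-degree part of $\mathcal R_1'$ is isomorphic to the nef-degree part of the image of $\Phi$, which equals $\mathcal R_0$ by the surjectivity step.

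I expect the hardest step to be the injectivity of the localization map in the kernel argument: identifying $(\widetilde{\mathcal R}_1/I_1)_{m_1}$ with $\mathbb K[x,y]_{m_1}/\langle F\rangle$ and showing that this ring embeds in $\Cox(X)_{m_1}$. To prove the embedding, we first use that each graded piece of $\mathbb K[x,y]_{m_1}/\langle F\rangle$ is a colimit of pieces $S_{\mathbf a}/F S_{\mathbf a-\deg F}$ over large positive $\mathbf a$. We then apply the exact sequence of Proposition~\ref{surjectivity}, which gives injectivity into $H^0(X,\mathcal O(\mathbf a))$ for all positive $\mathbf a$.
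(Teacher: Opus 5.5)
Your proposal is correct and is essentially the paper's proof. It has the same ingredients: the graded evaluation map $\Phi$, the inclusion $\overline I_1\subseteq\ker\Phi$ from $\Cox(X)$ being a domain, the reverse inclusion by localizing at $m_1$ to identify $(\widetilde{\mathcal R}_1/I_1)_{m_1}$ with $\mathbb K[x,y]_{m_1}/\langle F\rangle\hookrightarrow\Cox(X)_{m_1}$, and surjectivity in nef degrees from Propositions~\ref{surjectivity} and~\ref{zeta-relation}; your extra case of several vanishing $a_i$ or $b=0$ fills in a detail the paper leaves implicit. The one thing to tidy is the embedding step: $\deg m_1$ has $y$-degree $0$, so you cannot pass to positive multidegrees, but you do not need to, because left exactness of the ideal-sheaf sequence gives $\ker\bigl(S_{\mathbf a}\to H^0(X,\mathcal O_X(\mathbf a))\bigr)=F\,S_{\mathbf a-\deg F}$ for every $\mathbf a$, hence $S/\langle F\rangle\hookrightarrow\Cox(X)$, and localization preserves injectivity (and no compatibility check between the two Vieta relations is needed, since each one eliminates its own variable).
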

	\begin{proof}
		Regard $\widetilde{\mathcal{R}}_1$ as $\Cl(X)$-graded by $\deg(x_{ki})=H_k$,
		$\deg(\iota_k^*(x_{ki}))=\iota_k^*H_k$, $\deg(y_l)=H_{m+1}$, and let
		$$\Phi:\widetilde{\mathcal{R}}_1\to\Cox(X)$$ be the induced graded morphism. Since $F$ and
		$x_{ki}\iota_k^*(x_{ki})-R_{(k,2-2i)}$ vanish on $X$, we have $I_1\subseteq\ker\Phi$. By
		Propositions~\ref{surjectivity} and \ref{zeta-relation}, the image of $\Phi$
		in each degree $D\in\Nef(X)\cap\Cl(X)$ equals $H^0(X,\mathcal{O}_X(D))$. Thus it remains to prove
		$\ker\Phi=\overline{I}_1$. For notational convenience, we denote $\widetilde{\mathcal{R}}_1$ as $\mathcal{A}$ throughout the proof.
		
		Let $\mathcal{A}_{m_1}$ be the localization of $\mathcal{A}$ at the element $m_1$, and
		$\pi:\mathcal{A}\to\mathcal{A}_{m_1}$ the localization map. For any ideal
		$\mathfrak{a}\subseteq\mathcal{A}$ one has
		\[
		\pi^{-1}(\mathfrak{a}\,\mathcal{A}_{m_1})
		=\{f\mid m_1^{N}f\in\mathfrak{a}\text{ for some }N\ge0\}
		=(\mathfrak{a}:J_1^{\infty}),
		\]
		in particular $\mathfrak{a}=\pi^{-1}(\mathfrak{a}\,\mathcal{A}_{m_1})$ precisely when
		$\mathfrak{a}$ is $J_1$-saturated, i.e., $(\mathfrak{a}:J_1^{\infty})=\mathfrak{a}$.
		
		By construction $\overline{I}_1=\pi^{-1}(I_1\,\mathcal{A}_{m_1})$. The ideal
		$\ker\Phi$ is $J_1$-saturated as well, since $\Cox(X)$ is a domain and $\Phi(m_1)\ne0$, and so every
		relation $m_1^{N}f\in\ker\Phi$ implies $\Phi(m_1)^{N}\Phi(f)=0$ and so $\Phi(f)=0$. Thus
		$\ker\Phi=\pi^{-1}\bigl((\ker\Phi)\,\mathcal{A}_{m_1}\bigr)$. Consequently it suffices to prove
		\[
		(\ker\Phi)\,\mathcal{A}_{m_1}=I_1\,\mathcal{A}_{m_1}.
		\]
		
		Let $\mathcal{B}=\mathbb{K}[x_{ki}^{\pm1},y_0,\dots,y_n\mid 1 \le k \le m,\ i\in\{0,1\}\,]$. After localizing, the relations $x_{ki}\iota_k^*(x_{ki})=R_{(k,2-2i)}$ allow us to eliminate the
		variables $\iota_k^*(x_{ki})$. This yields a homomorphism
		\[
		\bar\phi:\mathcal{A}_{m_1}\longrightarrow  \mathcal{B}/\langle F\rangle,
		\qquad \iota_k^*(x_{ki})\longmapsto \frac{R_{(k,2-2i)}}{x_{ki}},
		\]
		whose kernel is precisely $I_1\,\mathcal{A}_{m_1}$, hence
		$\mathcal{A}_{m_1}/I_1\mathcal{A}_{m_1}\xrightarrow{\ \sim\ }
		\mathcal{B}/\langle F\rangle$.
		
		Let $S=\mathbb{K}[x_{ki},y_0,\dots,y_n\mid 1 \le k \le m,\ i\in\{0,1\}\,]$.
		For every divisor $D\subset \pp$ the ideal-sheaf sequence
		\[
		0\to\mathcal{O}_{\mathbb P}(D-\operatorname{div} F)\xrightarrow{\cdot F}\mathcal{O}_{\mathbb P}(D)
		\to\mathcal{O}_X(D)\to0
		\]
		gives
		\[0\to S_{D-\operatorname{div}F}\xrightarrow{\cdot F}S_D\to H^0(X,\mathcal{O}_X(D)),
		\]
		so the restriction map
		$S\to\Cox(X)$ has kernel $\langle F\rangle$ and $S/\langle F\rangle\hookrightarrow\Cox(X)$,
		localizing yields $$\mathcal{B}/\langle F\rangle\hookrightarrow\Cox(X)_{m_1}.$$
		Since the $\Phi(x_{ki})$ become invertible in $\Cox(X)_{m_1}$, $\Phi$ extends to
		$\Phi_{m_1}:\mathcal{A}_{m_1}\to\Cox(X)_{m_1}$ and factors as
		\[
		\mathcal{A}_{m_1}\twoheadrightarrow \mathcal{A}_{m_1}/I_1\mathcal{A}_{m_1}
		\xrightarrow{\ \sim\ }\mathcal{B}/\langle F\rangle
		\hookrightarrow\Cox(X)_{m_1}.
		\]
		As the last map is injective, $\ker\Phi_{m_1}=I_1\mathcal{A}_{m_1}$ and by exactness of
		localization, then $\ker\Phi_{m_1}=(\ker\Phi)\mathcal{A}_{m_1}$. Hence
		$(\ker\Phi)\mathcal{A}_{m_1}=I_1\mathcal{A}_{m_1}$, which consequently gives
		$\ker\Phi=\overline{I}_1$. Therefore
		$\mathcal{A}/\overline{I}_1\xrightarrow{\sim}\operatorname{im}(\Phi)\subseteq\Cox(X)$, and in each
		degree $D\in\Nef(X)\cap\Cl(X)$ this identifies $(\mathcal{A}/\overline{I}_1)_D
		=H^0(X,\mathcal{O}_X(D))$. Hence the nonnegatively graded part of $\mathcal{R}_1'$ coincides with $\mathcal{R}_0$.
	\end{proof}
	
	\medskip
	\begin{remark}\label{rem:explicit-relations}
		The identity $\ker\Phi=\overline{I}_1$ established in
		Theorem~\ref{thm:Nef-coordinate} shows that the specific
		relations exhibited in Propositions~\ref{zeta-relation} are contained in $\overline{I}_1$. Recall that $\zeta_k=x_{k0}\iota_k^*(x_{k1})$ in
		$\Cox(X)$.
		
		For the quadratic relation of Proposition~\ref{zeta-relation},
		multiplying by $x_{k1}^2$ gives
		\[
		x_{k1}^2\bigl(\zeta_k^2+R_{(k,1)}\zeta_k+R_{(k,0)}R_{(k,2)}\bigr)
		=R_{(k,0)}\bigl(R_{(k,0)}x_{k0}^2+R_{(k,1)}x_{k0}x_{k1}+R_{(k,2)}x_{k1}^2\bigr)
		=R_{(k,0)}F\in I_1.
		\]
		Since $x_{k1}^2$ divides $m_1^{2}$, multiplying by $m_1^{2}/x_{k1}^2\in\widetilde{\mathcal{R}}_1$ yields
		$m_1^{2}\bigl(\zeta_k^2+R_{(k,1)}\zeta_k+R_{(k,0)}R_{(k,2)}\bigr)\in I_1$, hence
		$\zeta_k^2+R_{(k,1)}\zeta_k+R_{(k,0)}R_{(k,2)}\in\overline{I}_1$.
		
		For the Vieta relation \eqref{Vieta-relation}, set
		\[
		G_k\;:=\;x_{k0}\,\iota_k^*(x_{k1})+x_{k1}\,\iota_k^*(x_{k0})+R_{(k,1)}.
		\]
		Multiplying by $x_{k0}x_{k1}$ and reducing modulo the Vieta-type
		generators of $I_1$ via
		$x_{k1}\,\iota_k^*(x_{k1})\equiv R_{(k,0)}$ and
		$x_{k0}\,\iota_k^*(x_{k0})\equiv R_{(k,2)}$,
		\[
		x_{k0}x_{k1}\,G_k
		\;\equiv\;R_{(k,0)}x_{k0}^2+R_{(k,1)}x_{k0}x_{k1}+R_{(k,2)}x_{k1}^2
		\;=\;F\;\pmod{I_1}.
		\]
		Since $F\in I_1$ and $x_{k0}x_{k1}$ divides $m_1$, multiplying by
		$m_1/x_{k0}x_{k1}\in\widetilde{\mathcal{R}}_1$ gives $m_1\,G_k\in I_1$, so
		$G_k\in\overline{I}_1$.
	\end{remark}
	
	\medskip
	Although Theorem~\ref{thm:Nef-coordinate} provides a presentation of the nef coordinate ring $\mathcal{R}_0$, determining generators for the saturated ideal $\overline{I}_1$ is computationally cumbersome. The next theorem replaces this presentation with an explicit generators--relations description of $\mathcal{R}_0$, thereby avoiding saturation altogether.
	
	Let
	\[
	\mathcal{A}_0
	:=\mathbb{K}[\,x_{ki},\,\zeta_k,\,y_j\mid 1\le k\le m,\ i\in\{0,1\},\ 0\le j\le n\,],
	\]
	graded by $\Cl(X)$ with $\deg x_{ki}=H_k$, $\deg y_j=H_{m+1}$, and
	\[
	\deg\zeta_k=\delta_k:=\operatorname{div}F-2H_k=2\Big(\!\!\sum_{j\neq k}H_j\Big)+(n+1)H_{m+1},
	\]
	which in coordinates reads $\delta_k=(2,\dots,2,0_k,2,\dots,2,n+1)$. Let
	$\mathcal{I}_1\subseteq\mathcal{A}_0$ be the ideal generated by the following homogeneous
	elements:
	\begin{itemize}
		\item $\zeta_k^2+R_{(k,1)}\zeta_k+R_{(k,0)}R_{(k,2)}$, for each $k$;
		\item $x_{k1}\zeta_k-R_{(k,0)}x_{k0}$ and $x_{k0}\zeta_k+R_{(k,1)}x_{k0}+R_{(k,2)}x_{k1}$, for each $k$;
		\item $\zeta_k\zeta_l-x_{k0}x_{l0}H_{kl}$, for each $k<l$, where $H_{kl}\in S_{\mathbf{d}}$ is the
		polynomial of Proposition~\ref{norelation} with $i=j=1$, i.e., $H_{kl}|_X=\iota_k^*(x_{k1})\iota_l^*(x_{l1})$.
	\end{itemize}
	Set $\widetilde{R}_0:=\mathcal{A}_0/\mathcal{I}_1$.
	
	\begin{theorem}\label{thm:Nef-no-saturation}
		The natural graded map $\mathcal{A}_0\to\Cox(X)$ sending each generator to its image in
		$\Cox(X)$ descends to an isomorphism of
		$\Cl(X)$-graded $\mathbb{K}$-algebras
		$\overline{\Psi}\colon\widetilde{R}_0\xrightarrow{\ \sim\ }\mathcal{R}_0$.
	\end{theorem}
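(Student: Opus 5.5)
The plan is to prove three things in turn: the map is well defined with image in $\mathcal R_0$, it is surjective onto $\mathcal R_0$, and it is injective. Injectivity will come from a normal form for elements of $\widetilde R_0$, compared degree by degree with the decomposition of Proposition~\ref{zeta-relation}(2).

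\emph{Well-definedness and image.} Each listed generator of $\mathcal I_1$ maps to $0$ in $\Cox(X)$:
\begin{itemize}
\item the quadratic relation is Proposition~\ref{zeta-relation}(1);
\item $x_{k1}\zeta_k=R_{(k,0)}x_{k0}$ follows from Proposition~\ref{multiplication-involution};
\item the relation $x_{k0}\zeta_k+R_{(k,1)}x_{k0}+R_{(k,2)}x_{k1}=0$ is the Vieta relation $G_k$ multiplied by $x_{k0}$, using $x_{k0}\iota_k^*(x_{k0})=R_{(k,2)}$;
\item $\zeta_k\zeta_l=x_{k0}x_{l0}\,\iota_k^*(x_{k1})\iota_l^*(x_{l1})=x_{k0}x_{l0}H_{kl}$ by Proposition~\ref{norelation}.
\end{itemize}
All generators $x_{ki},y_j,\zeta_k$ have degrees in $\Nef(X)$, so the image lies in $\mathcal R_0$.

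\emph{Surjectivity.} Fix $D=(a_1,\dots,a_m,b)\in\Nef(X)\cap\Cl(X)$. If all $a_i,b>0$, Proposition~\ref{surjectivity} shows $H^0(X,\mathcal O_X(D))$ consists of restrictions of elements of $S_D$. If exactly one $a_k=0$, Proposition~\ref{zeta-relation}(2) gives $H^0=S_D\oplus\zeta_k S_{D-\delta_k}$. The remaining boundary degrees (several $a_i=0$, or $b=0$) need a separate check. There I would redo the exact sequence of Proposition~\ref{surjectivity}: the relevant $H^1$ of $\mathcal O_{\mathbb P}(D-\operatorname{div}F)$ vanishes, because either two entries $a_i-2$ equal $-2$ (so the Künneth $H^1$ term needs an $H^0$ of a negative degree) or $b-n-1<0$. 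Hence these spaces are again restrictions from $S_D$.

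\emph{Normal form.} First, $F\in\mathcal I_1$, since
\[
x_{k1}\bigl(x_{k0}\zeta_k+R_{(k,1)}x_{k0}+R_{(k,2)}x_{k1}\bigr)-x_{k0}\bigl(x_{k1}\zeta_k-R_{(k,0)}x_{k0}\bigr)=F.
\]
Using the relations as rewriting rules:
\begin{itemize}
\item $\zeta_k^2$ is replaced by a polynomial linear in $\zeta_k$;
\item $\zeta_k\zeta_l$ ($k\neq l$) is replaced by an element of $S$;
\item $\zeta_k x_{ki}$ is replaced by an element of $S$.
\end{itemize}
By induction on total $\zeta$-degree, every element of $\widetilde R_0$ of degree $D$ is therefore congruent to $g+\sum_k\zeta_k f_k$, with $g\in S_D$ and $f_k\in S$ free of $x_{k0},x_{k1}$. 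A term $\zeta_kf_k$ has $k$-th degree $0$, so it can occur only if $a_k=0$. If also $a_l=0$ for some $l\neq k$, then $D-\delta_k$ has $l$-th entry $-2$, forcing $f_k=0$. So in each degree at most one $\zeta$-term survives.

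\emph{Injectivity.} Suppose $g+\zeta_kf$ maps to $0$ in $\Cox(X)$. By the direct sum decomposition in the proof of Proposition~\ref{zeta-relation}(2), $\zeta_kf$ restricts to $0$ in the quotient by $S_D$. Injectivity of $\overline m_{\zeta_k}$ then gives $f=0$, so $g|_X=0$. The kernel of $S\to\Cox(X)$ is $\langle F\rangle\subseteq\mathcal I_1$, so $g\in\mathcal I_1$. Hence $\overline\Psi$ is injective.

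The main obstacle I expect is making the rewriting terminate cleanly. Replacing $\zeta_k\zeta_l$ and $\zeta_kx_{ki}$ introduces $R$'s and $H_{kl}$, which are polynomials in $S$ only, so $\zeta$-degree strictly drops; I would argue by induction on $\zeta$-degree, then on $x_k$-degree. The verification of the boundary-degree vanishing also needs care.
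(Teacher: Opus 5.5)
Your proposal is correct and follows the paper's proof. It uses the same check that the generators of $\mathcal I_1$ vanish in $\Cox(X)$, the same surjectivity argument via Propositions~\ref{surjectivity} and~\ref{zeta-relation}(2), and the same rewriting (quadratic, binomial and linear relations, together with $F\in\mathcal I_1$) that brings every element of $(\widetilde R_0)_D$ to the form $g+\zeta_k f$ with $f$ free of $x_{k0},x_{k1}$. The only difference is the last step: the paper turns this normal form into the bound $\dim_{\mathbb K}(\widetilde R_0)_D\le h(D)$ via the Hilbert-function formula \eqref{eq:Cox-hilb} and compares it with surjectivity, whereas you show directly that the kernel vanishes, using injectivity of $\overline m_{\zeta_k}$ and $\ker(S\to\Cox(X))=\langle F\rangle\subseteq\mathcal I_1$; this is an equally valid way to finish.
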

	
	\begin{proof}
		Each generator of $\mathcal{I}_1$ vanishes in $\Cox(X)$. Indeed $F|_X=0$ by definition of $X$. The quadratic relations 
		Proposition~\ref{multiplication-involution}, the quadratic relations $\zeta_k^2+R_{(k,1)}\zeta_k+R_{(k,0)}R_{(k,2)}$, for each $k$ vanish by Proposition~\ref{zeta-relation}(1). The linear relations follow from $x_{k1}\zeta_k=x_{k0}\,x_{k1}\iota_k^*(x_{k1})=R_{(k,0)}x_{k0}$ and the analogous identity for $x_{k0}$ in Proposition~\ref{multiplication-involution}, and the binomial relations vanish by Proposition~\ref{norelation}(1). 
		Hence, the natural map descends to a graded homomorphism $\overline{\Psi}\colon\widetilde{R}_0\to\mathcal{R}_0$, since
		every generator of $\mathcal{A}_0$ has degree in $\Nef(X)\cap\Cl(X)$.
		
		\medskip
		Fix $D=(a_1,\dots,a_m,b)\in\Nef(X)\cap\Cl(X)$ and write
		$h(D)=\dim_{\mathbb{K}}H^0(X,\mathcal{O}_X(D))$ and $Z(D)=\{k\;|\;a_k=0\}$. Let $r\colon S\to\Cox(X)$ be the restriction map. The ideal-sheaf
		sequence computations in Proposition~\ref{surjectivity}, together with
		$H^1(\mathbb{P},\mathcal{O}(D))=0$ for $D\in\Nef(X)\cap\Cl(X)$, give the exact sequence
		\[
		0\to r(S_D)\to H^0(X,\mathcal{O}_X(D))\to H^1(\mathbb{P},\mathcal{O}(D-\operatorname{div}F))\to0.
		\]
		The kernel of $r$ on $S_D$ is $\langle F\rangle_D\cong S_{D-\operatorname{div}F}$, so
		$\dim_{\mathbb{K}}r(S_D)=\dim S_D-\dim S_{D-\operatorname{div}F}$, while by \eqref{eq:H^1} 
		$H^1(\mathbb{P},\mathcal{O}(D-\operatorname{div}F))$ vanishes unless $Z(D)=\{k\}$ for some $k$, in which
		case it is isomorphic to $S_{D-\delta_k}$. Therefore
		\begin{equation}\label{eq:Cox-hilb}
			h(D)=\dim S_D-\dim S_{D-\operatorname{div}F}+\sum_{k\in Z(D)}\dim S_{D-\delta_k},
		\end{equation}
		and at most one summand is nonzero, as $(D-\delta_k)_{k'}=-2$ for distinct $k,k'\in Z(D)$.
		
		We first show $\overline{\Psi}_D$ is surjective. The composite
		$S\hookrightarrow\mathcal{A}_0\xrightarrow{\Psi}\Cox(X)$ equals $r$, so
		$r(S_D)\subseteq\overline{\Psi}\bigl((\widetilde{R}_0)_D\bigr)$. If
		$H^1(\mathbb{P},\mathcal{O}(D-\operatorname{div}F))=0$, then $r$ is surjective onto
		$H^0(X,\mathcal{O}_X(D))$ and we are done. Otherwise $Z(D)=\{k\}$ with $a_i,b>0$ for $i\neq k$, and
		Proposition~\ref{zeta-relation}(2) gives
		$H^0(X,\mathcal{O}_X(D))=r(S_D)\oplus\zeta_k\cdot r(S_{D-\delta_k})$. Since
		$\zeta_k\cdot r(f)=\overline{\Psi}(\zeta_k f)$ for $f\in S_{D-\delta_k}$, the second summand also lies
		in the image. In both cases $\overline{\Psi}_D$ is surjective, so
		$\dim_{\mathbb{K}}(\widetilde{R}_0)_D\ge h(D)$.
		
		\medskip
		For the reverse inequality set $\overline{S}:=S/\langle F\rangle$.
		For every $k$, we observe that
		\[
		x_{k1}(x_{k0}\zeta_k+R_{(k,1)}x_{k0}+R_{(k,2)}x_{k1})-x_{k0}(x_{k1}\zeta_k-R_{(k,0)}x_{k0})=F,
		\]
		implying that $F\in\mathcal{I}_1$.  Hence, the map
		$S\hookrightarrow\mathcal{A}_0\to\widetilde{R}_0$ factors through a graded ring homomorphism $\sigma\colon\overline{S}\to\widetilde{R}_0$, making
		$\widetilde{R}_0$ a $\overline{S}$-algebra. 
		Its remaining generators are $\zeta_1,\dots,\zeta_m$, so $\widetilde{R}_0$ is spanned as an $\overline{S}$-module by the
		monomials $\zeta^{\mathbf{e}}=\prod_{k=1}^m\zeta_k^{e_k}$ with $\mathbf{e}=(e_1,\dots,e_m)\in\mathbb{Z}_{\ge0}^m$, where $\mathbf{e}_1,\dots,\mathbf{e}_m$ denotes the
		standard basis of $\mathbb{Z}^m$. 
		We claim every $\zeta^{\mathbf{e}}$ lies in the $\overline{S}$-submodule $$N:=\sigma(\overline{S})+\sum_{k=1}^m\sigma(\overline{S})\,\zeta_k,$$ arguing by induction on
		$|\mathbf{e}|=\sum_k e_k$. The cases $|\mathbf{e}|\le1$ hold by definition of $N$. 
		Let $|\mathbf{e}|\ge2$. If $e_k\ge2$ for some $k$, then $\zeta^{\mathbf{e}}=\zeta_k^2\,\zeta^{\mathbf{e}-2\mathbf{e}_k}$, and the quadratic relation gives in $\widetilde{R}_0$
		\[
		\zeta^{\mathbf{e}}
		=-\,\sigma(\overline{R_{(k,1)}})\,\zeta^{\mathbf{e}-\mathbf{e}_k}
		-\,\sigma(\overline{R_{(k,0)}R_{(k,2)}})\,\zeta^{\mathbf{e}-2\mathbf{e}_k}.
		\]
		Otherwise, if $e_i\le1$ for all $i$, it follows that $e_k=e_l=1$ for some $k<l$. Hence
		$\zeta^{\mathbf{e}}=\zeta_k\zeta_l\,\zeta^{\mathbf{e}-\mathbf{e}_k-\mathbf{e}_l}$, and the binomial
		relation gives
		\[
		\zeta^{\mathbf{e}}=\sigma(\overline{x_{k0}x_{l0}H_{kl}})\,\zeta^{\mathbf{e}-\mathbf{e}_k-\mathbf{e}_l}.
		\]
		In either case $\zeta^{\mathbf{e}}$ is an $\overline{S}$-combination of monomials $\zeta^{\mathbf{e}'}$
		with $|\mathbf{e}'|<|\mathbf{e}|$, which lie in $N$ by induction. As $N$ is an $\overline{S}$-module,
		$\zeta^{\mathbf{e}}\in N$, hence $\widetilde{R}_0=N$. Taking degree-$D$ parts,
		\[
		(\widetilde{R}_0)_D
		\subseteq\sigma(\overline{S}_D)+\sum_{k=1}^m\sigma(\overline{S}_{D-\delta_k})\,\zeta_k.
		\]
		By the linear relations, $x_{k1}\zeta_k=\sigma(\overline{R_{(k,0)}x_{k0}})$ and
		$x_{k0}\zeta_k=\sigma(\overline{-R_{(k,1)}x_{k0}-R_{(k,2)}x_{k1}})$, so
		$x_{ki}\zeta_k\in\sigma(\overline{S})$. Thus if a monomial $m$ of degree $D-\delta_k$ is divisible by
		some $x_{ki}$, writing $m=x_{ki}m'$ gives
		$\sigma(\overline{m})\zeta_k=\sigma(\overline{m'})\,x_{ki}\zeta_k\in\sigma(\overline{S}_D)$. Modulo $\sigma(\overline{S}_D)$, the summand $\sigma(\overline{S}_{D-\delta_k})\zeta_k$ is spanned by the classes $\sigma(\overline{m})\zeta_k$, where $m$ ranges over monomials of degree $D-\delta_k$ having $x_k$-degree zero.
		
		Since $(\delta_k)_k=0$, the condition that $m$ has $x_k$-degree zero forces $(D-\delta_k)_k=a_k=0$, hence $k\in Z(D)$. Therefore, whenever such monomials exist, every monomial of degree $D-\delta_k$ automatically has $x_k$-degree zero. Since monomials of degree $D-\delta_k$ form a basis of $S_{D-\delta_k}$, there are exactly $\dim_{\mathbb K}S_{D-\delta_k}$ of them.
		Combining this with
		$\dim_{\mathbb{K}}\sigma(\overline{S}_D)\le\dim_{\mathbb{K}}\overline{S}_D
		=\dim S_D-\dim S_{D-\operatorname{div}F}$ and \eqref{eq:Cox-hilb},
		\[
		\dim_{\mathbb{K}}(\widetilde{R}_0)_D
		\le\dim S_D-\dim S_{D-\operatorname{div}F}+\sum_{k\in Z(D)}\dim S_{D-\delta_k}=h(D).
		\]
		
		Both inequalities give $\dim_{\mathbb{K}}(\widetilde{R}_0)_D=h(D)
		=\dim_{\mathbb{K}}\mathcal{R}_{0,D}$, so the surjection $\overline{\Psi}_D$ is an isomorphism. As both
		$\widetilde{R}_0$ and $\mathcal{R}_0$ are supported on $\Nef(X)\cap\Cl(X)$, the map
		$\overline{\Psi}$ is an isomorphism of $\Cl(X)$-graded $\mathbb{K}$-algebras.
	\end{proof}

	\section{Cox rings of Wehler type hypersurfaces}\label{sec:cox-Wehler}
	
	In this section we describe the Cox ring of a Wehler type hypersurface from two complementary perspectives. First, following the framework of Theorem~\ref{thm:direct-limit-mov}, we realize $\Cox(X)$ as a filtered direct limit of finitely generated $\kk$-algebras arising from chamber section rings. This provides a description of $\Cox(X)$ in terms of an increasing system of finitely generated pieces.
	
	We then turn to explicit presentations. Using wall-crossing relations between chambers, we obtain generators and relations for $\Cox(X)$, establishing the presentation stated in Theorem~\ref{thm:cox-intro}. 
	
	\medskip
	We begin with the direct-limit description. For the reader's convenience, we recall its statement.

	\begin{theorem}\label{thm:cox-limit-section6}
		Let $X$ be a Wehler type hypersurface and let $\{\iota_k\}_{k=1}^{m}$ be the generators of $\Bir(X)$. For each $s\ge 0$, let $W_s$ denote the set of reduced words of length at most $s$ in the generators $\iota_k$, and define
		\[
		\widetilde{\mathcal R}_s
		:=
		\mathbb K\bigl[\,w^{*}(x_{ki}),\,y_0,\dots,y_n \;\bigm|\; w\in W_s,\; 1\le k\le m,\; i\in\{0,1\}\,\bigr].
		\]
		
		For $s\ge 1$, set
		\[
		I_s
		:=
		\Bigl\langle\, w^{*}\!\bigl(x_{ki}\,\iota_k^{*}(x_{ki})-R_{(k,2-2i)}\bigr),\; w^{*}(F)
		\;\Bigm|\; w\in W_{s-1},\; 1\le k\le m,\; i\in\{0,1\}\,\Bigr\rangle,
		\]
		and define
		\[
		m_s
		:=
		\!\!\!\prod_{\substack{w\in W_{s-1}\\ 1\le k\le m,\;i\in\{0,1\}}}
		\!\!\!
		w^{*}(x_{ki}),
		\qquad
		J_s
		:=
		\langle m_s\rangle
		\subseteq
		\widetilde{\mathcal R}_s.
		\]
		
		Let
		\[
		\overline I_s
		:=
		(I_s:J_s^{\,\infty}),
		\qquad
		\mathcal R'_s
		:=
		\widetilde{\mathcal R}_s/\overline I_s.
		\]
		
		With the convention $I_0=\langle F\rangle$, $m_0=1$, and $J_0=\langle 1\rangle$, the Cox ring of $X$ is isomorphic to the direct limit
		\[
		\Cox(X)\;\cong\;\varinjlim_s \mathcal R'_s.
		\]
	\end{theorem}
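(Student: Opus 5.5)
The plan has three steps: identify each $\mathcal R'_s$ with the image of an evaluation map into $\Cox(X)$, check that these images increase with $s$, and show that their union is all of $\Cox(X)$.

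\textbf{Step 1: evaluation maps and their kernels.} For each $s$, grade $\widetilde{\mathcal R}_s$ by $\Cl(X)$ via $\deg w^*(x_{ki})=w^*H_k$ and $\deg y_l=H_{m+1}$. Define the evaluation homomorphism
\[
\Phi_s\colon\widetilde{\mathcal R}_s\to\Cox(X),
\]
sending each formal symbol to the corresponding pulled-back section. Pullback by $w\in\Bir(X)=\PsAut(X)$ is a graded automorphism of $\Cox(X)$ (Lemma~\ref{lem:transport-sections}). Hence Proposition~\ref{multiplication-involution} transported by $w^*$ gives $I_s\subseteq\ker\Phi_s$.

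\textbf{Step 2: $\ker\Phi_s=\overline I_s$.} I will argue exactly as in Theorem~\ref{thm:Nef-coordinate}.
\begin{itemize}
\item $\ker\Phi_s$ is $J_s$-saturated, because $\Cox(X)$ is a domain (Lemma~\ref{domain}) and $\Phi_s(m_s)\neq0$.
\item So it suffices to show $I_s(\widetilde{\mathcal R}_s)_{m_s}=(\ker\Phi_s)(\widetilde{\mathcal R}_s)_{m_s}$.
\item After inverting $m_s$, induct on word length. For a reduced word $w\iota_k$ with $w\in W_{s-1}$, the relation $w^*(x_{ki}\iota_k^*(x_{ki})-R_{(k,2-2i)})$ expresses $(w\iota_k)^*(x_{ki})$ as $w^*(R_{(k,2-2i)})/w^*(x_{ki})$. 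The ordering of the composition should be checked against the paper's convention. The element $w^*(R_{(k,2-2i)})$ is a polynomial in lower-length symbols and the $y$'s.
\item Successive elimination yields
\[
(\widetilde{\mathcal R}_s)_{m_s}/I_s\cong \mathcal B/\langle F\rangle,
\]
with $\mathcal B=\mathbb K[x_{ki}^{\pm1},y_j]$. The transported equations $w^*(F)$ become consequences of $F$ after the substitution, since $w^*$ is a ring automorphism. I still need to verify that eliminating symbols of the form $w^*(x_{k,1-i})$ for $w$ ending in $\iota_k$ is consistent, i.e.\ that $w^*(x_{ki})$ for $w$ ending in $\iota_k$ is reached by a relation indexed by $w\iota_k\in W_{s-1}$.
\item As in Theorem~\ref{thm:Nef-coordinate}, $\mathcal B/\langle F\rangle\hookrightarrow\Cox(X)_{m_s}$.
\end{itemize}
Together these give $\ker\Phi_s=\overline I_s$. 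Hence $\mathcal R'_s\cong\operatorname{im}\Phi_s$ is a subalgebra of $\Cox(X)$. For $s=0$, $\overline I_0=\langle F\rangle$ and the same restriction-map argument applies.

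\textbf{Step 3: the images exhaust $\Cox(X)$.} Since $W_s\subseteq W_{s+1}$, the subalgebras $\operatorname{im}\Phi_s$ are nested, so $\varinjlim_s\mathcal R'_s=\bigcup_s\operatorname{im}\Phi_s$. It remains to show every homogeneous $f\in\Cox(X)$ of degree $D$ lies in some $\operatorname{im}\Phi_s$.
\begin{itemize}
\item If $D$ is not effective then $f=0$. Otherwise, by Theorem~\ref{Kawamata-Morrison}, $D=w^*D_0$ with $D_0\in\Nef(X)$ and $w$ a reduced word of length $\ell$.
\item Then $f=w^*g$ with $g\in H^0(X,\mathcal O(D_0))=\mathcal R_{0,D_0}$.
\item By Theorem~\ref{thm:Nef-coordinate}, $g$ is a polynomial in $x_{ki}$, $\iota_k^*(x_{ki})$ and $y_j$. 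Theorem~\ref{thm:Nef-no-saturation} alternatively gives $x_{ki},\zeta_k,y_j$, with $\zeta_k=x_{k0}\iota_k^*(x_{k1})$.
\item Since $w^*\iota_k^*=(\iota_k w)^*$ up to the composition convention, and $\iota_k^*(y_j)=y_j$, $f$ is a polynomial in symbols $u^*(x_{ki})$ with $u$ of length $\le\ell+1$. Thus $f\in\operatorname{im}\Phi_{\ell+1}$.
\item Here one must reduce words: if $\iota_k w$ is not reduced, it shortens, which only helps. One must also treat $w^*(y_j)$: it equals $y_j$ when the nontrivial $\iota_k$ fix $y_j$, which holds since each $\iota_k$ only moves the $k$-th $\mathbb P^1$ coordinate.
\end{itemize}

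\textbf{Expected main obstacle.} I expect the hardest step to be the localization/elimination argument in Step 2 for general $s$. One must confirm that every symbol $u^*(x_{ki})$ with $|u|\le s$ is eliminable after inverting $m_s$, via a Vieta relation indexed by a word in $W_{s-1}$. The subtle case is when $u$ ends in $\iota_k$ itself, handled using $\iota_k^2=1$. One must also confirm that no further relations arise among the $w^*(F)$ beyond $\langle F\rangle$. I would handle this by induction on $s$, showing $(\widetilde{\mathcal R}_s)_{m_s}/I_s\cong(\widetilde{\mathcal R}_{s-1})_{m_{s-1}}/I_{s-1}$ after inverting the new variables appearing in $m_s$.
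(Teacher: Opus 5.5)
Your plan is correct in substance. Your Step~3 is essentially the paper's Steps~1, 2 and~4: the paper phrases exhaustion as $\mathcal R'_s\subseteq\mathcal R_s\subseteq\mathcal R'_{s+1}$, using the generators $w^{*}(x_{ki}),w^{*}(\zeta_k),y_j$ of $\mathcal R_s$, and invoking Theorem~\ref{thm:Nef-coordinate} directly, as you do, is equivalent.

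The real difference is in proving $\ker\Phi_s=\overline I_s$. The paper does not redo the localization for general $s$. It inducts on $s$, with base case $s=1$ given by Theorem~\ref{thm:Nef-coordinate}. Given $P\in\ker\varphi_s$, it multiplies by a monomial $A$ in the factors $v^{*}(x_{ki})$ of $m_s$, chosen so that the Vieta relations indexed by $v\in W_{s-1}$ replace every length-$s$ variable. This gives $AP\equiv H\pmod{I_s}$ with $H\in\widetilde{\mathcal R}_{s-1}\cap\ker\varphi_{s-1}=\overline I_{s-1}$. Hence $m_{s-1}^{N}AP\in I_s$, and $m_{s-1}^{N}A$ divides a power of $m_s$. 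This clearing of denominators never identifies the localized ring and never needs the transported equations $w^{*}(F)$ to be redundant. Your global elimination yields more, namely an explicit description of $(\widetilde{\mathcal R}_s)_{m_s}/I_s$ as a localization of $\mathcal B/\langle F\rangle$, but it requires the bookkeeping below.

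One statement in your Step~2 is false as written. For $s\ge2$ one does \emph{not} have $(\widetilde{\mathcal R}_s)_{m_s}/I_s\cong\mathcal B/\langle F\rangle$. In that case $m_s$ contains $\iota_k^{*}(x_{ki})$, which the elimination sends to $R_{(k,2-2i)}/x_{ki}$, so $R_{(k,2-2i)}$ becomes a unit. More generally, inverting all symbols of length $\le s-1$ inverts the numerators of the pulled-back $v^{*}(R_{(k,2-2i)})$. The correct target is $(\mathcal B/\langle F\rangle)_T$, where $T$ is generated by the images of the factors of $m_s$. This does no harm: those images are nonzero in the domain $\Cox(X)$, so $(\mathcal B/\langle F\rangle)_T$ still embeds in $\operatorname{Frac}\Cox(X)$.

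That embedding is also what settles your worry about the $w^{*}(F)$. The phrase ``since $w^*$ is a ring automorphism'' is not by itself a justification at the level of the formal ring $\widetilde{\mathcal R}_s$. Instead, proceed as follows:
\begin{enumerate}
\item Define the substitution map using only the eliminating Vieta relations and $F$.
\item By induction on length, using $\Phi_s(v^{*}(x_{ki}))\neq0$, its composite with the embedding agrees with $\Phi_s$ on generators.
\item Hence it kills all of $I_s$, and its kernel is exactly $I_s(\widetilde{\mathcal R}_s)_{m_s}$.
\end{enumerate}
For a single involution you can also check this directly: the substitution sends $\iota_k^{*}(F)$ to $\frac{R_{(k,0)}R_{(k,2)}}{x_{k0}^{2}x_{k1}^{2}}\,F$. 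With this correction your argument is complete.
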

	
	Let $\varphi_s\colon\widetilde{\mathcal R}_s\to\Cox(X)$ be the evaluation
	homomorphism sending each formal symbol $w^{*}(x_{ki})$ to the corresponding
	pullback section and $y_l$ to itself.
	By Proposition~\ref{multiplication-involution} every generator of $I_s$
	vanishes in $\Cox(X)$, so $I_s\subseteq\ker\varphi_s$. Since $\Cox(X)$ is a
	domain (Lemma~\ref{domain}) and $\varphi_s(m_s)\ne 0$, the kernel
	$\ker\varphi_s$ is $J_s$-saturated; combined with $I_s\subseteq\ker\varphi_s$
	this yields
	\[
	\overline I_s\;\subseteq\;\ker\varphi_s,
	\]
	and $\varphi_s$ descends to $\overline\varphi_s\colon\mathcal R'_s\to\Cox(X)$.
	
	\begin{proof}[Proof of Theorem~\ref{thm:cox-limit-section6}]
		We divide the proof into four steps.
		
		\medskip
		\emph{Step 1. Cox ring as a direct limit:}  In this step we show that $\Cox(X)$ arises as a direct limit of finitely generated rings. Let
		$\operatorname{Bir}(X)$ be the birational automorphism group of \(X\). By Theorem \ref{thm:Wehler-bir}, $\operatorname{Bir}(X)$ is generated by involutions \(\iota_1,\dots,\iota_m\).
		
		For each integer $s\ge 0$ let $W_s\subset \operatorname{Bir}(X)$ be the set of elements
		that can be written as a reduced word of length at most $s$ in the fixed
		generating set $\{\iota_1,\dots,\iota_m\}$. For each $s$, recall the definition
		\[
		\mathcal{K}_s := \bigcup_{w\in W_s} w^*(\Nef(X)).
		\]
		It follows that $\mathcal{K}_s$ is a convex rational polyhedral cone by \cite[Proposition~8]{Vinberg}. Additionally, by the Cone theorem~\ref{Kawamata-Morrison}, the translates $w^*(\Nef(X))$ cover the effective cone. Hence
		\[
		\Eff(X)\
		\;=\; \bigcup_{s\ge 0} \mathcal{K}_s.
		\]
		For each $s$ we define the $\mathcal{K}_s$–graded section ring as
		\[
		\mathcal R_s=
		\bigoplus_{[D]\in \mathcal{K}_s\cap \Cl(X)} H^0\bigl(X,\mathcal{O}_X(D)\bigr)
		\;\subset\; \Cox(X).
		\]
		The inclusions $\mathcal K_s\subset \mathcal K_{s+1}$ induce inclusions
		$\mathcal R_s\subset \mathcal R_{s+1}$, and the union of the $\mathcal K_s$ is all of
		$\Eff(X)$.  Therefore, every homogeneous summand 
		$H^0(X,\mathcal{O}_X(D))\subset \Cox(X)$ with $[D]\in \Eff(X)$ is contained in some
		$\mathcal{R}_s$ with  $s\geq 0$. In particular, we obtain
		\[
		\Cox(X)
		\;=\; \bigoplus_{[D]\in \Eff(X)\cap \Cl(X)}
		H^0\bigl(X,\mathcal{O}_X(D)\bigr)
		\;\cong\; \varinjlim_s \mathcal R_s ,
		\]
		which proves the direct-limit statement.
		
		\emph{Step 2. Generation of $\mathcal{R}_s$:}  Let $[D]\in \mathcal K_{s}\cap \Cl(X)$ be an effective class. Then by Theorem~\ref{Kawamata-Morrison} there exists a nef divisor $L$ with
		$[D]=w^*[L] = (\iota_l v)^*[L]$ for $w\in W_s$ and some $v\in W_{s-1}$. Each involution $\iota_k$ is an isomorphism in codimension one, hence for any Cartier divisor $M$, the pull-back induces an isomorphism on global
		sections
		\[
		\iota_k^*: H^0\bigl(X,\mathcal{O}_X(M)\bigr)
		\xrightarrow{\;\sim\;}
		H^0\bigl(X,\mathcal{O}_X(\iota_k^*M)\bigr).
		\]
		By composing, we also get an isomorphism
		\[
		w^* = (\iota_l v)^*:
		H^0\bigl(X,\mathcal{O}_X(L)\bigr)
		\xrightarrow{\;\sim\;}
		H^0\bigl(X,\mathcal{O}_X(D)\bigr).
		\]
		
		By Propositions~\ref{surjectivity} and ~\ref{zeta-relation}
		the $\mathbb{K}$-vector space $H^0(X,\mathcal{O}_X(L))$ is generated by monomials in the sections $x_{ki}$, $y_j$ and $\zeta_k$.
		Applying $w^*$, we see that $H^0(X,\mathcal{O}_X(D))$ is generated by the monomials in the sections $w^*(x_{ki})$,
		$w^*(\zeta_k)$, and $y_j$.
		Hence $\mathcal R_{s}$ is generated by the set $$\{w^*(x_{ki}),w^*(\zeta_k),y_{0},\dots,y_{n}|\ 1\le k\le m,i\in\{0,1\},\ w\in W_s\}.$$
		
		\medskip
		\emph{Step 3. Injection of $\mathcal{R}^\prime_s$ in $\Cox(X)$:}
		We argue by induction on $s$ that $\ker\varphi_s\subseteq\overline I_s$.
		
		For $s=0$ we have that $I_0=\overline I_0=\langle F\rangle$; the ideal-sheaf sequence used in the proof
		of Theorem~\ref{thm:Nef-coordinate} gives $\ker\varphi_0=\langle F\rangle$. 
		
		For $s=1$, we have $\widetilde{\mathcal R}_1=\mathcal A$, as defined in the previous section.
		Theorem~\ref{thm:Nef-coordinate} yields the equality $\ker\varphi_1=\overline I_1$.
		
		Now, let $s\ge2$ and assume that $\ker\varphi_{s-1}=\overline I_{s-1}$ holds. Write $W_s^{\circ}:=W_s\setminus W_{s-1}$ for the words of length exactly $s$. Every $w\in W_s^{\circ}$ has a reduced expression $w=v\iota_a$ with $v$ of length exactly $s-1$. For an index $a$, the involution $\iota_a$ fixes $x_{ki}$ for $a\neq k$, so $w^{*}(x_{ki})=v^{*}(x_{ki})\in\widetilde{\mathcal R}_{s-1}$.
		Hence, the variables of $\widetilde{\mathcal R}_s$ not already in $\widetilde{\mathcal R}_{s-1}$ are
		\[
		(v\iota_k)^{*}(x_{ki}),\qquad v\iota_k\in W_s^{\circ},\ 1\le k\le m,\ i\in\{0,1\}.
		\]
		For each such variable the Vieta-type generator of $I_s$ indexed by $v\in W_{s-1}$ gives
		\begin{equation}\label{eq:vieta}
			v^{*}(x_{ki})\cdot(v\iota_k)^{*}(x_{ki})\;\equiv\;v^{*}\!\bigl(R_{(k,2-2i)}\bigr)
			\pmod{I_s},
		\end{equation}
		with $v^{*}(R_{(k,2-2i)})\in\widetilde{\mathcal R}_{s-1}$.
		
		Let $P\in\ker\varphi_s$,  written as a polynomial in the new variables with coefficients in $\widetilde{\mathcal R}_{s-1}$, and let $d_{v,k,i}\ge0$ be the exponent of $P$ in $(v\iota_k)^{*}(x_{ki})$.
		Set
		\[
		A\;:=\;\!\!\!\prod_{v\iota_k\in W_s^{\circ}}\!\!\! v^{*}(x_{ki})^{\,d_{v,k,i}}\in\ \widetilde{\mathcal R}_{s-1}.
		\]
		Pairing each occurrence of $(v\iota_k)^{*}(x_{ki})$ in $AP$ with a factor $v^{*}(x_{ki})$ of $A$ and applying \eqref{eq:vieta} replaces every new variable, so
		\begin{equation}\label{eq:descend}
			AP\;\equiv\;H\pmod{I_s},\qquad H\in\widetilde{\mathcal R}_{s-1}.
		\end{equation}
		Applying $\varphi_s$ to \ref{eq:descend} and using $\varphi_s(P)=0$ gives $\varphi_s(H)=0$. Since $\varphi_s|_{\widetilde{\mathcal R}_{s-1}}=\varphi_{s-1}$, it follows that $H\in\ker\varphi_{s-1}=\overline I_{s-1}$ by induction.
		Hence $m_{s-1}^{N}H\in I_{s-1}\subseteq I_s$ for some $N$. Multiplying
		\eqref{eq:descend} by $m_{s-1}^{N}$ gives
		\[
		(m_{s-1}^{N}A)\cdot P
		\;=\; m_{s-1}^{N}H \;+\; m_{s-1}^{N}\!\cdot\!(AP-H)
		\;\in\; I_s.
		\]
		The monomial $m_{s-1}^{N}A$ involves only factors of $m_s$, so it divides $m_s^{M}$ for some $M$. Writing $m_s^{M}=(m_{s-1}^{N}A)\,\tau$ yields
		$m_s^{M}\cdot P=\tau(\!\cdot\!m_{s-1}^{N}A)\!\cdot\!P\in I_s$. Therefore
		$P\in(I_s:J_s^{\,\infty})=\overline I_s$, which completes the induction.
		
		Since $\ker\varphi_s=\overline I_s$ for all $s$, it follows that
		$\overline\varphi_s\colon\mathcal R'_s\to\Cox(X)$ is injective for all $s$.
		
		\medskip
		\emph{Step 4. $\Cox(X)$ as a direct limit of $\mathcal R'_s$:}
		Viewing $\mathcal R'_s$ and $\mathcal R'_{s+1}$ as $\mathbb K$-subalgebras of $\Cox(X)$, we claim that
		\[
		\mathcal R'_s\subset\mathcal R_s\subset\mathcal R'_{s+1}.
		\]
		Each inclusion is verified on a generating set. The subalgebra
		$\operatorname{im}\varphi_s$ is generated by the sections $w^{*}(x_{ki})$ with $w\in W_s$, of degree $w^{*}(H_k)\in\mathcal K_s$, together with $y_0,\dots,y_n$, of degree $H_{m+1}\in\Nef(X)\subseteq\mathcal K_s$.
		Since every generator is homogeneous of degree in $\mathcal K_s$, this yields $\mathcal R'_s\subset\mathcal R_s$. 
		For the second inclusion, Step~2 shows that $\mathcal R_s$ is generated by the sections $w^{*}(x_{ki})$, $w^{*}(\zeta_k)$
		and $y_l$ with $w\in W_s$, each of which lies in $\operatorname{im}\varphi_{s+1}$. Thus, $\mathcal R_s\subset\mathcal R'_{s+1}$. This chain of inclusions combined with Step 1 give the desired isomorphism
		\[
		\Cox(X)\;\cong\;\varinjlim_s\mathcal R'_s.
		\]
	\end{proof}
	
	Now, we proceed to give the presentation of $\Cox(X)$ given in Theorem \ref{thm:cox-intro}. The underlying idea is to pull back the ideal
	$\mathcal{I}_1$ of Theorem~\ref{thm:Nef-no-saturation} along every $w\in \Bir(X)$ and adjoin a set of wall-crossing relations. Each such relation rewrites a product of generators coming from two different chambers as a polynomial in generators whose presenters have strictly smaller length.
	The iteration of this process reduces every relation of $\Cox(X)$ to one of the nef chamber. This process is developed in Proposition~\ref{prop:reduce}.
	
	\medskip
	We work in the polynomial ring
	\[
	\mathcal{A}_{\infty}:=\mathbb{K}\bigl[\,w^{*}(x_{ki}),\ w^{*}(\zeta_k),\ y_j \ \bigm|\
	w\in \Bir(X),\ 1\le k\le m,\ i\in\{0,1\},\ 0\le j\le n\,\bigr],
	\]
	graded by $\Cl(X)$, with evaluation homomorphism
	$\Phi\colon \mathcal{A}_{\infty}\to \Cox(X)$. Since $\iota_j$ fixes
	$x_{ki}$ for $j\ne k$, the section $w^{*}(x_{ki})$ depends only on the coset of $w$ modulo $\langle \iota_j:j\ne k\rangle$. Thus, we index each generator $w^{*}(x_{ki})$ and $w^{*}(\zeta_k)$ by the minimal-length representative of that coset, so that
	$w=\mathrm{id}$ or $w$ ends in $\iota_k$. For $w\in \Bir(X)$, let $\ell(w)$ denote the reduced word length of $w$, i.e., the number of letters in any reduced expression for $w$ (see for example \cite[Section 5.2]{Hum90}).

	\medskip
	By the Cone Theorem~\ref{Kawamata-Morrison} the effective cone is tiled by the
	chambers $K_w=w^*\Nef(X)$, $w\in \Bir(X)$. Their extreme rays are
	$w^*H_1,\dots,w^*H_{m+1}$ (Lemma~\ref{Neron-Severi}), where $H_{m+1}$ is fixed by $\Bir(X)$. We use the basis $H_1,\dots,H_{m+1}$ of $N^1(X)_{\mathbb{R}}$ to
	define coordinates on each chamber. 
	For $1\le k\le m$, set $$\lambda_{0,k}\!\bigl(\sum_j c_j H_j\bigr):=c_k.$$ 
	This is the supporting functional of the wall between $K_0$ and $K_{\iota_k}$, normalized $\ge0$ on $K_0$. 
	The $\Bir(X)$-action transports $\lambda_{0,k}$ to a general chamber. The wall functional between $K_w$ and $K_{\iota_kw}$ is given by
	\begin{equation}\label{eq:dualfunctional}
		\lambda_{w,k}(\xi)\ :=\ \lambda_{0,k}\!\bigl((w^{-1})^{*}\xi\bigr),
		\qquad \xi\in N^1(X)_{\mathbb{R}}.
	\end{equation}
	
	\medskip
	A product $w^*(x_{ki}) w'^*(x_{lj})$ whose degree $w^*H_k + w'^*H_l$ lies in the nef chamber $\Nef(X)$ can be expressed as an element of the nef graded ring $\widetilde{R}_0$ via Theorem~\ref{thm:Nef-no-saturation}, but this expression is not visible inside $\mathcal{A}_\infty$ until a relation is imposed. Similarly, with the equality $\zeta_k-x_{k0}\iota_k^{*}(x_{k1})=0$. In the next definition we record those relations.
	
	\medskip
	\begin{definition}\label{def:wall}
		Let $w,w'\in \Bir(X)$ with $w^{*}H_k+w'^{*}H_l\in\Nef(X)$. By
		Theorem~\ref{thm:Nef-no-saturation} there is a unique coset
		$H_{w,w',ki,lj}+\mathcal{I}_1\in\widetilde{R}_0$ with
		$\Phi(H_{w,w',ki,lj})=\Phi\bigl(w^{*}(x_{ki})\,w'^{*}(x_{lj})\bigr)$. The
		\emph{wall-crossing ideal} $I_{\mathrm{wall}}\subseteq\mathcal{A}_{\infty}$ is
		generated by
		\[
		u^{*}\bigl(w^{*}(x_{ki})\,w'^{*}(x_{lj})\bigr)-u^{*}(H),
		\qquad u\in \Bir(X),\ H\in H_{w,w',ki,lj}+\mathcal{I}_1,
		\]
		and
		\[
		u^*\bigl(\zeta_k-x_{k0}\iota_k^{*}(x_{k1})\bigr),
		\qquad u\in \Bir(X).
		\]
	\end{definition}
	
	\medskip
	To order monomials under reduction modulo $I_{\mathrm{wall}}$, we measure each $x$-type factor by the word length of the element presenting it.
	
	\begin{definition}\label{def:height}
		Let $M=\prod_{m=1}^N w_m^{*}(x_{k_m i_m})$ be a monomial with factors indexed by
		minimal presenters, i.e., each $w_m$ is the minimal-length representative of its coset modulo
		$\langle\iota_j:j\ne k_m\rangle$. Then its \emph{height} $\operatorname{ht}(M)$ is the finite
		multiset $\{\ell(w_1),\dots,\ell(w_N)\}$. We define
		$\operatorname{ht}(M)<\operatorname{ht}(M')$ by arranging both multisets in
		non-increasing order, extending the shorter sequence by zeros until both have the
		same length, and comparing the resulting sequences lexicographically.
	\end{definition}
	
	\begin{example}\label{ex:height}
		Let $m=2$, $n=2$, so $X\subset\mathbb{P}^1\times\mathbb{P}^1\times\mathbb{P}^2$ and $\Bir(X)=\langle \iota_1,\iota_2\rangle$. Consider the monomials
		\[
		M = (\iota_1\iota_2)^*(x_{20})\iota_1^*(x_{10}), \qquad
		M' = \iota_1^*(x_{10})x_{10}.
		\]
		Their minimal presenters have lengths $\{2,1\}$ and $\{1,0\}$, so $\operatorname{ht}(M)=\{2,1\}$ and $\operatorname{ht}(M')=\{1,0\}$.
		Arranging each in non-increasing order gives the sequences $(2,1)$ and $(1,0)$, yielding $\operatorname{ht}(M')<\operatorname{ht}(M)$.
	\end{example}
	
	\begin{remark}\label{rem:height-wellf}
		The height $\operatorname{ht}(M)$ consists only of zeros if and only if every factor of $M$ has a length-$0$ presenter, i.e.\ $M\in\mathcal{A}_0$ as in Theorem \ref{thm:Nef-no-saturation}. The order of Definition~\ref{def:height} is well founded, i.e., no infinite strictly descending
		chain exists. 
	\end{remark}
	
	\medskip
	The reduction in Proposition~\ref{prop:reduce} proceeds by pairing the factor $w_\alpha^{*}(x_{k_\alpha i_\alpha})$ of maximal $\ell(w_\alpha)$ with another factor lying on the opposite side of the wall functional $\lambda_{0,a}$.
	The following lemma establishes the sign behavior of pulled-back extreme rays needed to construct this pairing. For a reduced word $w=\iota_k\cdots \iota_b\neq 1$ we say $\iota_k$ is the initial letter and $\iota_b$ the ending letter of $w$. 
	
	\medskip
	\begin{lemma}\label{lem:pos}
		Let $w\ne1$ be a reduced word ending in $\iota_b$, with first letter $\iota_k$, and write
		$w^{*}H_k=\sum_j c_j H_j$.  Then $c_b\le-1$ and $c_j\ge1-c_b$ for all $j\ne b$.
		In particular $c_j\ge1$ for every $j\ne b$.
	\end{lemma}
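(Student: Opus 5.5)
The plan is to prove the statement by induction on the length $\ell(w)$, keeping track of the coefficient vector $(c_1,\dots,c_{m+1})$ as the letters of $w$ are applied one at a time. The first step is to write out the action of a single generator on $N^1(X)_{\mathbb R}$ in the basis $H_1,\dots,H_{m+1}$. By Theorem~\ref{involution-action}, $\iota_a^{*}$ fixes $H_j$ for $j\neq a$ and sends
\[
H_a\mapsto -H_a+2\sum_{j\neq a,\,j\le m}H_j+(n+1)H_{m+1}.
\]
Hence for $\xi=\sum_j c_jH_j$ the coefficients of $\iota_a^{*}\xi$ are
\[
c_a'=-c_a,\qquad c_j'=c_j+2c_a\ (j\le m,\ j\neq a),\qquad c_{m+1}'=c_{m+1}+(n+1)c_a .
\]

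Pullback reverses composition. So for $w=\iota_k\cdots\iota_b$ we have $w^{*}H_k=\iota_b^{*}\cdots\iota_k^{*}H_k$, and the innermost operator applied is $\iota_k^{*}$.

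\emph{Base case} ($w=\iota_k$, so $b=k$). Here $\iota_k^{*}H_k$ has $c_k=-1$, $c_j=2$ for $j\le m$ with $j\ne k$, and $c_{m+1}=n+1\ge 2$. Then $c_b=-1\le -1$ and $c_j\ge 2=1-c_b$ for $j\ne b$, as required.

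\emph{Inductive step.} Let $w=w_0\iota_a$ be reduced, where $w_0$ is reduced, ends in $\iota_b$, and $a\neq b$ because the word is reduced. The first letter is unchanged, so by the inductive hypothesis $\xi=w_0^{*}H_k$ satisfies $c_b\le -1$ and $c_j\ge 1-c_b\ge 2$ for all $j\ne b$. Since $a\ne b$, this gives $c_a\ge 1-c_b\ge 2>0$. Now apply $\iota_a^{*}$ and check the required inequalities.
\begin{itemize}
\item The new ending coordinate is $c_a'=-c_a\le -2\le -1$. The inequality to prove is $c_j'\ge 1-c_a'=1+c_a$ for every $j\ne a$.
\item For $j=b$: we need $c_b+2c_a\ge 1+c_a$, which is exactly $c_a\ge 1-c_b$, the inductive hypothesis.
\item For $j\le m$ with $j\ne a,b$: we need $c_j+c_a\ge 1$, which holds since $c_j\ge 2$.
\item For $j=m+1$: we need $c_{m+1}+(n+1)c_a\ge 1+c_a$. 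This holds since $c_{m+1}\ge 2$ and $n c_a\ge 0$. If $n=0$ this coordinate behaves like a $\mathbb P^n$-direction fixed by all involutions, and the same estimate applies.
\end{itemize}
This closes the induction. The final assertion follows at once: $c_b\le -1$ gives $c_j\ge 1-c_b\ge 2\ge 1$ for all $j\ne b$.

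The step I expect to need the most care is not an estimate but the bookkeeping. One must get the order of composition right, so that the last letter $\iota_b$ is the last operator applied and hence the coordinate that is negative. One must also observe that reducedness ($a\ne b$) is exactly what makes $c_a$ positive, which is what the whole inductive step relies on. The key point is the sharp form of the hypothesis, $c_j\ge 1-c_b$ rather than just $c_j\ge 1$: this stronger form is what propagates to the $j=b$ coordinate after the next reflection.
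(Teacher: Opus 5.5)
Your proof is correct and is essentially the paper's argument: induction on $\ell(w)$ using the coordinate formulas for $\iota_a^{*}$ from Theorem~\ref{involution-action}. As in the paper, reducedness ($a\neq b$) gives $c_a\ge 2$, and the sharpened bound $c_j\ge 1-c_b$ is exactly what is needed at the old ending coordinate after the next reflection. The only difference is labelling: your $\iota_a,\iota_b$ play the roles of the paper's $\iota_b,\iota_{b'}$.
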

	
	\begin{proof}
		We induct on $\ell(w)$.  For $\ell(w)=1$ we have $w=\iota_k$, so $b=k$, and
		Theorem~\ref{involution-action} gives $c_k=-1$, $c_j=2$ for $j\le m$ with $j\ne k$,
		and $c_{m+1}=n+1\ge2$. Therefore, the claim holds.  For $\ell(w)\ge2$, write $w=w'\iota_b$ with
		$w'$ reduced ending in $\iota_{b'}\ne \iota_b$ and first letter $\iota_k$; by induction
		$w'^{*}H_k=\sum_jc'_jH_j$ satisfies $c'_{b'}\le-1$ and $c'_j\ge1-c'_{b'}\ge2$
		for $j\ne b'$, in particular $c'_b\ge2$.  Applying $\iota_b^*$
		(Theorem~\ref{involution-action}),
		\[
		c_b=-c'_b,\qquad
		c_j=c'_j+2c'_b\ (j\le m,\ j\ne b),\qquad
		c_{m+1}=c'_{m+1}+(n+1)c'_b,
		\]
		so $c_b\le-2$, and for each $j\ne b$ one checks $c_j-(1-c_b)=c_j-1-c'_b\ge0$.
		This equals $c'_{b'}+c'_b-1\ge0$ when $j=b'$, equals
		$(c'_j-1)+c'_b>0$ when $j\le m$ is distinct from $b$ and $b'$, and equals $c'_{m+1}+nc'_b-1\ge0$ when $j=m+1$.
	\end{proof}
	
	\begin{lemma}\label{lem:cross}
		Let $w,w'\in \Bir(X)$ with $\ell(w)\ge\ell(w')\ge 1$, and set $L:=\ell(w)$.  Let
		\[
		A=w^{*}H_{k},\qquad B=w'^{*}H_{l}
		\]
		be extreme rays indexed by their minimal presenters; that is, $w=\iota_k w_0$ and $w'=\iota_l w_0'$ are reduced, with first letters $\iota_k,\iota_l$ and $k,l\le m$.
		Write $w=v\iota_a$ in reduced form.
		If $\lambda_{0,a}(B)>0$, then $A+B\in\overline{K_u}$ for some $u\in \Bir(X)$ with $\ell(u)<L$.
	\end{lemma}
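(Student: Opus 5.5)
The plan is to count the walls that separate a class from the fundamental chamber, the standard way of measuring word length in a reflection-group tiling. By Theorem~\ref{Kawamata-Morrison} and Theorem~\ref{involution-action}, $\Eff(X)$ is tiled by the chambers $K_u=u^*\Nef(X)$, and the tiling is governed by the pseudo-reflections $\iota_j^*$, with $\Bir(X)\cong(\mathbb Z/2\mathbb Z)^{*m}$ acting as a Coxeter group. For $\xi$ in the interior of $\Eff(X)$ let $N(\xi)$ be the number of walls of the tiling that strictly separate $\xi$ from $\operatorname{int}K_0$. The first step is the Coxeter fact that $\xi\in\overline{K_u}$ for some $u$ with $\ell(u)\le N(\xi)$. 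Concretely, the walls crossed along a reduced gallery from $K_0$ to $K_u$ are exactly the $\ell(u)$ walls separating $K_0$ from $K_u$, as in \cite[Section 5]{Hum90}. The lemma then follows once we show $N(A+B)<L$.

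The second step is the subadditivity of separation. Each wall is the zero set of a functional that is nonnegative on $K_0$. If such a functional is nonnegative on both $A$ and $B$, it is nonnegative on $A+B$. Hence every wall separating $A+B$ from $K_0$ separates $A$ or $B$ from $K_0$.

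Now consider $A=w^*H_k$. It is an extreme ray of $\overline{K_w}$, so the walls strictly separating it from $K_0$ are among the $L$ walls separating $K_w$ from $K_0$. One of these is the wall $\{\lambda_{0,a}=0\}$ attached to the ending letter $\iota_a$. It separates $A$ strictly, since Lemma~\ref{lem:pos} gives $\lambda_{0,a}(A)=c_a\le-1$.

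The hypothesis $\lambda_{0,a}(B)>0$ means $B$ lies on the positive side of this wall. If also $\lambda_{0,a}(A+B)\ge 0$, this wall no longer separates $A+B$, and we get $N(A+B)\le L-1$, provided the walls separating $B$ are among those separating $A$. If instead $\lambda_{0,a}(A+B)<0$, apply the reflection $\iota_a^*$, which preserves the tiling. It sends $A$ to an extreme ray of $K_v$ with $\ell(v)=L-1$, and it sends $B$ to a class whose $a$-coordinate becomes negative. Then run the same comparison for the shorter word $v$, inducting on $L$ and using the explicit coordinate formulas of Theorem~\ref{involution-action}. The base case $L=1$ is a direct check: here $w=\iota_a$, and since $\ell(w')\le L$ forces $w'=\iota_l$, one verifies from the coordinates $(2,\dots,-1_a,\dots,2,n+1)$ that $A+B\in\Nef(X)$.

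The main obstacle is the containment used above: every wall strictly separating $B$ from $K_0$ must also strictly separate $A$. This is where $\ell(w')\le\ell(w)$ and the minimal-presenter normalization must enter, since in general the separating sets of two rays are unrelated. I would attack it with the coordinate inequalities of Lemma~\ref{lem:pos}. The key estimate is $c_j\ge 1-c_a$ for all $j\ne a$, which says $A$ is very negative only in the $a$-direction and dominant elsewhere. Using it, I would show by induction along the reduced word $w'$ that each wall functional $\lambda_{u,j}$ which is negative on $B$ is dominated by its value on $A$.

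If this direct comparison fails, the fallback is to argue only about $N(A+B)$. In that version I would track the largest coordinate gap under each reflection $\iota_j^*$, and show that adding $B$ strictly decreases the number of sign changes relative to $A$ alone. Either way, the heart of the proof is a careful sign bookkeeping through the transformation rules $c_b\mapsto -c_b$, $c_j\mapsto c_j+2c_b$, $c_{m+1}\mapsto c_{m+1}+(n+1)c_b$.
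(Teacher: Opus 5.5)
There is a genuine gap, and it is exactly the step you flag as the main obstacle. The containment ``every wall strictly separating $B$ from $K_0$ also separates $A$'' is not merely unproved: it is false in every instance of the lemma. Let $\iota_b$ be the last letter of $w'$. Lemma~\ref{lem:pos} gives $\lambda_{0,b}(B)=c_b(B)\le -1$, and since $\lambda_{0,a}(B)>0$ this forces $b\neq a$. Applying Lemma~\ref{lem:pos} to $A$ then gives $\lambda_{0,b}(A)\ge 1-c_a(A)\ge 2$. So the wall $\{\lambda_{0,b}=0\}$ separates $B$ from $K_0$ but not $A$. This already happens for $m=n=2$, $w=\iota_1$, $w'=\iota_2$, where $A=(-1,2,3)$ and $B=(2,-1,3)$.

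Structurally, $\overline{K_w}\subseteq\{\lambda_{0,a}\le 0\}$ while $\overline{K_{w'}}\subseteq\{\lambda_{0,a}\ge 0\}$. In the chamber tree, the walls separating $K_w$ from $K_0$ and those separating $K_{w'}$ from $K_0$ therefore lie in different branches and are disjoint. Your planned induction along $w'$, ``dominating'' each functional negative on $B$ by its value on $A$, is aimed at a false statement.

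The other branch does not close the gap either. After applying $\iota_a^*$, the image of $B$ is the extreme ray $(w'\iota_a)^*H_l$, of length $\ell(w')+1$. This can exceed $\ell(v)=L-1$, so the hypothesis $\ell(w)\ge\ell(w')$ is not preserved. Swapping roles does not help, because the longer word then has length $\ge L$, so the induction on $L$ is not well founded. For example, take $m=3$, $n=2$, $w=\iota_1\iota_2\iota_3$, $w'=\iota_3\iota_1$. Then $A=(15,10,-6,27)$, $B=(-2,6,3,9)$ and $\lambda_{0,3}(A+B)=-3<0$. But $\iota_3^*B=(\iota_3\iota_1\iota_3)^*H_3$ has length $3>2$. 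Here $A+B\in K_{\iota_3}$, so $F_a$ still separates $A+B$; it is simply the wrong wall to track.

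What you are missing are the two ingredients the paper uses.

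First, because $K_w$ and $K_{w'}$ lie on opposite sides of $F_a$ and the chamber graph is a tree, the reduced gallery from $K_w$ to $K_{w'}$ passes through $K_0$. The union of the chambers on this gallery is convex, so $A+B$ lies in one of them. Every chamber on it has length $<L$ except $K_w$ and, when $\ell(w')=L$, $K_{w'}$. Your subadditivity observation can be salvaged into exactly this step. The walls strictly separating $A+B$ from $K_0$ form the separating set of a single chamber, i.e.\ a path from $K_0$ in the tree. Each such wall separates $A$ or $B$, and those walls live in different branches. Hence the path stays in one branch, on the gallery between $K_w$ and $K_{w'}$.

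Second, the endpoints are excluded using the walls adjacent to $K_w$ and $K_{w'}$, not $F_a$.
\begin{itemize}
\item For $K_w$, take $\lambda=\lambda_{w_0,k}$. Then $\lambda(A)=\lambda_{0,k}(\iota_k^*H_k)=-1$. Also $\lambda(B)\ge 1$, by sign separation, integrality, and $\lambda_{0,a}(B)>0$. Hence $\lambda(A+B)\ge 0$.
\item For $K_{w'}$ when $\ell(w')=L$, write $w'=\iota_l v_0$ and take $\mu=\lambda_{v_0,l}$. Then $\mu(B)=-1$, while $\mu(A)=c_l(g^*H_k)\ge 1$ by Lemma~\ref{lem:pos}. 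This applies because $g=wv_0^{-1}$ is reduced of length $2L-1$, begins with $\iota_k$, and ends in a letter different from $\iota_l$.
\end{itemize}
Neither the gallery-convexity step nor these endpoint estimates appear in your proposal.
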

	\begin{proof}
		Since $\Bir(X)$ is the free product of the $\langle \iota_k\rangle$ (Theorem~\ref{thm:Wehler-bir}), the chamber graph of $\mathcal{C}:=\Eff(X)$ is a tree in which the graph distance between $K_0$ and $K_w$ equals $\ell(w)$ for every $w\in\Bir(X)$, i.e.\ the number of walls separating $K_0$ from $K_w$ equals the number of generators in any reduced expression for $w$.
		A \emph{gallery} is a sequence of chambers in which consecutive chambers share a wall.
		It is \emph{reduced} if its length equals the graph distance between its endpoints \cite[Section 5.2]{Hum90}.
		Any two chambers are joined by a unique reduced gallery.
		Each wall has the form $\ker\nu\cap\mathcal{C}$ and is the wall of a single edge; deleting that edge splits the tree into the half-spaces $\{\nu\le0\}$ and $\{\nu\ge0\}$, and each chamber closure lies in one of them.
		We call this \emph{sign separation} (cf. \cite[Section 5.13]{Hum90}).
		
		\medskip
		We split the proof into four steps:
		\medskip
		
		\emph{Step 1.} Let $F_a=\ker \lambda_{0,a}\cap\mathcal{C}$ be the wall of the edge $(K_0,K_{\iota_a})$.
		As $w=v\iota_a$ is reduced, the geodesic from $K_0$ to $K_w$ crosses $F_a$ at its first step, so $\overline{K_w}\subseteq\{\lambda_{0,a}\le0\}$.
		Similarly, since $\lambda_{0,a}(B)>0$ and $B\in\overline{K_{w'}}$, sign separation gives $\overline{K_{w'}}\subseteq\{\lambda_{0,a}\ge0\}$.
		Thus $K_w$ and $K_{w'}$ lie in opposite half-spaces of $F_a$.
		
		The gallery from $K_w$ to $K_{w'}$ must pass through $K_0$ and decomposes as the concatenation of the geodesics $[K_w,K_0]$ and $[K_0,K_{w'}]$.
		Lengths decrease from $L$ to $0$ along the first path and increase from $0$ to $\ell(w')$ along the second.
		Hence every gallery chamber, except for $K_w$ and, when $\ell(w')=L$, possibly $K_{w'}$, has length $<L$.
		
		\medskip
		\emph{Step 2.} Let $[w,w']$ denote the set of chambers on the reduced gallery from $K_w$ to $K_{w'}$.
		By standard results in gallery intervals (see for example \cite[Proposition 2.39]{AM17}), it follows that $$\Sigma:=\bigcup_{q\in[w,w']}\overline{K_q}$$ is convex.
		Since $A\in\overline{K_w}$ and $B\in\overline{K_{w'}}$ lie in $\Sigma$, so does $\tfrac12(A+B)$.
		Thus, $A+B\in\overline{K_q}$ for some gallery chamber $q$.
		
		\medskip
		\emph{Step 3.} We exclude the possibility that $A+B\in\overline{K_w}$.
		Let $F=\overline{K_{w_0}}\cap\overline{K_w}$ be the wall adjacent to $K_w$ on the geodesic from $K_w$ to $K_0$, and let $\lambda:=\lambda_{w_0,k}$ be the supporting functional normalized by $\lambda\le0$ on $K_w$ and $\lambda\ge0$ on $K_{w_0}$.
		Since $F$ separates the chamber tree into the half-spaces $\{\lambda\le0\}$ and $\{\lambda\ge0\}$, and the gallery from $K_w$ to $K_{w'}$ crosses $F$ only at its first step, the chamber $K_{w'}$ lies in the $\{\lambda\ge0\}$ half-space.
		Hence $\lambda(B)\ge0$.
		
		\medskip
		We now compute $\lambda(A)$ in order to determine the sign of $\lambda(A+B)$.
		By \eqref{eq:dualfunctional}, $(w_0^{-1})^{*}w^{*}=\iota_k^{*}$, and since $\iota_k$ is the first letter of $w$, we have $\iota_k^{*}H_k=\delta_k-H_k$.
		Therefore
		\[
		\lambda(A)=\lambda_{0,k}\bigl((w_0^{-1})^{*}w^{*}H_k\bigr)
		=\lambda_{0,k}(\iota_k^{*}H_k)
		=\lambda_{0,k}(\delta_k-H_k)
		=-1 .
		\]
		
		If $\lambda(B)=0$, then $B\in F\subseteq\overline{K_w}\subseteq\{\lambda_{0,a}\le0\}$, contradicting $\lambda_{0,a}(B)>0$.
		Since $\lambda$ is integral and $B\in\Cl(X)$, it follows that $\lambda(B)\ge1$.
		Hence
		\[
		\lambda(A+B)\ge -1+1 =0 .
		\]
		Since $\operatorname{int}K_w\subseteq\{\lambda<0\}$ and $\overline{K_w}\subseteq\{\lambda\le0\}$, either $\lambda(A+B)>0$, in which case $A+B\notin\overline{K_w}$, or $\lambda(A+B)=0$.
		In the latter case,
		\[
		A+B\in\overline{K_w}\cap\{\lambda=0\}\subseteq\overline{K_{w_0}},
		\]
		where $\ell(w_0)=L-1$.
		Hence $A+B\in\overline{K_q}$ for a gallery chamber $q\ne w$.
		
		\medskip
		\emph{Step 4.} Assume $\ell(w')=L$.
		We exclude the possibility that $A+B\in\overline{K_{w'}}$.
		So $w'=\iota_l v_0$ with $\ell(v_0)=L-1$.
		Let $\mu:=\lambda_{v_0,l}$ be normalized so that $\mu\le0$ on $K_{w'}$.
		As in Step~3,
		\[
		\mu(B)=\lambda_{0,l}(\delta_l-H_l)=-1 .
		\]
		It therefore suffices to prove $\mu(A)\ge1$, since then $\mu(A+B)\ge0$, excluding $A+B$ from $\operatorname{int}K_{w'}$ exactly as in Step~3.
		
		Put $g:=wv_0^{-1}$.
		We first determine the position of $K_{v_0}$ relative to the wall $F_a$.
		By Step~1, both $K_{w'}$ and $K_0$ lie in $\{\lambda_{0,a}\ge0\}$, so the geodesic from $K_{w'}$ to $K_0$ does not cross $F_a$.
		Since $K_{v_0}$ lies on this geodesic, it follows that
		\[
		\overline{K_{v_0}}\subseteq\{\lambda_{0,a}\ge0\}.
		\]
		On the other hand, Step~1 gives $\overline{K_w}\subseteq\{\lambda_{0,a}\le0\}$.
		
		Hence $K_{v_0}$ and $K_w$ lie in opposite half-spaces of $F_a$, the geodesic $[K_{v_0},K_w]$ passes through $K_0$, and additivity of tree distance gives
		\[
		\ell(g)=d(K_{v_0},K_w)
		=d(K_{v_0},K_0)+d(K_0,K_w)
		=\ell(v_0)+\ell(w)
		=2L-1 .
		\]
		So $g$ is reduced of length $2L-1$ and begins with $\iota_k$.
		
		If $L=1$, then $v_0=\mathrm{id}$ and $g=w=\iota_a=\iota_k$.
		Since $\lambda_{0,a}(B)>0$ and $B=\iota_l^{*}H_l$, Theorem~\ref{involution-action} gives $\lambda_{0,a}(B)=-1$ if $a=l$. Thus $a\ne l$. It follows that the last letter $\iota_k$ of $g$ satisfies $k\ne l$.
		
		If $L\ge2$, writing $v_0=t_1\cdots t_r$ reduced, the last letter of $g$ is $t_1$. Since $w'=\iota_l v_0$ is reduced, $t_1\ne \iota_l$.
		In both cases the last letter of $g$ differs from $\iota_l$.
		
		By \eqref{eq:dualfunctional},
		\[
		\mu(A)=\lambda_{0,l}(g^{*}H_k)=c_l(g^{*}H_k).
		\]
		Since $g$ is reduced, begins with $\iota_k$, and ends in a letter $\iota_b l$ with $b\neq l$, we may apply Lemma~\ref{lem:pos} to obtain $c_l(g^{*}H_k)\ge1$.
		Hence $\mu(A)\ge1$.
		
		By Step~3, $A+B\notin\overline{K_w}$.
		If $\ell(w')=L$, Step~4 shows that whenever $A+B$ lies on the boundary of $K_{w'}$, it lies in the adjacent chamber $\overline{K_{v_0}}$ with $\ell(v_0)=L-1$.
		Since every other chamber along the gallery has length $<L$, we conclude that $A+B\in\overline{K_u}$ for some $u$ with $\ell(u)<L$.
	\end{proof}
	\medskip
	\begin{remark}\label{rem:cross-nef-partner}
		If the partner ray is nef, i.e.\ $w'=\mathrm{id}$ and $B=H_l$ with $\lambda_{0,a}(H_l)>0$, then necessarily $l=a$ and only Step~3 of Lemma~\ref{lem:cross} is used. The conclusion $A+B\in\overline{K_u}$ with $\ell(u)<L$ holds verbatim, the exclusion of $K_{w'}=K_0$ being vacuous since
		$\ell(w')=0<L$.
	\end{remark}
	
	For the reader's convenience, we recall the notation established in Theorem \ref{thm:Nef-no-saturation}. We defined
	$$\mathcal{A}_0 :=\mathbb{K}[\,x_{ki},\,\zeta_k,\,y_j\mid 1\le k\le m,\ i\in\{0,1\},\ 0\le j\le n\,].$$
	The kernel of the natural map $\mathcal{A}_0\to \Cox(X)$ was denoted by $\mathcal{I}_1$.
	
	\medskip
	\begin{proposition}\label{prop:reduce}
		Let $P\in\mathcal{A}_{\infty}$ be homogeneous with $\deg P\in\Nef(X)$.
		Then there exists some $Q\in\mathcal{A}_0$ such that
		$$P\equiv Q\pmod{I_{\mathrm{wall}}}.$$
	\end{proposition}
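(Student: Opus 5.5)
We argue by well-founded induction on the height of Definition~\ref{def:height}. By linearity and homogeneity, it suffices to treat a single monomial $P$ with $\deg P\in\Nef(X)$. First we use the generators $u^{*}(\zeta_k-x_{k0}\iota_k^{*}(x_{k1}))$ of $I_{\mathrm{wall}}$ to replace every factor $w^{*}(\zeta_k)$ by $w^{*}(x_{k0})(w\iota_k)^{*}(x_{k1})$. Then $P$ is congruent to a product of $y_j$'s and $x$-type factors $w_\alpha^{*}(x_{k_\alpha i_\alpha})$, each indexed by its minimal presenter. If every $w_\alpha=\mathrm{id}$, then $P\in\mathcal A_0$ by Remark~\ref{rem:height-wellf} and we are done. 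Otherwise we show that $P$ is congruent modulo $I_{\mathrm{wall}}$ to a $\mathbb K$-linear combination of monomials of strictly smaller height, all still of degree $\deg P\in\Nef(X)$. By Remark~\ref{rem:height-wellf}, no infinite strictly descending chain of heights exists, so the process terminates in $\mathcal A_0$.

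For the reduction step, pick a factor $A=w^{*}(x_{ki})$ with $L:=\ell(w)$ maximal, and write $w=v\iota_a$ in reduced form. Lemma~\ref{lem:pos} (or Step~1 of Lemma~\ref{lem:cross}) gives $\lambda_{0,a}(w^{*}H_k)\le -1$, while $\lambda_{0,a}(\deg P)\ge 0$ since $\deg P$ is nef. The functional $\lambda_{0,a}$ vanishes on $H_{m+1}$, so some other factor $B=w'^{*}(x_{lj})$ must satisfy $\lambda_{0,a}(w'^{*}H_l)>0$, and $\ell(w')\le L$ by maximality. If $\ell(w')\ge1$, Lemma~\ref{lem:cross} applies. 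If $w'=\mathrm{id}$, we use Remark~\ref{rem:cross-nef-partner}. In either case $\deg(AB)\in\overline{K_u}$ for some $u$ with $\ell(u)<L$.

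Applying $(u^{-1})^{*}$, the class $(u^{-1})^{*}(\deg AB)$ lies in $\Nef(X)$. So $(u^{-1})^{*}(AB)$ is a product $\tilde w^{*}(x_{ki})\tilde w'^{*}(x_{lj})$ of nef degree, with $\tilde w=u^{-1}w$ and $\tilde w'=u^{-1}w'$. Definition~\ref{def:wall} then supplies the generator
\[
u^{*}\bigl(\tilde w^{*}(x_{ki})\tilde w'^{*}(x_{lj})\bigr)-u^{*}(H)
\]
of $I_{\mathrm{wall}}$, with $H\in\mathcal A_0$. Hence $AB\equiv u^{*}(H)$, where $u^{*}(H)$ is a polynomial in $u^{*}(x_{k'i'})$, $u^{*}(\zeta_{k'})$ and $y_j$. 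We next expand the factors $u^{*}(\zeta_{k'})$ via the $\zeta$-generators into $u^{*}(x_{k'0})(u\iota_{k'})^{*}(x_{k'1})$.

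The main obstacle is verifying that the resulting monomials really have smaller height. The factors $u^{*}(x_{k'i'})$ have minimal presenters of length at most $\ell(u)\le L-1$. The factors $(u\iota_{k'})^{*}(x_{k'1})$, however, could have length $\ell(u)+1$, which may equal $L$. We plan to handle this in two ways. First, we refine the choice of $u$ using the chamber geometry: the expansion of $\zeta_{k'}$ is only needed when the degree lies on the wall between $K_u$ and $K_{u\iota_{k'}}$, and in that case we may replace $u$ by the chamber of the pair lying closer to $K_0$. Second, we observe that the multiset comparison still improves. Two entries, $L$ and $\ell(w')\le L$, are removed; what is added consists of entries $<L$ together with at most one entry equal to $L$. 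When $\ell(w')<L$ this already decreases the lexicographic order. When $\ell(w')=L$, one maximal entry is lost and at most one is gained, so we must also compare the remaining entries; here we would use that each term of $u^{*}(H)$ contains at most one $\zeta$-factor, after reducing via the quadratic and binomial relations in $\mathcal I_1$. If this bookkeeping fails, the fallback is to strengthen the height to record the pair (maximal length, multiplicity of the maximum) and to use the minimality of $u$ from Lemma~\ref{lem:cross}.
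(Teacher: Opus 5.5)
\paragraph{A genuine gap.} Your skeleton is the paper's: induction on height, a factor $A$ of maximal length $L$, a partner found via $\lambda_{0,a}$, Lemma~\ref{lem:cross}, then a wall relation. The gap comes from expanding \emph{every} $\zeta$, including the untranslated $\zeta_k$. As a result, every monomial in your process, and in particular every terminal one, lies in $\mathbb{K}[x_{ki},y_j]$. That terminal state cannot always be reached.

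Take $P=x_{a0}\,\iota_a^{*}(x_{a1})$, of degree $H_a+\iota_a^{*}H_a=\delta_a\in\Nef(X)$. Since $I_{\mathrm{wall}}\subseteq\ker\Phi$, any $Q\equiv P$ satisfies $\Phi(Q)=\zeta_a$. By Proposition~\ref{0part}, $\zeta_a$ is not in the image of $S_{\delta_a}$. So no $\zeta$-free polynomial in length-$0$ generators is congruent to $P$.

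Concretely, your step gives $L=1$, $B=x_{a0}$, $u=\mathrm{id}$ and $AB\equiv\zeta_a$. Re-expanding $\zeta_a$ returns $P$, with the same height $\{1,0\}$. This also refutes your claim that the order drops whenever $\ell(w')<L$. You remove $\{L,\ell(w')\}$, but you may add back one entry equal to $L$ together with entries below $L$ that need not be smaller than $\ell(w')$.

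\paragraph{The missing idea.} Treat the untranslated $\zeta_k$ as atoms. They are generators of $\mathcal{A}_0$, so the paper keeps them in the monomial, does not count them in the height, and expands only $w^{*}(\zeta_k)$ with $\ell(w)\ge 1$.

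The price is that your partner argument, which uses only $\lambda_{0,a}(H_{m+1})=0$, no longer suffices, because $\lambda_{0,a}(\delta_k)=2$ for $k\ne a$. The nef condition then yields one of two things:
\begin{enumerate}
\item an $x$-type factor with $\lambda_{0,a}\ge 1$, which serves as partner as before; or
\item an untranslated $\zeta_{k'}$ with $k'\ne a$. In this case one expands only that factor and pairs $A$ with $\eta=\iota_{k'}^{*}(x_{k'1})$. The wall relation consumes $\eta$, and only the length-$0$ factor $x_{k'0}$ survives.
\end{enumerate}
When $u=\mathrm{id}$, nothing in $G=H$ is re-expanded, so the loop above disappears.

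\paragraph{Translated $\zeta$'s with $u\ne\mathrm{id}$.} You are right that expanding $u^{*}(\zeta_{k'})$ can produce a factor whose presenter has length $\ell(u)+1$. The paper's proof asserts the bound $\ell(w)\le\ell(u)$ at this point without further comment.

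A real argument would start from a representative $H$ in the normal form of Theorem~\ref{thm:Nef-no-saturation}. There $\zeta_{k'}$ occurs only if $(u^{-1})^{*}(A+B)$ has vanishing $H_{k'}$-coordinate, i.e.\ only if $A+B$ lies on the wall between $K_u$ and its $k'$-neighbour. One must then bound the length of that neighbour.

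Your proposed remedies do not do this. Replacing $u$ by the chamber closer to $K_0$ is vacuous when $u$ already is the closer one. The strengthened height is only sketched.
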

	
	\begin{proof}
		We first eliminate every factor of the form $w^{*}(\zeta_k)$ with $\ell(w)\ge1$.
		By Definition~\ref{def:wall}, $\zeta_k-x_{k0}\iota_k^{*}(x_{k1})\in I_{\mathrm{wall}}$,
		so for $\ell(w)\ge1$,
		\[
		w^{*}(\zeta_k)\equiv w^{*}(x_{k0})\,(\iota_kw)^{*}(x_{k1})\pmod{I_{\mathrm{wall}}}.
		\]
		Thus, by applying this reduction iteratively, we obtain
		\[
		P\equiv P'\pmod{I_{\mathrm{wall}}},\qquad
		P'\in\mathbb{K}[\,w^{*}(x_{ki}),\,\zeta_k,\,y_0,\dots,y_n \;\bigm|\; w\in \Bir(X),\; 1\le k\le m,\; i\in\{0,1\}\,\bigr].
		\]
		
		\medskip
		By linearity it suffices to treat a single monomial of degree $\deg P$,
		\[
		M=\Bigl(\prod_{\alpha=1}^{N}w_\alpha^{*}(x_{k_\alpha i_\alpha})\Bigr)M_0,
		\qquad
		M_0=\prod_{k}\zeta_k^{\,a_k}\prod_{j}y_j^{\,b_j}\quad(a_k,b_j\in\mathbb{Z}_{\ge0}),
		\]
		where the first product contains every $x$-type factor of $M$ and $M_0$ contains
		the untranslated $\zeta_k$ and the $y_j$.
		With $D:=\deg M_0$,
		\[
		\sum_{\alpha=1}^{N}w_\alpha^{*}H_{k_\alpha}+D=\deg P\in\Nef(X).
		\]
		We prove, by induction on $\operatorname{ht}(M)$ in the order of
		Definition~\ref{def:height}, the statement
		\begin{equation}\label{eq:mon-induction}
			M\equiv Q_M\pmod{I_{\mathrm{wall}}}\ \text{ for some }Q_M\in\mathcal{A}_0,
		\end{equation}
		for every such monomial $M$ of $P^\prime$.
		
		\emph{Base case.}
		If $\ell(w_\alpha)=0$ for all $\alpha$, every factor of $M$ is a length-$0$
		generator, so $M\in\mathcal{A}_0$ and \eqref{eq:mon-induction} holds with $Q_M=M$.
		
		\emph{Inductive step.}
		Assume $\operatorname{ht}(M)$ has a positive entry and that \eqref{eq:mon-induction}
		holds for all monomials of degree in $\Nef(X)$ and strictly smaller height.
		Choose $\alpha$ with $L:=\ell(w_\alpha)\ge1$ maximal, and write
		$w_\alpha=v\iota_a$ in reduced form.
		Set $A:=w_\alpha^{*}H_{k_\alpha}$, so $\lambda_{0,a}(A)\le-1$ by Lemma~\ref{lem:pos}.
		Since $\lambda_{0,a}\ge0$ on $\Nef(X)$ and $\deg M\in\Nef(X)$, with $D:=\deg M_0$,
		\[
		\sum_{\beta\ne\alpha}\lambda_{0,a}\bigl(w_\beta^{*}H_{k_\beta}\bigr)+\lambda_{0,a}(D)\ge1,
		\]
		with all terms integers.
		Hence either $\lambda_{0,a}(w_\beta^{*}H_{k_\beta})\ge1$ for some $\beta\ne\alpha$,
		or $\lambda_{0,a}(D)\ge1$.
		We study both cases separately.
		\begin{itemize}[leftmargin=2.2em]
			\item[(i)] If $\lambda_{0,a}(w_\beta^{*}H_{k_\beta})\ge1$ for some $\beta\ne\alpha$,
			set $w':=w_\beta$, $\eta:=w_\beta^{*}(x_{k_\beta i_\beta})$ and
			$B:=w_\beta^{*}H_{k_\beta}$.
			It follows that $\ell(w')\le L$.
			\item[(ii)] Otherwise $\lambda_{0,a}(D)\ge1$.
			Since $\deg\zeta_k=\delta_k$ and $\deg y_j=H_{m+1}$, we obtain
			\[
			\lambda_{0,a}(D)
			=\sum_{k}a_k\,\lambda_{0,a}(\delta_k)
			+\Bigl(\sum_j b_j\Bigr)\lambda_{0,a}(H_{m+1})
			=2\Big(\!\!\sum_{k\ne a}\!a_k\Big),
			\]
			since $\lambda_{0,a}(\delta_k)=2$ for $k\ne a$, $\lambda_{0,a}(\delta_a)=0$, and
			$\lambda_{0,a}(H_{m+1})=0$.
			Every summand is nonnegative, so $\lambda_{0,a}(D)\ge1$ forces $a_{k'}\ge1$ for
			some $k'\ne a$, meaning $M_0$ contains a factor $\zeta_{k'}$ with $k'\ne a$.
			We expand this factor inside $M$ using the relation
			\[
			\zeta_{k'}\equiv x_{k'0}\,\iota_{k'}^{*}(x_{k'1})\pmod{I_{\mathrm{wall}}},
			\]
			replacing $\zeta_{k'}$ in $M_0$ by $x_{k'0}\cdot\iota_{k'}^{*}(x_{k'1})$.
			Set $w':=\iota_{k'}$, $\eta:=\iota_{k'}^{*}(x_{k'1})$ and $B:=\iota_{k'}^{*}H_{k'}$.
			By Lemma~\ref{lem:pos}, $\lambda_{0,a}(B)=2$ since $a\ne k'$, and $\ell(w')=1\le L$.
		\end{itemize}
		
		\medskip
		In both cases $\lambda_{0,a}(B)\ge1$ and $\ell(w')\le L$.
		By Lemma~\ref{lem:cross} (or Remark~\ref{rem:cross-nef-partner} when $B\in\Nef(X)$)
		there is $u\in\Bir(X)$ with
		\[
		A+B\in\overline{K_u},\qquad \ell(u)<L.
		\]
		The product $w_\alpha^{*}(x_{k_\alpha i_\alpha})\,\eta$ has degree
		$A+B\in\overline{K_u}$, so $(u^{-1})^{*}(A+B)\in\Nef(X)$.
		By Definition~\ref{def:wall}, there exists $H\in\mathcal{A}_0$ with
		$\Phi(H)=\Phi\bigl((u^{-1})^*(w_\alpha^{*}(x_{k_\alpha i_\alpha})\,\eta)\bigr)$
		and the relation
		\[
		w_\alpha^{*}(x_{k_\alpha i_\alpha})\,\eta\;-\;u^{*}(H)\;\in\;I_{\mathrm{wall}}.
		\]
		Setting $G:=u^{*}(H)\in u^{*}\mathcal{A}_0$,
		\[
		w_\alpha^{*}(x_{k_\alpha i_\alpha})\,\eta\equiv G\pmod{I_{\mathrm{wall}}},
		\qquad\deg G=A+B.
		\]
		As in the beginning of the proof, after reducing the translated $\zeta$'s in $G$, we may take
		\[
		G\in\mathbb{K}[\,w^{*}(x_{ki}),\,\zeta_k,\,y_j\,]\quad
		\text{with every $x$-type factor $w^{*}(x_{ki})$ satisfying $\ell(w)\le\ell(u)<L$.}
		\]
		
		Write $M=w_\alpha^{*}(x_{k_\alpha i_\alpha})\,\eta\cdot R$, where $R$ is the product
		of the remaining factors of $M$ (in case~(ii), $R$ contains the remaining $x_{k'0}$).
		Multiplying the congruence above by $R$ and writing $G=\sum_i c_i g_i$,
		\[
		M\equiv\sum_i c_i\,(g_i R)\pmod{I_{\mathrm{wall}}},\qquad M_i:=g_i R,\quad
		\deg M_i=\deg M\in\Nef(X).
		\]
		We compare the factor structure of $M_i$ with that of $M$.
		The monomial $M_i=g_iR$ is obtained from $M$ by replacing the pair
		$w_\alpha^{*}(x_{k_\alpha i_\alpha})\,\eta$ with $g_i$, while all factors of $R$
		remain unchanged.
		By construction every $x$-type factor $w^{*}(x_{ki})$ of $g_i$ satisfies $\ell(w)<L$, so the factors of $M_i$ originating from $g_i$ contribute only entries $<L$ to $\operatorname{ht}(M_i)$.
		
		It remains to account for $\eta$.
		In Case~(i), $\eta=w_\beta^{*}(x_{k_\beta i_\beta})$ was an existing factor of $M$.
		It is part of the replaced pair and does not appear in $M_i$, removing the entry $\ell(w_\beta)\le L$ from the height multiset $\operatorname{ht}(M_i)$.
		In Case~(ii), $\eta=\iota_{k'}^{*}(x_{k'1})$ was introduced by expanding
		$\zeta_{k'}\equiv x_{k'0}\,\iota_{k'}^{*}(x_{k'1})$; it is the second factor of the pair $w_\alpha^{*}(x_{k_\alpha i_\alpha})\,\eta$ replaced by $G$, hence it does not appear in $M_i$.
		The factor $x_{k'0}$, a length-$0$ generator, is the only remaining factor from the expansion of $\zeta_{k'}$, and it appears in $R$.
		
		In both cases the entry $L$ contributed by $w_\alpha^{*}(x_{k_\alpha i_\alpha})$
		is removed from $\operatorname{ht}(M_i)$ and no entry $\ge L$ is introduced, so
		$\operatorname{ht}(M_i)<\operatorname{ht}(M)$.
		
		\medskip
		By the induction hypothesis each $M_i\equiv Q_i\pmod{I_{\mathrm{wall}}}$ with
		$Q_i\in\mathcal{A}_0$; hence $M\equiv\sum_i c_i Q_i\in\mathcal{A}_0$.
		The order is well founded, so the induction terminates, and summing over the
		monomials of $P'$ produces $Q\in\mathcal{A}_0$ with
		$P\equiv Q\pmod{I_{\mathrm{wall}}}$.
	\end{proof}

	We now provide the presentation of $\Cox(X)$. We use the wall-crossing ideal $I_{\mathrm{wall}}\subseteq\mathcal{A}_{\infty}$ of Definition~\ref{def:wall}.
	The defining generators of $I_{\mathrm{wall}}$ are permuted among themselves by the pullback action of $\Bir(X)$ on $\mathcal{A}_{\infty}$, so $I_{\mathrm{wall}}$ is invariant under pullback: $u^{*}(I_{\mathrm{wall}})=I_{\mathrm{wall}}$ for every $u\in\Bir(X)$.
	Moreover, fixing $u,w,w',k,l,i,j$ and subtracting the generators attached to two representatives of the coset $H_{w,w',ki,lj}+\mathcal{I}_1$ produces $u^{*}(h)$ for an arbitrary $h\in\mathcal{I}_1$, it follows that $u^{*}\mathcal{I}_1\subseteq I_{\mathrm{wall}}$ for every $u\in\Bir(X)$. In particular $\mathcal{I}_1\subseteq I_{\mathrm{wall}}$.
	
	\begin{theorem}\label{thm:cox-pres}
		The evaluation homomorphism $\Phi$ induces an isomorphism of $\Cl(X)$--graded $\mathbb{K}$--algebras
		\[
		\mathcal{A}_{\infty}/I_{\mathrm{wall}}\ \xrightarrow{\ \sim\ }\ \Cox(X).
		\]
	\end{theorem}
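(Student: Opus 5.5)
The plan is to prove that $\Phi\colon\mathcal{A}_\infty\to\Cox(X)$ is surjective with kernel exactly $I_{\mathrm{wall}}$. Both properties are checked degree by degree, and the $\Bir(X)$-equivariance is used to reduce every degree to the nef chamber.

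\emph{Well-definedness and surjectivity.} First, every generator of $I_{\mathrm{wall}}$ lies in $\ker\Phi$. The element $\zeta_k-x_{k0}\iota_k^*(x_{k1})$ vanishes by Definition~\ref{def:zeta}. The element $w^*(x_{ki})w'^*(x_{lj})-H$ vanishes by the defining property of the coset $H_{w,w',ki,lj}+\mathcal{I}_1$ together with $\Phi(\mathcal{I}_1)=0$ (Theorem~\ref{thm:Nef-no-saturation}). Applying $u^*$ preserves vanishing, because $u^*$ acts on $\Cox(X)$ as a graded ring automorphism compatible with $\Phi$: pseudo-automorphisms induce isomorphisms on all $H^0$ by Lemma~\ref{lem:transport-sections}. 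Hence $\Phi$ descends to $\overline\Phi\colon\mathcal{A}_\infty/I_{\mathrm{wall}}\to\Cox(X)$. Surjectivity is Step~2 of the proof of Theorem~\ref{thm:cox-limit-section6}. Every effective class is $[D]=w^*[L]$ with $L$ nef (Theorem~\ref{Kawamata-Morrison}), and $H^0(X,\mathcal O_X(D))=w^*H^0(X,\mathcal O_X(L))$ is spanned by monomials in $w^*(x_{ki})$, $w^*(\zeta_k)$ and $y_j$. These monomials lie in the image of $\Phi$.

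\emph{Injectivity.} Let $P\in\ker\Phi$; we may take $P$ homogeneous of degree $D$. If $D$ is not effective then $P$ has no monomials and there is nothing to prove, so choose $u\in\Bir(X)$ with $(u^{-1})^*D\in\Nef(X)$. Since $u^*$ permutes the generators of $\mathcal{A}_\infty$, we may apply $(u^{-1})^*$ to $P$. This gives $P_0:=(u^{-1})^*P$ with $\deg P_0\in\Nef(X)$ and, by equivariance of $\Phi$, $\Phi(P_0)=(u^{-1})^*\Phi(P)=0$. Proposition~\ref{prop:reduce} then yields $Q\in\mathcal{A}_0$ with $P_0\equiv Q\pmod{I_{\mathrm{wall}}}$. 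It follows that $\Phi(Q)=\Phi(P_0)=0$. Theorem~\ref{thm:Nef-no-saturation} identifies $\ker(\mathcal{A}_0\to\Cox(X))$ with $\mathcal{I}_1$, so $Q\in\mathcal{I}_1\subseteq I_{\mathrm{wall}}$. Therefore $P_0\in I_{\mathrm{wall}}$, and since $u^*(I_{\mathrm{wall}})=I_{\mathrm{wall}}$ (as remarked just before the statement), $P=u^*P_0\in I_{\mathrm{wall}}$.

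\emph{Main obstacle.} The delicate point is the equivariance bookkeeping. The action of $u^*$ on $\mathcal{A}_\infty$ sends a generator $w^*(x_{ki})$ to $(wu)^*(x_{ki})$, which must then be re-indexed by its minimal coset presenter. One has to check that this is a well-defined ring automorphism of $\mathcal{A}_\infty$ intertwining $\Phi$ with the pullback action on $\Cox(X)$. Concretely, $\Phi(w^*(x_{ki}))$ must depend only on the coset of $w$ modulo $\langle\iota_j:j\neq k\rangle$, which holds because $\iota_j^*x_{ki}=x_{ki}$ for $j\neq k$. In addition, the action of $\Bir(X)$ on $\Cox(X)$ must be a genuine group action of graded automorphisms. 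For the latter I would use that $\Cl(X)$ is torsion-free and $H^0(X,\mathcal O_X)=\mathbb{K}$, so the identifications of Remark~\ref{rem:choiceK} can be normalized on the basis $H_1,\dots,H_{m+1}$ without sign ambiguity. Once equivariance is established, the remaining steps are direct citations of earlier results.
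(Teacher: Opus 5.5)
Your proposal is correct and follows the paper's proof essentially step for step: surjectivity comes from Step~2 of Theorem~\ref{thm:cox-limit-section6}; the inclusion $I_{\mathrm{wall}}\subseteq\ker\Phi$ comes from checking generators together with $\Phi\circ u^{*}=u^{*}\circ\Phi$; and the reverse inclusion comes from moving a homogeneous kernel element into $\Nef(X)$ with some $(w^{-1})^{*}$, reducing it to $\mathcal{A}_0$ via Proposition~\ref{prop:reduce}, using $\ker(\Phi|_{\mathcal{A}_0})=\mathcal{I}_1\subseteq I_{\mathrm{wall}}$ from Theorem~\ref{thm:Nef-no-saturation}, and translating back by $\Bir(X)$-stability of $I_{\mathrm{wall}}$. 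Your extra paragraph on equivariance concerns a point the paper takes for granted, but normalizing on a basis of $\Cl(X)$ fixes each $g^{*}$ individually without making $g\mapsto g^{*}$ multiplicative; the clean justification is that $\Bir(X)\cong(\mathbb{Z}/2\mathbb{Z})^{*m}$ is a free product, so it suffices that each lift $\iota_k^{*}$ (fixing $x_{ji}$ for $j\neq k$ and the $y_j$, with $\iota_k^{*}(x_{ki})\,x_{ki}=R_{(k,2-2i)}$) squares to the identity, which follows from $\iota_k^{*}(R_{(k,2-2i)})=R_{(k,2-2i)}$ and the fact that $\Cox(X)$ is a domain.
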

	
	\begin{proof}
		First, we observe that $\Phi$ is surjective. Indeed, by Step 2 of Theorem \ref{thm:cox-limit-section6}, the section ring
		$\mathcal{R}_s$ is generated by the set of sections $$\{w^*(x_{ki}),w^*(\zeta_k),y_{0},\dots,y_{n}|\ 1\le k\le m,i\in\{0,1\},\ w\in W_s\}.$$
		Since $\Cox(X)\cong \varinjlim_s \mathcal R_s$ the surjectivity of $\Phi$ follows.
		
		\medskip
		We proceed to prove that $I_{\mathrm{wall}}\subseteq\ker\Phi$.
		It suffices to check the generators of Definition~\ref{def:wall}.
		For the wall generators, every representative $H$ of the coset $H_{w,w',ki,lj}+\mathcal{I}_1$ satisfies $\Phi(H)=\Phi\bigl(w^{*}(x_{ki})\,w'^{*}(x_{lj})\bigr)$, since $\mathcal{I}_1\subseteq\ker\Phi$ by Theorem~\ref{thm:Nef-no-saturation}; applying $u^{*}$ and the compatibility $\Phi\circ u^{*}=u^{*}\circ\Phi$ gives $\Phi\bigl(u^{*}(w^{*}(x_{ki})\,w'^{*}(x_{lj}))-u^{*}(H)\bigr)=0$.
		For the relations $u^{*}\bigl(\zeta_k-x_{k0}\iota_k^{*}(x_{k1})\bigr)$, by definition it follows that $\Phi(\zeta_k)=\Phi(x_{k0})\,\Phi\bigl(\iota_k^{*}(x_{k1})\bigr)$, and applying $u^{*}$ preserves the vanishing.
		Hence every generator lies in $\ker\Phi$, so $I_{\mathrm{wall}}\subseteq\ker\Phi$.
		
		\medskip
		We now prove that $\ker\Phi\subseteq I_{\mathrm{wall}}$.
		Since $\Phi$ is $\Cl(X)$--graded, $\ker\Phi$ is homogeneous; fix a homogeneous $P\in\ker\Phi$ of degree $D$.
		Then $D\in\Eff(X)$, so by Theorem~\ref{Kawamata-Morrison} there is $w\in \Bir(X)$ with $D\in K_w=w^{*}\Nef(X)$, i.e.\ $(w^{-1})^{*}D\in\Nef(X)$.
		The lifted automorphism $(w^{-1})^{*}$ of $\mathcal{A}_{\infty}$ sends $P$ to a homogeneous element $(w^{-1})^{*}P$ of degree $(w^{-1})^{*}D\in\Nef(X)$, and
		\[
		\Phi\bigl((w^{-1})^{*}P\bigr)=(w^{-1})^{*}\Phi(P)=0,
		\]
		so $(w^{-1})^{*}P\in\ker\Phi$.
		Since $(w^{-1})^{*}P$ is homogeneous with degree in $\Nef(X)$, by Proposition~\ref{prop:reduce} there exists $Q\in\mathcal{A}_0$ with
		\[
		(w^{-1})^{*}P\equiv Q\pmod{I_{\mathrm{wall}}}.
		\]
		Since $(w^{-1})^{*}P\in\ker\Phi$ and $I_{\mathrm{wall}}\subseteq\ker\Phi$, we have
		\[
		Q=(w^{-1})^{*}P-\bigl((w^{-1})^{*}P-Q\bigr)\in\ker\Phi,
		\]
		so $Q\in\ker\Phi\cap\mathcal{A}_0$.
		By Theorem~\ref{thm:Nef-no-saturation}, $\ker\bigl(\Phi|_{\mathcal{A}_0}\bigr)=\mathcal{I}_1$, so $Q\in\mathcal{I}_1\subseteq I_{\mathrm{wall}}$. Consequently, we obtain $(w^{-1})^{*}P\in I_{\mathrm{wall}}$.
		Since $I_{\mathrm{wall}}$ is $\Bir(X)$--stable,
		\[
		P=w^{*}\bigl((w^{-1})^{*}P\bigr)\in w^{*}(I_{\mathrm{wall}})=I_{\mathrm{wall}}.
		\]
		
		\medskip
		Both inclusions give $\ker\Phi=I_{\mathrm{wall}}$, and consequently $\Phi$ descends to an isomorphism $\mathcal{A}_{\infty}/I_{\mathrm{wall}}\xrightarrow{\ \sim\ }\Cox(X)$ of $\Cl(X)$--graded $\mathbb{K}$--algebras.
	\end{proof}
	
	\begin{corollary}\label{cor:cox-intro}
		Let $\mathcal{I}\subseteq\mathcal{A}_{\infty}$ be the ideal generated by the $\Bir(X)$--translates $w^{*}\mathcal{I}_1$, the relations $w^{*}\bigl(\zeta_k-x_{k0}\iota_k^{*}(x_{k1})\bigr)$, and the single-representative wall relations $u^{*}\bigl(w^{*}(x_{ki})\,w'^{*}(x_{lj})-H_{w,w',ki,lj}\bigr)$, for $u,w,w'\in\Bir(X)$.
		Then $I_{\mathrm{wall}}=\mathcal{I}$ and consequently Theorem~\ref{thm:cox-intro} follows.
	\end{corollary}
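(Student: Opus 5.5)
The statement asserts $I_{\mathrm{wall}}=\mathcal{I}$. I will prove the two inclusions separately, working only with generators. After that, Theorem~\ref{thm:cox-intro} is Theorem~\ref{thm:cox-pres} with $I_{\mathrm{wall}}$ replaced by $\mathcal{I}$, once the generators of $\mathcal{I}$ are identified with the list (i)(a)--(d), (ii).

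\emph{Inclusion $\mathcal{I}\subseteq I_{\mathrm{wall}}$.} Each generator type of $\mathcal{I}$ lands in $I_{\mathrm{wall}}$:
\begin{enumerate}
\item The relations $w^{*}(\zeta_k-x_{k0}\iota_k^{*}(x_{k1}))$ are literally among the generators of $I_{\mathrm{wall}}$.
\item A single-representative wall relation $u^{*}(w^{*}(x_{ki})w'^{*}(x_{lj})-H_{w,w',ki,lj})$ is the generator of Definition~\ref{def:wall} with $H=H_{w,w',ki,lj}$.
\item For the translates $w^{*}\mathcal{I}_1$, I use the observation preceding Theorem~\ref{thm:cox-pres}. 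For $h\in\mathcal{I}_1$, both $H_{w,w',ki,lj}$ and $H_{w,w',ki,lj}+h$ represent the same coset, so the difference of the two corresponding generators, namely $u^{*}(h)$, lies in $I_{\mathrm{wall}}$. This needs at least one admissible triple $(w,w',ki,lj)$, for example $w=w'=\mathrm{id}$ with $H_k+H_l\in\Nef(X)$.
\end{enumerate}

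\emph{Inclusion $I_{\mathrm{wall}}\subseteq\mathcal{I}$.} Take a generator $u^{*}(w^{*}(x_{ki})w'^{*}(x_{lj}))-u^{*}(H)$ of $I_{\mathrm{wall}}$ with $H\in H_{w,w',ki,lj}+\mathcal{I}_1$. Write $H=H_{w,w',ki,lj}+h$ with $h\in\mathcal{I}_1$. Then
\[
u^{*}\bigl(w^{*}(x_{ki})w'^{*}(x_{lj})-H\bigr)=u^{*}\bigl(w^{*}(x_{ki})w'^{*}(x_{lj})-H_{w,w',ki,lj}\bigr)-u^{*}(h).
\]
The first term is a single-representative wall relation and the second lies in $u^{*}\mathcal{I}_1$, so the difference lies in $\mathcal{I}$. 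The $\zeta$-relations are common to both ideals, which gives equality.

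\emph{Matching with Theorem~\ref{thm:cox-intro}.} The translates $w^{*}\mathcal{I}_1$ are generated by the $w^{*}$-images of the listed generators of $\mathcal{I}_1$: the quadratic relation, the two linear relations, and $\zeta_k\zeta_l-x_{k0}x_{l0}H_{kl}$. These are exactly items (i)(b)--(d). The $\zeta$-relations are (i)(a), and the wall relations are (ii). Combining $\mathcal{I}=I_{\mathrm{wall}}$ with Theorem~\ref{thm:cox-pres} then yields the isomorphism $\mathcal{A}_\infty/\mathcal{I}\cong\Cox(X)$.

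\emph{Main obstacle.} The argument is essentially formal, so the only subtlety is well-definedness. The generators $w^{*}(x_{ki})$ are indexed by minimal coset representatives, so the pullback action $u^{*}$ on $\mathcal{A}_\infty$ must be defined by reindexing via those representatives. I must also check that $u^{*}$ is a ring automorphism and that $u^{*}$ of a generator of $\mathcal{I}$ is again a generator. Both follow because $\iota_j$ fixes $x_{ki}$ for $j\neq k$, which makes the indexing compatible with left multiplication.
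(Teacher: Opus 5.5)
Your proof is correct and follows the paper's argument: you check both inclusions on generators, getting $\mathcal{I}\subseteq I_{\mathrm{wall}}$ from the fact that subtracting the wall generators attached to two representatives of the same coset yields $u^{*}(h)$ for every $h\in\mathcal{I}_1$, and the reverse inclusion from splitting $H=H_{w,w',ki,lj}+h$. Your extra remarks are details the paper leaves implicit: the existence of an admissible pair such as $w=w'=\mathrm{id}$, the well-definedness of $u^{*}$ on $\mathcal{A}_\infty$, and the explicit matching with items (i)(a)--(d) and (ii).
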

	
	\begin{proof}
		Each generator of $\mathcal{I}$ lies in $I_{\mathrm{wall}}$: the single-representative wall relations are the $H=H_{w,w',ki,lj}$ single cases of Definition~\ref{def:wall}, the $\zeta$-relations are common to both, and $w^{*}\mathcal{I}_1\subseteq I_{\mathrm{wall}}$ by the observation preceding Theorem~\ref{thm:cox-pres}.
		Conversely, every coset wall generator $u^{*}\bigl(w^{*}(x_{ki})\,w'^{*}(x_{lj})\bigr)-u^{*}(H)$ with $H=H_{w,w',ki,lj}+h$, $h\in\mathcal{I}_1$, equals $u^{*}\bigl(w^{*}(x_{ki})\,w'^{*}(x_{lj})-H_{w,w',ki,lj}\bigr)-u^{*}(h)\in \mathcal{I}$, and the $\zeta$-relations lie in $\mathcal{I}$.
		Hence $I_{\mathrm{wall}}=\mathcal{I}$, so $\mathcal{A}_{\infty}/\mathcal{I}\xrightarrow{\sim}\Cox(X)$, which is the presentation of Theorem~\ref{thm:cox-intro}.
	\end{proof}

	\section{F-purity of {Cox(X)}}
	In this section we study the singularities of the Cox ring of a Wehler type
	hypersurface $X\subset(\mathbb{P}^1)^m\times\mathbb{P}^n$ via positive characteristic methods. Our main result, Theorem~\ref{F-puretype}, states that
	$\Cox(X)$ is of dense $F$-pure type. Recall that $F$-purity is the characteristic-$p$ analogue of log canonicity.
	
	We start by defining the integral model of a Wehler type hypersurface $X\subset(\mathbb{P}^1)^m\times\mathbb{P}^n$. Let
	$\mathbb{P}_{\mathbb{Z}}:=(\mathbb{P}^1_{\mathbb{Z}})^m\times_{\Spec\mathbb{Z}}\mathbb{P}^n_{\mathbb{Z}}$, and let $\{m_i\}_{i=1}^N$ be a monomial basis of
	$H^0(\mathbb{P}_{\mathbb{Z}},\mathcal{O}_{\mathbb{P}_{\mathbb{Z}}}(2,\dots,2,n+1))$.
	Set
	\[
	S:=\Spec\mathbb{Z}[t_1,\dots,t_N],\qquad
	\mathbb{P}_S:=\mathbb{P}_{\mathbb{Z}}\times_{\Spec\mathbb{Z}}S.
	\]
	Consider the universal section
	\[
	s_{\mathrm{univ}}:=\sum_{i=1}^N t_i m_i
	\in H^0(\mathbb{P}_S,\mathcal{O}_{\mathbb{P}_S}(2,\dots,2,n+1)),
	\]
	and define $X_{\mathrm{univ}}:=Z(s_{\mathrm{univ}})$, the vanishing subscheme of
	$s_{\mathrm{univ}}$. Each tuple $(c_1,\dots,c_N)\in\mathbb{C}^N$ determines a ring homomorphism
	\[
	\phi\colon\mathbb{Z}[t_1,\dots,t_N]\longrightarrow\mathbb{C},\qquad \phi(t_i)=c_i,
	\]
	and the hypersurface $F=\sum_{i=1}^N c_i m_i=0$ is the base change of
	$X_{\mathrm{univ}}$ along $\Spec\phi$:
	\[
	X_{\mathbb{C}}:=X_{\mathrm{univ}}\times_{S}\Spec\mathbb{C},
	\]
	fitting into the commutative diagram
	\[\begin{tikzcd}
		X_{\mathbb{C}} \arrow[r]\arrow[d]
		& X_{\mathrm{univ}} \arrow[d] \\
		\Spec\mathbb{C} \arrow[r] & S.
	\end{tikzcd}\]
	
	To reduce modulo a prime $p$, we perform the base-change of $X_{\mathrm{univ}}$ along
	$\Spec\mathbb{F}_p\to\Spec\mathbb{Z}$. This yields 
	\[
	X_{\mathbb{F}_p,\mathrm{univ}}:=X_{\mathrm{univ}}\times_{\Spec\mathbb{Z}}\Spec\mathbb{F}_p
	\;\longrightarrow\;\Spec\mathbb{F}_p[t_1,\dots,t_N],
	\]
	where a hypersurface over a finite field $\mathbb{F}_q$ of characteristic $p$ is
	the fiber of $X_{\mathbb{F}_p,\mathrm{univ}}$ along the evaluation morphism
	$\Spec\mathbb{F}_q\to\Spec\mathbb{F}_p[t_1,\dots,t_N]$, $t_i\mapsto\bar c_i$. We
	obtain the commutative diagram
	\[\begin{tikzcd}
		X_{\mathbb{F}_p,\mathrm{univ}} \arrow[r]\arrow[d]
		& X_{\mathrm{univ}} \arrow[d] \\
		\Spec\mathbb{F}_p[t_1,\dots,t_N] \arrow[r] & S.
	\end{tikzcd}\]
	We establish the model adapted to the coefficients of $F$. Set
	$A:=\mathbb{Z}[c_1,\dots,c_N]\subset\mathbb{C}$. For a very general Wehler type
	hypersurface the $c_i$ are algebraically independent over $\mathbb{Q}$, so
	$\phi$ induces an isomorphism $A\cong\mathbb{Z}[t_1,\dots,t_N]$; the hypersurface
	$X_A:=Z\bigl(\sum_{i=1}^N c_i m_i\bigr)\subset\mathbb{P}_A$ is then a model of
	$X_{\mathbb{C}}$ over $\Spec A$, i.e.\ $X_{\mathbb{C}}=X_A\times_{\Spec A}\Spec\mathbb{C}$.
	\begin{definition}[{\cite[Definition~2.13]{GOST15}}]\label{def:dense-F-type}
		\hfill
		\begin{enumerate}
			\item A projective variety $X$ over an $F$-finite field of characteristic $p>0$ is \emph{globally $F$-split} if the Frobenius $\mathcal{O}_X\to F_*\mathcal{O}_X$ splits as
			a map of $\mathcal{O}_X$-modules.
			A ring $R$ of characteristic $p>0$ is \emph{$F$-pure}
			if the Frobenius $R\to F_*R$ is pure, i.e.,\ $M\to M\otimes_R F_*R$ is injective for every $R$-module $M$.
			
			\item Let $X$ be a projective variety (resp.\ $R$ a ring) over a field $\mathbb{K}$ of characteristic zero. We say that $X$ is of \emph{dense globally $F$-split type} (resp.\ $R$ is of \emph{dense $F$-pure type}) if there exist a finitely generated $\mathbb{Z}$-subalgebra $A\subset\mathbb{K}$ and a flat $A$-model $X_A$ of $X$
			(resp.\ $R_A$ of $R$), satisfying $X_A\times_{\Spec A}\Spec\mathbb{K}\cong X$
			(resp.\ $R_A\otimes_A\mathbb{K}\cong R$), such that the set of closed points $\mu\in\Spec A$ for which the fiber $X_\mu:=X_A\times_{\Spec A}\Spec\kappa(\mu)$ is
			globally $F$-split (resp.\ $R_\mu:=R_A\otimes_A\kappa(\mu)$ is $F$-pure) is dense in $\Spec A$.
		\end{enumerate}
	\end{definition}
	
	By the following lemma, Theorem \ref{F-puretype} follows from the fact that the Wehler type hypersurface $X$ is of dense globally $F$-split type. 
	
	\begin{lemma}\emph{(\cite[Lemma~4.5]{GOST15})}
		Let $X$ be a normal projective variety over an $F$-finite field of characteristic $p>0$, with $\Pic^0(X)=0$ and $\Cl(X)$ finitely generated. 
		If $X$ is globally $F$-split, then $\Cox(X)$ is $F$-pure.
	\end{lemma}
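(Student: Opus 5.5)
The plan is to build an explicit $\Cox(X)$-linear splitting of the Frobenius $\Cox(X)\to F_*\Cox(X)$ from a global splitting of $\mathcal O_X\to F_*\mathcal O_X$. A split injection is pure, since $M\to M\otimes_R F_*R$ then has a left inverse for every $R$-module $M$, so this will suffice.

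\textbf{Step 1: twist the splitting.} Fix $\varphi\colon F_*\mathcal O_X\to\mathcal O_X$ with $\varphi(1)=1$. Let $U\subset X$ be the regular locus. Since $X$ is normal, $\codim(X\setminus U)\ge 2$.
\begin{itemize}
\item On $U$, every Weil divisor $D$ is Cartier and $F^*\mathcal O_U(D)\cong\mathcal O_U(pD)$. The projection formula gives $F_*\mathcal O_U(pD)\cong F_*\mathcal O_U\otimes\mathcal O_U(D)$.
\item Tensoring $\varphi|_U$ with $\mathcal O_U(D)$ therefore yields $\varphi_D\colon F_*\mathcal O_U(pD)\to\mathcal O_U(D)$.
\item Take global sections. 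Since $\mathcal O_X(D)$ and $\mathcal O_X(pD)$ are reflexive and the complement of $U$ has codimension $\ge2$, this produces a $\mathbb K$-semilinear map $\Phi_D\colon H^0(X,\mathcal O_X(pD))\to H^0(X,\mathcal O_X(D))$.
\end{itemize}
Concretely, viewing sections as rational functions $f$ with $\divv(f)+pD\ge 0$, the map $\Phi_D$ is just $\varphi$ applied in $\mathbb K(X)$ (extended to $F_*\mathbb K(X)\to\mathbb K(X)$). So $\Phi_D(g^p s)=g\,\Phi_D(s)$ for every $g\in H^0(X,\mathcal O_X(E))$ and $s\in H^0(X,\mathcal O_X(pD'))$, with $D=E+D'$.

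\textbf{Step 2: assemble a map on the divisorial algebra.} Work with a finitely generated subgroup $\mathcal M\subset\WDiv(X)$ surjecting onto $\Cl(X)$, as in Definition~\ref{def:cox}. Define $\Phi\colon S(X,\mathcal M)\to S(X,\mathcal M)$ as follows:
\begin{itemize}
\item on the homogeneous piece of degree $pD$ with $D\in\mathcal M$, set $\Phi=\Phi_D$;
\item on pieces whose degree is not in $p\mathcal M$, set $\Phi=0$.
\end{itemize}
The identity $\varphi(a^pb)=a\varphi(b)$ in $\mathbb K(X)$ then gives $\Phi(a^pb)=a\Phi(b)$ for homogeneous $a,b$, so $\Phi$ is linear for the $R$-module structure on $F_*R$. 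Moreover $\Phi(1)=1$.

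\textbf{Step 3 (main obstacle): descend to $\mathcal R(X,\mathcal M)=S/I$.} We must show $\Phi(I)\subseteq I$, where $I$ is generated by the elements $1_E-1$ with $E=\divv(f)\in\mathcal M^0$. The difficulty is bookkeeping. An element of degree $pD$ in $S/I$ may be represented in several degrees of $\mathcal M$ that differ by principal divisors not divisible by $p$ in $\mathcal M^0$; when $\Cl(X)$ has $p$-torsion, the Cox ring grading and the grading of $S$ differ.

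The approach I would try first is a direct check on generators.
\begin{itemize}
\item For $E\in\mathcal M^0$, write $(1_E-1)s=(f^{-1}-1)s$ inside $\mathbb K(X)$.
\item Grouping by degree, $\Phi$ kills components outside $p\mathcal M$. On a component of degree $pD$, the term $1_E\cdot s$ with $E=pE'$ maps to $1_{E'}\Phi(s)$, because $f^{-1}=(g^{-1})^p\cdot u$ with $u$ a unit of degree $0$ ($H^0(X,\mathcal O_X)=\mathbb K$, which is perfect).
\item This exhibits $\Phi((1_E-1)s)$ as an element of $I$.
\item If $E\notin p\mathcal M$, the two components $1_E s$ and $s$ lie in degrees of which at most one is in $p\mathcal M$. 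Here the check is subtler. I would handle it by using $\Pic^0(X)=0$ and finite generation to choose $\mathcal M$ so that $\mathcal M^0\cap p\mathcal M=p\mathcal M^0$, which can be arranged by a suitable choice of basis (cf.\ \cite[\S1.4.3]{ADHL15}).
\end{itemize}
If this direct check becomes unmanageable, the fallback is to verify the splitting on the torus-invariant level: use that $\mathcal R(X,\mathcal M)$ is the ring of invariants of $S$ under the finite group $\Hom(\mathcal M/\mathcal M^0\text{-torsion},\mathbb G_m)$, and that a splitting commuting with the group action descends to the invariants.

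Once $\Phi$ descends to $\Cox(X)$, it is a $\Cox(X)$-linear map $F_*\Cox(X)\to\Cox(X)$ sending $1$ to $1$. Hence Frobenius splits, and $\Cox(X)$ is $F$-pure.
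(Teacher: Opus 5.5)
The paper gives no argument for this lemma; it only quotes \cite[Lemma~4.5]{GOST15}. Your Steps 1 and 2 are correct and give a Frobenius splitting $\Phi$ of $S=S(X,\mathcal M)$. When $\Cl(X)$ is torsion-free you may take $\mathcal M\cong\Cl(X)$, so $I=0$ and you are done; this covers the Wehler case, where $\Cl(X)\cong\mathbb Z^{m+1}$.

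\textbf{The gap is Step 3, and it is not a bookkeeping issue.} As soon as $\Cl(X)$ has torsion, $\Phi(I)\not\subseteq I$ for every choice of $\mathcal M$. Since $\mathcal M\subset\WDiv(X)$ is torsion-free and $\mathcal M/\mathcal M^0\cong\Cl(X)$ is not, $\mathcal M^0\neq 0$. Pick $E\in\mathcal M^0\setminus p\mathcal M^0$. If $E\notin p\mathcal M$, then $\Phi(1_E)=0$ by your definition. If $E=pE'$, then $[E']$ is a nonzero $p$-torsion class, and $\Phi(1_E)\in H^0(X,\mathcal O_X(E'))=0$, because a nontrivial torsion class on a normal projective variety has no nonzero sections. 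Either way $\Phi(1_E-1)=-1\notin I$.

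Your proposed remedies do not work:
\begin{itemize}
\item Since $\mathcal M$ is torsion-free, $\mathcal M^0\cap p\mathcal M=p\mathcal M^0$ holds if and only if $\Cl(X)$ has no $p$-torsion. So it cannot be arranged in general, and even when it holds the computation above still applies.
\item The fallback rests on a false premise: $\mathcal R(X,\mathcal M)=S/I$ is a quotient of $S$, not a ring of invariants of $S$.
\item Minor: the base field is only $F$-finite, so you cannot invoke perfectness. If you fix a character $\chi$ as in \cite{ADHL15}, then $1_{pE'}=1_{E'}^p$ exactly, and perfectness is not needed.
\end{itemize}

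\textbf{What is missing is to build the splitting on $\Cox(X)$ itself,} using its $\Cl(X)$-grading and all $p$-th roots of a degree, rather than on $S$ with ``zero off $p\mathcal M$''. Use that $S_G\to\Cox(X)_{[G]}$ is bijective for every $G\in\mathcal M$.
\begin{itemize}
\item \emph{Definition.} For $\bar s\in\Cox(X)_d$ and each $c\in\Cl(X)$ with $pc=d$, choose $D_c\in\mathcal M$ of class $c$ and let $s_c\in S_{pD_c}$ represent $\bar s$. Put $\psi(\bar s):=\sum_{pc=d}\overline{\varphi(s_c)}$, and $\psi=0$ in degrees outside $p\Cl(X)$.
\item \emph{Well defined.} Replacing $D_c$ by $D_c+E'$ with $E'\in\mathcal M^0$ replaces $s_c$ by $1_{E'}^p s_c$ and $\varphi(s_c)$ by $1_{E'}\varphi(s_c)$, which has the same class.
\item \emph{Linear.} The map $c\mapsto c+e$ matches the $p$-th roots of $d$ with those of $d+pe$; taking $D_{c+e}=D_c+A$ for a representative $a\in S_A$ of $\bar a$ gives $\psi(\bar a^p\bar s)=\bar a\,\psi(\bar s)$.
\item \emph{Unital.} $\psi(1)=1$, because the summands for nonzero $p$-torsion $c$ lie in $H^0(X,\mathcal O_X(D_c))=0$.
\end{itemize}
Summing over all roots is genuinely needed. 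When $\Cl(X)$ has $p$-torsion, no single choice of $p$-th root per degree is compatible with multiplication by $p$-th powers. This is why no choice of $\mathcal M$ can rescue a construction that keeps only one degree.
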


	We prove the following statement. 
	\begin{proposition}\label{global-Fsplit}
		Let $X$ be a Wehler type hypersurface of multidegree $(2,\dots,2,n+1)$ over a field of characteristic $0$.
		Then $X$ is of dense globally $F$-split type.
	\end{proposition}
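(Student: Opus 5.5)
We will prove that, for a dense set of closed points $\mu\in\Spec A$, the reduction $X_\mu\subset\mathbb{P}_{\kappa(\mu)}$ is globally $F$-split. We use Fedder's criterion in its global form for complete intersections in toric varieties: a hypersurface $X=Z(F)$ in a smooth projective toric variety $\mathbb{P}$ over a perfect field of characteristic $p$ is globally $F$-split as soon as $F^{p-1}$ contains the monomial $\prod_{k,i}x_{ki}^{p-1}\prod_j y_j^{p-1}$ with nonzero coefficient. Indeed, the trace map $F_*\omega_{\mathbb{P}}^{1-p}\to\mathcal{O}_{\mathbb{P}}$ sends a section $g$ to the coefficient of $(\prod x\prod y)^{p-1}$ after the usual Cartier-operator normalization. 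Since $K_X\sim 0$, the splittings of $X$ correspond to elements of $H^0(\mathbb{P},\mathcal{O}(-(p-1)(K_{\mathbb{P}}+X)))=S_0=\mathbb{K}$. Concretely, $X$ is $F$-split iff $F^{p-1}\notin \mathfrak{m}^{[p]}$ in the toric sense, meaning the coefficient of $(x_{10}x_{11}\cdots x_{m0}x_{m1}y_0\cdots y_n)^{p-1}$ in $F^{p-1}$ is nonzero. The multidegree matches exactly, because $\deg F=(2,\dots,2,n+1)=-K_{\mathbb{P}}$. We will cite this criterion (for example from \cite{GOST15} or via Fedder's lemma applied to the affine cone), taking the standard splitting criterion for anticanonical hypersurfaces as known.

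The main step is to show that the coefficient $c_p\in A\cong\mathbb{Z}[t_1,\dots,t_N]$ of this monomial in $s_{\mathrm{univ}}^{p-1}$ is nonzero modulo $p$ as a polynomial in the $t_i$, for infinitely many $p$ (in fact for all $p$). Rather than expand in general, we specialize to the single monomial $m_0:=x_{10}x_{11}\cdots x_{m0}x_{m1}\,y_0\cdots y_n$, which is one of the basis monomials of degree $(2,\dots,2,n+1)$. The coefficient of $t_{0}^{p-1}$ in $c_p$ comes from $(t_0m_0)^{p-1}$ and equals $1$. Hence $c_p$, viewed in $\mathbb{F}_p[t_1,\dots,t_N]$, is a nonzero polynomial for every prime $p$. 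This is the step I expect to need the most care. The difficulty is not the computation itself but making sure the criterion really uses the coefficient of this exact monomial, and that no other monomial products cancel the $t_0^{p-1}$ term. They cannot, since distinct products of the $t_i$ are distinct monomials in the polynomial ring $A$.

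We then conclude by density. The set of points $\mu\in\Spec A$ with $c_p(\mu)\neq 0$, where $p=\operatorname{char}\kappa(\mu)$, is the union over $p$ of the nonempty open sets $D(c_p)\subset\Spec\mathbb{F}_p[t_1,\dots,t_N]$. Closed points of $\Spec A$ all have finite residue fields, and every nonempty open subset of $\Spec A$ meets infinitely many fibers $\Spec\mathbb{F}_p[t]$ in nonempty opens. Each fiber is irreducible, so the union is dense. At every such closed point $\mu$, the fiber $X_\mu$ is globally $F$-split by the criterion, since $\kappa(\mu)$ is finite and hence perfect and $F$-finite. We also note that $X_A$ is flat over $A$, because it is a hypersurface cut out by a section whose coefficients include units generically. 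After shrinking $\Spec A$ to the locus where the fibers are smooth, we keep the conventions of Definition~\ref{def:dense-F-type}. This gives dense globally $F$-split type, and Theorem~\ref{F-puretype} follows via \cite[Lemma~4.5]{GOST15} applied to the fibers, using that $\Cl(X_\mu)$ is still generated by the $H_i$ for general $\mu$.
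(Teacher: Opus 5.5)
Your proposal is correct and follows essentially the same route as the paper. Both reduce global $F$-splitting of the anticanonical hypersurface to the nonvanishing of the coefficient of $M^{p-1}$ in $F^{p-1}$, where $M=\prod_k x_{k0}x_{k1}\prod_j y_j$. Both then note that the $t^{p-1}$-term for the parameter of $M$ has coefficient $1$, so this coefficient is nonzero mod every $p$, and conclude by density of closed points across the fibers $\mathbb{A}^N_{\mathbb{F}_p}$. The differences are only in packaging: you phrase the criterion via the trace map/Fedder and work directly over the perfect residue field, whereas the paper uses Brion--Kumar's criterion at a torus-fixed point plus compatible splitting of $Z(F)$ over $\overline{\kappa(\mu)}$.
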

	\begin{proof}
		Set $L=\mathcal{O}_{\mathbb{P}}(2,\dots,2,n+1)=\omega_{\mathbb{P}}^{-1}$, and let $\{m_i\}_{i=1}^N$ be the
		monomial basis of $H^0(\mathbb{P},L)$. Recall the affine parameter space
		$S=\Spec\mathbb{Z}[t_1,\dots,t_N]$ and the universal section
		\[
		F_{\mathrm{univ}}=\sum_{i=1}^N t_i\,m_i\ \in\ H^0\big(\mathbb{P}_S,\,L\big)
		\]
		introduced above. For a point $v$ of $S$ with coordinates
		$(t_i(v))$ we write $F_v=\sum_i t_i(v)\,m_i$ for the corresponding section of $L$ and
		$X_v=Z(F_v)$ for its zero scheme.
		The monomial $$M=\big(\prod_{k=1}^m x_{k0}x_{k1}\big)\big(\prod_{i=0}^n y_i\big)$$
		has multidegree $(2,\dots,2,n+1)$, hence belongs to the basis $\{m_i\}_{i=1}^N$. We relabel the basis so that $m_1=M$.
		
		\medskip
		Let $k$ be an algebraically closed field of characteristic $p>0$, and let
		$F\in H^0(\mathbb{P}_k,L)$ be a section with $Z(F)$ smooth.
		Being a product of projective spaces, $\mathbb{P}_k$ is globally $F$-split by
		\cite[Example~1.1.10.3 and Exercise~1.3.E.8]{BrionKumar}.
		Suppose the coefficient of $M^{p-1}$ in $F^{p-1}$ is nonzero. Since $k$ is algebraically
		closed, we may rescale $F$ by a $(p-1)$-st root of this coefficient so that the coefficient
		equals $1$. Note this leaves $Z(F)$ unchanged. Then, by the splitting criterion
		\cite[Theorem~1.3.8]{BrionKumar} evaluated at a torus-fixed point of $\mathbb{P}_k$, the section
		\[
		F^{p-1}\in H^0(\mathbb{P}_k,\omega_{\mathbb{P}}^{\otimes(1-p)})
		\]
		splits $\mathbb{P}_k$.
		Since $F$ is a section of $\omega_{\mathbb{P}}^{-1}$, the converse part of
		\cite[Proposition~1.3.11]{BrionKumar} then shows that $Z(F)$ is compatibly split in $\mathbb{P}_k$.
		Hence $Z(F)$ is globally $F$-split by \cite[Remark~1.1.4(ii)]{BrionKumar}.
		
		\medskip
		Fix a prime $p$. The coefficient of $M^{p-1}$ in $F_{\mathrm{univ}}^{\,p-1}$ is a polynomial
		$C_p\in\mathbb{Z}[t_1,\dots,t_N]$, homogeneous of degree $p-1$.
		Specializing $t_1=1$ and $t_2=\cdots=t_N=0$ gives $F_{\mathrm{univ}}^{\,p-1}=M^{p-1}$, so
		$C_p(1,0,\dots,0)=1$, i.e., the coefficient of $t_1^{p-1}$ in $C_p$ equals $1$.
		Hence the reduction $c_p\in\mathbb{F}_p[t_1,\dots,t_N]$ of $C_p$ is nonzero, and
		\[
		U_p=\big\{\,t\in\mathbb{A}^N_{\mathbb{F}_p}\ \mid\ c_p(t)\neq0\,\big\}
		\]
		is a dense open subset.
		
		Fix $c_1,\dots,c_N\in\mathbb{C}$ algebraically independent over $\mathbb{Q}$, as holds for a
		very general $X$, and put
		\[
		A=\mathbb{Z}[c_1,\dots,c_N]\ \cong\ \mathbb{Z}[t_1,\dots,t_N],
		\]
		with model $X_A=Z\big(\textstyle\sum_i c_i m_i\big)$ over $\Spec A\cong\mathbb{A}^N_{\mathbb{Z}}$.
		Let $W\subset\Spec A$ be the open locus of smooth fibers. Since $L$ is very ample, Bertini's theorem shows that a general member of $|L|$ over $\overline{\mathbb{F}_p}$ is smooth. Hence $W$ meets every fiber $\mathbb{A}^N_{\mathbb{F}_p}$ in a nonempty, dense open subset.
		Set
		\[
		\Sigma=\big\{\,\mu\in W\ \text{closed}\ \mid\ t(\mu)\in U_{\operatorname{char}\kappa(\mu)}\,\big\}.
		\]
		For $\mu\in\Sigma$ the fiber $X_\mu$ is smooth and the coefficient of $M^{p-1}$ is nonzero, so $X_\mu\otimes\overline{\kappa(\mu)}$ is globally $F$-split by the preceding paragraph, and therefore so is $X_\mu$.
		Any closed subset of $\Spec A$ containing $\Sigma$ meets each fiber $\mathbb{A}^N_{\mathbb{F}_p}$ in a closed set containing $W\cap U_p$, hence equals that fiber, and so equals $\Spec A$.
		Thus $\Sigma$ is dense, and $X$ is of dense globally $F$-split type.
	\end{proof}
	
	\begin{corollary}
		Let $X$ be a Wehler type hypersurface defined over $\mathbb{C}$. Then the Cox ring of $X$ is of dense $F$-pure type.
	\end{corollary}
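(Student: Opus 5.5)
The plan is to combine Proposition~\ref{global-Fsplit} with the lemma of \cite[Lemma~4.5]{GOST15} quoted above. The work lies in building a flat model of $\Cox(X)$ over the same base $A=\mathbb{Z}[c_1,\dots,c_N]$ whose reductions at a dense set of closed points agree with the Cox rings of the reductions $X_\mu$. Concretely, I would take $\Cox(X_A)$ to be the $\Cl$-graded ring $\bigoplus_{D\in\mathbb{Z}^{m+1}}H^0(X_A,\mathcal{O}_{X_A}(D))$. Here $\Cl$ is generated by $H_1,\dots,H_{m+1}$ on every smooth fiber, by the Grothendieck--Lefschetz argument of Lemma~\ref{Neron-Severi}. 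Each graded piece is a finitely generated $A$-module, and after shrinking $\Spec A$ to an open subset it is free by generic flatness. Since $\Cox(X)$ has infinitely many graded pieces, I cannot simply shrink once per piece. I would therefore use the explicit structure: by Theorem~\ref{thm:Nef-no-saturation} and Theorem~\ref{thm:Wehler-bir}, every degree is a $\Bir$-translate of a nef degree. The involutions $\iota_k$ and the sections $\zeta_k$ are defined over $A$ by the same formulas, since the Vieta relations only involve the $R_{(k,i)}$.

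The key steps, in order, are as follows.

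First, I would show that on the open set $W\cap\{c_p\neq 0\}$ of smooth fibers, the nef-graded ring is given fiberwise by the presentation $\mathcal{A}_0/\mathcal{I}_1$. The dimension count in the proof of Theorem~\ref{thm:Nef-no-saturation} uses only Künneth, Serre duality and the ideal-sheaf sequence, and these work in every characteristic. The one exception is the injectivity in Proposition~\ref{zeta-relation}, which needs $x_{k1}\nmid R_{(k,0)}$; that is an open condition.

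Second, I would transport this description by the pseudo-automorphisms $\iota_k$, which are defined fiberwise on the same open set. This gives base-change compatibility $H^0(X_A,\mathcal{O}(D))\otimes\kappa(\mu)\cong H^0(X_\mu,\mathcal{O}(D))$ for all effective $D$. The graded pieces are thereby identified with translates of nef pieces, which have constant dimension given by \eqref{eq:Cox-hilb}. Cohomology and base change then gives local freeness uniformly, and $R_A:=\Cox(X_A)$ satisfies $R_A\otimes_A\mathbb{C}\cong\Cox(X)$ and $R_A\otimes\kappa(\mu)\cong\Cox(X_\mu)$.

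Third, for closed points $\mu$ in the dense set $\Sigma$ of Proposition~\ref{global-Fsplit}, $X_\mu$ is a smooth globally $F$-split variety with $\Pic^0=0$ and $\Cl$ finitely generated. The quoted lemma then gives that $\Cox(X_\mu)$ is $F$-pure. Hence $R_A$ is a flat model with a dense set of $F$-pure fibers, as required by Definition~\ref{def:dense-F-type}.

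The main obstacle is the second step: guaranteeing uniformly in $D$ that the Cox ring commutes with reduction mod $p$. This matters because a very general $X_\mu$ over a finite field need not be very general, so $\Bir(X_\mu)$ could be larger and its Picard lattice could jump. I would sidestep this by never using the Cone theorem on $X_\mu$. Instead, I would directly show that the sections $w^*(x_{ki}),w^*(\zeta_k),y_j$ together with the relations of Theorem~\ref{thm:cox-intro} are defined over $A$ and that $\dim_{\kappa(\mu)}H^0(X_\mu,\mathcal{O}(D))$ agrees with the characteristic-zero value. Upper semicontinuity gives one inequality, and the explicit sections produced by the $\iota_k$ give the other. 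If full equality proves delicate, a fallback is to note that $F$-purity of a graded ring can be tested on a cofinal system of Veronese-type subrings $\mathcal{R}_s$ of Theorem~\ref{thm:cox-limit-section6}. Each of these is finitely generated, so generic freeness applies to it, and $\Cox$ is a direct limit of them. Purity passes along this limit, with $F$-purity of each $\mathcal{R}_s$ over $\mu$ deduced from global $F$-splitting of $X_\mu$ exactly as in \cite[Lemma~4.5]{GOST15}.
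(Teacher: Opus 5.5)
Your outer argument is the paper's. There the corollary is obtained simply by combining Proposition~\ref{global-Fsplit} with \cite[Lemma~4.5]{GOST15}. The paper never builds an $A$-model of $\Cox(X)$, and never compares its closed fibres with $\Cox(X_\mu)$. Your Step~2 therefore supplies a step the paper leaves implicit, and that step is not automatic for a non-finitely generated Cox ring.

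The core mechanism of Step~2 is sound. For nef $D$ the ideal-sheaf sequence and $H^1(\mathbb{P},\mathcal{O}(D))=0$ give $h^0(X_\mu,\mathcal{O}(D))=\dim S_D-\dim S_{D-\operatorname{div}F}+h^1(\mathbb{P},\mathcal{O}(D-\operatorname{div}F))$, which is independent of $\mu$. If every $\iota_k$ is a pseudo-automorphism of $X_\mu$, the Vieta relation gives the same action on $\Pic(X_\mu)\cong\mathbb{Z}^{m+1}$ as in characteristic zero. The isomorphisms $w^*\colon H^0(X_\mu,L)\xrightarrow{\sim}H^0(X_\mu,w^*L)$ then give $h^0(X_\mu,D)=h^0(X_{\mathbb C},D)$ for all $D\in\Eff(X_{\mathbb C})$, on one open set of the base. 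Grauert's theorem then gives local freeness and base change in all these degrees at once. This makes your Step~1 unnecessary, since only dimensions enter. Incidentally, the condition $x_{k1}\nmid R_{(k,0)}$ you cite fails in every Cox ring, because $R_{(k,0)}=x_{k1}\iota_k^*(x_{k1})$. The injectivity actually follows by comparing $x_k$-degrees in the polynomial ring.

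Three points in your write-up need repair.

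\emph{The upper bound on $h^0$.} Your sentence ``upper semicontinuity gives one inequality, and the explicit sections produced by the $\iota_k$ give the other'' is wrong: both only bound $h^0(X_\mu,D)$ from below. The upper bound must come from the pseudo-automorphism isomorphism above. That needs an open condition which is not among those defining $\Sigma$: $Z_k=\{R_{(k,0)}=R_{(k,1)}=R_{(k,2)}=0\}$ must have codimension $3$, so that the indeterminacy $\mathbb{P}^1\times Z_k$ of $\iota_k$ has codimension $\ge2$ in $X_\mu$. You should also discard $p=2$ to keep $\pi'_k$ separable. These conditions cut out a nonempty open subset of all but finitely many fibres $\mathbb{A}^N_{\mathbb{F}_p}$, so $\Sigma$ stays dense after shrinking.

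\emph{Degrees outside $\Eff(X_{\mathbb C})$.} For $D\notin\Eff(X_{\mathbb C})$ you get $H^0(X_A,\mathcal{O}(D))=0$. Your identification $R_A\otimes\kappa(\mu)\cong\Cox(X_\mu)$ therefore also needs $H^0(X_\mu,\mathcal{O}(D))=0$. That is exactly the effective half of the cone theorem on $X_\mu$, so it cannot be ``sidestepped''. It does hold on the same open set:
\begin{itemize}
\item while some $c_k<0$ with $k\le m$, replace an effective class by its $\iota_k^*$-transform; this strictly lowers its degree against $A=\sum_j H_j$, since $(\delta_k-2H_k)\cdot A^{\dim X-1}>0$;
\item once all such $c_k\ge0$, restricting to a general fibre of $X_\mu\to(\mathbb{P}^1)^m$ forces $c_{m+1}\ge0$.
\end{itemize}
Alternatively, avoid the issue entirely. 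A global $F$-splitting of $X_\mu$ induces a homogeneous splitting of Frobenius on $\Cox(X_\mu)$: degree $pD$ goes to degree $D$, and all other degrees go to $0$. It therefore restricts to the subring $\bigoplus_{D\in\Eff(X_{\mathbb C})}H^0(X_\mu,\mathcal{O}(D))$, which is all you need.

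\emph{The fallback.} Drop the fallback through the $\mathcal{R}_s$. Generic freeness gives an open set $U_s$ for each $s$, but $\bigcap_s U_s$ need not contain a dense set of closed points. So it does not produce a single model with densely many $F$-pure fibres, and once the first two points are fixed it is not needed.
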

	
	\FloatBarrier
	\bibliographystyle{habbvr}
	\bibliography{bib}
\end{document}